\tikzset{
    dot diameter/.store in=\dot@diameter,
    dot diameter=3pt,
    dot spacing/.store in=\dot@spacing,
    dot spacing=10pt,
    dots/.style={
        line width=\dot@diameter,
        line cap=round,
        dash pattern=on 0pt off \dot@spacing
    }
}
\newtheorem{theorem}{Theorem}[section]
\newtheorem{lemma}[theorem]{Lemma}
\newtheorem{proposition}[theorem]{Proposition}
\newtheorem{corollary}[theorem]{Corollary}
\newtheorem{conjecture}[theorem]{Conjecture}
\newenvironment{customthm}[1]
  {\innercustomthm}
  {\endinnercustomthm}
\theoremstyle{definition}
\newtheorem{definition}[theorem]{Definition}
\newtheorem{example}[theorem]{Example}
\theoremstyle{remark}
\newtheorem{remark}[theorem]{Remark}
\newcommand{\tcs}[1]{{\tabbedCenterstack{#1}}}
\newcommand{\tcss}[1]{{\scriptsize\tabbedCenterstack{#1}}}
\DeclareMathOperator{\Hom}{Hom}
\DeclareMathOperator{\im}{im}
\DeclareMathOperator{\Ext}{Ext}
\DeclareMathOperator{\End}{End}
\DeclareMathOperator{\Fac}{Fac}
\DeclareMathOperator{\add}{add}
\DeclareMathOperator{\modules}{mod}
\renewcommand{\mod}{\modules}
\DeclareMathOperator{\additive}{add}
\DeclareMathOperator{\ind}{ind}
\DeclareMathOperator{\Filt}{Filt}
\DeclareMathOperator{\proj}{proj}
\newcommand{\relproj}[1]{\mathcal{P}(#1)}
\DeclareMathOperator{\simp}{simp}
\newcommand{\relsimp}[1]{\simp(#1)}
\DeclareMathOperator{\brick}{brick}
\DeclareMathOperator{\thick}{thick}
\DeclareMathOperator{\soc}{soc}
\DeclareMathOperator{\rad}{rad}
\DeclareMathOperator{\coim}{coim}
\newcommand{\tors}{\mathsf{tors}\,}
\newcommand{\fftors}{\mathsf{ff}\text{-}\mathsf{tors}\,}
\newcommand{\twosilt}{2\text{-}\mathsf{silt}\,}
\newcommand{\sttilt}{\mathsf{s}\tau\text{-}\mathsf{tilt}\,}
\newcommand{\Tors}[1]{\mathrm{Tors}(#1)}
\newcommand{\Torf}[1]{\mathrm{Torf}(#1)}
\renewcommand{\emptyset}{\varnothing}
\newcommand{\iysh}{\langle 1 \rangle}
\newcommand{\kbproj}{K^{b}(\proj \Lambda)}
\newcommand{\twoterm}{K^{[-1,0]}(\proj \Lambda)}
\newcommand{\eor}[1]{{#1}^{\bot_{1}}}
\newcommand{\eol}[1]{\prescript{\bot_{1}}{}{#1}}
\newcommand{\hor}[1]{#1^{\bot_{0}}}
\newcommand{\hol}[1]{\prescript{\bot_{0}}{}{#1}}
\newcommand{\st}{\mid}
\newcommand{\summ}[1]{\mathsf{S}_{\mathrm{s}}(#1)}
\newcommand{\sumt}[1]{\mathsf{S}_{\tau}(#1)}
\newcommand{\exch}[1]{\mathsf{E}(#1)}
\newcommand{\bricks}[1]{\mathsf{B}(#1)}
\newcommand{\bleq}{\leqslant_{\mathsf{B}}}
\newcommand{\bl}{<_{\mathsf{B}}}
\newcommand{\sleq}{\leqslant_{\mathsf{S}}}
\newcommand{\dleq}{\leqslant_{\textup{\pentagon}}}
\newcommand{\dl}{<_{\textup{\pentagon}}}
\newcommand{\dlessdot}{\lessdot_{\textup{\pentagon}}}
\newcommand{\hleq}{\leqslant_{\mathsf{HN}}}
\newcommand{\hl}{<_{\mathsf{HN}}}
\newcommand{\hnf}[2]{\mathsf{SSF}_{#1}(#2)}
\newcommand{\hnfbig}[2]{\mathsf{SSF}_{#1}\Bigl(#2\Bigr)}
\newcommand{\bhnf}[2]{\mathsf{SF}_{#1}(#2)}
\newcommand{\bhnfbig}[2]{\mathsf{SF}_{#1}\Bigl(#2\Bigr)}
\newcommand\@dotsep{4.5}
\def\@tocline#1#2#3#4#5#6#7{\relax
  \ifnum #1>\c@tocdepth 
  \else
    \par \addpenalty\@secpenalty\addvspace{#2}%
    \begingroup \hyphenpenalty\@M
    \@ifempty{#4}{%
      \@tempdima\csname r@tocindent\number#1\endcsname\relax
    }{%
      \@tempdima#4\relax
    }%
    \parindent\z@ \leftskip#3\relax \advance\leftskip\@tempdima\relax
    \rightskip\@pnumwidth plus1em \parfillskip-\@pnumwidth
    #5\leavevmode\hskip-\@tempdima{#6}\nobreak
    \leaders\hbox{$\m@th\mkern \@dotsep mu\hbox{.}\mkern \@dotsep mu$}\hfill
    \nobreak
    \hbox to\@pnumwidth{\@tocpagenum{\ifnum#1=1\fi#7}}\par
    \nobreak
    \endgroup
  \fi}
\renewcommand\csname r@tocindent0\endcsname{0pt}
\def\l@subsection{\@tocline{2}{0pt}{2.5pc}{5pc}{}}
\title[A structural view of maximal green sequences]{A structural view of\\maximal green sequences}
\author{Mikhail Gorsky}
\urladdr{https://sites.google.com/site/homepageofmikhailgorsky/}
\email{mikhail.gorskii@univie.ac.at}
\address{Faculty of Mathematics, Oskar-Morgenstern-Platz 1, 1090 Vienna, Austria}
\author{Nicholas J. Williams}
\urladdr{https://nchlswllms.github.io/}
\email{nicholas.williams@lancaster.ac.uk}
\address{Department of Mathematics and Statistics, Fylde College, Lancaster University, Lancaster, LA1 4YF, United Kingdom}
\subjclass[2020]{Primary: 16G20; Secondary: 13F60, 16G10, 18E40}
\keywords{Maximal green sequences, $\tau$-tilting, silting, torsion classes, Harder--Narasimhan filtrations, cluster algebras}
\begin{document}

\begin{abstract}
    We study the structure of the set of all maximal green sequences of a finite-dimensional algebra. There is a natural equivalence relation on this set, which we show can be interpreted in several different ways, underscoring its significance. There are three partial orders on the equivalence classes, analogous to the partial orders on silting complexes and generalising the higher Stasheff--Tamari orders on triangulations of three-dimensional cyclic polytopes. We conjecture that these partial orders are in fact equal, just as the orders in the silting case have the same Hasse diagram. This can be seen as a refined and more widely applicable version of the No-Gap Conjecture of Br\"ustle, Dupont, and Perotin. We prove our conjecture in the case of Nakayama algebras.
\end{abstract}

\maketitle

\tableofcontents

 \section{Introduction}

Maximal green sequences were introduced by Keller in \cite{kel-green}, but were already implicit in the physics literature in the context of BPS spectra of particles in string theory \cite{ccv}; see also \cite{gmn,accrv,xie}. The origin of maximal green sequences lies in the theory of cluster algebras, which were introduced by Fomin and Zelevinsky \cite{fz1}. Cluster algebras are commutative rings with distinguished generating sets known as `clusters' which are related by a process called `mutation'. Every cluster has a skew-symmetrisable matrix associated to it, with mutation both transforming the matrix and the variables of the cluster. If the matrix is in fact skew-symmetric, it gives a quiver. The clusters of a cluster algebra form the vertices of a graph whose edges connect clusters related by mutation; this graph is known as the `exchange graph'.

Subsequent to their introduction, deep connections were found between cluster algebras and the representation theory of finite-dimensional algebras \cite{mrz,ccs,bmrrt}. Here, the clusters of a cluster algebra correspond to objects in a certain category, thereby ``categorifying'' the cluster algebra. Some categorifications of cluster algebras --- in particular, those related to $\tau$-tilting theory \cite{air} --- give an orientation to the edges of the exchange graph, thereby producing a partial order. For a helpful survey of this phenomenon, see \cite{by}. A maximal green sequence is then simply a maximal chain of finite length in the resulting poset. The name comes from a combinatorial construction due to Keller \cite{kel-green}, in which some vertices of the quiver of a cluster are green and some are red, with a maximal green sequence given by a sequence of mutations at green vertices which turns the quiver from all green to all red.

The original motivation for introducing maximal green sequence comes from the theory of Donaldson--Thomas invariants, which, roughly speaking, count the aforementioned BPS states. A maximal green sequence for a quiver gives an explicit formula for the refined Donaldson--Thomas invariant associated to the quiver by Kontsevich and Soibelman \cite{ks_stability}. Consequently, maximal green sequences give quantum dilogarithm identities \cite{fv_aca,fk_qd,reineke_poisson}. The existence of a maximal green sequence also gives a formula for the twist automorphism of a cluster algebra \cite{gls_generic}, as well as guaranteeing the existence of a theta basis \cite{ghkk} and a generic basis \cite{qin_bases} in the upper cluster algebra. Maximal green sequences are intimately related to Bridgeland stability conditions on module categories \cite{bridgeland}, where a chamber of type I with finitely many isomorphism classes of stable objects gives a maximal green sequence \cite{bridgeland}. A useful survey of maximal green sequences is given in \cite{dem-kel}.

If one is interested in a mathematical object, then it is natural to study the structure of the set of all of them. An equivalence relation on the set of maximal green sequences of a finite-dimensional algebra was considered in \cite{njw-hst,njw-phd}, where it was shown that equivalence classes of maximal green sequences of the linearly oriented $A_n$ quiver were in bijection with triangulations of the three-dimensional cyclic polytope with $n + 3$ vertices. In the first main result of this paper, we show that the equivalence relation on maximal green sequences admits the six following interpretations. The fact that the equivalence relation detects several different aspects of maximal green sequences underscores its significance. For the detail on the terminology of this theorem, see Section~\ref{sect:back}.

\begin{customthm}{A}[Theorem~\ref{thm:mg_equiv}, Lemma~\ref{lem:same_bricks}, Example~\ref{ex:brick_counter}]\label{thm:intro:mg_equiv}
Let $\Lambda$ be a finite-dimensional algebra over a field $K$ with $\mathcal{G}$ and $\mathcal{G}'$ maximal green sequences of~$\Lambda$. Then the following are equivalent.
\begin{enumerate}[label=\textup{(}\arabic*\textup{)}]
    \item $\mathcal{G}$ and $\mathcal{G}'$ can be deformed into each other across squares.\label{op:intro_eq_thm:squares}
    \item $\mathcal{G}$ and $\mathcal{G}'$ have the same set of exchange pairs.
    \item $\mathcal{G}$ and $\mathcal{G}'$ have the same set of indecomposable direct summands of two-term silting complexes.
    \item $\mathcal{G}$ and $\mathcal{G}'$ have the same set of indecomposable direct summands of support $\tau$-tilting modules.
    \item For any $\Lambda$-module $M$, the set of semistable factors of $M$ is the same for the respective Harder--Narasimhan filtrations given by $\mathcal{G}$ and $\mathcal{G}'$.
    \item For any $\Lambda$-module $M$, the multiset of stable factors of $M$ is the same for the respective Harder--Narasimhan filtrations given by $\mathcal{G}$ and $\mathcal{G}'$.\label{op:intro_eq_thm:stable}
\end{enumerate}
Moreover, if $\mathcal{G}$ and $\mathcal{G}'$ satisfy any of the equivalent statements above, then $\mathcal{G}$ and $\mathcal{G}'$ have the same set of bricks. However, $\mathcal{G}$ and $\mathcal{G}'$ may have the same set of bricks without any of the above holding.
\end{customthm}

When $\Lambda$ is a Jacobian algebra and so has a corresponding cluster algebra, conditions \ref{op:intro_eq_thm:squares} to \ref{op:intro_eq_thm:stable} are satisfied if and only if the sets of cluster variables appearing in the clusters along the two maximal green sequences coincide.
In terms of quantum dilogarithm factorisations \cite{kel-green,reineke_poisson}, two maximal green sequences are equivalent if the factorisations are related by swapping commuting terms.

In some ways it is rather surprising that two maximal green sequences may have the same set of bricks while having different sets of $\tau$-rigid modules, since bricks are dual to indecomposable $\tau$-rigid modules \cite{dij}. This is related to the ``tropical duality'' that exists between $c$-vectors and $g$-vectors for cluster algebras \cite{nz_trop}. However, we show that two maximal green sequences with the same bricks are equivalent in the case of Nakayama algebras (Theorem~\ref{thm:nak_brick_equiv}). More generally, as we explain, having the same factors of Harder--Narasimhan filtrations can be considered an augmentation of the condition of having the same bricks. An intriguing implication of Theorem~\ref{thm:intro:mg_equiv} is that the set of indecomposable summands of support $\tau$-tilting modules of a maximal green sequence determines the semistable factors of every module; it would be interesting if there were a direct construction relating the two.

The equivalence relation on maximal green sequences then reveals even more structure, since there exist natural partial orders on the equivalence classes. In the linearly oriented type~$A$ case, these partial orders correspond to the higher Stasheff--Tamari orders on the triangulations of a three-dimensional cyclic polytope \cite{njw-hst,njw-phd}, a higher-dimensional version of the Tamari lattice \cite{kv-poly,er}. For the preprojective algebra of type~$A$, equivalence classes of maximal green sequences correspond to elements of the two-dimensional higher Bruhat order \cite{ms}, as we explore in a sequel paper \cite{mgsii}. Posets of maximal green sequences were also studied in \cite{gorsky_phd,gorsky_note} using the language of \cite{bkt}. For path algebras of~$ADE$ quivers, the covering relations in the partial order somewhat implicitly discussed there correspond to edge subdivisions of certain subword complexes. For certain Delzant polytopal realizations of those, the covering relations thus correspond to blow-ups of corresponding toric varieties. The order agrees with the first order defined below.

The work \cite{njw-hst,njw-phd} also implies that the partial orders on equivalence classes of maximal green sequences are analogous to the well-known partial orders on silting complexes \cite{ai,rs-simp}. In the silting case, there are two different partial orders, one defined locally via mutation and the other defined globally via inclusion of the associated aisles; these orders were shown to have the same Hasse diagram \cite{ai}. These correspond to the two different versions of the two-dimensional higher Stasheff--Tamari orders, which were shown to be equal in \cite{njw-equal}. Via the bijection between two-term silting complexes and functorially finite torsion classes, the second order corresponds to the inclusion order on torsion classes.  Following these cases, we conjecture that the three partial orders on equivalence classes of maximal green sequences should also coincide. As we explain (Remark~\ref{rmk:no_gap}), this conjecture can be seen as a refined and more widely applicable version of the No-Gap Conjecture. The three partial orders are as follows: one defined locally in terms of certain deformations, analogous to the irreducible mutation order on silting complexes; one globally defined in terms of inclusion of indecomposable summands of two-term silting complexes, analogous to the order on silting complexes via inclusion of aisles; and one extra global order in terms of Harder--Narasimhan filtrations. The third order can be seen as a refinement of a natural analogue of the inclusion order on torsion classes; the analogue itself can be formulated via inclusion of sets of bricks and is not compatible with the equivalence relation by the last paragraph of Theorem~\ref{thm:intro:mg_equiv}. We show the following result towards the conjectured equality of the partial orders. See Definition~\ref{def:iepd} for the definition of an increasing elementary polygonal deformation and Definition~\ref{def:partial_orders}\eqref{op:partial_orders:hn} for the definition of refinement of Harder--Narasimhan filtrations.

\begin{customthm}{B}[{Theorem~\ref{thm:first->second} and Theorem~\ref{thm:def->hn}}]\label{thm:intro:poset}
Let $\Lambda$ be a finite-dimen\-sional algebra over a field $K$ with $\mathcal{G}$ and $\mathcal{G}'$ maximal green sequences of~$\Lambda$. Furthermore, suppose that $[\mathcal{G}']$ is the result of a series of increasing elementary polygonal deformations of $[\mathcal{G}]$. Then we have the following.
\begin{enumerate}[label=\textup{(}\arabic*\textup{)}]
    \item Every indecomposable direct summand of a two-term silting complex of $\mathcal{G}'$ is a direct summand of a two-term silting complex of~$\mathcal{G}$.
    \item The Harder--Narasimhan filtrations induced by $\mathcal{G}'$ refine those induced by~$\mathcal{G}$.
\end{enumerate}
\end{customthm}

The partial orders can also be seen in the context of partial orders on chambers of stability conditions, although this is not the language we choose to use. There is a partial order on the chambers of King stability conditions \cite{king} given by those from $\tau$-tilting theory \cite{bst,asai_wall}, and a partial order on chambers of type II for Bridgeland stability conditions given by inclusion of aisles of $t$-structures \cite{bridgeland}. These partial orders have proven useful in studying the stability manifold \cite{psz,amy}. Roughly speaking, we study partial orders on equivalence classes of certain type I chambers. As an aside, it is curious to note that while all two-term silting complexes give chambers of King stability conditions \cite{bridge_scat,bst,asai_wall}, not all maximal green sequences give chambers of Bridgeland stability conditions, see \cite[Counterexample~7.16]{Qiu15} and
\cite[Theorem~L3]{ai2020}. The partial order given by deformations is very natural in terms of stability conditions, since increasing elementary polygonal deformations correspond to crossing walls of type I which decrease the number of stables.

For Nakayama algebras, or algebras with two simple modules up to isomorphism, we prove the converse implications from Theorem~\ref{thm:intro:poset}. Namely, we show that the three partial orders in fact coincide, along with an additional order given by inclusion of bricks. The latter is not well-defined for an arbitrary finite-dimensional algebra~$\Lambda$, since in general non-equivalent maximal green sequences may have the same bricks. 

\begin{customthm}{C}[{Theorem~\ref{thm:two_simples} and Corollary~\ref{cor:nak_orders}}]\label{thm:intro:nakayama}
Let $\Lambda$ be a finite-dimensional algebra over a field $K$ which is either a Nakayama algebra or an algebra with two isomorphism classes of simple modules. Further, let $\mathcal{G}$ and $\mathcal{G}'$ maximal green sequences of~$\Lambda$. Then the following are equivalent. 
\begin{enumerate}[label=\textup{(}\arabic*\textup{)}]
    \item $[\mathcal{G}']$ is the result of a series of increasing elementary polygonal deformations of~$[\mathcal{G}]$. 
    \item Every indecomposable direct summand of a support $\tau$-tilting module of $\mathcal{G}'$ is a direct summand of a support $\tau$-tilting module of~$\mathcal{G}$.\label{op:thm_nak_intro:tau}
    \item The Harder--Narasimhan filtrations induced by $\mathcal{G}'$ refine those induced by~$\mathcal{G}$.
    \item Every brick of $\mathcal{G}'$ is a brick of~$\mathcal{G}$.\label{op:thm_nak_intro:brick}
\end{enumerate}
\end{customthm}

\vspace{0.3cm}

\noindent {\bf Organisation of the paper.} The structure of this paper is as follows. We begin in Section~\ref{sect:back} by giving background to the paper. We introduce maximal green sequences in Section~\ref{sect:back:mgs}, and then give their description in terms of $\tau$-tilting theory in Section~\ref{sect:back:ttt}, followed by their description in terms of sequences of bricks in Section~\ref{sect:back:bricks}. We introduce equivalence relations on maximal green sequences in Section~\ref{sect:equiv}, and prove our first main result Theorem~\ref{thm:intro:mg_equiv} showing the different ways of characterising the equivalence relation. In Section~\ref{sect:partial_order}, we introduce three partial orders on equivalence classes of maximal green sequences and prove our second main result Theorem~\ref{thm:intro:poset}, showing how these are related. We then go on to consider the interaction between one of the partial orders and the exchange pairs of a maximal green sequence. We then discuss an interesting example of a poset of equivalence classes of maximal green sequences of an algebra related to a triangulation of the twice-punctured torus. Finally, in Section~\ref{sect:nak} we study the posets for Nakayama algebras in detail and prove Theorem~\ref{thm:intro:nakayama}.

\vspace{0.3cm}

\noindent {\bf Acknowledgements.} We would like to thank Aran Tattar for help with the proof of Lemma~\ref{lem:extension_of_bricks}, and Haruhisa Enomoto, Bernhard Keller, Hipolito Treffinger, Osamu Iyama, Aaron Chan, Daniel Labardini-Fragoso, and H\r{a}vard Terland for useful discussions. This work is part of a project that has received funding from the European Research Council (ERC) under the European Union’s Horizon 2020 research and innovation programme (grant agreement No.\ 101001159). Parts of this work were done during stays of MG at the University of Stuttgart, and he is very grateful to Steffen Koenig for the hospitality. NJW is currently supported by EPSRC grant EP/V050524/1, and part of the work on this paper was done while a JSPS short-term postdoctoral research fellow at the University of Tokyo.

\vspace{0.3cm}

\noindent {\bf Notation.} 
Throughout this paper, we let $\Lambda$ be a finite-dimensional algebra over a field $K$, with $\modules \Lambda$ the category of finitely generated right $\Lambda$-modules. In this paper we use the symbols `$\subset$' and `$\supset$' to denote strict inclusion of sets, that is, inclusion but not equality. This is more commonly denoted with the symbols `$\subsetneq$' and `$\supsetneq$' respectively.

\section{Background}\label{sect:back}

\subsection{Partially ordered sets}

Given a set $\mathsf{P}$, a \emph{partial order} on $\mathsf{P}$ is a relation $\mathsf{R} \subseteq \mathsf{P} \times \mathsf{P}$ which is reflexive, symmetric, and transitive. Partially ordered sets are referred to as \emph{posets}. We usually write partial orders with the symbol $\leqslant$, so that if $(x, y) \in \mathsf{R}$, where $\mathsf{R}$ is a partial order on a set $\mathsf{P}$, we write $x \leqslant y$. A~\emph{covering relation} in a partial order $\leqslant$ is a relation $x < z$ such that if $x \leqslant y \leqslant z$, then $y = x$ or $y = z$. One can also say that $z$ \emph{covers}~$x$. It is usual to write $x \lessdot z$ when $x < z$ is a covering relation. An \emph{interval} of a poset $\mathsf{P}$ is a subset of the form $\{y \in \mathsf{P} \st x \leqslant y \leqslant z\}$ for some $x, z \in \mathsf{P}$.

The \emph{Hasse diagram} of a partial order $\leqslant$ on a set $\mathsf{P}$ is the quiver with the elements of $\mathsf{P}$ as vertices, with arrows $z \to x$ whenever $x \lessdot z$ is a covering relation. In this paper, we illustrate posets using their Hasse diagrams. Recall that a Hasse diagram is \emph{$n$-regular} if every vertex is incident to precisely $n$ arrows.

\subsection{Maximal green sequences}\label{sect:back:mgs}

Maximal green sequences were introduced by Keller in the context of Donaldson--Thomas theory using a combinatorial definition in terms of quivers \cite{kel-green}. It follows from work of Nagao that this is equivalent to having a maximal chain of torsion classes \cite{nagao}. This is the first notion of a maximal green sequence that we will cover.

\subsubsection{Torsion classes}

Torsion pairs were introduced by Dickson to generalise the structure given by torsion and torsion-free abelian groups to arbitrary abelian categories \cite{dickson}. A \emph{torsion pair} is a pair of full subcategories $(\mathcal{T}, \mathcal{F})$ of $\modules \Lambda$ such that
\begin{enumerate}
    \item $\Hom_{\Lambda}(\mathcal{T}, \mathcal{F}) = 0$;
    \item if $\Hom_{\Lambda}(T, \mathcal{F}) = 0$, then $T \in \mathcal{T}$;
    \item if $\Hom_{\Lambda}(\mathcal{T}, F) = 0$, then $F \in \mathcal{F}$.
\end{enumerate}
Here $\mathcal{T}$ is called the \emph{torsion class} and $\mathcal{F}$ is called the \emph{torsion-free class}. More generally, a full subcategory $\mathcal{T}$ is called a torsion class if it is a torsion class in some torsion pair, and likewise for torsion-free classes. It is well-known that a full subcategory $\mathcal{T}$ of $\modules \Lambda$ is a torsion class if and only if it is closed under factor modules and extensions \cite[Theorem~2.3]{dickson}. Given a set $\mathcal{X}$ of $\Lambda$-modules, we write $\Tors{\mathcal{X}}$ for the smallest torsion class containing~$\mathcal{X}$ and $\Torf{\mathcal{X}}$ for the smallest torsion-free class containing~$\mathcal{X}$.

Given a torsion pair $(\mathcal{T}, \mathcal{F})$ in $\modules \Lambda$ and a $\Lambda$-module $M$, there is an exact sequence \[0 \to L \to M \to N \to 0\] such that $L \in \mathcal{T}$ and $N \in \mathcal{F}$, which is unique up to isomorphism. Here $L$ is called the \emph{torsion submodule} of $M$ and $N$ is called the \emph{torsion-free factor module}.

The torsion classes of $\modules \Lambda$ form a complete lattice under inclusion, denoted $\tors \Lambda$. We call the covering relations of this lattice \emph{minimal inclusions}. Hence, $\mathcal{T} \subset \mathcal{T}'$ is a minimal inclusion if and only if whenever we have $\mathcal{T} \subseteq \mathcal{T}'' \subseteq \mathcal{T}'$, we must either have $\mathcal{T}'' = \mathcal{T}$ or $\mathcal{T}'' = \mathcal{T}'$.

We will be particularly interested in the subposet $\fftors \Lambda$ of functorially finite torsion classes of $\Lambda$, where `functorially finite' is defined as follows. Given a subcategory $\mathcal{X} \subseteq \modules \Lambda$ and a map $f\colon X \rightarrow M$, where $X \in \mathcal{X}$ and $M \in \modules \Lambda$, we say that $f$ is a \emph{right $\mathcal{X}$-approximation} if for any $X' \in \mathcal{X}$, the sequence \[\mathrm{Hom}_{\Lambda}(X',X) \rightarrow \mathrm{Hom}_{\Lambda}(X',M) \rightarrow 0\] is exact, following \cite{as-preproj}. \emph{Left $\mathcal{X}$-approximations} are defined dually. The subcategory $\mathcal{X}$ is said to be \emph{contravariantly finite} if every $M \in \modules\Lambda$ admits a right $\mathcal{X}$-approximation, and \emph{covariantly finite} if every $M \in \modules \Lambda$ admits a left $\mathcal{X}$-approximation. If $\mathcal{X}$ is both contravariantly finite and covariantly finite, then $\mathcal{X}$ is \emph{functorially finite}.

Certain sorts of approximations are of particular note. A morphism $f \colon X \to Y$ is \emph{right minimal} if any morphism $g \colon X \to X$ such that $fg = f$ is an isomorphism. \emph{Left minimal} morphisms are defined dually. A right $\mathcal{X}$-approximation is a \emph{minimal right $\mathcal{X}$-approximation} if it is also right minimal, and \emph{minimal left $\mathcal{X}$-approximations} are defined analogously.

\subsubsection{First notion of maximal green sequence}

A \emph{maximal green sequence} is a maximal chain in $\tors \Lambda$ of finite length. More explicitly, a maximal green sequence is a chain of minimal inclusions of torsion classes \[\modules \Lambda = \mathcal{T}_0 \supset \mathcal{T}_1 \supset \dots \supset \mathcal{T}_{r-1} \supset \mathcal{T}_r = \{0\}.\] We shall see two further definitions of maxmial green sequence. We shall regard the three notions as being cryptomorphic to each other.

We note at this point that a result of Demonet, Iyama, and Jasso \cite[Theorem~3.1]{dij} implies that every torsion class in a finite maximal chain is functorially finite, so that maximal green sequences are in fact finite maximal chains in $\fftors \Lambda$.

\subsection{Relative projectives in torsion classes: \texorpdfstring{$\tau$}{tau}-tilting}\label{sect:back:ttt}

Our second notion of maximal green sequences will operate in terms of the relative projectives of the torsion classes in the maximal chain. Relative projectives in torsion classes were studied by Adachi, Iyama, and Reiten in \cite{air} in terms of what is called `$\tau$-tilting theory'.

\subsubsection{Relative projectives in torsion classes}

Given a torsion class $\mathcal{T} \in \tors \Lambda$, a module $X \in \mathcal{T}$ is a \emph{relative projective} if $\Ext_\Lambda^1(X, M) = 0$ for all $M \in \mathcal{T}$. We write $\relproj{\mathcal{T}}$ for the direct sum of one copy of each indecomposable relative projective in $\mathcal{T}$, up to isomorphism.

\subsubsection{Support $\tau$-tilting pairs}

A $\Lambda$-module $M$ is called \emph{$\tau$-rigid} if $\Hom_{\Lambda}(M,$ $\tau M) = 0$, where $\tau$ is the Auslander--Reiten translate. A pair $(M, P)$ of $\Lambda$-modules where $P$ is projective is called \emph{$\tau$-rigid} if $M$ is $\tau$-rigid and $\Hom_{\Lambda}(P, M) = 0$. A $\tau$-rigid pair $(M, P)$ is called \emph{support $\tau$-tilting} if $|M| + |P| = |\Lambda|$, where $|X|$ denotes the number of non-isomorphic indecomposable direct summands of $X$. In this case $M$ is called a \emph{support $\tau$-tilting module}. We write $\sttilt \Lambda$ for the set of isomorphism-class representatives of  basic support $\tau$-tilting modules over $\Lambda$.

\begin{theorem}[{\cite[Theorem~2.7]{air}}]\label{thm:air_relproj}
There is a bijection
\begin{align*}
    \fftors \Lambda &\longleftrightarrow \sttilt \Lambda, \\
    \mathcal{T} &\longmapsto \relproj{\mathcal{T}}, \\
    \Fac M &\longmapsfrom M.
\end{align*}
\end{theorem}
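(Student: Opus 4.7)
The plan is to prove the two directions of the correspondence and then check that the assignments are mutually inverse. The key analytic ingredient throughout is the Auslander--Reiten formula, in the form of the standard identification of $\tau$-rigidity with vanishing of $\Ext^{1}_{\Lambda}$ against factor modules: a module $M$ is $\tau$-rigid if and only if $\Ext^{1}_{\Lambda}(M, \Fac M) = 0$.

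Starting from a functorially finite torsion class $\mathcal{T}$, I would first argue that $\relproj{\mathcal{T}}$ is $\tau$-rigid. Because $\mathcal{T}$ is closed under factors, $\Fac(\relproj{\mathcal{T}}) \subseteq \mathcal{T}$, and Ext-projectivity of $\relproj{\mathcal{T}}$ inside $\mathcal{T}$ gives the required Ext-vanishing. To build the projective part of the pair, I would let $P$ be the sum of those indecomposable projective summands of $\Lambda$ whose simple top does not lie in $\mathcal{T}$; then $\Hom_{\Lambda}(P, \relproj{\mathcal{T}}) = 0$ by construction, so $(\relproj{\mathcal{T}}, P)$ is a $\tau$-rigid pair.

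The main obstacle is the counting identity $|\relproj{\mathcal{T}}| + |P| = |\Lambda|$, and I expect to spend most of the work here. The idea is to produce, for each simple $S$ with $S \in \mathcal{T}$ (equivalently, each indecomposable projective $P_{S}$ not appearing in $P$), a distinct indecomposable summand of $\relproj{\mathcal{T}}$. One takes a minimal right $\mathcal{T}$-approximation of $P_{S}$ (which exists by contravariant finiteness of $\mathcal{T}$), or equivalently a minimal left $\mathcal{T}$-approximation of the relevant radical quotient, and shows it yields an indecomposable Ext-projective of $\mathcal{T}$ with top $S$; the uniqueness of tops then gives the count.

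For the inverse direction, starting from a support $\tau$-tilting pair $(M, P)$, I would show $\Fac M$ is a torsion class (it is automatically closed under factors and extensions, using $\tau$-rigidity of $M$ and the $\Ext^{1}_{\Lambda}$ characterisation above) and that it is functorially finite: left approximations come from sums of copies of $M$ via the canonical surjection, and right approximations use the $\tau$-rigidity together with the fact that $\add M$ is functorially finite because it has finitely many indecomposables. It is then straightforward that $M \in \relproj{\Fac M}$, so the compositions $\mathcal{T} \mapsto \relproj{\mathcal{T}} \mapsto \Fac(\relproj{\mathcal{T}})$ and $M \mapsto \Fac M \mapsto \relproj{\Fac M}$ give back $\mathcal{T}$ and $M$ respectively: the first equality is Auslander--Smal\o{}'s description of functorially finite torsion classes as being generated by their Ext-projectives, and the second follows from $\tau$-rigidity combined with the counting identity, which forces $\relproj{\Fac M}$ to have no additional indecomposable summands beyond those of $M$.
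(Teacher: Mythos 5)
The paper itself does not prove this statement; it is cited directly from Adachi--Iyama--Reiten, so there is no in-text argument to compare your attempt against. Evaluating your proposal on its own terms, your overall structure and the use of the Auslander--Smal\o{} criterion ($M$ is $\tau$-rigid iff $\Ext^{1}_{\Lambda}(M, \Fac M) = 0$) to show that $\relproj{\mathcal{T}}$ is $\tau$-rigid are both fine. However, your construction of the projective part $P$ is wrong, and the error propagates into the counting. You define $P$ as the sum of those $P_i$ whose simple top $S_i$ does not lie in $\mathcal{T}$, and assert that $\Hom_{\Lambda}(P, \relproj{\mathcal{T}}) = 0$ ``by construction''. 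This is false: $S_i \notin \mathcal{T}$ does not prevent $S_i$ from being a composition factor of a module in $\mathcal{T}$. Concretely, for $\Lambda = K(1 \leftarrow 2)$ and $\mathcal{T} = \Fac P_2 = \add\{P_2, S_2\}$, we have $S_1 \notin \mathcal{T}$, so your rule puts $P_1 = S_1$ into $P$; but $\Hom_{\Lambda}(S_1, P_2) \neq 0$, so $(\relproj{\mathcal{T}}, P_1)$ is not a $\tau$-rigid pair, and in fact the correct $P$ here is $0$. The right condition is that $S_i$ is not a composition factor of any module of $\mathcal{T}$, equivalently $\Hom_{\Lambda}(P_i, X) = 0$ for all $X \in \mathcal{T}$.

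The same example also breaks your counting step. Here $\relproj{\mathcal{T}} = P_2 \oplus S_2$, and both indecomposable summands have top $S_2$, so the proposed bijection ``one indecomposable Ext-projective per simple in $\mathcal{T}$, indexed by its top'' fails: there is one simple in $\mathcal{T}$ but two indecomposable Ext-projectives. Moreover, a minimal right $\mathcal{T}$-approximation of $P_i$ recovers only part of $\relproj{\mathcal{T}}$ (in this example, it gives $P_2$ but never $S_2$). The argument you are reaching for is Smal\o{}'s: take a minimal left $\mathcal{T}$-approximation $f : \Lambda \to T$ and then show that $T \oplus \operatorname{coker}(f)$ is Ext-projective and additively generates $\relproj{\mathcal{T}}$; the cokernel summands account precisely for the pieces (like $S_2$ above) that pure approximations of projectives miss. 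Without that correction, the counting identity $|\relproj{\mathcal{T}}| + |P| = |\Lambda|$ is not established, which you correctly identify as the crux of the theorem.
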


Here \[\Fac M := \{X \in \modules \Lambda \st \text{ there is an epimorphism } M^{\oplus m} \twoheadrightarrow X \text{ for some } m\}.\]

\subsubsection{Two-term silting}

An equivalent framework to support $\tau$-tilting modules is given by two-term silting complexes. We will often prefer to work with these objects instead, for reasons that we will explain.

We denote by $\kbproj$ the homotopy category of bounded complexes of projective right $\Lambda$-modules. We will usually consider $\twoterm$, the subcategory of $\kbproj$ consisting of \emph{two-term complexes}, that is, complexes concentrated in degrees $-1$ and $0$: \[P^{-1} \to P^{0}.\]

An object $T$ of $\kbproj$ is called \emph{pre-silting} if $\Hom_{\kbproj}(T, T[i]) = 0$ for all $i > 0$. A pre-silting complex $T$ is \emph{silting} if, additionally, $\thick T = \kbproj$. Here $\thick T$ denotes the smallest full subcategory of $\kbproj$ which contains $T$ and is closed under cones, $[\pm 1]$, direct summands, and isomorphisms. For a two-term complex $T$ to be pre-silting, it suffices that $\Hom_{\kbproj}(T, T[1]) = 0$. Moreover, for a pre-silting two-term complex $T$ to be silting, it suffices that $|T| = |\Lambda|$ by \cite[Proposition~3.3(b)]{air}. We write $\twosilt \Lambda$ for the set of isomorphism-class representatives of basic two-term silting complexes of $\Lambda$.

\begin{theorem}[{\cite[Theorem~3.2]{air}}]
There is a bijection
\begin{align*}
    \twosilt \Lambda &\longleftrightarrow \sttilt \Lambda, \\
    T &\longmapsto H^{0}(T).
\end{align*}
\end{theorem}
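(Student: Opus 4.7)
My plan is to produce an explicit inverse to $T \mapsto H^0(T)$ and check that both compositions give the identity. Given a support $\tau$-tilting pair $(M, P)$, take a minimal projective presentation $P_M^{-1} \xrightarrow{f_M} P_M^0 \to M \to 0$ and define
\[
\phi(M, P) := (P_M^{-1} \xrightarrow{f_M} P_M^0) \oplus P[1] \in \twoterm,
\]
so that $H^0(\phi(M,P)) \cong M$ is immediate. The real work is to show that $\phi(M,P)$ is a two-term silting complex and that every basic two-term silting complex arises in this way up to isomorphism.

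The key technical input is the Hom-formula
\[
\Hom_{\kbproj}(T_M, T_N[1]) \cong \Hom_{\Lambda}(M, \tau N),
\]
valid whenever $T_M, T_N$ are minimal projective presentations of modules $M, N$. Combining this with the elementary computations $\Hom_{\kbproj}(Q[1], T_N[1]) \cong \Hom_{\Lambda}(Q, N)$ and $\Hom_{\kbproj}(T_M, Q[2]) = 0$ for $Q \in \proj \Lambda$, the silting vanishing $\Hom_{\kbproj}(\phi(M,P), \phi(M,P)[1]) = 0$ becomes equivalent to the conjunction $\Hom_{\Lambda}(M, \tau M) = 0$ and $\Hom_{\Lambda}(P, M) = 0$, which is precisely the $\tau$-rigid pair condition. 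The rank equality $|\phi(M,P)| = |M| + |P| = |\Lambda|$ then upgrades pre-silting to silting via \cite[Proposition~3.3(b)]{air}, already cited in the excerpt.

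For the inverse direction, I would take a basic $T \in \twosilt \Lambda$, split off its maximal direct summand of the form $Q[1]$ with $Q \in \proj \Lambda$, and argue that the remaining summand $(P^{-1} \xrightarrow{f} P^0)$ is forced by the basic and silting hypotheses to be a minimal projective presentation of $M := H^0(T)$. Reading the key Hom-formula in reverse, $(M, Q)$ is then $\tau$-rigid, and the count $|M| + |Q| = |T| = |\Lambda|$ makes it support $\tau$-tilting, yielding $\phi(M, Q) \cong T$. The main obstacle is the Hom-formula itself, which is a variant of the Auslander--Reiten formula and requires some care to set up cleanly; a secondary delicate point is confirming that the residual summand $(P^{-1} \xrightarrow{f} P^0)$ really is a \emph{minimal} projective presentation, since non-minimality would introduce spurious contractible summands and break the rank count on which the whole argument hinges.
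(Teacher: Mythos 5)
The paper does not prove this statement: it is cited directly as \cite[Theorem~3.2]{air}. Your proposal is essentially a reconstruction of the argument given there, with the inverse map built from minimal projective presentations, the $\tau$-rigid vanishing condition translated into the presilting condition via the Auslander--Reiten/Auslander--Smal\o{} style Hom-formula, and the rank count (via \cite[Proposition~3.3(b)]{air}) promoting presilting to silting.

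Two small remarks. First, the Hom-formula you quote is not literally an isomorphism in that form; the precise statement in \cite[Lemma~3.4]{air} involves the stable Hom and the $K$-dual, but since you only invoke it to transfer a \emph{vanishing} condition, the conclusion you draw is correct. Second, the ``delicate point'' you flag, that the residual summand $P^{-1}\xrightarrow{f} P^{0}$ of a basic two-term silting complex with no summand of the form $Q[1]$ is a \emph{minimal} projective presentation of $H^{0}(T)$, is genuine but resolvable: $T$ basic (no contractible summand) forces $\im f \subseteq \rad P^{0}$ after a change of basis, and the absence of $Q[1]$ summands forces $\ker f \subseteq \rad P^{-1}$, which are exactly the two minimality conditions. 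You should also note that passing from the bijection with support $\tau$-tilting \emph{pairs} to one with support $\tau$-tilting \emph{modules} uses that $P$ is determined up to isomorphism by $M$, being the maximal projective with $\Hom_{\Lambda}(P,M)=0$.
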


Hence, support $\tau$-tilting and two-term silting are essentially equivalent. Functorially finite torsion classes of $\Lambda$ are therefore also in bijection with two-term silting complexes over $\Lambda$. The advantage of support $\tau$-tilting is that it is easier to describe the relation with torsion classes. The advantage of two-term silting is that it is easier to talk about mutation, which is why we usually work in this framework.

\subsubsection{Mutation}

Given a silting complex $T = E \oplus X$ in $\kbproj$ where $X$ is indecomposable, let \[X \xrightarrow{f} E' \xrightarrow{g} Y \to X[1]\] be a triangle in $\kbproj$ such that $f$ is a minimal left $\additive E$-approximation of $X$. This triangle is known as the \emph{exchange triangle}. Then, by \cite[Theorem~2.35]{ai}, $Y$ is indecomposable with $g$ a minimal right $\additive E$-approximation of $Y$ and, by \cite[Theorem~2.31]{ai}, $T' = E \oplus Y$ is a silting complex. In this situation, we say that $T'$ is a \emph{green mutation} (or \emph{left mutation}) of $T$ and $T$ is a \emph{red mutation} of $T'$ (or \emph{right mutation}).
The opposite convention for green and red mutations is used by some authors. Such choice would not make any difference to our considerations of maximal chains in the lattice of two-term silting complexes, since the chain remains the same whichever direction one traverses it in.
We call $(X, Y)$ the \emph{exchange pair} of the mutation.

We say that a pair of two-term silting complexes $T, T' \in \twoterm$ are \emph{mutations} of each other if and only if $T = E \oplus X$ and $T' = E \oplus Y$ where $X$ and $Y$ are indecomposable. By \cite[Corollary~3.8(b)]{air}, we have that $T$ and $T'$ are mutations of each other if and only if either $T'$ is a green mutation of~$T$, or $T$ is a green mutation of~$T'$. The set of basic two-term silting complexes of $\Lambda$ forms a poset denoted $\twosilt \Lambda$ where the covering relations are that $T' \lessdot T$ if and only if $T'$ is a green mutation of $T$. The partial order itself is then the transitive-reflexive closure of these covering relations.

\begin{theorem}[{\cite[Corollary~2.34, Corollary~3.9]{air}}]\label{thm:air:silt_tors}
The bijection between $\twosilt \Lambda$ and $\fftors \Lambda$ induces an isomorphism between the Hasse diagrams of these posets.

In particular, if functorially finite torsion classes $\mathcal{T}$ and $\mathcal{T}'$ correspond to two-term silting complexes $T$ and $T'$ respectively, then there is a minimal inclusion $\mathcal{T} \supset \mathcal{T}'$ if and only if $T'$ is a green mutation of $T$.
\end{theorem}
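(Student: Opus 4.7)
The plan is to reduce the isomorphism of Hasse diagrams to a matching of covering relations on both sides. The underlying bijection between $\twosilt \Lambda$ and $\fftors \Lambda$ is already given by combining Theorem~\ref{thm:air_relproj} with the bijection between $\twosilt \Lambda$ and $\sttilt \Lambda$, so what remains is to prove that $T'$ is a green mutation of $T$ if and only if $\Fac H^0(T) \supset \Fac H^0(T')$ is a minimal inclusion of functorially finite torsion classes.

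For the forward direction, I would start with a green mutation $T = E \oplus X \to T' = E \oplus Y$ and its exchange triangle $X \xrightarrow{f} E' \xrightarrow{g} Y \to X[1]$ with $E' \in \add E$. Applying $H^0$ to this triangle gives an exact sequence $H^0(X) \to H^0(E') \to H^0(Y) \to 0$, since $H^1(X) = 0$ for any two-term complex. Hence $H^0(Y)$ is a quotient of $H^0(E') \in \add H^0(E)$, so that $H^0(Y) \in \Fac H^0(T)$ and consequently $\Fac H^0(T') \subseteq \Fac H^0(T)$. Strict inclusion follows from the injectivity of the bijection, as $T \neq T'$.

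For both minimality and the reverse implication, the key auxiliary fact I would establish is that any basic pre-silting two-term complex with $|\Lambda| - 1$ indecomposable summands admits exactly two completions to a basic two-term silting complex, and that these two completions are related by a single green mutation. Given this, minimality in the forward direction follows because any intermediate torsion class $\mathcal{U}$ with $\Fac H^0(T') \subseteq \mathcal{U} \subseteq \Fac H^0(T)$ corresponds to a silting complex $U$ containing every summand of $E$, since each summand of $E$ remains relative projective in $\mathcal{U}$, so that $U$ must be one of the two completions $T$ or $T'$. The reverse direction is symmetric: for a minimal inclusion $\mathcal{T} \supset \mathcal{T}'$ with silting complexes $T, T'$, the common pre-silting part $E$ of $T$ and $T'$ must have $|\Lambda| - 1$ summands, as otherwise one could interpolate a strictly intermediate torsion class using a partial completion.

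I expect the main obstacle to be precisely this Bongartz-type completion step, which requires carefully controlling the structure of minimal left $\add E$-approximations and ensuring that the exchange triangle produces a well-defined second completion distinct from the original. Here one combines the general silting mutation results of \cite{ai} with the two-term criteria from \cite{air} to pin down exactly two completions rather than more. Once this structural fact is in place, the matching of covering relations becomes a bookkeeping exercise.
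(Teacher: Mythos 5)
The paper does not prove this result; it is quoted from Adachi--Iyama--Reiten \cite{air}, so there is no internal proof to compare against. Your forward direction (green mutation $\Rightarrow$ minimal inclusion) is sound: applying $H^0$ to the exchange triangle and using $H^1(X) = 0$ gives the right-exact sequence, strictness follows from injectivity of the bijection, and the minimality argument via showing any intermediate functorially finite torsion class $\mathcal{U}$ corresponds to a completion of $E$ works, modulo one imprecision. For a shifted-projective summand $P_i[1]$ of $E$ the phrase ``remains relative projective in $\mathcal{U}$'' is off, since $H^0(P_i[1]) = 0$; what you actually need is $\Hom_\Lambda(P_i, \mathcal{U}) = 0$, which does follow from $\mathcal{U} \subseteq \Fac H^0(T)$ and the fact that $(H^0(T), P_i)$ is part of a support $\tau$-tilting pair. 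With that fix this half is fine.

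The serious gap is the converse: that a minimal inclusion $\mathcal{T} \supset \mathcal{T}'$ forces the corresponding silting complexes to share $|\Lambda|-1$ indecomposable summands. You assert that otherwise ``one could interpolate a strictly intermediate torsion class using a partial completion,'' but this is precisely the nontrivial content of the theorem, and no argument is given. A priori $T$ and $T'$ might share few or even no indecomposable summands, and it is not clear which ``partial completion'' does the job: the Bongartz completion of the common presilting summand $E$ (or of $E$ together with a chosen summand of $T'$) need not land below $\mathcal{T}$, the co-Bongartz completion need not land above $\mathcal{T}'$, and nothing immediately pins a functorially finite torsion class strictly between the two. This interpolation is exactly what \cite[Theorem~2.33]{air} establishes, and it needs more than the two-completion fact you isolate as the ``main obstacle'' --- the remaining work is not bookkeeping. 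As written, the hard direction is replaced by an unsupported assertion.
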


\subsubsection{Second notion of maximal green sequence}\label{sect:back:silt_notion}

We can use Theorem~\ref{thm:air:silt_tors} to obtain the second notion of maximal green sequence. Since a maximal green sequence is a maximal chain of minimal inclusions in $\fftors \Lambda$, by Theorem~\ref{thm:air:silt_tors}, we have that a maximal green sequence is a maximal sequence of green mutations of two-term silting complexes. More explicitly, a maximal green sequence is a sequence of two-term silting complexes $\Lambda = T_0, T_1, \dots, T_r = \Lambda[1]$ such that for each $i \in \{1, \dots, r\}$, we have that $T_i$ is a green mutation of $T_{i - 1}$. This was first observed by Br\"ustle, Smith, and Treffinger in terms of support $\tau$-tilting pairs \cite[Proposition~4.9]{bst}.

Such a maximal green sequence can be specified by labelling each of the summands of $\Lambda$ from $1$ to $n$ where $n = |\Lambda|$ and then giving a list of numbers specifying the sequence of summands to be mutated. (When summand $i$ is mutated, the summand that replaces it is then also labelled $i$.) In terms of the original notion of maximal green sequence from \cite{kel-green} using quiver mutation, this can be seen as a sequence of vertices of the quiver to mutate. We will use this perspective in a couple of instances.

\subsection{Relative simples in torsion classes: bricks}\label{sect:back:bricks}

We now give background leading up to the third notion of maximal green sequences, which comes from looking at the relative simples in torsion classes.

\subsubsection{Relative simples in torsion classes}

Relatively simple objects in exact categories were first studied in \cite{bg_hall} and were considered in the specific case of torsion-free classes in \cite{enomoto_bruhat}. A $\Lambda$-module $B$ in a torsion class $\mathcal{T}$ is a \emph{relative simple} if there is no short exact sequence \[0 \to A \to B \to C \to 0\] such that $A$ and $C$ are both non-zero modules in $\mathcal{T}$. Since torsion classes are closed under factor modules, it is in fact necessary and sufficient for $B$ to have no proper submodules which are in $\mathcal{T}$. We write $\relsimp{\mathcal{T}}$ for the set of isomorphism-class representatives of relative simples of $\mathcal{T}$.

\subsubsection{Brick labelling}\label{sect:back:brick_label}

In a somewhat, but not entirely, analogous way to how relative projectives of torsion classes correspond to support $\tau$-tilting modules, relative simples of torsion classes correspond to certain modules known as `bricks'.

An object $B$ of $\modules \Lambda$ is a \emph{brick} if $\End_{\Lambda}B$ is a division ring. Equivalently, $B$ is a brick if every non-zero endomorphism of $B$ is an isomorphism. This is the case if and only if $B$ has no proper factor module which is isomorphic to a proper submodule.

\begin{theorem}[{\cite{bcz,dirrt}}]
We have that $\mathcal{T} \supseteq \mathcal{U}$ is a minimal inclusion if and only if $\mathcal{T} \cap \hor{\mathcal{U}} = \Filt (B)$ for a brick $B$. Moreover, this brick $B$ is unique up to isomorphism.
\end{theorem}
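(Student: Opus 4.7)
Write $\mathcal{H} := \mathcal{T} \cap \hor{\mathcal{U}}$; my plan is to produce, under the minimality hypothesis, a brick $B$ such that $\mathcal{H} = \Filt(B)$, and then to check that the existence of such a description forces both minimality of the inclusion and uniqueness of~$B$. The first observation is that $\mathcal{H}$ is extension-closed in $\modules \Lambda$, since torsion classes and torsion-free classes are each closed under extensions, whence so is their intersection. Moreover, whenever $\mathcal{T} \supsetneq \mathcal{U}$, the heart $\mathcal{H}$ contains a non-zero object: any $X \in \mathcal{T} \setminus \mathcal{U}$ has a non-zero torsion-free quotient with respect to the torsion pair $(\mathcal{U}, \hor{\mathcal{U}})$, and that quotient sits in $\mathcal{T}$ as a quotient of $X$ and in $\hor{\mathcal{U}}$ by construction.

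Assuming now that $\mathcal{T} \supsetneq \mathcal{U}$ is minimal, I extract the brick by choosing $B \in \mathcal{H}$ of minimum $K$-dimension among non-zero objects. If $f \colon B \to B$ were a non-zero endomorphism that is not an isomorphism, then $\im f$ would be a proper non-zero submodule-and-quotient of~$B$: as a quotient of $B \in \mathcal{T}$ it remains in $\mathcal{T}$, and as a submodule of $B \in \hor{\mathcal{U}}$ it remains in $\hor{\mathcal{U}}$, so $\im f$ would be a non-zero object of $\mathcal{H}$ of strictly smaller dimension than~$B$, contradicting the choice of~$B$. Hence every non-zero endomorphism of $B$ is invertible, and $B$ is a brick.

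The inclusion $\Filt(B) \subseteq \mathcal{H}$ is then immediate from extension-closure of $\mathcal{H}$, so the main task is to establish the reverse inclusion $\mathcal{H} \subseteq \Filt(B)$. Here I plan to exploit minimality via the torsion class $\Tors{\mathcal{U} \cup \{B\}}$: since $B \in \mathcal{T} \setminus \mathcal{U}$, this sits strictly between $\mathcal{U}$ and $\mathcal{T}$, so minimality forces it to equal $\mathcal{T}$. I would then argue by induction on $\dim_K M$ that every $M \in \mathcal{H}$ lies in $\Filt(B)$: given $M \in \mathcal{H}$ non-zero, the plan is to produce a submodule $N \subseteq M$ with $N \cong B$, so that $M/N$ lies in $\mathcal{T}$ but may acquire $\mathcal{U}$-torsion; replacing $M/N$ with its torsion-free quotient with respect to $(\mathcal{U}, \hor{\mathcal{U}})$ returns a strictly smaller object of $\mathcal{H}$, and induction together with extension-closure of $\Filt(B)$ delivers $M \in \Filt(B)$.

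For the converse direction, if $\mathcal{H} = \Filt(B)$ and $\mathcal{U} \subseteq \mathcal{V} \subseteq \mathcal{T}$ is any intermediate torsion class, then $\mathcal{V} \cap \hor{\mathcal{U}} \subseteq \Filt(B)$ is extension-closed; if it is non-zero it contains $B$ by the minimum-dimension argument, whence $\mathcal{V} \supseteq \Tors{\mathcal{U} \cup \{B\}} = \mathcal{T}$, while otherwise $\mathcal{V} = \mathcal{U}$, giving minimality of the inclusion. Uniqueness of $B$ follows because any other brick $B'$ with $\Filt(B') = \mathcal{H}$ lies in $\Filt(B)$ and shares the same minimum dimension, so its $B$-filtration must have length one, yielding $B' \cong B$. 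The main obstacle I foresee lies in the inductive step above: because $\mathcal{H}$ is not in general closed under submodules or arbitrary quotients, one cannot navigate short exact sequences while remaining in $\mathcal{H}$ without repeatedly applying the torsion-free-quotient functor, and producing a genuine copy of~$B$ inside~$M$ (rather than merely as a subquotient guaranteed by $\mathcal{T} = \Tors{\mathcal{U} \cup \{B\}}$) is where the argument requires the most care.
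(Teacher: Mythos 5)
This theorem is stated in the paper as a citation to \cite{bcz,dirrt}; the paper gives no proof of its own, so there is nothing internal to compare against, and your attempt must be judged on its merits.

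Your set-up is sound: $\mathcal{H} = \mathcal{T} \cap \hor{\mathcal{U}}$ is extension-closed and non-zero whenever $\mathcal{T} \supset \mathcal{U}$, and the minimum-dimension argument does produce a brick $B \in \mathcal{H}$ (the image of a non-isomorphism endomorphism is a quotient of $B \in \mathcal{T}$ and a submodule of $B \in \hor{\mathcal{U}}$, hence a smaller non-zero object of $\mathcal{H}$). The inclusion $\Filt(B) \subseteq \mathcal{H}$ and the uniqueness of $B$ once $\mathcal{H} = \Filt(B)$ is known are also fine.

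The gap you flag at the end is real, and it is not a matter of routine care --- your proposed inductive step does not go through as stated. You wish to find a submodule $N \subseteq M$ with $N \cong B$ for any non-zero $M \in \mathcal{H}$. What minimality actually buys you is $\mathcal{T} = \Tors{\mathcal{U} \cup \{B\}} = \Filt(\mathcal{U} \cup \Fac B)$, and from this, together with $M \in \hor{\mathcal{U}}$, one only extracts a non-zero submodule of $M$ lying in $\Fac B$, i.e.\ a quotient of some $B^{\oplus n}$, not a copy of $B$ itself. Upgrading this to a submodule isomorphic to $B$ is essentially the content of the theorem, and using ``any brick of $\mathcal{H}$ is isomorphic to $B$'' to force it is circular, since that statement is only available once $\mathcal{H} = \Filt(B)$ is proved. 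Moreover, even if a copy of $B$ were found inside $M$, your induction would produce a filtration of $M$ with some factors in $\mathcal{U}$ (the $\mathcal{U}$-torsion of $M/N$); you would still need a separate argument that these factors vanish, which again relies on what is being proved. There is also a secondary issue in the converse direction: the equality $\Tors{\mathcal{U} \cup \{B\}} = \mathcal{T}$ is used there, but in that direction you do not yet have minimality, so it needs its own short argument from $\mathcal{H} = \Filt(B)$ (take $X \in \mathcal{T}$, split off its $\mathcal{U}$-torsion, and note that $X/tX \in \mathcal{H} = \Filt(B)$). Similarly, ``contains $B$ by the minimum-dimension argument'' should be replaced by the observation that every non-zero object of $\Filt(B)$ has $B$ as a quotient, and $\mathcal{V}$ is closed under quotients.

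The approach taken in \cite{bcz} (and used in \cite{dirrt}) is structurally different and avoids exactly this obstacle: rather than selecting a minimum-dimension object, one works with \emph{minimal extending modules} for $\mathcal{U}$, characterised by the three conditions that every proper factor lies in $\mathcal{U}$, that any extension of $\mathcal{U}$ by $M$ either splits or lands in $\mathcal{U}$, and that $\Hom_\Lambda(\mathcal{U}, M) = 0$. These conditions are precisely what is needed both to show $M$ is a brick and to control the factors of arbitrary objects of $\mathcal{T} \cap \hor{\mathcal{U}}$, and it is this apparatus that your minimum-dimension choice lacks. If you want to salvage your strategy, the missing lemma you should aim to prove is that the brick $B$ you constructed satisfies the first two of those conditions; the minimum-dimension property alone does not supply them.
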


Here $\Filt$ and $\hor{\mathcal{U}}$ are defined as follows. We have
\[\hor{\mathcal{U}} := \{M \in \modules \Lambda \st \Hom_{\Lambda}(U, M) = 0 \text{ for all } U \in \mathcal{U}\}.\] Given a full subcategory $\mathcal{C}$ of $\modules \Lambda$, $\Filt(\mathcal{C})$ is the full subcategory of $\modules \Lambda$ consisting of modules $M$ with a finite filtration \[M = M_0 \supset M_1 \supset \dots \supset M_{l - 1} \supset M_l = 0\] such that $M_{i - 1}/M_{i} \in \add \mathcal{C}$ for all $1 \leqslant i \leqslant l$. Here $\add \mathcal{C}$ is the full subcategory of $\modules \Lambda$ consisting of direct summands of finite direct sums of objects in $\mathcal{C}$.
It is known that $\Filt (B)$ for a brick $B$ is a \emph{wide} subcategory of $\mod \Lambda$ \cite[1.2]{ringel_rksb}, meaning that it is closed under extensions, kernels and cokernels. It is therefore also closed under images.

In this way, the covering relations of $\tors \Lambda$ can be labelled by bricks. We often write the brick labels of the inclusions by \[\mathcal{T} \overset{B}{\supset} \mathcal{U}.\] For two torsion classes $\mathcal{T} \supseteq \mathcal{U}$, we denote $[\mathcal{T}, \mathcal{U}] := \mathcal{T} \cap \hor{\mathcal{U}}$. The relation between brick labels and intervals in the lattice of torsion classes extends beyond intervals given by covering relations.

\begin{theorem}[{\cite[Theorem~6.8]{tattar_qa}, \cite[Theorem~3.5]{enomoto_bruhat}}]\label{thm:filt_intervals}
Let \[\mathcal{T}_i \overset{B_{i + 1}}{\supset} \dots \overset{B_j}{\supset} \mathcal{T}_j\] be a chain of minimal inclusions in $\tors \Lambda$ with brick labels. Then \[[\mathcal{T}_i, \mathcal{T}_{j}] = \Filt(B_{i + 1}, B_{i + 2}, \dots, B_j).\]
\end{theorem}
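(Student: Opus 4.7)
The plan is to prove both inclusions $[\mathcal{T}_i,\mathcal{T}_j] \subseteq \Filt(B_{i+1},\dots,B_j)$ and $\Filt(B_{i+1},\dots,B_j) \subseteq [\mathcal{T}_i,\mathcal{T}_j]$ by induction on the length $\ell := j-i$ of the chain, with the base case $\ell = 1$ being exactly the definition of the brick label of a covering relation. The easier inclusion is the second one: each $B_k$ for $i+1 \leqslant k \leqslant j$ lies in $\mathcal{T}_{k-1} \cap \hor{\mathcal{T}_k} \subseteq \mathcal{T}_i \cap \hor{\mathcal{T}_j} = [\mathcal{T}_i,\mathcal{T}_j]$, using that torsion classes grow along $\mathcal{T}_{k-1} \subseteq \mathcal{T}_i$ and torsion-free classes grow along $\hor{\mathcal{T}_k} \subseteq \hor{\mathcal{T}_j}$; since $\mathcal{T}_i$ and $\hor{\mathcal{T}_j}$ are both closed under extensions, so is their intersection, and $\Filt$ applied to objects in an extension-closed subcategory stays inside that subcategory.

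For the harder inclusion, given $M \in [\mathcal{T}_i,\mathcal{T}_j]$, I would apply the torsion pair $(\mathcal{T}_{j-1},\hor{\mathcal{T}_{j-1}})$ to obtain a short exact sequence
\[
0 \to L \to M \to N \to 0
\]
with $L \in \mathcal{T}_{j-1}$ and $N \in \hor{\mathcal{T}_{j-1}}$. The goal is then to show $L \in [\mathcal{T}_{j-1},\mathcal{T}_j]$ and $N \in [\mathcal{T}_i,\mathcal{T}_{j-1}]$. For $L$: it lies in $\mathcal{T}_{j-1}$ by construction, and since $L \hookrightarrow M$ and $M \in \hor{\mathcal{T}_j}$, while torsion-free classes are closed under submodules, we obtain $L \in \hor{\mathcal{T}_j}$. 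For $N$: it lies in $\hor{\mathcal{T}_{j-1}}$ by construction, and since $N$ is a quotient of $M \in \mathcal{T}_i$ and $\mathcal{T}_i$ is closed under factor modules, we also get $N \in \mathcal{T}_i$.

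By the base case applied to $\mathcal{T}_{j-1} \overset{B_j}{\supset} \mathcal{T}_j$ we have $L \in \Filt(B_j)$, and by the inductive hypothesis applied to the shorter chain $\mathcal{T}_i \supset \cdots \supset \mathcal{T}_{j-1}$ we have $N \in \Filt(B_{i+1},\dots,B_{j-1})$. Since $\Filt(B_{i+1},\dots,B_j)$ is closed under extensions, the exact sequence above places $M$ in $\Filt(B_{i+1},\dots,B_j)$, completing the induction. The only delicate point, which I would flag as the main thing to verify carefully, is the interplay between the inclusions of torsion and torsion-free classes: one must check that as $\mathcal{T}_k$ shrinks with $k$, the associated $\hor{\mathcal{T}_k}$ grows, so that $L$ inherits membership in $\hor{\mathcal{T}_j}$ from $M$ rather than merely in $\hor{\mathcal{T}_{j-1}}$; this is what makes the inductive decomposition at $\mathcal{T}_{j-1}$ compatible with the labels.
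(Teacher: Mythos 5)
The paper does not prove this theorem itself --- it is cited from \cite{tattar_qa} and \cite{enomoto_bruhat} --- so there is no internal proof to compare against; I will evaluate your argument on its own terms. Your proof is correct. The base case $\ell=1$ is literally the defining property of the brick label. Both directions of the inductive step are sound: for the easy inclusion, each $B_k$ lies in $\mathcal{T}_{k-1}\cap\hor{\mathcal{T}_k}\subseteq\mathcal{T}_i\cap\hor{\mathcal{T}_j}$ because $\hor{(-)}$ is inclusion-reversing, and $\mathcal{T}_i\cap\hor{\mathcal{T}_j}$ is closed under extensions, finite direct sums, and summands, hence contains $\Filt(B_{i+1},\dots,B_j)$; for the hard inclusion, the decomposition $0\to L\to M\to N\to 0$ given by the torsion pair at $\mathcal{T}_{j-1}$ correctly places $L\in\mathcal{T}_{j-1}\cap\hor{\mathcal{T}_j}=[\mathcal{T}_{j-1},\mathcal{T}_j]$ (submodule closure of the torsion-free class $\hor{\mathcal{T}_j}$) and $N\in\mathcal{T}_i\cap\hor{\mathcal{T}_{j-1}}=[\mathcal{T}_i,\mathcal{T}_{j-1}]$ (factor closure of $\mathcal{T}_i$), and then extension-closure of $\Filt$ finishes. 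This is the standard and essentially inevitable argument for this statement: peel off one covering relation at a time via the torsion submodule.

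One small imprecision in your closing remark, which does not affect correctness: you say $L$ must ``inherit membership in $\hor{\mathcal{T}_j}$ from $M$ rather than merely in $\hor{\mathcal{T}_{j-1}}$.'' In fact the torsion pair decomposition at $\mathcal{T}_{j-1}$ gives you no information at all about whether $L$ lies in any $\hor$-class --- it is $N$, not $L$, that lands in $\hor{\mathcal{T}_{j-1}}$. The membership $L\in\hor{\mathcal{T}_j}$ comes solely from $M\in\hor{\mathcal{T}_j}$ together with submodule closure, and the monotonicity $\hor{\mathcal{T}_{j-1}}\subseteq\hor{\mathcal{T}_j}$ is not actually invoked at that point in the argument (though it is what makes the symmetric statement about $N$ consistent with the picture). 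The genuine point to flag, if any, is rather that one must check both $\mathcal{T}_i$-membership of $N$ and $\hor{\mathcal{T}_j}$-membership of $L$ separately, using the two different closure properties; you did check both, so the proof stands.
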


The following proposition is a slight generalisation of \cite[Proposition~3.8]{enomoto_bruhat}.

\begin{proposition}\label{prop:simples_in_intervals}
Given torsion classes $\mathcal{T}_i \supseteq \mathcal{T}_j$ in $\modules \Lambda$, every relatively simple object in $[\mathcal{T}_i, \mathcal{T}_j]$ must occur as a brick label in every finite maximal chain in the interval of $\tors \Lambda$ between $\mathcal{T}_i$ and $\mathcal{T}_j$.
\end{proposition}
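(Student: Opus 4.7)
Fix a finite maximal chain in the interval between $\mathcal{T}_i$ and $\mathcal{T}_j$, and label its covering relations by bricks:
\[
\mathcal{T}_i = \mathcal{U}_0 \overset{B_1}{\supset} \mathcal{U}_1 \overset{B_2}{\supset} \cdots \overset{B_r}{\supset} \mathcal{U}_r = \mathcal{T}_j.
\]
By Theorem~\ref{thm:filt_intervals}, we then have $[\mathcal{T}_i,\mathcal{T}_j] = \Filt(B_1,\dots,B_r)$, which is a wide subcategory of $\mod \Lambda$. Let $S$ be a relatively simple object of $[\mathcal{T}_i,\mathcal{T}_j]$. The goal is to show $S \cong B_k$ for some~$k$.

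First, I would use the filtration description to show that $S$ is actually a direct summand of one of the $B_k$'s. Since $S \in \Filt(B_1,\dots,B_r)$, there is a filtration $S = S_0 \supset S_1 \supset \cdots \supset S_l = 0$ with every factor $S_{m-1}/S_m$ lying in $\add\{B_1,\dots,B_r\}$; take one with $l$ minimal (so every factor is non-zero). Suppose for contradiction that $l \geq 2$. Then the short exact sequence
\[
0 \to S_1 \to S \to S/S_1 \to 0
\]
has $S_1 \neq 0$ (since it still admits a non-trivial filtration by the $B_k$'s) and $S/S_1 \in \add\{B_1,\dots,B_r\}$ non-zero, and both sit inside $\Filt(B_1,\dots,B_r) = [\mathcal{T}_i,\mathcal{T}_j]$. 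This contradicts relative simplicity of $S$ in the interval, so $l = 1$ and therefore $S \in \add\{B_1,\dots,B_r\}$.

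To conclude, I would argue that $S$ is indecomposable, so that $S \cong B_k$ for some~$k$. Since $[\mathcal{T}_i,\mathcal{T}_j]$ is a wide (hence abelian) subcategory and $S$ is simple in it, any non-zero endomorphism of $S$ must be both injective and surjective as a morphism in the wide subcategory, hence an isomorphism; thus $\End_\Lambda S$ is a division ring, i.e.\ $S$ is a brick and, in particular, indecomposable. Combining this with $S \in \add\{B_1,\dots,B_r\}$ gives $S \cong B_k$ for some $k$, showing $S$ occurs as a brick label in the chosen chain.

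\textbf{Main obstacle.} The only delicate point is the correct interpretation of ``relatively simple'' in the interval $[\mathcal{T}_i,\mathcal{T}_j]$, which is a wide subcategory rather than a torsion class. I would expect to devote a line at the outset to noting that the natural extension of the torsion-class definition, namely the absence of a short exact sequence $0 \to A \to S \to C \to 0$ with $A,C$ both non-zero in $[\mathcal{T}_i,\mathcal{T}_j]$, is equivalent to $S$ being simple in the abelian category $[\mathcal{T}_i,\mathcal{T}_j]$. Once this is in place, the argument above, which follows the strategy of \cite[Proposition~3.8]{enomoto_bruhat}, works essentially verbatim.
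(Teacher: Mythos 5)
Your strategy coincides with the paper's: write $[\mathcal{T}_i,\mathcal{T}_j]=\Filt(B_1,\dots,B_r)$ via Theorem~\ref{thm:filt_intervals}, then use relative simplicity to force a length-one filtration and deduce $S\in\add\{B_1,\dots,B_r\}$; that part is correct, and indeed slightly more explicit than the paper's about the distinction between factors lying in $\{B_1,\dots,B_r\}$ and in $\add\{B_1,\dots,B_r\}$. However, the concluding step contains a genuine error: you assert that $[\mathcal{T}_i,\mathcal{T}_j]$ is a wide (hence abelian) subcategory and deduce that $S$ is a brick as a simple object of an abelian category. This is false in general --- an interval $\mathcal{T}_i\cap\hor{\mathcal{T}_j}$ need not be closed under kernels or cokernels, even when it is spanned by a finite maximal chain. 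The paper itself signals this immediately after the proposition by citing \cite{asai_pfeifer} for relative simples in $[\mathcal{T}_i,\mathcal{T}_j]$ ``in the case where this category is abelian,'' and the wideness result cited from \cite{ringel_rksb} is only for $\Filt(B)$ with a \emph{single} brick $B$. For a concrete failure, take the linearly oriented $A_2$ quiver: the interval between $\modules\Lambda$ and the torsion class generated by the simple injective is additively generated by the simple projective together with the projective-injective indecomposable, and is not closed under cokernels.

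Fortunately all you actually need is that $S$ is indecomposable, and this follows from something much weaker than wideness. Since $\mathcal{T}_i$ is a torsion class (closed under quotients, hence under direct summands) and $\hor{\mathcal{T}_j}$ is a torsion-free class (closed under submodules, hence under direct summands), $[\mathcal{T}_i,\mathcal{T}_j]$ is closed under direct summands. Any decomposition $S\cong S'\oplus S''$ with $S',S''\neq 0$ would therefore yield a short exact sequence $0\to S'\to S\to S''\to 0$ with both ends non-zero and in $[\mathcal{T}_i,\mathcal{T}_j]$, contradicting relative simplicity. With $S$ indecomposable and $S\in\add\{B_1,\dots,B_r\}$, the Krull--Schmidt theorem gives $S\cong B_k$ for some $k$. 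Replacing your wide-subcategory step with this observation repairs the proof without changing its overall structure.
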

\begin{proof}
To see this, take a relatively simple object $S$ in $[\mathcal{T}_i, \mathcal{T}_j]$ and suppose that there is a maximal chain connecting $\mathcal{T}_i$ and $\mathcal{T}_j$ with brick labels $B_{i + 1}, B_{i + 2}, \dots, B_j$. Then, since $[\mathcal{T}_i, \mathcal{T}_j] = \Filt(B_{i + 1}, B_{i + 2}, \dots, B_j)$, we must have that $S$ has a filtration with factors in $\{B_{i + 1}, B_{i + 2}, \dots, B_j\}$. But, since $S$ is relatively simple in $[\mathcal{T}_{i}, \mathcal{T}_{j}]$, this filtration can only have one factor, and so we must have that $S = B_k$ for some $k$, as desired.
\end{proof}

Relatively simple objects in $[\mathcal{T}_i, \mathcal{T}_j]$ were studied in \cite{asai_pfeifer} in the case where this category is abelian. Finally, the relationship between the relative simples of torsion classes in a maximal green sequence and the brick labels is as follows.

\begin{theorem}[{\cite[Proposition~3.8]{enomoto_bruhat}}]\label{thm:enomoto_relsimp}
Let $\mathcal{G}$ be a maximal green sequence \[\modules \Lambda = \mathcal{T}_0 \overset{B_1}{\supset} \mathcal{T}_1 \overset{B_2}{\supset} \dots \overset{B_{r - 1}}{\supset} \mathcal{T}_{r-1} \overset{B_{r}}{\supset} \mathcal{T}_r = \{0\}.\] Then \[\bigcup_{i = 0}^{r}\relsimp{\mathcal{T}_i} = \{B_1, B_2, \dots, B_r\}.\]
\end{theorem}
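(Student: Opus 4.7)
The plan is to prove the two set containments separately, in each case leveraging Theorem~\ref{thm:filt_intervals} to identify $[\mathcal{T}_{k-1}, \mathcal{T}_k]$ with $\Filt(B_k)$, together with the elementary observation that every nonzero $M \in \Filt(B_k)$ satisfies $\dim_K M \geq \dim_K B_k$ (since a filtration by copies of $B_k$ has total length a positive multiple of $\dim_K B_k$).

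For the inclusion $\{B_1,\dots,B_r\} \subseteq \bigcup_i \relsimp{\mathcal{T}_i}$, I would show that $B_k \in \relsimp{\mathcal{T}_{k-1}}$ for each $k$. Certainly $B_k \in \Filt(B_k) \subseteq \mathcal{T}_{k-1}$. Suppose $M \subsetneq B_k$ is a proper nonzero submodule with $M \in \mathcal{T}_{k-1}$. I would first observe that $M \in \hor{\mathcal{T}_k}$: any morphism $U \to M$ with $U \in \mathcal{T}_k$, composed with the monomorphism $M \hookrightarrow B_k$, is a morphism $U \to B_k$ which vanishes because $B_k \in \hor{\mathcal{T}_k}$, and hence the original map is zero by monicity. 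Thus $M \in \mathcal{T}_{k-1} \cap \hor{\mathcal{T}_k} = \Filt(B_k)$, forcing $\dim_K M \geq \dim_K B_k$, which contradicts $M \subsetneq B_k$.

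For the reverse inclusion, take $S \in \relsimp{\mathcal{T}_i}$ and let $k$ be the minimal index with $k > i$ and $S \notin \mathcal{T}_k$; such a $k$ exists because $S$ is nonzero while $\mathcal{T}_r = \{0\}$. Then $S \in \mathcal{T}_{k-1}$. By a parallel argument, $S \in \hor{\mathcal{T}_k}$: the image of any $U \to S$ with $U \in \mathcal{T}_k$ is simultaneously a quotient of $U$ (hence in $\mathcal{T}_k \subseteq \mathcal{T}_i$) and a submodule of $S$, so by relative simplicity of $S$ in $\mathcal{T}_i$ it is $0$ or $S$; the latter would contradict $S \notin \mathcal{T}_k$. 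Hence $S \in \Filt(B_k)$, and any filtration $S = S_0 \supset S_1 \supset \dots \supset S_l = 0$ with factors $B_k$ must have $l = 1$, since otherwise $S_1$ is a proper nonzero submodule of $S$ lying in $\Filt(B_k) \subseteq \mathcal{T}_{k-1} \subseteq \mathcal{T}_i$, violating relative simplicity of $S$ in $\mathcal{T}_i$. Therefore $S \cong B_k$.

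I do not expect a substantial obstacle: both directions use the same structural ingredients, and the dimension bound in $\Filt(B_k)$ is robust since each filtration factor is isomorphic to $B_k$ as a $\Lambda$-module, independently of any hypothesis on~$K$. The points requiring care are tracking the direction of the torsion class inclusions — in particular that $\mathcal{T}_{k-1} \subseteq \mathcal{T}_i$ whenever $k-1 \geq i$ — and verifying that the index $k$ in the second direction is always well-defined.
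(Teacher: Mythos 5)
Your proof is correct and self-contained, which is slightly more than what the paper does: the paper cites \cite[Proposition~3.8]{enomoto_bruhat} for the inclusion $\bigcup_{i}\relsimp{\mathcal{T}_i}\subseteq\{B_1,\dots,B_r\}$ and merely asserts the reverse inclusion as the "fact that $B_i$ is a relative simple in $\mathcal{T}_{i-1}$". You supply the missing details for the asserted direction (that $B_k\in\relsimp{\mathcal{T}_{k-1}}$), by pushing a hypothetical proper nonzero submodule of $B_k$ lying in $\mathcal{T}_{k-1}$ into $\hor{\mathcal{T}_k}$ and hence into $\Filt(B_k)$, where the dimension bound gives a contradiction; this is sound, and the dimension bound is a clean substitute for invoking simplicity of $B_k$ in the wide subcategory $\Filt(B_k)$. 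For the cited direction, you reprove Enomoto's result directly: the key steps — using relative simplicity of $S$ in $\mathcal{T}_i$ to kill maps from $\mathcal{T}_k$, landing in $[\mathcal{T}_{k-1},\mathcal{T}_k]=\Filt(B_k)$ via Theorem~\ref{thm:filt_intervals}, and then forcing the filtration to have length one — are exactly the right ingredients and all check out, including the directionality of the inclusions $\mathcal{T}_{k-1}\subseteq\mathcal{T}_i$ and the existence of the minimal $k$ since $\mathcal{T}_r=\{0\}$. The approach is in the same spirit as the paper's Proposition~\ref{prop:simples_in_intervals}, but you specialise it to the right single covering relation rather than invoking the general interval statement, which makes your argument genuinely self-contained with no external citation.
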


More precisely, \cite[Proposition~3.8]{enomoto_bruhat} proves the inclusion 
\[\bigcup_{i = 0}^{r}\relsimp{\mathcal{T}_i} \subseteq \{B_1, B_2, \dots, B_r\},\]
and the converse inclusion follows from the fact that $B_i$ is a relative simple in $\mathcal{T}_{i-1}$, for $1 \leq i \leq r$.

\subsubsection{Third notion of maximal green sequence}

We can now give the third notion of maximal green sequences. In the appendix to \cite{dem-kel}, Demonet shows that (not necessarily finite) maximal chains of torsion classes may be characterised in terms of bricks, generalising \cite[Theorem~1.1]{igusa_mgs}. Another related result to this is \cite[Theorem~5.3]{treff_hn}.

A \emph{backwards $\Hom$-orthogonal sequence of bricks} is a sequence of bricks \[B_1, B_2, \dots, B_r\] such that if $i < j$ then $\Hom_{\Lambda}(B_j, B_i) = 0$. A backwards $\Hom$-orthogonal sequence of bricks is \emph{maximal} if one cannot insert a brick at any point in the sequence without losing the backwards $\Hom$-orthogonality property. By \cite[Theorem~A.3]{dem-kel}, there is a bijection between maximal green sequences and maximal backwards Hom-orthogonal sequences of bricks. Given a maximal green sequence \[\modules \Lambda = \mathcal{T}_0 \supset \mathcal{T}_1 \supset \dots \supset \mathcal{T}_r = \{0\},\] one obtains the maximal backwards $\Hom$-orthogonal sequence of bricks by taking $B_i$ as the brick label of the minimal inclusion $\mathcal{T}_{i - 1} \supset \mathcal{T}_{i}$. Conversely, given a maximal backwards $\Hom$-orthogonal sequence of bricks \[B_1, B_2, \dots, B_r,\] one obtains the corresponding maximal green sequence by taking $\mathcal{T}_{i}$ to be the smallest torsion class $\Tors{B_{i + 1}, \dots, B_r}$ containing $B_{i + 1}, \dots, B_r$.

\subsubsection{Harder--Narasimhan filtrations}

It was shown in \cite[Theorem~3.8]{igusa_lsc} that maximal green sequences induce so-called ``Harder--Narasimhan filtrations''. This was then shown for arbitrary chains of torsion classes in \cite[Theorem~2.10]{treff_hn}, see also  \cite[Section 3.3]{bkt}. Such filtrations were originally studied in \cite{hn} and are well known from their role in the theory of stability conditions, for example \cite{rudakov,bridgeland}. Let \[B_1, B_2, \dots, B_r\] be a maximal green sequence $\mathcal{G}$ of a finite-dimensional algebra $\Lambda$ given as a maximal backwards $\Hom$-orthogonal sequence of bricks. Then, every non-zero $\Lambda$-module $M$ has a unique filtration \[M = M_0 \supset M_1 \supset \dots \supset M_{l - 1} \supset M_l = 0\] such that $F_{j} := M_{j - 1 }/M_{j} \in  \Filt(B_{i_{j}})$ for some $i_{j}$, with \[i_{1} < i_{2} < \dots < i_{l}.\] This filtration is called the \emph{Harder--Narasimhan filtration} (HN filtration) or the \emph{$\mathcal{G}$-Harder--Narasimhan filtration}. This is precisely the filtration of $M$ by the torsion submodules associated to it by the torsion classes in $\mathcal{G}$, with duplicated torsion submodules removed. We call the $F_{j}$ the \emph{semistable factors} here, or the \emph{$\mathcal{G}$-semistable factors}. We write \[\hnf{\mathcal{G}}{M} = \{F_j\}_{1 \leqslant j \leqslant l}\] for the set of $\mathcal{G}$-semistable factors of $M$. The existence of Harder--Narasimhan filtrations can be conceived of as a factorisation, in some sense, of the category $\modules \Lambda$ into the smaller abelian categories $\Filt(B_i)$, which only have one simple object each.

Since the module $F_j$ lies in $\Filt(B_{i_j})$, it has a filtration where all of the factors are isomorphic to $B_{i_j}$. (In terms of stability conditions, this is the Jordan--H\"older filtration of a semistable module in terms of stable modules \cite[Theorem~3]{rudakov}.) We write $\bhnf{\mathcal{G}}{F_j}$ for the multiset of $B_{i_j}$ factors in this filtration of $F_j$. That is, $\bhnf{\mathcal{G}}{F_j}$ is $s$ copies of $B_{i_j}$, where $s$ is the number of factors in this filtration of~$F_j$. We furthermore write \[\bhnf{\mathcal{G}}{M} = \bigsqcup_{i = 1}^{l} \bhnf{\mathcal{G}}{F_j}.\] We refer to $\bhnf{\mathcal{G}}{M}$ as the multiset of \emph{stable factors} or \emph{$\mathcal{G}$-stable factors} of~$M$.

\section{Equivalence relations on maximal green sequences}\label{sect:equiv}

We now begin the study of the structure of the set of all maximal green sequences of $\Lambda$. In particular, there is a natural equivalence relation on this set, which essentially holds when two maximal green sequences differ by swapping adjacent commuting mutations. 
We give six different criteria for a pair of maximal green sequences to be equivalent. We also show that two non-equivalent maximal green sequences can have the same sets of bricks. Hence, having the same set of bricks is not a sufficient criterion for two maximal green sequences to be equivalent.

\subsection{Preliminary lemmas}

Before we start characterising equivalent maximal green sequences, we must prove some preliminary lemmas. In the following lemma, we collect some useful facts. Here, for $T \in \twoterm$, we denote
\begin{align*}
\eor{T} &= \{X \in \twoterm \mid \Hom_{\kbproj}(T, X[1]) = 0\}, \\
\eol{T} &= \{X \in \twoterm \mid \Hom_{\kbproj}(X, T[1]) = 0\}.
\end{align*}

\begin{lemma}\label{lem:mut_dir}
Let $T = U \oplus X$ and $T' = U \oplus Y$ be two two-term silting complexes in $\twoterm$ such that $T'$ is a green mutation of $T$. Then:
\begin{enumerate}[label=\textup{(}\arabic*\textup{)}]
\item $\eol{T} \subset \eol{T'}$; $\eor{T} \supset \eor{T'}$;
\label{op:silt1}
\item $X \in \eol{T'}$, $Y \in \eor{T}$;\label{op:silt2}
\item $X \notin \eor{T'}$, $Y \notin \eol{T}$.\label{op:silt3}
\end{enumerate} 
\end{lemma}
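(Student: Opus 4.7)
The plan is to exploit the exchange triangle
\[
X \xrightarrow{f} E' \xrightarrow{g} Y \xrightarrow{h} X[1]
\]
coming from the definition of the mutation, together with two fundamental vanishing principles. First, since $T = U\oplus X$ and $T' = U \oplus Y$ are silting, $\Hom_{\kbproj}(T,T[1]) = 0$ and $\Hom_{\kbproj}(T',T'[1]) = 0$. Second, for any $Z, W \in \twoterm$ one has $\Hom_{\kbproj}(Z, W[i]) = 0$ whenever $|i|\geqslant 2$, simply because the nonzero terms of $Z$ and $W[i]$ sit in disjoint degrees. The other key input is that $T$ is basic, so the indecomposable summand $X$ does not lie in $\add U$.

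First I would prove (2). Applying $\Hom_{\kbproj}(X,-)$ to the triangle yields a segment $\Hom(X,E'[1]) \to \Hom(X,Y[1]) \to \Hom(X,X[2])$, in which both outer terms vanish: the left because $E' \in \add U$ and $\Hom(X,U[1]) = 0$, the right by two-term vanishing. Hence $\Hom(X,Y[1]) = 0$, which combined with $\Hom(X,U[1]) = 0$ gives $X \in \eol{T'}$. The statement $Y \in \eor{T}$ then follows from $\Hom(U,Y[1]) = 0$ (silting of $T'$) together with the vanishing $\Hom(X,Y[1]) = 0$ just obtained.

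For the non-strict parts of (1), the same computation applies with $Z$ or $W$ arbitrary. If $Z \in \eol{T}$, so $\Hom(Z,U[1]) = \Hom(Z,X[1]) = 0$, applying $\Hom_{\kbproj}(Z,-)$ to the triangle and using $\Hom(Z,E'[1]) = 0$ and $\Hom(Z,X[2]) = 0$ gives $\Hom(Z,Y[1]) = 0$, whence $Z \in \eol{T'}$. Dually, if $W \in \eor{T'}$, applying $\Hom_{\kbproj}(-,W[1])$ yields a segment $\Hom(Y,W[1]) \to \Hom(E',W[1]) \to \Hom(X,W[1]) \to \Hom(Y,W[2])$ whose first, second and fourth terms all vanish, forcing $\Hom(X,W[1]) = 0$ and thus $W \in \eor{T}$.

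Part (3), which will simultaneously upgrade (1) to strict inclusions, is the crux. The key observation is that the connecting morphism $h\colon Y \to X[1]$ is nonzero in $\kbproj$: if it were zero, the exchange triangle would split, giving $E' \cong X \oplus Y$ and exhibiting $X$ as a direct summand of an object of $\add U$, contradicting that $T$ is basic with $X$ indecomposable. Therefore $\Hom(Y,X[1]) \neq 0$, and since $\Hom(U,X[1]) = 0$ by the silting of $T$, the decomposition $\Hom(T',X[1]) = \Hom(U,X[1]) \oplus \Hom(Y,X[1])$ is nonzero, proving $X \notin \eor{T'}$. A symmetric decomposition proves $Y \notin \eol{T}$. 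Finally, $X \in \eor{T}$ and $Y \in \eol{T'}$ are immediate from the silting of $T$ and $T'$, so the witnesses $X$ and $Y$ upgrade the inclusions in (1) to strict ones. The only genuinely subtle point in the whole argument is the nonvanishing of $h$; everything else is a mechanical diagram chase within the long exact sequences attached to the exchange triangle.
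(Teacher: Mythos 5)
Your proof is correct but takes a genuinely different, more elementary route than the paper's. The paper proves \ref{op:silt1} by citing \cite[Theorem~2.35]{ai} wholesale, and then obtains \ref{op:silt3} by observing that $T' \oplus X$ and $T \oplus Y$ cannot be (pre)silting --- implicitly invoking the bound on the number of indecomposable summands of a two-term presilting complex. You instead argue directly from the exchange triangle $X \to E' \to Y \to X[1]$: the non-strict inclusions in \ref{op:silt1} and all of \ref{op:silt2} fall out of long exact sequences together with the degree-disjointness of shifted two-term complexes (so $\Hom(Z,W[i])=0$ for $|i|\geqslant 2$), and the crucial nonvanishing $\Hom(Y,X[1])\neq 0$ behind both \ref{op:silt3} and the strictness in \ref{op:silt1} is extracted from the observation that the connecting morphism cannot be zero, because a split exchange triangle would put $X$ in $\additive U$ and contradict the basicity of $T$. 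This buys you a self-contained proof that isolates exactly where irreducibility of the mutation enters, at the cost of having to explicitly invoke that $T$ is basic with $X \notin \additive U$ (which is indeed part of the standing hypotheses for silting mutation and therefore legitimate to use).
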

\begin{proof}
\ref{op:silt1} follows from \cite[Theorem~2.35]{ai}. \ref{op:silt2} then follows immediately from this, since clearly $X \in \eol{T}$ and $Y \in \eor{T'}$. \ref{op:silt3} then follows, since, by assumption, $T' \oplus X$ and $T \oplus Y$ cannot be silting complexes.
\end{proof}

By iterating Lemma~\ref{lem:mut_dir}, we obtain the following.

\begin{corollary}\label{cor:mut_dir}
Let $\mathcal{G}$ be a maximal green sequence of $\Lambda$ containing two-term silting complexes $T$ and $T'$ where $T'$ occurs after $T$. Furthermore, let $X$ be an indecomposable summand of $T$ which is not a summand of $T'$ and let $Y$ be an indecomposable summand of $T'$ which is not a summand of $T$. Then:
\begin{enumerate}[label=\textup{(}\arabic*\textup{)}]
\item $\eol{T} \subset \eol{T'}$; $\eor{T} \supset \eor{T'}$;\label{op:orthogonals}
\item $X \in \eol{T'}$, $Y \in \eor{T}$;\label{op:in}
\item $X \notin \eor{T'}$, $Y \notin \eol{T}$.\label{op:notin}
\end{enumerate}
\end{corollary}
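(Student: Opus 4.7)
The plan is to iterate Lemma~\ref{lem:mut_dir} along the subsequence of green mutations connecting $T$ to $T'$. Write the maximal green sequence as $\Lambda = T_0, T_1, \dots, T_r = \Lambda[1]$ and assume $T = T_a$ and $T' = T_b$ with $a < b$.

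Part~\ref{op:orthogonals} is immediate: applying Lemma~\ref{lem:mut_dir}\ref{op:silt1} to each single mutation $T_i \to T_{i+1}$ and chaining the resulting strict inclusions gives $\eol{T} \subset \eol{T'}$ and $\eor{T} \supset \eor{T'}$. For part~\ref{op:in}, since $X$ is a summand of $T$ we have $X \in \eol{T}$, and part~\ref{op:orthogonals} places $X$ in $\eol{T'}$; symmetrically $Y \in \eor{T'} \subseteq \eor{T}$.

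The only real content is in part~\ref{op:notin}. The idea is to locate the single mutation step of $\mathcal{G}$ at which $X$ leaves the silting complex: since $X$ is a summand of $T_a$ but not of $T_b$, there exists $i$ with $a \leq i < b$ such that $X$ is a summand of $T_i$ but not of $T_{i+1}$. Applying Lemma~\ref{lem:mut_dir}\ref{op:silt3} to this single mutation yields $X \notin \eor{T_{i+1}}$. Iterating Lemma~\ref{lem:mut_dir}\ref{op:silt1} from $T_{i+1}$ down to $T_b$ gives the inclusion $\eor{T'} \subseteq \eor{T_{i+1}}$, so non-membership propagates and $X \notin \eor{T'}$. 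The statement $Y \notin \eol{T}$ is handled symmetrically, by identifying the step at which $Y$ first appears as a summand and invoking the dual half of Lemma~\ref{lem:mut_dir}\ref{op:silt3}, combined with the chain $\eol{T} \subseteq \eol{T_j}$ from part~\ref{op:orthogonals}.

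There is no serious obstacle here; the entire corollary is a formal iteration of Lemma~\ref{lem:mut_dir}. The one thing to watch is the direction of the inclusions in part~\ref{op:notin}: one must use $\eor{T'} \subseteq \eor{T_{i+1}}$ rather than the reverse, so that the failure of membership at step $i+1$ propagates forward to step $b$; the wrong inclusion would give no information. Nothing in the argument requires that $X$ fails to reappear later as a summand (though in fact it cannot, by the same orthogonality considerations).
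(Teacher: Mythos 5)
Your proposal is correct and is essentially the paper's proof: the paper simply remarks that the corollary is obtained by iterating Lemma~\ref{lem:mut_dir}, which is exactly what you do, filling in the bookkeeping (locating the step at which $X$ leaves, and using the inclusion $\eor{T'} \subseteq \eor{T_{i+1}}$ to propagate non-membership forward).
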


We now introduce the following notation.

\begin{definition}
Let $\mathcal{G}$ be a maximal green sequence of $\Lambda$. We consider $\mathcal{G}$ to be a sequence $T_0, T_1, \dots, T_r$ of green mutations of two-term silting complexes with $T_0  = \Lambda$ and $T_r = \Lambda[1]$ in $\twoterm$.
\begin{enumerate}
    \item We denote by \[\summ{\mathcal{G}} := \bigcup_{i = 0}^{r}\{\text{Indecomposable summands of } T_i\}/\cong\] the set of isomorphism classes of indecomposable complexes which occur as direct summands of two-term silting complexes in $\mathcal{G}$.
    \item We denote by \[\exch{\mathcal{G}} := \bigcup_{i = 1}^{r} \left\{(X, Y)\st T_{i - 1} = E \oplus X,\, T_i = E \oplus Y\right\}\] the set of exchange pairs of $\mathcal{G}$.
\end{enumerate}
Furthermore, now consider $\mathcal{G}$ to be a sequence of support $\tau$-tilting modules $M_0, M_1, \dots$, $M_r$ corresponding to the two-term silting complexes $T_i$.
\begin{enumerate}[resume]
    \item We denote by \[\sumt{\mathcal{G}} := \bigcup_{i = 0}^{r}\{\text{Indecomposable summands of } M_i\}/\cong\] the set of isomorphism classes of indecomposable modules which occur as direct summands of support $\tau$-tilting modules in $\mathcal{G}$.
\end{enumerate}
Finally, consider $\mathcal{G}$ to be a maximal backwards $\Hom$-ortho\-gonal sequence of bricks $B_1, B_2, \dots, B_r$.
\begin{enumerate}[resume]
    \item We denote the set of bricks in the sequence by \[\bricks{\mathcal{G}} := \{B_1, B_2, \dots, B_r\}.\]
\end{enumerate}
\end{definition}

The following fact is well-known, but we do not know whether a proof has appeared in the literature. We use $\ind \twoterm$ to denote a set of representatives of the isomorphism classes of indecomposable complexes in $\twoterm$.

\begin{lemma}\label{lem:at_most_once}
Let $A \in \ind \twoterm$ and let $\mathcal{G}$ be a maximal green sequence of $\Lambda$. Then there is at most one exchange pair $(X, Y) \in \exch{\mathcal{G}}$ such that $X \cong A$ and at most one exchange pair $(X, Y) \in \exch{\mathcal{G}}$ such that $Y \cong A$. Moreover, the exchange pair $(X, A)$ must occur before the exchange pair $(A, Y)$ in $\mathcal{G}$.
\end{lemma}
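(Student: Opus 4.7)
The strategy is to reduce all three statements to a single observation: for each indecomposable $A \in \ind \twoterm$, the set
\[
I_A := \{i \in \{0, 1, \dots, r\} \st A \text{ is a direct summand of } T_i\}
\]
is a (possibly empty) contiguous interval of $\{0, 1, \dots, r\}$. Granting this, all three conclusions follow at once. Two exchange pairs with $A$ as the second coordinate would create two distinct ``entrances'' of $A$ into the list of summands, and two exchange pairs with $A$ as the first coordinate would create two ``exits''; the ordering statement is precisely that an exit of $A$ cannot precede an entrance of $A$.

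To establish the interval property, I would argue by contradiction: suppose that some $A \in \ind \twoterm$ is mutated out at step $j \in \{1, \dots, r\}$, i.e., $A$ is a summand of $T_{j-1}$ but not of $T_j$, while $A$ is also a summand of some later $T_u$ with $u \geq j$. Applying Lemma~\ref{lem:mut_dir}\ref{op:silt3} to the mutation $T_{j-1} \to T_j$, with $A$ in the role of $X$, gives $A \notin \eor{T_j}$. On the other hand, because $A$ is an indecomposable direct summand of the silting complex $T_u$, the pre-silting condition forces $A \in \eor{T_u}$. Iterating Lemma~\ref{lem:mut_dir}\ref{op:silt1} along the mutations from $T_j$ to $T_u$ (equivalently, invoking Corollary~\ref{cor:mut_dir}\ref{op:orthogonals}) yields $\eor{T_u} \subseteq \eor{T_j}$, whence $A \in \eor{T_j}$, a contradiction.

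I anticipate no significant obstacle: the proof reduces to the monotonicity of $\eor{\cdot}$ along a maximal green sequence together with the fact that a just-mutated-out summand exits this orthogonal, both of which have already been established in Lemma~\ref{lem:mut_dir} and Corollary~\ref{cor:mut_dir}.
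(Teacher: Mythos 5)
Your proof is correct and, for the two ``at most one exchange pair'' statements, uses exactly the paper's mechanism: Lemma~\ref{lem:mut_dir}\ref{op:silt3} (a just-mutated-out summand leaves $\eor{\,\cdot\,}$) together with the monotonicity $\eor{T} \supseteq \eor{T'}$ from Corollary~\ref{cor:mut_dir}\ref{op:orthogonals}; your ``interval property'' of $I_A$ is a tidy repackaging of the same contradiction $A \notin \eor{T_j} \supseteq \eor{T_u} \ni A$. Where you diverge from the paper is the ordering claim: you derive it as a third consequence of the same interval property, whereas the paper gives a separate, shorter argument --- if $(A,Y)$ had no earlier $(X,A)$ then $A$ would be a summand of $T_0 = \Lambda$ and hence projective, and if $(X,A)$ had no later $(A,Y)$ then $A$ would be a summand of $T_r = \Lambda[1]$ and hence a shifted projective, and $A$ cannot be both. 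Both routes are correct; yours has the virtue of uniformity across all three conclusions, while the paper's dispatches the ordering part without reinvoking the orthogonality machinery.
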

\begin{proof}
Suppose for contradiction that $\mathcal{G}$ possesses two exchange pairs $(A, X_{1})$ and $(A, X_{2})$ which respectively correspond to green mutations from $T_{1}$ to $T'_{1}$ and from $T_{2}$ to $T'_{2}$. Suppose without loss of generality that $T_{1}$ occurs before $T_{2}$ in~$\mathcal{G}$. Then, by Corollary~\ref{cor:mut_dir}, we obtain that $A \notin \eor{T'_{1}} \supseteq \eor{T_{2}} \ni A$, which is a contradiction. The case where $A$ is the second half of the exchange pair is similar.

For the final statement it suffices to observe that if $(A, Y)$ occurs without $(X, A)$ before it, then $A$ must be a projective, and if $(X, A)$ occurs without $(A, Y)$ after it, then $A$ must be a shifted projective. But both cannot simultaneously be true.
\end{proof}

We finish this subsection by proving the following useful result on exchange pairs.

\begin{lemma}\label{lem:exch_crit}
Exchange pairs have the following properties.
\begin{enumerate}[label=\textup{(}\arabic*\textup{)}]
    \item Suppose that $(X, Y)$ is an exchange pair for a green mutation from $T$ to $T'$ in a maximal green sequence $\mathcal{G}$. Then $X$ is the unique indecomposable in $\summ{\mathcal{G}} \cap \eor{T}$ such that \[\Hom_{\kbproj}(Y, X[1]) \neq 0.\]\label{op:exch_crit:use_exch}
    \item Conversely, let $T$ be a two-term silting complex in a maximal green sequence $\mathcal{G}$. Suppose that, for  $Y \in \summ{\mathcal{G}} \cap \eor{T}$, there exists an indecomposable $X \in \summ{\mathcal{G}} \cap \eor{T}$, unique up to isomorphism, such that \[\Hom_{\kbproj}(Y, X[1]) \neq 0.\] Then $(X, Y) \in \exch{\mathcal{G}}$.\label{op:exch_crit:get_exch}
\end{enumerate}
\end{lemma}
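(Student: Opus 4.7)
The plan is to first pin down, for each indecomposable $W \in \summ{\mathcal{G}}$, exactly which silting complexes in $\mathcal{G}$ have $W$ in their right $\mathrm{Ext}$-perpendicular. Writing $\mathcal{G}$ as $\Lambda = T_0, T_1, \dots, T_r = \Lambda[1]$, Lemma~\ref{lem:at_most_once} implies that the set of indices $k$ with $W \in \summ{T_k}$ forms a contiguous range $[a_W, b_W]$. A short case analysis using Corollary~\ref{cor:mut_dir} then shows that $W \in \eor{T_m}$ if and only if $m \leq b_W$: for $m \in [a_W, b_W]$ this is automatic; for $m < a_W$ it follows from Corollary~\ref{cor:mut_dir}\ref{op:in} at the step introducing $W$ combined with the monotonicity $\eor{T_m} \supseteq \eor{T_{a_W - 1}}$ from Corollary~\ref{cor:mut_dir}\ref{op:orthogonals}; and for $m \geq b_W + 1$ it follows from Corollary~\ref{cor:mut_dir}\ref{op:notin} at the step removing $W$ combined with the dual monotonicity $\eor{T_m} \subseteq \eor{T_{b_W + 1}}$. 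Writing $T = T_i$, this yields the key description
\[
\summ{\mathcal{G}} \cap \eor{T} \;=\; \bigcup_{k \geq i} \summ{T_k},
\]
which will drive both parts of the proof.

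For part (1), I would first observe that $\Hom_{\kbproj}(Y, X[1]) \neq 0$ falls out of the exchange triangle $X \to E' \to Y \xrightarrow{h} X[1]$: silting of $T' = E \oplus Y$ gives $\Hom_{\kbproj}(Y, E'[1]) = 0$, so $h$ is the image of $\mathrm{id}_Y$ under $\Hom_{\kbproj}(Y, -)$; and if $h = 0$ the triangle would split, exhibiting $Y$ as a summand of $E' \in \add E$ and contradicting that $T' = E \oplus Y$ is basic silting. For uniqueness, take $W \in \summ{\mathcal{G}} \cap \eor{T}$ with $W \not\cong X$. By the key description, $W \in \summ{T_k}$ for some $k \geq i$. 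If $k \geq i + 1$, then $W \in \eor{T'}$, so $\Hom_{\kbproj}(Y, W[1]) = 0$ since $Y \in \summ{T'}$; while if $b_W = i$, then $W$ is removed at the step $i \to i+1$, and by Lemma~\ref{lem:at_most_once} this forces $W \cong X$, contradicting the assumption.

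For part (2), note first that $Y$ cannot be a projective summand of $\Lambda$: as a two-term complex it would sit in cohomological degree $0$, while $Z[1]$ lies in degrees $\leq -1$ for every $Z \in \twoterm$, so $\Hom_{\kbproj}(Y, Z[1]) = 0$ and no candidate $X$ could exist. Hence $a_Y \geq 1$, and $Y$ is introduced at a well-defined step $a_Y - 1 \to a_Y$ via an exchange pair $(X_{a_Y-1}, Y) \in \exch{\mathcal{G}}$. The principal case is $a_Y - 1 \geq i$: here Corollary~\ref{cor:mut_dir}\ref{op:orthogonals} gives $\eor{T_{a_Y - 1}} \subseteq \eor{T}$, so applying part (1) at the introducing step exhibits $X_{a_Y - 1}$ as an indecomposable in $\summ{\mathcal{G}} \cap \eor{T}$ satisfying $\Hom_{\kbproj}(Y, X_{a_Y - 1}[1]) \neq 0$; the uniqueness hypothesis then forces $X \cong X_{a_Y - 1}$, whence $(X, Y) \in \exch{\mathcal{G}}$.

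The main obstacle will be ruling out the remaining case $a_Y - 1 < i$, in which $Y$ is introduced strictly before $T$ and, combined with $b_Y \geq i$ (equivalent to $Y \in \eor{T}$ by the key description), forces $Y \in \summ{T}$. The strategy is to show that \emph{no} candidate $Z \in \summ{\mathcal{G}} \cap \eor{T}$ can satisfy $\Hom_{\kbproj}(Y, Z[1]) \neq 0$, contradicting the hypothesis. A case split on where $Z$ appears in $\mathcal{G}$ achieves this: if $Z$ and $Y$ are both summands of some common $T_k$, silting of $T_k$ gives the vanishing directly; if $Z \in \summ{T_k}$ only for $k > b_Y$, then Corollary~\ref{cor:mut_dir}\ref{op:in} at the step $b_Y \to b_Y + 1$ places $Y$ in $\eol{T_{b_Y + 1}}$, hence in $\eol{T_k}$ by the monotonicity of $\eol$, so again $\Hom_{\kbproj}(Y, Z[1]) = 0$; and the residual possibility $Z \in \summ{T_k}$ only for $k < a_Y$ is excluded because $Z \in \eor{T}$ forces $k \geq i$ while $a_Y \leq i$ here. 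This exhausts the cases, yields the contradiction, and completes the proof.
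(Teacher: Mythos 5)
Your proof is correct and essentially mirrors the paper's: both arguments locate $X$, $Y$, and any competing candidate $Z$ within the green sequence using Lemma~\ref{lem:at_most_once} and Corollary~\ref{cor:mut_dir}, with your interval bookkeeping $[a_W, b_W]$ serving as convenient scaffolding for the same facts. One simplification you miss in part~(2): in the residual case $a_Y - 1 < i$, once you have $Y \in \summ{T}$ the definition of $\eor{T}$ already gives $\Hom_{\kbproj}(Y, Z[1]) = 0$ for every $Z \in \eor{T}$, so the three-way case split (and the preliminary projective check, which is subsumed by this case) is unnecessary --- the paper dispatches it in one line by observing that $X \in \eor{T}$ forces $Y \notin \summ{T}$.
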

\begin{proof}
\begin{enumerate}[wide]
    \item Suppose that there exists an indecomposable two-term complex $Z \in \summ{\mathcal{G}} \cap \eor{T}$ such that $\Hom_{\kbproj}(Y, Z[1]) \neq 0$. Then $Z \notin \eor{T'}$, since $Y$ is a summand of~$T'$. Hence, by Corollary~\ref{cor:mut_dir}, we cannot have that $Z$ is a summand of $T'$ or any silting complex which occurs later in $\mathcal{G}$ than $T'$. If $Z$ leaves $\mathcal{G}$ before $T$, then $Z \notin \eor{T}$, which is contrary to our assumption. We conclude that $Z$ must leave $\mathcal{G}$ between $T$ and $T'$, and so $Z \cong X$.
    
    \item Now suppose that $(X, Y)$ is such that $Y \in \summ{\mathcal{G}} \cap \eor{T}$ and $X$ is the unique indecomposable in $\summ{\mathcal{G}} \cap \eor{T}$ such that $\Hom_{\kbproj}(Y, X[1]) \neq 0$. The fact that $X \in \eor{T}$ means that we cannot have $Y$ as a summand of $T$. Therefore, $Y$ cannot occur in $\mathcal{G}$ before $T$ by Corollary~\ref{cor:mut_dir}\ref{op:notin}, otherwise $Y \notin \eor{T}$. Thus, $Y$ occurs in $\mathcal{G}$ after $T$ as the second half of an exchange pair $(Z, Y)$. But then we must have that $Z \in \summ{\mathcal{G}} \cap \eor{T}$ by Corollary~\ref{cor:mut_dir}\ref{op:in} and that $\Hom_{\kbproj}(Y, Z[1]) \neq 0$. We obtain that $Z \cong X$, since $X$ is the unique such indecomposable up to isomorphism.
\end{enumerate}
\end{proof}

\subsection{Polygons} \label{ssec:polygons}

One of the equivalence relations we introduce on maximal green sequences corresponds to deformations across squares in $\twosilt \Lambda$. Squares are a special case of the larger class of ``polygons'' in $\twosilt \Lambda$, so we introduce all polygons at this juncture. Several authors have studied the notion of a polygon in the poset of torsion classes or two-term silting objects. A lattice-theoretic notion is used by Reading \cite{reading_regions}, and Garver and McConville \cite{g-mc}. Our notion is instead based on that of Hermes and Igusa \cite{hi-no-gap}.

\begin{definition} \label{def:polygons}
A \emph{polygon} in the poset $\twosilt \Lambda$ is a subposet consisting of all two-term silting complexes possessing some presilting complex $E$ as a direct summand, where $|E| = |\Lambda| - 2$. A polygon in $\fftors \Lambda$ is the image of a polygon in $\twosilt \Lambda$ under the bijection between the two posets.
\end{definition}

\begin{proposition}
A polygon in the poset $\twosilt \Lambda$ falls under one of the four different types shown in Figure~\ref{fig:poly}.
\end{proposition}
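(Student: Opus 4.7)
The plan is to analyze the local Hasse-diagram structure of a polygon $\mathsf{P} \subseteq \twosilt \Lambda$ determined by an almost-complete presilting $E$ with $|E| = |\Lambda| - 2$. Every $T \in \mathsf{P}$ has the form $T = E \oplus X_1 \oplus X_2$ with $X_1, X_2 \in \ind \twoterm$, since $|T| = |\Lambda|$. A mutation of $T$ (green or red) that remains in $\mathsf{P}$ must exchange $X_1$ or $X_2$ rather than a summand of $E$, so each element of $\mathsf{P}$ has at most two green mutations and at most two red mutations within $\mathsf{P}$.

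Suppose first that $\mathsf{P}$ has a maximum element $T_\top = E \oplus X_0 \oplus Y_0$. I would show that both $X_0$ and $Y_0$ admit a green mutation within $\mathsf{P}$, yielding distinct down-neighbours $T_1^L = E \oplus X_0' \oplus Y_0$ and $T_1^R = E \oplus X_0 \oplus Y_0'$. From $T_1^L$, a further green mutation within $\mathsf{P}$ at the summand shared with $T_\top$ (namely $Y_0$) produces the next element $T_2^L$ of the left side; iterating, one obtains a canonical descending sequence along each side of $\mathsf{P}$. Since green mutation strictly decreases in the partial order on $\twosilt \Lambda$, no such sequence can cycle back on itself, and each either terminates at an element with no further green mutation in $\mathsf{P}$ (giving a minimum of $\mathsf{P}$) or continues indefinitely. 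Dual reasoning applies to red mutations from a possible minimum~$T_\bot$.

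Case analysis on whether $\mathsf{P}$ has a maximum, a minimum, both, or neither then yields the four types displayed in Figure~\ref{fig:poly}: a finite $n$-gon (both extrema exist and the two sides meet at the minimum), a polygon with maximum but no minimum (infinite below), dually a polygon with minimum but no maximum (infinite above), and a bi-infinite chain.

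The main obstacle I expect is verifying two local structural claims: first, that at an extremal element of $\mathsf{P}$ both complement summands can indeed be mutated within $\mathsf{P}$ in the appropriate direction; and second, that at a non-extremal element exactly one complement summand is green-mutable within $\mathsf{P}$ and exactly one is red-mutable, so that each side of the polygon is a chain rather than a branching tree. The first point excludes degenerate behaviour at the extrema. The second, which ensures the polygon closes up correctly, should follow from Corollary~\ref{cor:mut_dir} and Lemma~\ref{lem:exch_crit}: the summand introduced by a preceding green mutation is precisely the one reversed by the subsequent red mutation, forcing the other complement summand to control the continuation down the side. Together these facts ensure that when both sides terminate, they converge at a common minimum, completing the classification.
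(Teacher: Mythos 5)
Your case analysis is not the one the paper uses, and unfortunately it does not give the right answer. The key structural fact you are missing is that every polygon in $\twosilt \Lambda$ has \emph{both} a unique maximum and a unique minimum: the maximum is the Bongartz completion of the presilting complex $E$, and the minimum is the co-Bongartz completion, both of which always exist \cite[Theorem~2.10]{air}. Consequently your three ``missing extremum'' cases (maximum but no minimum, minimum but no maximum, neither) are empty, and the bi-infinite chain does not occur; all polygons lie in what you call the ``both exist'' case.

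The classification in Figure~\ref{fig:poly} is therefore not by presence or absence of extrema but by the \emph{lengths of the two paths} between the (always existing) maximum and minimum: both of length two (square), one of length two and the other finite but longer (oriented polygon), both finite but longer than two (unoriented polygon), and at least one path of infinite length (infinite polygon). Your descent-from-the-top argument also cannot by itself recover this picture: in, say, the Kronecker algebra both chains descending from the maximum and both chains ascending from the minimum are infinite, yet the polygon still has a maximum and a minimum; they are simply not connected by any finite path, which is exactly case~(d). Your sketch does provide the $2$-regularity of the Hasse diagram, which the paper also uses, but without Bongartz and co-Bongartz completions the case split does not close.
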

\begin{proof}
The two-regularity of the Hasse diagram of the polygon follows from the fact that all but two summands are fixed. The existence of a unique maximum and minimum follows from the existence of the Bongartz and co-Bongartz completions \cite[Theorem~2.10]{air}. The cases displayed are then clearly exhaustive, which are as follows. 
\begin{enumerate}[label=(\alph*)]
    \item Square: the two paths from the maximum to the minimum are both of length two.
    \item Oriented polygon: one path from the maximum to the minimum is of length two and the other is finite of length greater than two.
    \item Unoriented polygon: both paths from the maximum to the minimum are finite of length greater than two.
    \item Infinite polygon: the polygon contains infinitely many elements. Equivalently, there is at most one path of finite length from the maximum to the minimum.
\end{enumerate}
\end{proof}

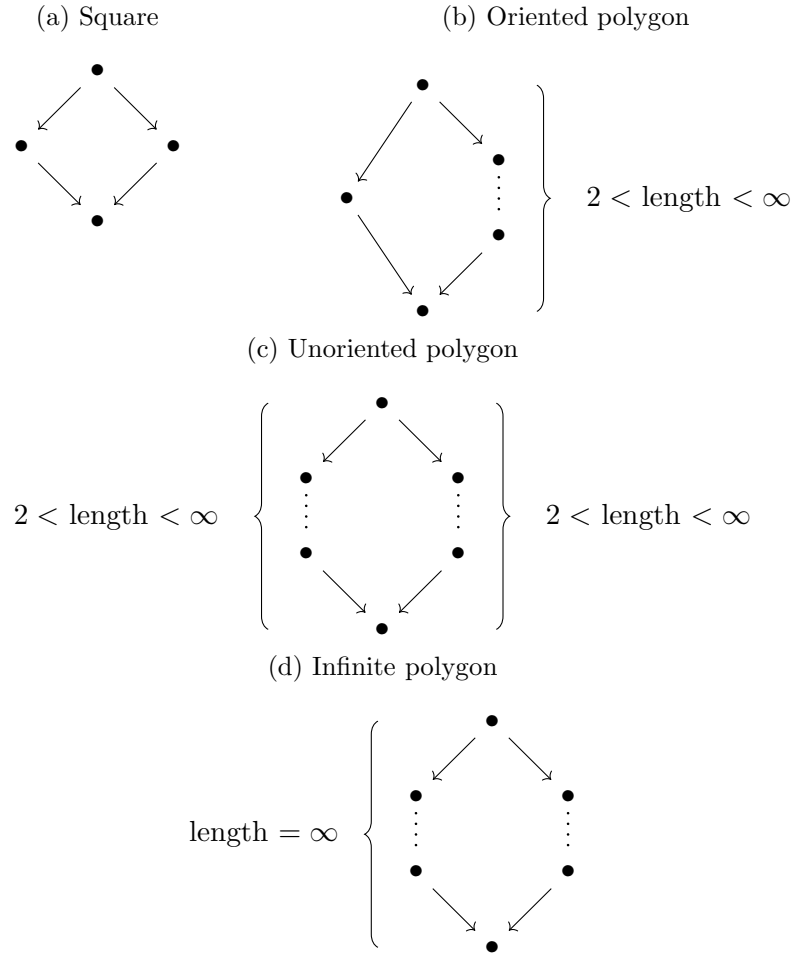
\begin{figure}
    \caption{Different types of polygons}\label{fig:poly}
    \begin{subfigure}[t]{0.35\textwidth}
    \caption{Square}\label{fig:poly:square}
    \[
    \begin{tikzpicture}
        
        \node (t) at (0,0) {$\bullet$};
        \node (l) at (-1,-1) {$\bullet$};
        \node (r) at (1,-1) {$\bullet$};
        \node (b) at (0,-2) {$\bullet$};
        
        \draw[->] (t) -- (l);
        \draw[->] (t) -- (r);
        \draw[->] (l) -- (b);
        \draw[->] (r) -- (b);
        
    \end{tikzpicture}
    \]
    \end{subfigure}
    \begin{subfigure}[t]{0.55\textwidth}
    \caption{Oriented polygon}\label{fig:poly:orient}
    \[
    \begin{tikzpicture}
        
        \node (t) at (0,0) {$\bullet$};
        \node (l) at (-1,-1.5) {$\bullet$};
        \node (r1) at (1,-1) {$\bullet$};
        \node (r2) at (1,-2) {$\bullet$};
        \node (b) at (0,-3) {$\bullet$};
        
        \draw[->] (t) -- (l);
        \draw[->] (t) -- (r1);
        \draw[dot diameter=1pt, dot spacing=4pt, dots] (r1) -- (r2);
        \draw[->] (l) -- (b);
        \draw[->] (r2) -- (b);
        
        \draw[decorate,decoration={brace,amplitude=5pt}] (1.5,0) -- (1.5,-3);
        
        \node at (3.5,-1.5) {$2 <$ length $< \infty$};
        
    \end{tikzpicture}
    \]
    \end{subfigure}
    \begin{subfigure}[t]{0.9\textwidth}
    \caption{Unoriented polygon}\label{fig:poly:unorient}
    \[
    \begin{tikzpicture}
        
        \node (t) at (0,0) {$\bullet$};
        \node (l1) at (-1,-1) {$\bullet$};
        \node (l2) at (-1,-2) {$\bullet$};
        \node (r1) at (1,-1) {$\bullet$};
        \node (r2) at (1,-2) {$\bullet$};
        \node (b) at (0,-3) {$\bullet$};
        
        \draw[->] (t) -- (l1);
        \draw[->] (t) -- (r1);
        \draw[dot diameter=1pt, dot spacing=4pt, dots] (r1) -- (r2);
        \draw[dot diameter=1pt, dot spacing=4pt, dots] (l1) -- (l2);
        \draw[->] (l2) -- (b);
        \draw[->] (r2) -- (b);
                
        \draw[decorate,decoration={brace,amplitude=5pt}] (1.5,0) -- (1.5,-3);
        \draw[decorate,decoration={brace,amplitude=5pt,mirror}] (-1.5,0) -- (-1.5,-3);
        
        \node at (3.5,-1.5) {$2 <$ length $< \infty$};
        \node at (-3.5,-1.5) {$2 <$ length $< \infty$};
        
    \end{tikzpicture}
    \]
    \end{subfigure}
    \begin{subfigure}[t]{0.9\textwidth}
    \caption{Infinite polygon}\label{fig:poly:infinite}
    \[
    \begin{tikzpicture}
        
        \node (t) at (0,0) {$\bullet$};
        \node (l1) at (-1,-1) {$\bullet$};
        \node (l2) at (-1,-2) {$\bullet$};
        \node (r1) at (1,-1) {$\bullet$};
        \node (r2) at (1,-2) {$\bullet$};
        \node (b) at (0,-3) {$\bullet$};
        
        \draw[->] (t) -- (l1);
        \draw[->] (t) -- (r1);
        \draw[dot diameter=1pt, dot spacing=4pt, dots] (r1) -- (r2);
        \draw[dot diameter=1pt, dot spacing=4pt, dots] (l1) -- (l2);
        \draw[->] (l2) -- (b);
        \draw[->] (r2) -- (b);
        
        \draw[decorate,decoration={brace,amplitude=5pt,mirror}] (-1.5,0) -- (-1.5,-3);

        \node at (-3,-1.5) {length $= \infty$};
        
    \end{tikzpicture}
    \]
    \end{subfigure}
\end{figure}

\begin{definition}
A \emph{finite polygon} in $\twosilt \Lambda$ is a polygon which is not an infinite polygon. A finite polygon therefore has two finite paths from its maximum to its minimum.

Two maximal green sequences $\mathcal{G}$ and $\mathcal{G}'$ are related by \emph{deformation across a \textup{(}finite\textup{)} polygon} if we have
\begin{align*}
    \mathcal{G} &= (T_0, \dots, T_i, U_1, \dots, U_k, T_{i + 1}, \dots, T_r), \\
    \mathcal{G}' &= (T_0, \dots, T_i, V_1, \dots, V_l, T_{i + 1}, \dots, T_r),
\end{align*}
as sequences of green mutations of silting complexes, with \[
\begin{tikzpicture}[scale=1.2,xscale=1.5]

\node (TT) at (0,0) {$T_{i}$};
\node (U1) at (-1,-1) {$U_{1}$};
\node (U2) at (-1,-2) {};
\node (Uk-1) at (-1,-3) {};
\node (Uk) at (-1,-4) {$U_{k}$};
\node (TB) at (0,-5) {$T_{i + 1}$}; 
\node (V1) at (1,-1) {$V_{1}$};
\node (V2) at (1,-2) {};
\node (Vl-1) at (1,-3) {};
\node (Vl) at (1,-4) {$V_{l}$};

\draw[->] (TT) -- (U1);
\draw[->] (U1) -- (U2);
\draw[->] (Uk-1) -- (Uk);
\draw[dot diameter=1pt, dot spacing=4pt, dots] (U2) -- (Uk-1);
\draw[->] (Uk) -- (TB);
\draw[->] (TT) -- (V1);
\draw[->] (V1) -- (V2);
\draw[->] (Vl-1) -- (Vl);
\draw[dot diameter=1pt, dot spacing=4pt, dots] (V2) -- (Vl-1);
\draw[->] (Vl) -- (TB);

\end{tikzpicture}
\] a finite polygon.
\end{definition}

In the case of a finite polygon, the two paths around the polygon from the maximum to the minimum are the only paths between these silting complexes in the poset $\twosilt \Lambda$.

\begin{lemma}\label{lem:iepd_unique}
Let $E \oplus X \oplus X'$ and $E \oplus Y \oplus Y'$ be the respective maximum and minimum of a polygon in $\twosilt \Lambda$. Then the only paths from $E \oplus X \oplus X'$ to $E \oplus Y \oplus Y'$ in $\twosilt \Lambda$ are the two paths around the polygon.
\end{lemma}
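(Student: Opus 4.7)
The key claim I would aim to establish is that any sequence of green mutations $E \oplus X \oplus X' = T_0 \to T_1 \to \cdots \to T_n = E \oplus Y \oplus Y'$ in $\twosilt \Lambda$ must remain inside the polygon, in the sense that $E$ is a direct summand of every $T_i$. Once this is shown, the conclusion follows from the classification of polygons in Figure~\ref{fig:poly}: the Hasse diagram of the polygon is $2$-regular with a unique maximum and a unique minimum, hence a (possibly infinite) cycle, and the only paths from max to min in such a graph are the two sides of the polygon.

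To carry this out, I would embed the path in a maximal green sequence $\mathcal{G}$ of $\Lambda$, by prepending any maximal green sequence from $\Lambda$ down to $T_0$ and appending one from $T_n$ down to $\Lambda[1]$. Fix an indecomposable summand $E_j$ of $E$. Both $T_0$ and $T_n$ have $E_j$ as a summand. If $E_j$ were not a summand of some intermediate $T_i$, then along the path between $T_0$ and $T_i$ there would be an exchange pair $(E_j, Z) \in \exch{\mathcal{G}}$ at which $E_j$ is removed, and between $T_i$ and $T_n$ an exchange pair $(W, E_j) \in \exch{\mathcal{G}}$ at which $E_j$ is reinstated. This configuration is forbidden by the final assertion of Lemma~\ref{lem:at_most_once}, which requires the exchange pair with $E_j$ in the second position to occur \emph{before} the exchange pair with $E_j$ in the first position in $\mathcal{G}$, whereas in our setup the order is reversed. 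Hence every indecomposable summand of $E$ persists along the whole chain, so the path lies entirely in the polygon.

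I expect the main subtlety to lie in checking that this in-and-out behaviour of $E_j$ genuinely produces exchange pairs of the stated form in $\mathcal{G}$, so that Lemma~\ref{lem:at_most_once} applies cleanly; this amounts to tracking the summand $E_j$ through the mutations at the first moment it disappears and the last moment it reappears. Once this is set up, the contradiction is immediate, and reading off the two paths around the polygon from Figure~\ref{fig:poly} is routine.
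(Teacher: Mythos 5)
Your approach is essentially the same as the paper's: any path leaving the polygon would have to remove and later restore an indecomposable summand $E_j$ of $E$, which the ``at most once'' behaviour of exchange pairs forbids, so every path stays within the $2$-regular polygon. However, there is a genuine gap in the step where you embed the chain $T_0 \to \cdots \to T_n$ into a full maximal green sequence of $\Lambda$ by prepending a finite chain of green mutations from $\Lambda$ to $T_0$ and appending one from $T_n$ to $\Lambda[1]$. Such finite chains need not exist: for a representation-infinite algebra the Bongartz completion $T_0$ of $E$ can lie at infinite distance from $\Lambda$ in $\twosilt\Lambda$, and the existence of some maximal green sequence of $\Lambda$ does not guarantee one passing through $T_0$. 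You really do need the full embedding for your argument, because the ``moreover'' clause of Lemma~\ref{lem:at_most_once} --- which is the part you invoke --- uses the endpoints $\Lambda$ and $\Lambda[1]$ in an essential way, arguing that $E_j$ would otherwise have to be both projective and shifted projective.

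The gap is easy to close without any embedding. The persistence of $E_j$ follows directly from Corollary~\ref{cor:mut_dir} applied to the given chain $T_0 \to \cdots \to T_n$: although that corollary is stated for a maximal green sequence, its proof only iterates Lemma~\ref{lem:mut_dir} along consecutive green mutations, so it applies verbatim to any finite chain of green mutations. Concretely, if $E_j$ were a summand of $T_i$ and of $T_l$ (with $i < l$) but not of some intermediate $T_k$, then Corollary~\ref{cor:mut_dir}\ref{op:notin} applied to the pair $(T_i, T_k)$ gives $E_j \notin \eor{T_k}$, while Corollary~\ref{cor:mut_dir}\ref{op:in} applied to $(T_k, T_l)$ gives $E_j \in \eor{T_k}$ --- a contradiction. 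This is in effect the argument the paper's own proof relies on when it cites Lemma~\ref{lem:at_most_once}, which, read strictly, is also only stated for maximal green sequences.
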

\begin{proof}
Each vertex in the Hasse diagram of the polygon has degree two, corresponding to the two indecomposable summands of the two-term silting complex at the vertex which are not summands of $E$. Having another path from $E \oplus X \oplus X'$ to $E \oplus Y \oplus Y'$ would require mutating an indecomposable direct summand of $E$. However, Lemma~\ref{lem:at_most_once} then precludes the path reaching $E \oplus Y \oplus Y'$.
\end{proof}

Any convex subposet which looks like a square or an oriented polygon must in fact be a square or an oriented polygon. Recall that a subposet $\mathsf{P}'$ of a poset $\mathsf{P}$ is convex if whenever $p \leqslant q \leqslant r$ with $p, r \in \mathsf{P}'$, we have that $q \in \mathsf{P}'$ too.

\begin{lemma}
Any convex subposet of $\twosilt \Lambda$ isomorphic to Figure~\ref{fig:poly}\textup{(}\subref{fig:poly:square}\textup{)} or \ref{fig:poly}\textup{(}\subref{fig:poly:orient}\textup{)} is a polygon.
\end{lemma}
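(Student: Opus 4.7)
The plan is to show that the convex subposet $\mathcal{P}$ coincides with a polygon in the sense of Definition~\ref{def:polygons}. Let $T$ be the maximum of $\mathcal{P}$ and $T''$ its minimum. Since $\mathcal{P}$ is convex, the two elements of $\mathcal{P}$ covered by $T$ are also covered by $T$ in $\twosilt \Lambda$; call them $T_1$ and $U_1$, with $T_1$ on the shorter path to $T''$ in the oriented case. They arise as green mutations of $T$ at distinct indecomposable summands, so I can write $T = E \oplus X_1 \oplus X_2$, $T_1 = E \oplus Y_1 \oplus X_2$, and $U_1 = E \oplus X_1 \oplus Y_2$ for some presilting $E$ with $|E| = |\Lambda| - 2$. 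Let $\mathcal{Q}$ denote the polygon determined by~$E$; the aim is to prove $\mathcal{P} = \mathcal{Q}$.

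First, $T$ is the maximum of $\mathcal{Q}$ because its two polygon-mutations (at $X_1$ and $X_2$) yield $T_1$ and $U_1$, both below $T$ in $\twosilt \Lambda$. The key goal is then to show that $T'' \in \mathcal{Q}$ and is its minimum. Once this is done, Lemma~\ref{lem:iepd_unique} applied to the pair $(T, T'')$ forces the only two paths from $T$ to $T''$ in $\twosilt \Lambda$ to be the two polygon-paths of $\mathcal{Q}$; since the two paths of $\mathcal{P}$ are paths in $\twosilt \Lambda$ with these endpoints, they must coincide with these polygon-paths, giving $\mathcal{P} = \mathcal{Q}$.

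For the square case, $T''$ is a common green mutation of both $T_1$ and $U_1$. The mutation from $T_1 = E \oplus Y_1 \oplus X_2$ cannot be at $Y_1$ (that would give $T$, a red mutation), nor at a summand of $E$ (otherwise $T''$ would retain $Y_1$ and $X_2$ as summands, but $U_1$ contains neither, contradicting $T''$ being a mutation of $U_1$), so it must be at $X_2$. A symmetric argument applied to $U_1$ then forces $T'' = E \oplus Y_1 \oplus Y_2 \in \mathcal{Q}$, and local minimality in $\mathcal{Q}$ (both polygon-mutations of $T''$ go up) gives global minimality.

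The main obstacle is the oriented case, where $T''$ is reached from $U_1$ via $T \to U_1 \to \dots \to U_k \to T''$ with $k \geq 2$, so the direct comparison above does not immediately rule out the mutation from $T_1$ being at a summand of $E$. I plan to exploit convexity of $\mathcal{P}$: consider the polygon-down-neighbor $V = E \oplus Y_1 \oplus Z$ of $T_1$ in $\mathcal{Q}$, which is a cover of $T_1$ in $\twosilt \Lambda$. If $V \neq T''$ and I can establish that $V \geq T''$, then convexity forces $V \in \mathcal{P}$; since the unique element of $\mathcal{P}$ covered by $T_1$ is $T''$, this gives $V = T''$, contradicting $V \in \mathcal{Q}$ and $T'' \notin \mathcal{Q}$. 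The hardest step is proving $V \geq T''$, for which I expect to need an inductive analysis propagating along the long path, showing at each stage that $U_i$ must lie in $\mathcal{Q}$ by a parallel convexity-plus-uniqueness-of-polygon-neighbour argument, and tracking the polygon-descendants on both sides of $\mathcal{Q}$ until they meet and are forced to equal $T''$.
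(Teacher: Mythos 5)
Your set-up mirrors the paper's, and your square case is correct, but the oriented case is left as an incomplete plan rather than a proof, and the plan you sketch is more convoluted than necessary and is not clearly going to close.

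The decisive observation you are missing is the one the paper leans on, and it handles both cases at once: since the short path $T \to T_1 \to T''$ consists of two green mutations, the mutation $T_1 \to T''$ removes exactly one indecomposable summand of $T_1 = E \oplus Y_1 \oplus X_2$. It cannot remove $Y_1$, since $E \oplus X_2$ has exactly two completions to a silting complex ($T$ and $T_1$), so mutating $Y_1$ returns to $T$, a red mutation. To rule out removing a summand of $E$, do not compare $T''$ with $U_1$; instead look at the long path $T \to U_1 \to \cdots \to T''$. Its very first step removes $X_2$, and by iterating Lemma~\ref{lem:mut_dir} (equivalently, Corollary~\ref{cor:mut_dir} or the argument in Lemma~\ref{lem:at_most_once}), a summand that leaves a directed path cannot return: once $X_2 \notin \eor{U_1}$, it cannot be a summand of any later complex on that path. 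Hence $X_2$ is not a summand of $T''$, forcing the mutation $T_1 \to T''$ to be at $X_2$, so $T'' = E \oplus Y_1 \oplus Y_2'$ for some $Y_2'$ and $T''$ lies in the polygon $\mathcal{Q}$ of $E$. The paper phrases this slightly differently (define $E$ as the summands common to $T$ and $T''$ along the length-two path, then use Lemma~\ref{lem:at_most_once} to show the long path never mutates a summand of $E$), but it is the same "once a summand leaves it cannot come back" idea, and it replaces your proposed convexity-plus-induction on polygon descendants, which you acknowledge you have not carried out and whose key step ($V \geqslant T''$) you do not justify. Your remaining step — concluding $\mathcal{P} = \mathcal{Q}$ from $T, T'' \in \mathcal{Q}$ with $T$ a local maximum and hence the global maximum of $\mathcal{Q}$, then invoking Lemma~\ref{lem:iepd_unique} — is fine and is also how the paper's terse conclusion should be unpacked.
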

\begin{proof}
This follows from the fact that in each of these cases there is a path from the maximum of the poset to the minimum of the poset of length two. Since the covering relations from Figure~\ref{fig:poly} correspond to covering relations in $\twosilt \Lambda$, we must have that the maximum of the poset is two mutations away from the bottom of the poset, and so shares all but two summands with it. Hence, there exists $E$ such that $|E| = |\Lambda| - 2$ such that $E$ is a summand of all two-term silting complexes along the length-two path from maximum to minimum. It then follows from Lemma~\ref{lem:at_most_once} that $E$ must be a summand of all the two-term silting complexes along the other path too. We conclude that the subposet is indeed a polygon.
\end{proof}

\begin{remark}\label{rmk:reduction}
The different types of polygons correspond to the two-term silting theory of different algebras $\Gamma$ with $|\Gamma| = 2$. Indeed, let $E$ be a two-term presilting complex with $|E| = |\Lambda| - 2$. Then the complexes which complete $E$ to a silting complex must lie in \[\mathcal{Z} = (\hol{E[>0]}) \cap (\hor{E[<0]}).\] 
Here
\begin{align*}
    \hol{E[>0]} &= \{X \in \kbproj \st \Hom_{\kbproj}(X, E[i]) = 0 \: \forall i > 0\},\\
    \hor{E[<0]} &= \{X \in \kbproj \st \Hom_{\kbproj}(E[i], X) = 0 \: \forall i < 0\}.
\end{align*}
Let $\widetilde{(-)} \colon \mathcal{Z} \to \mathcal{Z}/[E]$ be the ideal quotient of $\mathcal{Z}$ by the ideal of morphisms factoring through $\additive E$. We then have that $\mathcal{Z}/[E]$ is a triangulated category by \cite[Theorem~4.2]{iy-red}. Further, let $E \oplus X \oplus Y$ be the Bongartz completion of~$E$. Then, it follows from \cite{jasso_red,iyama_yang} that the polygon determined by $E$ is isomorphic as a poset to $\twosilt \Gamma$ where $\Gamma = \End_{\mathcal{Z}/[E]} (X \oplus Y)$ via sending a two-term silting complex $E \oplus X' \oplus Y'$ in $\twoterm$ to the two-term silting complex $\widetilde{X'} \oplus \widetilde{Y'}$ of $\Gamma$. This process is known as \emph{silting reduction} and was introduced in \cite{ai} and given this description in \cite{iyama_yang}.
\end{remark}

\subsection{Deformations across squares}

A particular instance of deformation across a polygon is deformation across a square. This will be used to define one notion of equivalence for maximal green sequences. In this section, we study deformations across squares in terms of silting complexes and bricks.

\subsubsection{Deformations across squares in terms of silting}

We prove some results on deformations across squares from the perspective of silting.

\begin{lemma}\label{lem:sq_exch_pairs}
If the exchange pairs in one path around the square are $(X, Y)$ and then $(X', Y')$, then the exchange pairs in the path around the other side of the square are $(X', Y')$ and then $(X, Y)$.
\end{lemma}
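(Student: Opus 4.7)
The plan is to determine the second intermediate silting complex in the square by matching its decomposition from two sides, and then simply read off the exchange pairs. Denote the maximum and minimum of the square by $T_0 = E \oplus X \oplus X'$ and $T_2 = E \oplus Y \oplus Y'$. The hypothesized path, with exchange pairs $(X,Y)$ then $(X',Y')$, runs through $T_1 = E \oplus Y \oplus X'$; write $S_1$ for the other intermediate silting complex of the square. The goal reduces to showing $S_1 = E \oplus X \oplus Y'$, at which point the exchange pairs of $T_0 \to S_1 \to T_2$ can be read off directly as $(X',Y')$ followed by $(X,Y)$.

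The first step I would establish is the observation that $\{X,X'\} \cap \{Y,Y'\} = \emptyset$ as isomorphism classes of indecomposable complexes. Indeed, if for instance $X \cong Y'$, then $T_0$ and $T_2$ would share the summand $X$ alongside $E$ and so differ in only one indecomposable summand, contradicting the fact that they sit at distance two in the Hasse diagram of the square. Next I would pin down $S_1$ from two directions. As a green mutation of $T_0$, the complex $S_1$ is obtained by mutating out one of the summands $X$ or $X'$; but mutating $X$ out of $T_0$ yields the unique silting complex $T_1$ via the corresponding exchange triangle, so $S_1 \neq T_1$ forces $S_1$ to arise from mutating $X'$, giving $S_1 = E \oplus X \oplus Z$ for some indecomposable $Z$. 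Dually, reversing the green mutation $S_1 \to T_2$ and noting that red-mutating $Y'$ out of $T_2$ yields $T_1$, I obtain $S_1 = E \oplus W \oplus Y'$ for some indecomposable $W$.

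Applying Krull--Schmidt to the two decompositions $E \oplus X \oplus Z \cong S_1 \cong E \oplus W \oplus Y'$ gives the equality of multisets $\{X,Z\} = \{W,Y'\}$. The case $X \cong Y'$ with $Z \cong W$ is excluded by the disjointness observation of the first step, forcing $X \cong W$ and $Z \cong Y'$, hence $S_1 = E \oplus X \oplus Y'$, and the exchange pairs along the second path follow. The proof is essentially bookkeeping inside the polygon, and the one small but essential subtlety is the non-isomorphism $X \not\cong Y'$ that distinguishes the two possible matchings under Krull--Schmidt. An alternative route would invoke the silting reduction of Remark \ref{rmk:reduction} to identify the square with $\twosilt \Gamma$ for an algebra $\Gamma$ with $|\Gamma|=2$ whose two indecomposable projectives mutate independently, and then argue directly inside that four-element poset; I prefer the direct approach as it keeps the argument inside $\twoterm$ and avoids setting up the triangulated quotient.
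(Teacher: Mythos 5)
Your argument is correct and follows essentially the same route as the paper: both proofs identify the two-term silting complex at the fourth corner of the square by observing that it shares all but one summand with each of the top and bottom of the square, and then pin it down as $E \oplus X \oplus Y'$. The paper compresses this final step into ``it is then immediate to see,'' whereas you make the underlying reasoning explicit---uniqueness of green and red mutation at a fixed summand, the multiset cancellation from Krull--Schmidt, and the non-isomorphism $X \not\cong Y'$ that selects the right matching---so yours is a fuller write-up of the same proof rather than a different one.
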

\begin{proof}
Let $T$ be the two-term silting complex at the top of the square. Then $X$ is certainly an indecomposable summand of $T$. We must also have that $X'$ is an indecomposable summand of $T$, since we cannot have $X' \cong Y$. Hence, let $T = E \oplus X \oplus X'$. This gives the path around the square we know as \[E \oplus X \oplus X' \to E \oplus Y \oplus X' \to E \oplus Y \oplus Y'.\] Let $T'$ be the two-term silting complex in the middle of the path around the other side of the square. We must have that $T'$ has all but one indecomposable summand in common with $E \oplus X \oplus X'$ and $E \oplus Y \oplus Y'$, since it is related to each of these silting complexes by mutation. It is then immediate to see that $T '\cong E \oplus X \oplus Y'$, which gives us the result.
\end{proof}

The following criterion for when one can swap the order of two exchange pairs will be useful later.

\begin{lemma}\label{lem:sq_crit}
Let $\mathcal{G}$ be a maximal green sequence of $\Lambda$ with an exchange pair $(X, Y)$ immediately succeeded by an exchange pair $(X', Y')$. Then one can deform $\mathcal{G}$ across a square with sides $(X,Y)$ and $(X', Y')$ to obtain another maximal green sequence $\mathcal{G}'$ if and only if $\Hom_{\kbproj}(Y',X[1]) = 0$.
\end{lemma}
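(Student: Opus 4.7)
The plan is to use Lemma~\ref{lem:sq_exch_pairs} to pin down the only candidate for the deformed path. Writing $T_{i-1} = E \oplus X \oplus X'$, $T_i = E \oplus Y \oplus X'$, and $T_{i+1} = E \oplus Y \oplus Y'$, Lemma~\ref{lem:sq_exch_pairs} forces the alternative intermediate silting complex to be $T'_i = E \oplus X \oplus Y'$. The task is thus to prove: the deformation exists if and only if $T'_i$ is a two-term silting complex and $T_{i-1} \to T'_i \to T_{i+1}$ are green mutations.

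The forward direction will be immediate, since $T'_i$ being silting directly entails $\Hom_{\kbproj}(Y', X[1]) = 0$. For the converse, I would first verify that $T'_i$ is silting, assuming $\Hom_{\kbproj}(Y', X[1]) = 0$. The only non-trivial $\Hom$-vanishings in the silting condition for $T'_i$ are between $X$ and $Y'$: the hypothesis handles one direction, while Corollary~\ref{cor:mut_dir}\ref{op:in} gives $Y' \in \eor{T_{i-1}}$ (since $Y'$ is a summand of the later complex $T_{i+1}$), yielding $\Hom_{\kbproj}(X, Y'[1]) = 0$. Since $|T'_i| = |\Lambda|$, we conclude $T'_i$ is two-term silting.

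The main obstacle will be to confirm that the mutations $T_{i-1} \to T'_i$ and $T'_i \to T_{i+1}$ are green rather than red. Here I would invoke the standard characterisation from \cite{ai} that, for two-term silting complexes, $T \geq T'$ if and only if $\Hom_{\kbproj}(T, T'[1]) = 0$. This reduces the direction check to routine $\Hom$-vanishings between pairs of summands, each of which follows either from the silting conditions on $T_{i-1}, T_i, T_{i+1}$, from Corollary~\ref{cor:mut_dir}\ref{op:in} applied to summands occurring later in $\mathcal{G}$, or from Lemma~\ref{lem:mut_dir}\ref{op:silt2} applied to the original exchange pairs. Since $T'_i$ differs from each of $T_{i-1}$ and $T_{i+1}$ by a single summand, the strict inequalities $T_{i-1} > T'_i > T_{i+1}$ will then force these to be green covering relations.

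As an alternative to the $\Hom$-vanishing characterisation of the order, one could bypass the direction check altogether by invoking the silting reduction of Remark~\ref{rmk:reduction}: once $T'_i$ is known to be silting, the four complexes $T_{i-1}, T_i, T_{i+1}, T'_i$ all contain $E$ as a summand and hence lie in the polygon determined by $E$; the two-regularity of its Hasse diagram combined with Lemma~\ref{lem:iepd_unique} will then force the polygon to be a square with $T_{i-1}$ at the top and $T_{i+1}$ at the bottom, producing the deformation directly.
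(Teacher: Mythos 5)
Your proof is correct and follows the paper's core approach: identify the unique candidate $T'_i = E \oplus X \oplus Y'$ for the intermediate silting complex and verify it is two-term silting, with the key vanishings $\Hom_{\kbproj}(Y', X[1]) = 0$ (the hypothesis) and $\Hom_{\kbproj}(X, Y'[1]) = 0$ (from Corollary~\ref{cor:mut_dir}\ref{op:in}), and the remaining ones from the silting conditions on $T_{i-1}$ and $T_{i+1}$. You go a little further than the paper by explicitly verifying that $T_{i-1} \to T'_i \to T_{i+1}$ are green mutations; the paper asserts the replacement gives a maximal green sequence without addressing direction. Your $\Hom$-vanishing verification of $T_{i-1} \geq T'_i \geq T_{i+1}$ via \cite[Theorem~2.11]{ai} is sound, and the individual vanishings reduce as you describe. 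It is worth noting, though, that once $T'_i$ is a silting complex distinct from $T_i$, the direction is automatic: since the Hasse diagram of $\twosilt\Lambda$ coincides with the mutation graph \cite[Corollary~2.34]{ai}, any other orientation (say $T'_i > T_{i-1}$) would place $T_{i-1}$ and $T_i$ strictly between the mutation-adjacent pair $T'_i$ and $T_{i+1}$, contradicting that adjacency gives a covering relation. This same observation is what is needed to unpack the terse "two-regularity \ldots will force $T_{i-1}$ at the top" in your silting-reduction alternative, so both of your routes work but both ultimately rest on this point. One small omission: to conclude $|T'_i| = |\Lambda|$ one should observe $X \not\cong Y'$ (either directly, since $X$ is not a summand of $T_{i+1}$ while $Y'$ is, or via Lemma~\ref{lem:at_most_once} as the paper does).
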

\begin{proof}
Let \[
\begin{tikzcd}
T_{i-1} \ar[r,"{(X,Y)}"] & T_{i} \ar[r,"{(X',Y')}"] & T_{i + 1} 
\end{tikzcd}
\] be the relevant part of $\mathcal{G}$. Suppose that we can deform $\mathcal{G}$ across a square with these sides to give a maximal green sequence $\mathcal{G}'$, which, by Lemma~\ref{lem:sq_exch_pairs}, instead has the sequence \[
\begin{tikzcd}
T_{i - 1} \ar[r, "{(X',Y')}"] & T'_{i} \ar[r,"{(X,Y)}"] & T_{i + 1}. 
\end{tikzcd}
\] Then $T'_{i}$ contains both $Y'$ and $X$ as summands, and so we must have that $\Hom_{\kbproj}(Y',X[1]) = 0$, since $T'_{i}$ is silting.

Now we show the reverse direction, maintaining our labelling of $\mathcal{G}$ as above, and supposing that $\Hom_{\kbproj}(Y', X[1]) = 0$. We then have that $T_{i - 1} = E \oplus X \oplus X'$ for some $E \in \twoterm$. Moreover, if we let $T'_{i} = E \oplus X \oplus Y'$, then $\Hom_{\kbproj}(T'_{i}, T'_{i}[1]) = 0$. Indeed, we have the following:
\begin{itemize}
\item $\Hom_{\kbproj}(E \oplus Y', (E \oplus Y')[1]) = 0$, as $E \oplus Y'$ is a summand of $T_{i + 1}$; 
\item $\Hom_{\kbproj}(E \oplus X, (E \oplus X) [1]) = 0$, as $E \oplus X$ is a summand of $T_{i - 1}$; 
\item $\Hom_{\kbproj}(Y', X[1]) = 0$ by assumption;
\item $\Hom_{\kbproj}(X, Y'[1]) = 0$, since $Y' \in \eor{T_i} \subset \eor{T_{i - 1}}$ by Lemma~\ref{lem:mut_dir}.
\end{itemize} 
By Lemma~\ref{lem:at_most_once}, we have $Y' \not\cong X$. Therefore, $T'_{i}$ has the maximal number of isomorphism classes of indecomposable summands. Hence $T'_{i}$ is a silting complex and we obtain a maximal green sequence $\mathcal{G}$ by replacing the relevant portion of $\mathcal{G}$ with \[
\begin{tikzcd}
T_{i - 1} \ar[r,"{(X',Y')}"] & T'_{i} \ar[r,"{(X,Y)}"] & T_{i + 1}. 
\end{tikzcd}
\]
\end{proof}

\subsubsection{Deformations across squares in terms of bricks}

We now consider deformations across squares from the point of view of bricks.

\begin{lemma} \label{lem:square-bricks}
Given a maximal green sequence $\mathcal{G}$ of $\Lambda$ as the maximal backwards $\Hom$-orthogonal sequence of bricks \[B_{1}, B_{2}, \dots, B_{r},\] a maximal green sequence $\mathcal{G}'$ is related to $\mathcal{G}$ by deformation across a square if and only if $\mathcal{G}'$ is given by \[B_{0}, \dots, B_{i - 1}, B_{i + 1}, B_{i}, B_{i + 2}, \dots, B_{r}\] as a maximal backwards $\Hom$-orthogonal sequence of bricks for some~$i$.
\end{lemma}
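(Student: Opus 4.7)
My plan is to handle both directions by translating the square-deformation structure between the brick-label and silting-complex viewpoints.

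\textbf{Forward direction.} Suppose $\mathcal{G}$ and $\mathcal{G}'$ differ by deformation across a square at positions $i, i+1$, with torsion classes $\mathcal{T}_{i-1} \supset \mathcal{T}_i, \mathcal{T}'_i \supset \mathcal{T}_{i+1}$ and with brick labels $B_i, B_{i+1}$ on one path and $B'_i, B'_{i+1}$ on the other. Theorem~\ref{thm:filt_intervals} yields $\Filt(B_i, B_{i+1}) = [\mathcal{T}_{i-1}, \mathcal{T}_{i+1}] = \Filt(B'_i, B'_{i+1})$. I would then verify that each of the four brick labels is relatively simple in this interval: for $B_i$, any short exact sequence $0 \to A \to B_i \to C \to 0$ with $A, C \in [\mathcal{T}_{i-1}, \mathcal{T}_{i+1}]$ forces $A \in \hor{\mathcal{T}_i} \cap \mathcal{T}_{i-1} = \Filt(B_i)$ by closure of $\hor{\mathcal{T}_i}$ under submodules, and a length count using the brick property (no proper submodule isomorphic to itself) gives $A = 0$ or $A = B_i$; the argument for $B_{i+1}$ is dual, applying closure of $\mathcal{T}_i$ under factor modules to $C$. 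Proposition~\ref{prop:simples_in_intervals} then places all four bricks as labels in both maximal chains of the interval, so $\{B_i, B_{i+1}\} = \{B'_i, B'_{i+1}\}$. Since bricks in a backwards Hom-orthogonal sequence are pairwise distinct, and the bijection between maximal green sequences and such sequences would force $\mathcal{G} = \mathcal{G}'$ if the orderings matched, the two bricks must appear swapped.

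\textbf{Reverse direction.} Suppose the swap produces a maximal backwards Hom-orthogonal sequence, yielding a maximal green sequence $\mathcal{G}'$ with $\mathcal{T}'_j = \mathcal{T}_j$ for $j \neq i$ and with $\mathcal{T}'_i \neq \mathcal{T}_i$ (otherwise the brick label of $\mathcal{T}_{i-1} \supset \mathcal{T}_i$ would force $B_i = B_{i+1}$). Via Theorem~\ref{thm:air:silt_tors}, this lifts to a diamond $T_{i-1} \gtrdot T_i, T'_i \gtrdot T_{i+1}$ in $\twosilt \Lambda$, and I would argue it is a square polygon in three steps: (i) $T_{i-1}$ and $T_{i+1}$ differ by exactly two summands, since a 1-summand difference would produce a covering pair contradicting the strict intermediate $\mathcal{T}_i$; (ii) writing $E = T_{i-1} \cap T_{i+1}$ for the common part of size $|\Lambda| - 2$, an inclusion-exclusion count on the $|\Lambda| - 1$ summand overlaps of $T_i$ with $T_{i-1}$ and with $T_{i+1}$ forces $E \subseteq T_i$, and likewise $E \subseteq T'_i$, placing all four silting complexes in the polygon determined by $E$; (iii) polygons have 2-regular Hasse diagrams in which intermediate elements admit exactly one upper and one lower cover, so $T_{i-1}$ having two distinct lower covers $T_i, T'_i$ in the polygon forces it to be the polygon's maximum, and dually $T_{i+1}$ is its minimum, whence two length-2 paths between these endpoints forces the polygon to be a square as in Figure~\ref{fig:poly}(a).

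\textbf{Main obstacle.} I expect the main delicacy to lie in the verification of relative simplicity in the forward direction: the brick $B_i$ is trivially relatively simple in the smaller wide subcategory $\Filt(B_i)$, but upgrading this to the larger interval $[\mathcal{T}_{i-1}, \mathcal{T}_{i+1}]$ relies on combining torsion-theoretic closure properties (of $\hor{\mathcal{T}_i}$ under submodules, of $\mathcal{T}_i$ under factors) with the fact that a brick has no proper submodule (respectively factor) isomorphic to itself.
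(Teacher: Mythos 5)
Your proof is correct and follows the same overall strategy as the paper's, with a few substantive differences worth noting. In the forward direction, you reach the same conclusion via Theorem~\ref{thm:filt_intervals} and Proposition~\ref{prop:simples_in_intervals}, but your verification that each brick label is relatively simple in $[\mathcal{T}_{i-1}, \mathcal{T}_{i+1}]$ goes directly through closure properties: a submodule of $B_i$ lying in the interval must lie in $\hor{\mathcal{T}_i}\cap\mathcal{T}_{i-1}=\Filt(B_i)$, and then the brick property pins it down. The paper instead argues indirectly: Proposition~\ref{prop:simples_in_intervals} shows the relative simples form a subset of $\{B_i,B_{i+1}\}$, and neither can fail to be relatively simple because $B_i\in\Filt(B_{i+1})$ or $B_{i+1}\in\Filt(B_i)$ would produce a nonzero morphism contradicting backwards $\Hom$-orthogonality. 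Both arguments work; yours is a bit more hands-on, the paper's leans more heavily on the orthogonality structure already in place.

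The more visible difference is in the reverse direction, which the paper treats quite tersely (it essentially folds the reverse direction into the framing "\dots if and only if there is a square\dots" and relies implicitly on the earlier lemma that any subposet of $\twosilt\Lambda$ isomorphic to Figure~\ref{fig:poly}(\subref{fig:poly:square}) is a polygon). You make this explicit: from $\mathcal{T}'_j=\mathcal{T}_j$ for $j\neq i$ and $\mathcal{T}'_i\neq\mathcal{T}_i$ you obtain a diamond in $\twosilt\Lambda$, then argue via the summand count and the inclusion-exclusion bound that the common presilting complex $E$ of $T_{i-1}$ and $T_{i+1}$ is a summand of $T_i$ and $T'_i$, and conclude via 2-regularity that the diamond is the whole polygon and is a square. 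This is a clean and correct verification; the paper's route to the same fact is the argument in the proof of the unreferenced lemma following Lemma~\ref{lem:iepd_unique}, which uses Lemma~\ref{lem:at_most_once} in place of your inclusion-exclusion count. In short, nothing is missing; you have supplied the detail that the paper leaves to the reader.
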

\begin{proof}
The maximal green sequences $\mathcal{G}$ and $\mathcal{G}'$ are related by deformation across a square if and only if there is a square \[
\begin{tikzpicture}

\node (T) at (0,0) {$\mathcal{T}_{i - 1}$};
\node (L) at (-1,-1) {$\mathcal{T}_{i}$};
\node (R) at (1,-1) {$\mathcal{T}'_{i}$};
\node (B) at (0,-2) {$\mathcal{T}_{i + 1}$};

\draw[->] (T) -- (L);
\draw[->] (L) -- (B);
\draw[->] (T) -- (R);
\draw[->] (R) -- (B);

\end{tikzpicture}
\] such that $\mathcal{G}$ and $\mathcal{G}'$ differ only in that $\mathcal{G}$ contains the left-hand path around the square, whilst $\mathcal{G}'$ contains the right-hand path around the square. Let $B_{i}$ and $B_{i + 1}$ be the respective brick labels of the minimal inclusions $\mathcal{T}_{i - 1} \supset \mathcal{T}_{i}$ and $\mathcal{T}_{i} \supset \mathcal{T}_{i + 1}$. Then we have that $[\mathcal{T}_{i - 1}, \mathcal{T}_{i + 1}] = \Filt(B_{i}, B_{i + 1})$. Moreover, $B_{i}$ and $B_{i + 1}$ must be precisely the relatively simple objects in $\Filt(B_{i}, B_{i + 1})$ since if either $B_i \in \Filt(B_{i + 1})$ or $B_{i + 1} \in \Filt(B_i)$, then backwards $\Hom$-orthogonality is violated, and there cannot be more relatively simple objects in  $\Filt(B_{i}, B_{i + 1})$ by Proposition~\ref{prop:simples_in_intervals}. By applying Proposition~\ref{prop:simples_in_intervals} to the other path around the square, we see that the brick labels of the other two minimal inclusions must also be $B_{i}$ and $B_{i + 1}$. We then must have that $B_{i + 1}$ labels $\mathcal{T}_{i - 1} \supset \mathcal{T}'_{i}$ and $B_{i}$ labels $\mathcal{T}'_{i} \supset \mathcal{T}_{i + 1}$, since $\mathcal{T}_{i} \neq \mathcal{T}'_{i}$.
\end{proof}

In order to prove the analogue of Lemma~\ref{lem:sq_crit} for bricks, we need the following lemma. 

\begin{lemma}\label{lem:extension_of_bricks}
Suppose that $L$ and $N$ are bricks over $\Lambda$ such that \[\Hom_{\Lambda}(L, N) = \Hom_{\Lambda}(N, L) = 0.\] Then every non-split extension of $L$ and $N$ is a brick.
\end{lemma}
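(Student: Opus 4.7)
The plan is to let $0 \to N \xrightarrow{\iota} E \xrightarrow{\pi} L \to 0$ be a non-split extension, take an arbitrary endomorphism $f \colon E \to E$, and show that $f$ is either zero or an isomorphism.

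The first step is to observe that $f$ restricts to an endomorphism of $N$ and descends to an endomorphism of $L$. Indeed, the composition $\pi \circ f \circ \iota \colon N \to L$ lies in $\Hom_{\Lambda}(N, L) = 0$, so $f(\iota(N)) \subseteq \iota(N)$; this gives an induced endomorphism $f_N \colon N \to N$ and a quotient endomorphism $\bar{f} \colon L \to L$. Since $N$ and $L$ are bricks, each of $f_N$ and $\bar{f}$ is either $0$ or an isomorphism, giving four cases.

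I would next dispose of the off-diagonal cases using non-splitness together with the $\Hom$-vanishing hypotheses. If $f_N = 0$, then $f$ factors as $f = g \circ \pi$ for some $g \colon L \to E$; now $\pi \circ g$ is an endomorphism of $L$ and hence is $0$ or invertible. If it were invertible, then $g$ (composed with the inverse) would split $\pi$, contradicting the non-splitness of the extension. So $\pi \circ g = 0$, meaning $g$ factors through $\iota$, giving an element of $\Hom_{\Lambda}(L, N) = 0$; hence $g = 0$ and $f = 0$. Symmetrically, if $f_N$ is an isomorphism but $\bar{f} = 0$, then $f(E) \subseteq \iota(N)$, so there exists $h \colon E \to N$ with $f = \iota \circ h$; then $h \circ \iota = f_N$ is invertible, so $f_N^{-1} \circ h$ is a retraction of $\iota$, again contradicting non-splitness.

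The remaining case is when both $f_N$ and $\bar{f}$ are isomorphisms, in which case the Five Lemma applied to the commutative diagram
\[
\begin{tikzcd}
0 \ar[r] & N \ar[r,"\iota"]\ar[d,"f_N"] & E \ar[r,"\pi"]\ar[d,"f"] & L \ar[r]\ar[d,"\bar{f}"] & 0 \\
0 \ar[r] & N \ar[r,"\iota"] & E \ar[r,"\pi"] & L \ar[r] & 0
\end{tikzcd}
\]
shows that $f$ itself is an isomorphism. Thus every endomorphism of $E$ is zero or invertible, so $E$ is a brick. The only subtle point, which is really the heart of the argument, is the correct use of non-splitness to rule out the two mixed cases; the rest is a bookkeeping exercise.
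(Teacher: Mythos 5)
Your proof is correct and takes essentially the same approach as the paper: reduce $\pi f \iota = 0$ using one $\Hom$-vanishing, obtain induced endomorphisms on the submodule and quotient, then argue by cases using the Five Lemma, non-splitness to rule out sections or retractions, and the other $\Hom$-vanishing. The only cosmetic difference is that you merge the two subcases where the submodule endomorphism vanishes into a single argument, whereas the paper treats the $(0,\mathrm{iso})$ and $(0,0)$ cases separately.
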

\begin{proof}
Suppose that \[0 \to L \xrightarrow{f} M \xrightarrow{g} N \to 0\] is a non-split extension of $L$ and $N$. We want to show that $M$ is a brick, that is, if we let $h\colon M \to M$ be an endomorphism, then $h$ is either zero or an isomorphism. Since $\Hom_{\Lambda}(L, N) = 0$, we have that $ghf = 0$. Hence, by the universal properties of kernels and cokernels, we have a commutative diagram \[
\begin{tikzcd}
0 \ar[r] & L \ar[r,"f"] \ar[d,"a"] & M \ar[r,"g"] \ar[d,"h"] & N \ar[r] \ar[d,"b"] & 0 \\
0 \ar[r] & L \ar[r,"f"] & M \ar[r,"g"] & N \ar[r] & 0.
\end{tikzcd}
\]
Since $L$ and $N$ are both bricks, we have that $a$ and $b$ are both either isomorphisms or zero. If they are both isomorphisms, then $h$ is also an isomorphism by the Five Lemma.

Hence, we suppose that at least one of $a$ and $b$ is not an isomorphism. Suppose first that $a$ is zero and $b$ is an isomorphism. Thus, $hf = fa = 0$. By the universal property of cokernels, we have a map $s\colon N \to M$ such that $sg = h$. \[
\begin{tikzcd}
0 \ar[r] & L \ar[r,"f"] \ar[d,"0"] & M \ar[r,"g"] \ar[d,"h"] & N \ar[r] \ar[d,"b"] \ar[dl,"s",dashed] & 0 \\
0 \ar[r] & L \ar[r,"f"] & M \ar[r,"g"] & N \ar[r] & 0
\end{tikzcd}
\] We then have that $gsg = gh = bg$, which implies that $gs = b$, since $g$ is epic. Since $b$ is an isomorphism, it has an inverse $b^{-1}$. We then have that $gsb^{-1} = \mathrm{id}_N$, so that $sb^{-1}$ is a section of $g$. But this means that the extension $0 \to L \to M \to N \to 0$ is split, which is a contradiction. The case where $a$ is an isomorphism and $b$ is zero is similar to this.

The final case to consider is where $a$ and $b$ are both zero. This gives that $hf = fa = 0$, and so we have a map $s\colon N \to M$ such that $h = sg$. Then $gsg = gh = bg = 0$, which implies that $gs = 0$, since $g$ is epic. By the universal property of kernels, we have that there is a map $t\colon N \to L$ such that $s = ft$. \[
\begin{tikzcd}
0 \ar[r] & L \ar[r,"f"] \ar[d,"0"] & M \ar[r,"g"] \ar[d,"h"] & N \ar[r] \ar[d,"0"] \ar[dl,"s",dashed] \ar[dll,"t",dashed,swap] & 0 \\
0 \ar[r] & L \ar[r,"f",swap] & M \ar[r,"g"] & N \ar[r] & 0
\end{tikzcd}
\] However, $\Hom_{\Lambda}(N, L) = 0$, so $t = 0$. Consequently, $s = ft = 0$ and, in turn, $h = sg = 0$. We conclude that every endomorphism of $M$ is either an isomorphism or zero, as desired.
\end{proof}

We apply this to show the following.

\begin{lemma}\label{lem:brick_no_ext}
Let $\mathcal{G}$ be a maximal green sequence of $\Lambda$ given by a maximal backwards $\Hom$-orthogonal sequence of bricks \[B_1, B_2, \dots, B_i, B_{i + 1}, \dots, B_r.\] If $\Hom_{\Lambda}(B_i, B_{i+1}) = 0$, then $\Ext_{\Lambda}^{1}(B_{i + 1}, B_{i}) = 0$.
\end{lemma}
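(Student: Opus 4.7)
The plan is to argue by contradiction against maximality of the backwards $\Hom$-orthogonal sequence. Suppose $\Ext^{1}_{\Lambda}(B_{i+1}, B_{i}) \neq 0$ and take a non-split extension
\[ 0 \to B_{i} \xrightarrow{\iota} M \xrightarrow{\pi} B_{i+1} \to 0. \]
We have $\Hom_{\Lambda}(B_{i}, B_{i+1}) = 0$ by hypothesis and $\Hom_{\Lambda}(B_{i+1}, B_{i}) = 0$ from backwards $\Hom$-orthogonality of $\mathcal{G}$. Hence Lemma~\ref{lem:extension_of_bricks} applies and $M$ is a brick. The goal will then be to show that inserting $M$ between $B_{i}$ and $B_{i+1}$ produces a longer backwards $\Hom$-orthogonal sequence of bricks, contradicting the maximality of $\mathcal{G}$.

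To verify that the insertion preserves backwards $\Hom$-orthogonality, I would compute four classes of $\Hom$-spaces. For $j < i$, applying $\Hom_{\Lambda}(-, B_{j})$ to the extension gives the exact sequence
\[ 0 \to \Hom_{\Lambda}(B_{i+1}, B_{j}) \to \Hom_{\Lambda}(M, B_{j}) \to \Hom_{\Lambda}(B_{i}, B_{j}), \]
with the outer terms vanishing by backwards $\Hom$-orthogonality, so $\Hom_{\Lambda}(M, B_{j}) = 0$. Dually, applying $\Hom_{\Lambda}(B_{j}, -)$ for $j > i+1$ yields $\Hom_{\Lambda}(B_{j}, M) = 0$.

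For the two critical remaining cases I would use the brick structure of $B_{i}$ and $B_{i+1}$ together with the non-splitness of the extension. If $f \colon M \to B_{i}$ is nonzero, then $f\iota \in \End_{\Lambda}(B_{i})$ is either an isomorphism, providing a retraction of $\iota$ and splitting the sequence, or zero, in which case $f$ factors through $\pi$ giving a nonzero map $B_{i+1} \to B_{i}$, contradicting backwards $\Hom$-orthogonality; hence $\Hom_{\Lambda}(M, B_{i}) = 0$. A dual argument with $\pi g$ for $g \colon B_{i+1} \to M$ shows $\Hom_{\Lambda}(B_{i+1}, M) = 0$. Combining these four vanishings, the sequence
\[ B_{1}, \dots, B_{i}, M, B_{i+1}, \dots, B_{r} \]
is backwards $\Hom$-orthogonal, violating maximality.

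The main obstacle is really just the two boundary cases $\Hom_{\Lambda}(M, B_{i})$ and $\Hom_{\Lambda}(B_{i+1}, M)$, since the long exact sequence alone does not suffice; one must exploit the brickness of $B_{i}$ and $B_{i+1}$ and the non-splitness of the chosen extension. Everything else is a straightforward $\Hom$-functor computation once Lemma~\ref{lem:extension_of_bricks} is in hand.
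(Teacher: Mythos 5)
Your proof is correct and follows essentially the same approach as the paper: both form a non-split extension $M$ of $B_i$ by $B_{i+1}$, invoke Lemma~\ref{lem:extension_of_bricks} to conclude $M$ is a brick, and then verify the four $\Hom$-vanishings needed to insert $M$ between $B_i$ and $B_{i+1}$, contradicting maximality. The only cosmetic difference is that you phrase the $j < i$ and $j > i+1$ cases via the $\Hom$ long exact sequence where the paper reasons directly with the universal property of the kernel, and in the boundary cases you derive the contradiction from non-splitness where the paper derives it from indecomposability of $M$; these are equivalent.
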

\begin{proof}
If $\Ext_{\Lambda}^{1}(B_{i + 1}, B_{i}) \neq 0$, then there is a non-split short exact sequence \[0 \to B_{i} \to B \to B_{i + 1} \to 0.\] The module $B$ here must then be a brick by Lemma~\ref{lem:extension_of_bricks}.

We claim that the sequence of bricks given by \[B_1, B_2, \dots, B_{i - 1}, B_{i}, B, B_{i + 1}, B_{i + 2}, \dots, B_r\] is also backwards $\Hom$-orthogonal. Suppose there exists $B_j$ with $j > i + 1$ such that there is a non-zero homomorphism $B_j \to B$. The composition $B_j \to B \to B_{i + 1}$ must then be zero, by backwards $\Hom$-orthogonality of the original sequence. But this gives a non-zero map $B_j \to B_{i}$ by the universal property of the kernel, which is a contradiction. One can similarly argue that there is no non-zero map $B \to B_j$ for $j < i$.

We must finally show that there cannot be any non-zero maps $B_{i + 1} \to B$ or $B \to B_{i}$. In the first case, if the composition $B_{i + 1} \to B \to B_{i + 1}$ is zero, then there is a contradictory non-zero map $B_{i + 1} \to B_{i}$. Hence the composition $B_{i + 1} \to B \to B_{i + 1}$ is non-zero and cannot be an isomorphism since $B$ is indecomposable. This contradicts the fact that $B_{i + 1}$ is a brick. The existence of a non-zero map $B \to B_{i}$ is likewise contradictory.
\end{proof}

The following lemma is the analogue of Lemma~\ref{lem:sq_crit} for bricks: it tells us when we can exchange two consecutive bricks in order to deform across a square.

\begin{lemma}\label{lem:sq_crit_brick}
Let $\mathcal{G}$ be a maximal green sequence of $\Lambda$ given by a maximal backwards $\Hom$-orthogonal sequence of bricks \[B_1, B_2, \dots, B_i, B_{i + 1}, \dots, B_r.\] Then \[B_1, B_2, \dots, B_{i - 1}, B_{i + 1}, B_i, B_{i + 2}, \dots, B_r\] is a maximal backwards $\Hom$-orthogonal sequence of bricks if and only if \[\Hom_{\Lambda}(B_i, B_{i + 1}) = \Ext_{\Lambda}^{1}(B_i, B_{i + 1}) = 0.\]
\end{lemma}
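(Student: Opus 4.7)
The plan is to prove each direction of the equivalence separately, relying on Lemma~\ref{lem:brick_no_ext} to convert vanishing of $\Hom$ into vanishing of $\Ext^1$, and on Lemma~\ref{lem:square-bricks} to translate the reverse direction into producing a square in $\tors\Lambda$.

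For the forward direction, suppose the swapped sequence is a maximal backwards $\Hom$-orthogonal sequence of bricks. Since $B_{i+1}$ then precedes $B_i$, backwards $\Hom$-orthogonality directly gives $\Hom_\Lambda(B_i, B_{i+1}) = 0$. Then I would apply Lemma~\ref{lem:brick_no_ext} to the swapped sequence at positions $i$ and $i+1$, whose bricks are $B_{i+1}$ and $B_i$ respectively: the lemma's hypothesis becomes $\Hom_\Lambda(B_{i+1}, B_i) = 0$, which holds by backwards $\Hom$-orthogonality of the \emph{original} sequence, and its conclusion yields $\Ext^1_\Lambda(B_i, B_{i+1}) = 0$.

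For the reverse direction, assume $\Hom_\Lambda(B_i, B_{i+1}) = \Ext^1_\Lambda(B_i, B_{i+1}) = 0$. First I would upgrade this to four-way orthogonality: the original backwards $\Hom$-orthogonality gives $\Hom_\Lambda(B_{i+1}, B_i) = 0$, and Lemma~\ref{lem:brick_no_ext} applied to the original sequence then yields $\Ext^1_\Lambda(B_{i+1}, B_i) = 0$. By Theorem~\ref{thm:filt_intervals}, $[\mathcal{T}_{i-1}, \mathcal{T}_{i+1}] = \Filt(B_i, B_{i+1})$. A straightforward induction on filtration length, using the vanishing of all four $\Hom$'s and $\Ext^1$'s between $B_i$ and $B_{i+1}$, shows that every module in $\Filt(B_i, B_{i+1})$ is a direct sum of copies of $B_i$ and $B_{i+1}$, so that this wide subcategory is semisimple with simples $B_i$ and $B_{i+1}$. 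Its four torsion classes lift via $\mathcal{U} \mapsto \Tors{\mathcal{U}, \mathcal{T}_{i+1}}$ to four torsion classes in $[\mathcal{T}_{i-1}, \mathcal{T}_{i+1}]$, producing a square with new vertex $\mathcal{T}'_i := \Tors{B_i, \mathcal{T}_{i+1}}$. Lemma~\ref{lem:square-bricks} then identifies the deformation across this square with the desired brick swap.

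The main obstacle will be in the reverse direction, specifically verifying that $\mathcal{T}'_i$ genuinely sits as a fourth vertex of a square in $\tors\Lambda$ --- that is, that $\mathcal{T}'_i$ is distinct from $\mathcal{T}_i$ and that both $\mathcal{T}_{i-1} \supset \mathcal{T}'_i \supset \mathcal{T}_{i+1}$ are minimal inclusions with brick labels $B_{i+1}$ and $B_i$. This amounts to showing that $\mathcal{U} \mapsto \Tors{\mathcal{U}, \mathcal{T}_{i+1}}$ restricts to a lattice isomorphism from the torsion classes of $\Filt(B_i, B_{i+1})$ onto the interval $[\mathcal{T}_{i-1}, \mathcal{T}_{i+1}]$, which I expect to follow from Proposition~\ref{prop:simples_in_intervals} together with the structure of brick labels.
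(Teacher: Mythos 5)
Your forward direction coincides with the paper's: backwards $\Hom$-orthogonality of the swapped sequence gives $\Hom_\Lambda(B_i, B_{i+1}) = 0$, and Lemma~\ref{lem:brick_no_ext} applied to the swapped sequence (with $B_{i+1}$ at position $i$, $B_i$ at position $i+1$) then gives $\Ext^1_\Lambda(B_i, B_{i+1}) = 0$.

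Your reverse direction takes a genuinely different route from the paper, but as written it has two gaps. First, the claim that the four vanishings make $\Filt(B_i, B_{i+1})$ semisimple is false in general: you have no control over the self-extensions $\Ext^1_\Lambda(B_i, B_i)$ and $\Ext^1_\Lambda(B_{i+1}, B_{i+1})$, and if either is nonzero then $\Filt(B_i)$ or $\Filt(B_{i+1})$ contains indecomposables of length $\geq 2$. Your induction breaks at exactly that point: for $0 \to K \to M \to B_i \to 0$ with $K$ already a direct sum of $B_i$'s and $B_{i+1}$'s, splitting off the top copy of $B_i$ requires $\Ext^1_\Lambda(B_i, B_i) = 0$. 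What the four vanishings do give you (by the same style of induction) is the weaker abelian-category decomposition $\Filt(B_i, B_{i+1}) \simeq \Filt(B_i) \oplus \Filt(B_{i+1})$. Each factor is a length category with one simple, hence has exactly two torsion classes, so the direct sum has exactly four --- which is all you need, but the statement must be corrected. Second, and more seriously, the identification of torsion classes of the wide subcategory $\Filt(B_i, B_{i+1})$ with the interval $[\mathcal{T}_{i-1}, \mathcal{T}_{i+1}]$ in $\tors\Lambda$ is a nontrivial piece of lattice theory that you leave as ``expected to follow from Proposition~\ref{prop:simples_in_intervals}.'' That proposition only identifies relatively simple objects with brick labels along maximal chains; it does not supply the required bijection between torsion classes. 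You would need to prove directly that $\Tors{B_i, \mathcal{T}_{i+1}}$ lies strictly between $\mathcal{T}_{i+1}$ and $\mathcal{T}_{i-1}$, is distinct from $\mathcal{T}_i$, and that the two new inclusions are covering relations with brick labels $B_{i+1}$ on top and $B_i$ below --- or cite the general interval-to-wide-subcategory lattice isomorphism, which the paper never states.

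The paper sidesteps all this with an elementary contradiction: assuming the swapped sequence is not maximal, it inserts a brick $B \in \Filt(B_i, B_{i+1})$ between $B_{i+1}$ and $B_i$, takes a composition series of $B$ by the factors $B_i$ and $B_{i+1}$, and repeatedly commutes adjacent split $(B_i, B_{i+1})$ subquotients until it is forced to exhibit a non-split extension $0 \to B_{i+1} \to B' \to B_i \to 0$, contradicting $\Ext^1_\Lambda(B_i, B_{i+1}) = 0$. Your square-building strategy is conceptually appealing and can be repaired, but it invokes heavier machinery than the paper needs.
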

\begin{proof}
We first show that the conditions are necessary. It is immediate that we must have $\Hom_{\Lambda}(B_i, B_{i + 1}) = 0$ if the sequence is to remain backwards $\Hom$-orthogonal. By Lemma~\ref{lem:brick_no_ext}, we then also have that $\Ext_{\Lambda}^{1}(B_i, B_{i + 1}) = 0$.

We now show that the conditions are sufficient. We have that the new sequence is backwards $\Hom$-orthogonal as $\Hom_{\Lambda}(B_i, B_{i + 1}) = 0$. Suppose now that the new sequence is not maximal. Since the original sequence is maximal, the only place where a new brick $B$ could be added to the new sequence is between $B_{i + 1}$ and $B_{i}$. We have then that $B \in \Filt(B_{i}, B_{i + 1})$ by Theorem~\ref{thm:filt_intervals}. Since, by $\Hom$-orthogonality, we have that $\Hom_{\Lambda}(B_i, B) = \Hom_{\Lambda}(B, B_{i + 1}) = 0$, $B$ cannot contain $B_i$ as a submodule or $B_{i + 1}$ as a factor module. We let \[B = L_{0} \supset L_{1} \supset \dots \supset L_{s} = 0\] be the composition series of $B$ in terms of $B_{i}$ and $B_{i + 1}$. We let $F_{i} = L_{i - 1}/L_{i}$. Then we must have that $F_{1} \cong B_{i}$ and $F_{s} \cong B_{i + 1}$ by backwards $\Hom$-orthogonality. Hence, there must be some $j$ such that $F_{j} \cong B_{i}$ and $F_{j + 1} \cong B_{i + 1}$. We can swap the order of these two factors in the composition series if the subquotient $L_{j - 1}/L_{j + 1} \cong B_i \oplus B_{i+1}$.  If we continue making such swaps where possible, we must eventually reach a case where $L_{j - 1}/L_{j + 1}$ is not isomorphic to such a direct sum, since we cannot have a composition series with $F_{1} \cong B_{i + 1}$ or $F_{s} \cong B_{i}$.

Thus, there exists a module $B' = L_{j - 1}/L_{j + 1}$ with a non-split short exact sequence \[0 \to B_{i + 1} \to B' \to B_{i} \to 0.\] This gives that $\Ext_{\Lambda}^{1}(B_i, B_{i + 1}) \neq 0$, which is a contradiction.
\end{proof}

The following observation will be used in several proofs.

\begin{lemma}\label{lem:adj_bricks}
Suppose that $\mathcal{G}$ and $\mathcal{G}'$ are distinct maximal green sequences such that $\bricks{\mathcal{G}} \supseteq \bricks{\mathcal{G}'}$. Then there exists a pair of bricks $B, B'$ which are adjacent with $B < B'$ in $\mathcal{G}'$, but which have $B' < B$ in $\mathcal{G}$.
\end{lemma}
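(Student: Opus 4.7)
My plan is to proceed by contradiction: assume that no adjacent pair $B < B'$ in $\mathcal{G}'$ is reversed to $B' < B$ in $\mathcal{G}$. The first step is to promote this assumption from consecutive pairs to arbitrary pairs via transitivity. Since $\bricks{\mathcal{G}} \supseteq \bricks{\mathcal{G}'}$, each brick of $\mathcal{G}'$ has a well-defined position in $\mathcal{G}$, so for any two bricks of $\mathcal{G}'$ appearing at positions $i < j$ in $\mathcal{G}'$, chaining the assumption over the consecutive pairs $B'_i, B'_{i+1}, \ldots, B'_j$ yields that their positions in $\mathcal{G}$ are also increasing. In particular, the subsequence of $\mathcal{G}$ consisting of the bricks that belong to $\bricks{\mathcal{G}'}$ reproduces $\mathcal{G}'$ exactly.

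I would then split into two cases. If $\bricks{\mathcal{G}} = \bricks{\mathcal{G}'}$, the subsequence above is the whole of $\mathcal{G}$, so $\mathcal{G}$ and $\mathcal{G}'$ agree as sequences of bricks, contradicting the hypothesis that they are distinct. If instead $\bricks{\mathcal{G}} \supsetneq \bricks{\mathcal{G}'}$, pick some $B \in \bricks{\mathcal{G}} \setminus \bricks{\mathcal{G}'}$ and let $p$ be the number of bricks of $\mathcal{G}'$ preceding $B$ in $\mathcal{G}$. I would insert $B$ into $\mathcal{G}'$ between $B'_p$ and $B'_{p+1}$ and check that the resulting sequence remains backwards $\Hom$-orthogonal: pairs not involving $B$ are inherited from $\mathcal{G}'$, while pairs involving $B$ match the corresponding pairs in $\mathcal{G}$, whose $\Hom$-vanishings hold because by construction the relative order of $B$ with respect to the bricks of $\mathcal{G}'$ in the new sequence coincides with its order in $\mathcal{G}$. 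This contradicts the maximality of $\mathcal{G}'$ as a backwards $\Hom$-orthogonal sequence of bricks.

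I do not anticipate a serious obstacle here. The transitivity step is essentially an observation about total orders on finite sets, and once it is in hand both cases reduce to immediate consequences of the definitions: case A is a tautology, and case B uses only that maximality of a backwards $\Hom$-orthogonal sequence forbids the insertion of a new brick anywhere. The only mild subtlety is to be explicit that the argument handles the degenerate positions $p = 0$ and $p$ equal to the length of $\mathcal{G}'$, but these are just endpoint insertions and require no further comment.
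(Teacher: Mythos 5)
Your proof is correct and its core idea — using transitivity on consecutive pairs to deduce that the relative order of the $\mathcal{G}'$-bricks would be preserved in $\mathcal{G}$ — is exactly the paper's, just phrased in contrapositive form: the paper picks the closest reversed pair and argues adjacency by transitivity, while you assume no adjacent pair is reversed and propagate by transitivity. The one place you go beyond the paper is Case B. The paper's first sentence asserts that distinctness of $\mathcal{G}$ and $\mathcal{G}'$ alone yields a reversed pair of $\mathcal{G}'$-bricks, but this is only immediate when $\bricks{\mathcal{G}} = \bricks{\mathcal{G}'}$; when the inclusion is strict the two sequences are trivially distinct without any pair necessarily being reversed. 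Your insertion argument — that if no pair were reversed, any $B \in \bricks{\mathcal{G}} \setminus \bricks{\mathcal{G}'}$ could be slotted into $\mathcal{G}'$ at the position dictated by $\mathcal{G}$ while preserving backwards $\Hom$-orthogonality, contradicting maximality of $\mathcal{G}'$ — is precisely the step the paper leaves implicit, and it is the step that actually uses the hypothesis that $\mathcal{G}'$ is a \emph{maximal} green sequence. So your write-up is a modest but genuine tightening of the argument as it appears in the paper.
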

\begin{proof}
There must be a pair of bricks of $\mathcal{G}'$ which are ordered differently under $\mathcal{G}$, since we are assuming that $\mathcal{G}$ and $\mathcal{G}'$ are distinct. Thus, choose a pair of bricks $B$ and $B'$ of $\mathcal{G}'$ which are ordered differently in $\mathcal{G}$, such that $B$ and $B'$ are as close as possible in $\mathcal{G}'$. If there is a brick $B''$ of $\mathcal{G}'$ between $B$ and $B'$, then one of the pairs $(B, B'')$ and $(B'', B')$ must be ordered differently in $\mathcal{G}$. But this contradicts the choice of $B$ and $B'$ as the closest bricks which are ordered differently between the two sequences.
\end{proof}

\subsection{Characterising equivalent maximal green sequences}

Our preliminary work now puts us in a position to give different characterisations of equivalent maximal green sequences. The following theorem gives us six criteria for when a pair of maximal green sequences are equivalent.

\begin{theorem}\label{thm:mg_equiv}
Let $\Lambda$ be a finite-dimensional algebra over a field $K$. Let $\mathcal{G}_{1}$ and $\mathcal{G}_{2}$ be two maximal green sequences of $\Lambda$. Then the following are equivalent.
\begin{enumerate}[label=\textup{(}\arabic*\textup{)}]
\item $\mathcal{G}_{1}$ and $\mathcal{G}_{2}$ are related by a finite sequence of deformations across squares.\label{op:equiv:sq}
\item $\exch{\mathcal{G}_{1}} = \exch{\mathcal{G}_{2}}$.\label{op:equiv:exch}
\item $\summ{\mathcal{G}_{1}} = \summ{\mathcal{G}_{2}}$.\label{op:equiv:summ}
\item $\sumt{\mathcal{G}_{1}} = \sumt{\mathcal{G}_{2}}$.\label{op:equiv:sumt}
\item For any $\Lambda$-module $M$, $\hnf{\mathcal{G}_{1}}{M} = \hnf{\mathcal{G}_{2}}{M}$.\label{op:equiv:hn}
\item For any $\Lambda$-module $M$, $\bhnf{\mathcal{G}_{1}}{M} = \bhnf{\mathcal{G}_{2}}{M}$.\label{op:equiv:hnb}
\end{enumerate}
\end{theorem}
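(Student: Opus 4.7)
I plan to prove the theorem via the cycle of implications
\[
(1) \Rightarrow (2) \Rightarrow (3) \Leftrightarrow (4) \Rightarrow (1) \quad\text{together with}\quad (1) \Rightarrow (5) \Rightarrow (6) \Rightarrow (2),
\]
which together establish all six equivalences. The main obstacle is the implication $(3) \Rightarrow (1)$; a second delicate point is the closing arrow $(6) \Rightarrow (2)$, where one must recover combinatorial mutation data from Harder--Narasimhan information.

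The easy arrows along the first cycle go as follows. For $(1) \Rightarrow (2)$, Lemma~\ref{lem:sq_exch_pairs} shows that a single square deformation swaps two consecutive exchange pairs while preserving the set of all exchange pairs; iterating yields the result. For $(2) \Rightarrow (3)$, every indecomposable summand occurring somewhere in $\mathcal{G}$ is either a summand of $\Lambda = T_0$, a shifted summand of $\Lambda[1] = T_r$, or an entry of some exchange pair of $\mathcal{G}$, and the first two sets are independent of $\mathcal{G}$. For $(3) \Leftrightarrow (4)$, the set $\sumt{\mathcal{G}}$ is precisely the degree-zero part of $\summ{\mathcal{G}}$, while the degree $-1$ part consists of shifts of all indecomposable projectives (each $P[1]$ appears as a summand of $T_r = \Lambda[1]$), which is also independent of $\mathcal{G}$.

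The main step $(3) \Rightarrow (1)$ I would handle by induction on the position at which $\mathcal{G}_1$ and $\mathcal{G}_2$ first diverge. If the first divergence is at step $i + 1$, with $\mathcal{G}_2$ performing an exchange $(X_2, Y_2)$ that $\mathcal{G}_1$ does not, then Lemma~\ref{lem:exch_crit} combined with $\summ{\mathcal{G}_1} = \summ{\mathcal{G}_2}$ and Lemma~\ref{lem:at_most_once} imply that $(X_2, Y_2)$ is also an exchange pair of $\mathcal{G}_1$, occurring at some later step $j$. I would then move $(X_2, Y_2)$ leftward in $\mathcal{G}_1$ one position at a time by square deformations. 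For each intermediate exchange pair $(X', Y')$ past which $(X_2, Y_2)$ must be swapped, the verification via Lemma~\ref{lem:sq_crit} reduces to two Hom-vanishings. The first, $\Hom_{\kbproj}(Y_2, Y'[1]) = 0$, follows because $(X', Y')$ occurs strictly after $(X_2, Y_2)$ in $\mathcal{G}_2$, so by Corollary~\ref{cor:mut_dir}\ref{op:in} $Y' \in \eor{T'_2}$, where $T'_2$ is the silting complex right after $(X_2, Y_2)$ in $\mathcal{G}_2$, while $Y_2$ is a summand of $T'_2$. The symmetric vanishing $\Hom_{\kbproj}(Y', Y_2[1]) = 0$ is analogous, using that in the current state of $\mathcal{G}_1$ the pair $(X_2, Y_2)$ occurs strictly later than $(X', Y')$. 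After iterating these swaps, $(X_2, Y_2)$ sits at position $i + 1$ in the modified $\mathcal{G}_1$, strictly reducing the first divergence position, and the induction proceeds.

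For the second cycle, $(1) \Rightarrow (5)$ follows by analyzing a single square deformation: Lemmas~\ref{lem:square-bricks} and~\ref{lem:sq_crit_brick} identify it with swapping adjacent bricks $B_i, B_{i+1}$ satisfying $\Hom_\Lambda(B_i, B_{i+1}) = \Ext^1_\Lambda(B_i, B_{i+1}) = 0$, and if both label consecutive semistable factors $F_j \in \Filt(B_i)$ and $F_{j+1} \in \Filt(B_{i+1})$ in the HN filtration of some module $M$, a devissage using the long exact sequences in $\Ext$ along the $B_i$- and $B_{i+1}$-filtrations of $F_j$ and $F_{j+1}$ yields $\Ext^1_\Lambda(F_j, F_{j+1}) = 0$, so the subquotient $M_{j-1}/M_{j+1}$ splits and the two factors can be swapped in the HN filtration without changing the set of semistable factors. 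For $(5) \Rightarrow (6)$, each semistable factor $F_j$ uniquely determines its labelling brick $B_{i_j}$ (distinct wide subcategories $\Filt(B)$ have disjoint sets of non-zero objects), and the composition length of $F_j$ in $\Filt(B_{i_j})$ yields the multiplicity of $B_{i_j}$ in the multiset of stable factors of $M$. For the closing step $(6) \Rightarrow (2)$, applying (6) to $M = B$ for each brick $B$ first shows $\bricks{\mathcal{G}_1} = \bricks{\mathcal{G}_2}$; to upgrade to equality of exchange pairs one uses the HN filtrations of further test modules, particularly of non-split extensions of pairs of bricks (which reveal order data through the brick labelling of mutations), so that every exchange pair of $\mathcal{G}_1$ is matched by one in $\mathcal{G}_2$. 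This closing step is where the most careful bookkeeping will be required.
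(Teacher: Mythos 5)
Your step $(3)\Rightarrow(1)$ has a genuine gap. When you want to slide $(X_2,Y_2)$ leftward past an exchange pair $(X',Y')$ in $\mathcal{G}_1$, Lemma~\ref{lem:sq_crit} (applied with $(X',Y')$ immediately succeeded by $(X_2,Y_2)$) requires that $\Hom_{\kbproj}(Y_2,X'[1])=0$, not the Hom-groups $\Hom_{\kbproj}(Y_2,Y'[1])$ and $\Hom_{\kbproj}(Y',Y_2[1])$ that you propose to check. More importantly, the justification you give is circular: you argue the vanishing ``because $(X',Y')$ occurs strictly after $(X_2,Y_2)$ in $\mathcal{G}_2$'', but under hypothesis $(3)$ you only know $\summ{\mathcal{G}_1}=\summ{\mathcal{G}_2}$ --- it is not yet established that $(X',Y')$ is an exchange pair of $\mathcal{G}_2$ at all, let alone where it sits. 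The paper instead argues by contradiction: if $\Hom_{\kbproj}(Y_2,X'[1])\neq 0$, then $X'\in\summ{\mathcal{G}_1}\cap\eor{T}=\summ{\mathcal{G}_2}\cap\eor{T}$, and the uniqueness clause of Lemma~\ref{lem:exch_crit}\ref{op:exch_crit:use_exch} applied to $\mathcal{G}_2$ at $T$ forces $X'\cong X_2$, contradicting Lemma~\ref{lem:at_most_once}.

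Your closing arrow $(6)\Rightarrow(2)$ is also only a sketch, as you yourself flag, and the plan (recovering exchange pairs from HN filtrations of test extensions of bricks) is not carried out and is not obviously workable. The paper closes that cycle with $(6)\Rightarrow(1)$ instead: having $\bricks{\mathcal{G}_1}=\bricks{\mathcal{G}_2}$ by Lemma~\ref{lem:same_bricks}, it uses Lemma~\ref{lem:adj_bricks} to find adjacent bricks $B<B'$ in $\mathcal{G}_1$ ordered $B'<B$ in $\mathcal{G}_2$, shows $\Ext^1_\Lambda(B,B')=0$ via Lemma~\ref{lem:extension_of_bricks} together with uniqueness of HN filtrations (a nonzero extension would produce a brick $M$ whose $\mathcal{G}_1$- and $\mathcal{G}_2$-stable factors cannot agree), and then applies Lemma~\ref{lem:sq_crit_brick} to deform across a square, reducing the count of disagreements. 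The remaining pieces of your plan --- $(1)\Rightarrow(2)\Rightarrow(3)\Leftrightarrow(4)$ and $(1)\Rightarrow(5)\Rightarrow(6)$, including your devissage argument making explicit the splitting used in $(1)\Rightarrow(5)$ --- are sound and match the paper, but neither of your two cycles closes as written.
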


The fact that these six notions of equivalence of maximal green sequences are the same indicates that this is the ``correct'' equivalence relation to impose upon maximal green sequences. See also Remarks~\ref{rem:cluster} and \ref{rem:DT} for further explanation of the intuition behind this relation. 

\begin{definition} \label{def:equivalence}
Given a finite-dimensional algebra $\Lambda$ over a field $K$ with $\mathcal{G}$ and $\mathcal{G}'$ two maximal green sequences of $\Lambda$, we say that $\mathcal{G}$ and $\mathcal{G}'$ are \emph{equivalent} if any one of the six interchangeable conditions from Theorem~\ref{thm:mg_equiv} holds. If $\mathcal{G}$ and $\mathcal{G}'$ are equivalent, then we write $\mathcal{G} \sim \mathcal{G}'$.
\end{definition}

We first show that, under one of the conditions in the statement of Theorem~\ref{thm:mg_equiv}, maximal green sequences have the same bricks.

\begin{lemma}\label{lem:same_bricks}
If $\mathcal{G}$ and $\mathcal{G}'$ are maximal green sequences such that for any $\Lambda$-module $M$ we have $\bhnf{\mathcal{G}}{M} = \bhnf{\mathcal{G}'}{M}$, then we have that $\bricks{\mathcal{G}} = \bricks{\mathcal{G}'}$.
\end{lemma}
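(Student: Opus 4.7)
The plan is to observe that if $B \in \bricks{\mathcal{G}}$, then $B$ itself is a simple object of the wide subcategory $\Filt(B)$, so its $\mathcal{G}$-Harder--Narasimhan filtration is the trivial filtration $B \supset 0$ with a single semistable factor equal to $B$, and the stable filtration of $B$ contributes exactly one copy of $B$. Thus $\bhnf{\mathcal{G}}{B} = \{B\}$ as a multiset with one element.

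By hypothesis, $\bhnf{\mathcal{G}'}{B} = \{B\}$ as well. By the definition of $\bhnf{\mathcal{G}'}{-}$ as the multiset obtained from the $\mathcal{G}'$-Harder--Narasimhan filtration of $B$ by refining each semistable factor by its Jordan--H\"older filtration in $\Filt(B_j')$, the fact that this multiset contains a single element forces the $\mathcal{G}'$-Harder--Narasimhan filtration of $B$ to consist of a single semistable factor, which itself is filtered by a single copy of some brick $B_j' \in \bricks{\mathcal{G}'}$. Since this single factor must equal $B$, we conclude $B \cong B_j' \in \bricks{\mathcal{G}'}$.

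This yields $\bricks{\mathcal{G}} \subseteq \bricks{\mathcal{G}'}$, and the reverse inclusion follows by the same argument with the roles of $\mathcal{G}$ and $\mathcal{G}'$ swapped. No serious obstacle is expected: the entire argument rests on the elementary observation that a brick appearing in a maximal backwards $\Hom$-orthogonal sequence is its own stable filtration, together with the symmetry of the hypothesis.
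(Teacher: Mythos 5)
Your proof is correct and takes essentially the same approach as the paper: observe that $\bhnf{\mathcal{G}}{B} = \{B\}$ for any $B \in \bricks{\mathcal{G}}$, apply the hypothesis to get $\bhnf{\mathcal{G}'}{B} = \{B\}$, and deduce $B \in \bricks{\mathcal{G}'}$. You merely spell out the last implication in a bit more detail than the paper, which states it as immediate.
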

\begin{proof}
Let $B \in \bricks{\mathcal{G}}$. Then $\{B\} = \bhnf{\mathcal{G}}{B} = \bhnf{\mathcal{G}'}{B}$, by assumption. We must then have $B \in \bricks{\mathcal{G}'}$.
\end{proof}

We now prove our first main theorem.

\begin{proof}[Proof of Theorem~\ref{thm:mg_equiv}]
We will prove the implications 
\[\mbox{\ref{op:equiv:sq}} \Rightarrow \mbox{\ref{op:equiv:exch}} \Rightarrow \mbox{\ref{op:equiv:summ}}  \Rightarrow \mbox{\ref{op:equiv:sq}},\]
\[\mbox{\ref{op:equiv:sq}} \Rightarrow \mbox{\ref{op:equiv:hn}} \Rightarrow  \mbox{\ref{op:equiv:hnb}} \Rightarrow \mbox{\ref{op:equiv:sq}},\]
and the equivalence 
\[\mbox{\ref{op:equiv:summ}} \Leftrightarrow \mbox{\ref{op:equiv:sumt}}.\]
It is immediate from \cite[Theorem~3.2]{air} that \ref{op:equiv:summ} and \ref{op:equiv:sumt} are equivalent.
That \ref{op:equiv:exch} implies \ref{op:equiv:summ} is evident. It is also immediate that \ref{op:equiv:sq} implies \ref{op:equiv:exch} since deforming across a square does not change the set of exchange pairs by Lemma~\ref{lem:sq_exch_pairs}. Finally, it is clear that \ref{op:equiv:hn} implies \ref{op:equiv:hnb}, since the set of semistable factors determines the set of stable factors.

We now show that \ref{op:equiv:summ} implies \ref{op:equiv:sq}. Suppose that we have maximal green sequences $\mathcal{G}_{1}, \mathcal{G}_{2}$ such that $\summ{\mathcal{G}_{1}} = \summ{\mathcal{G}_{2}}$. We will prove that $\mathcal{G}_1$ and $\mathcal{G}_2$ can be deformed into each other across squares by induction on the distance between the first point where they diverge and the end of the maximal green sequences at $\Lambda[1]$. The base case is when this distance is zero, so that $\mathcal{G}_1 = \mathcal{G}_2$, in which case it is trivial that the two maximal green sequences are related by deformations across squares.

Now suppose that $\mathcal{G}_1 \neq \mathcal{G}_2$. Consider the first exchange pairs where $\mathcal{G}_{1}$ and $\mathcal{G}_{2}$ diverge. Let this exchange pair be $(X_{1}, Y_{1})$ for $\mathcal{G}_{1}$ and $(X_{2}, Y_{2})$ for $\mathcal{G}_{2}$ and let $T$ be the last two-term silting complex they share before they diverge. By Lemma~\ref{lem:exch_crit}, we must actually also have $(X_{1}, Y_{1}) \in \exch{\mathcal{G}_{2}}$ and $(X_{2}, Y_{2}) \in \exch{\mathcal{G}_{1}}$. Indeed, we know from the fact that $(X_{1}, Y_{1})$ is an exchange pair for $\mathcal{G}_{1}$ that $X_{1}$ is the unique indecomposable object (up to isomorphism) in $\summ{\mathcal{G}_1} \cap \eor{T} = \summ{\mathcal{G}_2} \cap \eor{T}$ such that $\Hom_{\kbproj}(Y_{1}, X_{1}[1]) \neq 0$, using Lemma~\ref{lem:exch_crit}\ref{op:exch_crit:use_exch}. Then, using Lemma~\ref{lem:exch_crit}\ref{op:exch_crit:get_exch} on $\mathcal{G}_{2}$, we obtain that $(X_{1}, Y_{1}) \in \exch{\mathcal{G}_{2}}$. The mirror-image of this argument shows that $(X_{2}, Y_{2}) \in \exch{\mathcal{G}_{1}}$. 

We claim that, by deforming across squares, we can make $(X_{2},Y_{2})$ the exchange pair before $(X_{1}, Y_{1})$ in $\mathcal{G}_{1}$. Suppose that we cannot deform $(X_{2}, Y_{2})$ back past some exchange pair $(A, B)$ in $\mathcal{G}_{1}$ which occurs after $T$. Then, by Lemma~\ref{lem:sq_crit},  we have that $\Hom_{\kbproj}(Y_{2}, A[1]) \neq 0$. We then know that we cannot have $A$ as a summand of $T$, since this would mean we cannot exchange $X_{2}$ for $Y_{2}$ in $T$ as part of $\mathcal{G}_2$, by Lemma~\ref{lem:mut_dir}\ref{op:silt3}.
We further know that $A \in \summ{\mathcal{G}_{1}} \cap \eor{T}$ by Corollary~\ref{cor:mut_dir}, since $A$ occurs in $\mathcal{G}_{1}$ after~$T$. By Lemma~\ref{lem:exch_crit}, we know that $X_{2}$ is the unique indecomposable in $\summ{\mathcal{G}_{2}} \cap \eor{T} = \summ{\mathcal{G}_{1}} \cap \eor{T}$ such that $\Hom_{\kbproj}(Y_{2}, X_{2}[1]) \neq 0$. But then $A \cong X_{2}$, which contradicts Lemma~\ref{lem:at_most_once}.

Hence we may deform $\mathcal{G}_{1}$ across squares to obtain a maximal green sequence $\mathcal{G}'_{1}$ where $(X_{1}, Y_{1})$ is the exchange pair immediately following $(X_{2}, Y_{2})$. But then, $\mathcal{G}'_1$ and $\mathcal{G}_2$ agree on a longer initial segment than $\mathcal{G}_1$ and $\mathcal{G}_2$, so, by the induction hypothesis, we can deform $\mathcal{G}'_1$ into $\mathcal{G}_2$ across squares. This then gives that we can deform $\mathcal{G}_1$ into $\mathcal{G}_2$ across squares, which establishes the claim.

We now show that \ref{op:equiv:sq} implies \ref{op:equiv:hn} by showing that the factors of Harder--Narasimhan filtrations are preserved by deformations across squares. We start with a maximal backwards $\Hom$-orthogonal sequence of bricks $\mathcal{G}_{1}$ \[B_1, B_2, \dots, B_i, B_{i + 1}, \dots, B_r\] and deform across a square to obtain a sequence $\mathcal{G}_{2}$ given by \[B_1, B_2, \dots, B_{i - 1}, B_{i + 1}, B_i, B_{i + 2}, \dots, B_r.\] By Lemma~\ref{lem:sq_crit_brick}, we have that 
\begin{align*}
    \Hom_{\Lambda}(B_i, B_{i + 1}) &= \Hom_{\Lambda}(B_{i + 1}, B_{i}) \\
    &= \Ext_{\Lambda}^{1}(B_i, B_{i + 1}) = \Ext_{\Lambda}^{1}(B_{i + 1}, B_{i}) = 0.
\end{align*}

We let $M$ be a $\Lambda$-module and show that $\hnf{\mathcal{G}_{1}}{M} = \hnf{\mathcal{G}_{2}}{M}$. If $\Filt(B_{i}) \cap \hnf{\mathcal{G}_{1}}{M} = \emptyset$ or $\Filt(B_{i + 1}) \cap \hnf{\mathcal{G}_{1}}{M} = \emptyset$, then the same filtration is also the Harder--Narasimhan filtration of $M$ by $\mathcal{G}_{2}$. Suppose then that the Harder--Narasimhan filtration of $M$ by $\mathcal{G}_{1}$ is \[M = M_0 \supset M_1 \supset \dots \supset M_{l - 1} \supset M_l = 0\] where $F_{k} = M_{k - 1 }/M_{k} \in \Filt(B_{j_{k}})$ for some $j_{k}$ for all $1 \leqslant k \leqslant l$, with \[j_{1} < j_{2} < \dots < j_{l}.\] Suppose that $j_{k} = i$ and $j_{k + 1} = i + 1$. Since $\Ext_{\Lambda}^{1}(B_{i}, B_{i + 1}) = 0$, the subquotient $M_{k - 1}/M_{k + 1}$ is isomorphic to $F_{k} \oplus F_{k + 1}$. Hence, we may replace $M_{k}$ by $M'_{k}$ such that $M_{k - 1}/M'_{k} = F_{k + 1}$ and $M'_{k}/M_{k + 1} = F_{k}$. The filtration obtained is then the Harder--Narasimhan filtration of $M$ by $\mathcal{G}_{2}$, since we have $F_{k + 1} \in \Filt(B_{i + 1})$, $F_{k} \in \Filt(B_{i})$ with $F_{k + 1}$ occurring before $F_{k}$ in the filtration. This has the same factors as the Harder--Narasimhan filtration of $M$ by $\mathcal{G}_{1}$, so that $\hnf{\mathcal{G}_1}{M} = \hnf{\mathcal{G}_2}{M}$, as desired.

Finally, we show that \ref{op:equiv:hnb} implies \ref{op:equiv:sq}. Note first that this implies that $\bricks{\mathcal{G}_{1}} = \bricks{\mathcal{G}_{2}}$ by Lemma~\ref{lem:same_bricks}. We now show that one can deform $\mathcal{G}_{1}$ across squares to obtain $\mathcal{G}_{2}$ by swapping adjacent bricks. By Lemma~\ref{lem:adj_bricks}, we have adjacent bricks $B < B'$ in $\mathcal{G}_{1}$ which are ordered $B' < B$ in $\mathcal{G}_{2}$. Since both sequences are backwards $\Hom$-orthogonal, we have \[\Hom_{\Lambda}(B, B') = \Hom_{\Lambda}(B', B) = 0.\] If we have $\Ext^{1}_{\Lambda}(B, B') \neq 0$, then, by Lemma~\ref{lem:extension_of_bricks}, we have that there is a brick $M$ which is given by a non-split extension $0 \to B' \to M \to B \to 0$. But then, by uniqueness of Harder--Narasimhan filtrations, this short exact sequence must give the Harder--Narasimhan filtration of $M$ with respect to~$\mathcal{G}_{1}$. However, the Harder--Narasimhan filtration of $M$ with respect to $\mathcal{G}_{2}$ cannot have the same bricks, otherwise the filtration gives a non-split extension $0 \to B \to M \to B' \to 0$. This gives an endomorphism $M \to B' \to M$ of $M$ which is neither zero nor an isomorphism, contradicting the fact that $M$ is a brick. This contradicts the assumption that the Harder--Narasimhan filtrations with respect to $\mathcal{G}_{1}$ and $\mathcal{G}_{2}$ must have the same multisets of bricks.

Hence, we have that $\Ext_\Lambda^1(B, B') = 0$, so by Lemma~\ref{lem:sq_crit_brick}, we can deform $\mathcal{G}_1$ across a square by swapping $B$ and $B'$. Since the number of pairs of bricks ordered differently in $\mathcal{G}_1$ and $\mathcal{G}_2$ was finite and decreases under each such swap, by repeating this process, we deform $\mathcal{G}_1$ into $\mathcal{G}_2$ across a sequence of squares in finitely many steps.  
\end{proof}

\begin{remark} \label{rem:cluster}
Note that, if  $\Lambda$ is a Jacobian algebra of a quiver with potential,
there is a cluster algebra associated to $\Lambda$. Its cluster variables will be in bijection with the reachable indecomposable presilting complexes in $\twoterm$ \cite{air,by,ck06}. Condition~\ref{op:equiv:summ} in Theorem~\ref{thm:mg_equiv} means that maximal green sequences are equivalent in the sense of Definition~\ref{def:equivalence} if and only if the corresponding sets of cluster variables appearing in the clusters in the mutation sequences coincide.
\end{remark}

\begin{remark} \label{rem:DT}
It is shown in \cite{reineke_poisson} that, given a path algebra of a simply-laced Dynkin diagram, maximal green sequences correspond to products of quantum dilogarithms and that the value of this product is in fact independent of the maximal green sequence chosen. These products of quantum dilogarithms give so-called ``refined Donaldson--Thomas invariants''. See also \cite{kel-green, dem-kel} for more general statements and further references. Maximal green sequences which are related by deformation across a square correspond to products of quantum dilogarithms which differ by swapping two adjacent commuting terms in the product. Hence, maximal green sequences which are equivalent are related by finitely many such swaps. It is natural to consider such products as the same. This is analogous to considering reduced words of longest elements of Weyl groups up to commutation, as we will explore in a sequel paper \cite{mgsii}.
\end{remark}

Thanks to Theorem~\ref{thm:mg_equiv}, Lemma~\ref{lem:same_bricks} implies the following.

\begin{corollary}
If $\mathcal{G} \sim \mathcal{G}'$ are equivalent maximal green sequences, then we have that $\bricks{\mathcal{G}} = \bricks{\mathcal{G}'}$.
\end{corollary}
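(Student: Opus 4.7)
The proof is essentially immediate from the results that have already been established, and amounts to unwinding definitions and chaining two prior statements. The plan is to observe that the equivalence of $\mathcal{G}$ and $\mathcal{G}'$ gives access to any of the six equivalent conditions of Theorem~\ref{thm:mg_equiv}, and that the condition concerning stable factors of Harder--Narasimhan filtrations is precisely the hypothesis of Lemma~\ref{lem:same_bricks}.

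More concretely, I would proceed as follows. First, since $\mathcal{G} \sim \mathcal{G}'$, Definition~\ref{def:equivalence} says that any one of the six conditions of Theorem~\ref{thm:mg_equiv} holds between them, and in particular condition \ref{op:equiv:hnb}: for any $\Lambda$-module $M$, we have $\bhnf{\mathcal{G}}{M} = \bhnf{\mathcal{G}'}{M}$. Second, this is exactly the hypothesis of Lemma~\ref{lem:same_bricks}, which then yields $\bricks{\mathcal{G}} = \bricks{\mathcal{G}'}$, completing the proof.

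There is no real obstacle here; the content of the corollary is entirely packaged into the earlier two results. The only thing to note is that one could equally well invoke condition \ref{op:equiv:hn} instead of \ref{op:equiv:hnb}, since the set of semistable factors determines (and is determined by, in the relevant weak sense) the multiset of stable factors for the purposes of recovering bricks: each brick $B \in \bricks{\mathcal{G}}$ is itself its own stable factor in its own Harder--Narasimhan filtration with respect to $\mathcal{G}$, and thus must also appear in the stable factors of its Harder--Narasimhan filtration with respect to $\mathcal{G}'$, forcing $B \in \bricks{\mathcal{G}'}$. Symmetry then yields the reverse inclusion.
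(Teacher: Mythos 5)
Your proof is correct and is exactly the paper's argument: the paper derives this corollary by noting that equivalence (via Theorem~\ref{thm:mg_equiv}, condition on stable factors) supplies the hypothesis of Lemma~\ref{lem:same_bricks}, which gives $\bricks{\mathcal{G}} = \bricks{\mathcal{G}'}$.
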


 The converse is false, as is shown in the following example. Maximal green sequences which have the same set of bricks are not necessarily equivalent. Hence, our notion of equivalence does not coincide with the notion of ``weak equivalence'' from \cite[p.257]{Qiu15}.

\begin{example}\label{ex:brick_counter}
Consider the path algebra of the quiver \[1 \leftarrow 2 \rightarrow 3,\] where we use the convention of composing arrows as if they were functions, so that $\xrightarrow{\alpha}\xrightarrow{\beta} = \beta\alpha$.
The Auslander--Reiten quiver of this algebra is \[
\begin{tikzpicture}


\node(2) at (0,1) {$\tcs{2}$};
\node(32) at (1,0) {$\tcs{3\\2}$};
\node(12) at (1,2) {$\tcs{1\\2}$};
\node(123) at (2,1)	{$\tcs{1~3\\2}$};
\node(1) at (3,0) {$\tcs{1}$.};
\node(3) at (3,2) {$\tcs{3}$};


\draw[->] (2) -- (12);
\draw[->] (2) -- (32);
\draw[->] (32) -- (123);
\draw[->] (12) -- (123);
\draw[->] (123) -- (1);
\draw[->] (123) -- (3);


\draw[dashed] (32) -- (1);
\draw[dashed] (2) -- (123);
\draw[dashed] (12) -- (3);

\end{tikzpicture}
\]
The lattice of its two-term silting complexes is shown in Figure~\ref{fig:silting_counter} and the lattice of its torsion classes is shown in Figure~\ref{fig:brick_label_counter}. Consider the two maximal green sequences given by the maximal backward $\Hom$-orthogonal sequences of bricks \[\tcs{2},\, \tcs{1\\2},\, \tcs{1},\, \tcs{3\\2},\, \tcs{3}\] and \[\tcs{2},\, \tcs{3\\2},\, \tcs{3},\, \tcs{1\\2},\, \tcs{1}.\] These two maximal green sequences have the same set of bricks, but inspection shows that one cannot transform one into the other by deforming across squares. Indeed, these two maximal green sequences have different sets of summands, namely \[\left\{\tcs{1\\2},\, \tcs{2},\, \tcs{3\\2},\, \tcs{1~3\\2},\, \tcs{3},\, \tcs{1\\2}[1],\, \tcs{2}[1],\, \tcs{3\\2}[1] \right\}\] and \[\left\{\tcs{1\\2},\, \tcs{2},\, \tcs{3\\2},\, \tcs{1~3\\2},\, \tcs{1},\, \tcs{1\\2}[1],\, \tcs{2}[1],\, \tcs{3\\2}[1] \right\}.\]
\end{example}

\begin{figure}
    \caption{The lattice of two-term silting complexes from Example~\ref{ex:brick_counter}}
    \label{fig:silting_counter}
    \[
\begin{tikzpicture}[xscale=1.5,yscale=2]


\node(12p2p32) at (0,0) {$\tcs{1\\2} \oplus \tcs{2} \oplus \tcs{3\\2}$};

\node(121p2p32) at (-3,-1) {$\tcs{1\\2}[1] \oplus \tcs{2} \oplus \tcs{3\\2}$};
\node(12p123p32) at (0,-1) {$\tcs{1\\2} \oplus \tcs{1~3\\2} \oplus \tcs{3\\2}$};
\node(12p2p321) at (3,-1) {$\tcs{1\\2} \oplus \tcs{2} \oplus \tcs{3\\2}[1]$};

\node(3p123p32) at (-1.5,-2) {$\tcs{3} \oplus \tcs{1~3\\2} \oplus \tcs{3\\2}$};
\node(12p123p1) at (1.5,-2) {$\tcs{1\\2} \oplus \tcs{1~3\\2} \oplus \tcs{1}$};

\node(121p3p32) at (-3,-3) {$\tcs{1\\2}[1] \oplus \tcs{3} \oplus \tcs{3\\2}$};
\node(3p123p1) at (0,-3) {$\tcs{3} \oplus \tcs{1~3\\2} \oplus \tcs{1}$};
\node(321p12p1) at (3,-3) {$\tcs{3\\2}[1] \oplus \tcs{1\\2} \oplus \tcs{1}$};

\node(3p21p1) at (0,-4) {$\tcs{3} \oplus \tcs{2}[1] \oplus \tcs{1}$};

\node(121p3p21) at (-3,-5) {$\tcs{1\\2}[1] \oplus \tcs{3} \oplus \tcs{2}[1]$};
\node(121p2p321) at (0,-5) {$\tcs{1\\2}[1] \oplus \tcs{2} \oplus \tcs{3\\2}[1]$};
\node(321p21p1) at (3,-5) {$\tcs{3\\2}[1] \oplus \tcs{2}[1] \oplus \tcs{1}$};

\node(121p21p321) at (0,-6) {$\tcs{1\\2}[1] \oplus \tcs{2}[1] \oplus \tcs{3\\2}[1]$.};


\draw[->] (12p2p32) -- (121p2p32);
\draw[->] (12p2p32) -- (12p123p32);
\draw[->] (12p2p32) -- (12p2p321);

\draw[->] (12p123p32) -- (3p123p32);
\draw[->] (12p123p32) -- (12p123p1);

\draw[->] (121p2p32) -- (121p3p32);
\draw[->] (12p2p321) -- (321p12p1);
\draw[->] (121p2p32) -- (121p2p321);
\draw[->] (12p2p321) -- (121p2p321);

\draw[->] (3p123p32) -- (3p123p1);
\draw[->] (3p123p32) -- (121p3p32);
\draw[->] (12p123p1) -- (3p123p1);
\draw[->] (12p123p1) -- (321p12p1);

\draw[->] (3p123p1) -- (3p21p1);

\draw[->] (121p3p32) -- (121p3p21);
\draw[->] (321p12p1) -- (321p21p1);

\draw[->] (3p21p1) -- (121p3p21);
\draw[->] (3p21p1) -- (321p21p1);

\draw[->] (121p3p21) -- (121p21p321);
\draw[->] (121p2p321) -- (121p21p321);
\draw[->] (321p21p1) -- (121p21p321);

\end{tikzpicture}
\]
\end{figure}
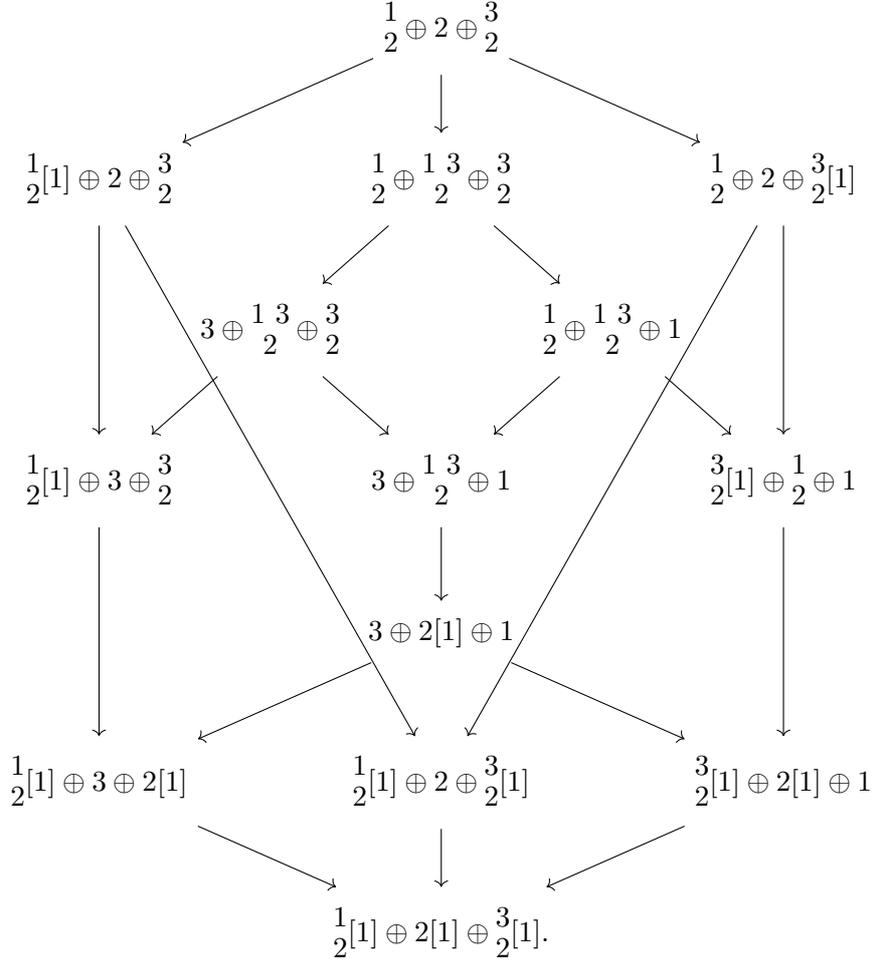

\begin{figure}
    \caption{The lattice of torsion classes from Example~\ref{ex:brick_counter} with brick labelling}
    \label{fig:brick_label_counter}
    \[
\begin{tikzpicture}[xscale=1.5,yscale=3.1]


\node(12p2p32) at (0,0) {$\additive \left\{\tcs{1\\2},\, \tcs{1~3\\2},\, \tcs{3\\2},\, \tcs{1},\, \tcs{2},\, \tcs{3}\right\}$};

\node(121p2p32) at (-3,-1) {$\additive \left\{\tcs{2},\, \tcs{3\\2},\, \tcs{3}\right\}$};
\node(12p123p32) at (0,-1) {$\additive \left\{\tcs{1\\2},\, \tcs{1~3\\2},\, \tcs{3\\2},\, \tcs{1},\, \tcs{3}\right\}$};
\node(12p2p321) at (3,-1) {$\additive \left\{\tcs{2},\, \tcs{1\\2},\, \tcs{1}\right\}$};

\node(3p123p32) at (-1.5,-2) {$\additive \left\{\tcs{1~3\\2},\, \tcs{3\\2},\, \tcs{1},\, \tcs{3}\right\}$};
\node(12p123p1) at (1.5,-2) {$\additive \left\{\tcs{1\\2},\, \tcs{1~3\\2},\, \tcs{1},\, \tcs{3}\right\}$};

\node(121p3p32) at (-3,-3) {$\additive \left\{\tcs{3\\2},\, \tcs{3}\right\}$};
\node(3p123p1) at (0,-3) {$\additive \left\{\tcs{1~3\\2},\, \tcs{1},\, \tcs{3}\right\}$};
\node(321p12p1) at (3,-3) {$\additive \left\{\tcs{1\\2},\, \tcs{1}\right\}$};

\node(3p21p1) at (0,-4) {$\additive \left\{\tcs{1},\, \tcs{3}\right\}$};

\node(121p3p21) at (-3,-5) {$\additive \left\{\tcs{3}\right\}$};
\node(121p2p321) at (0,-5) {$\additive \left\{\tcs{2}\right\}$};
\node(321p21p1) at (3,-5) {$\additive \left\{\tcs{1}\right\}$};

\node(121p21p321) at (0,-6) {$\{0\}$.};


\draw[->] (12p2p32) -- (121p2p32) node [midway,fill=white] {$\tcs{1}$};
\draw[->] (12p2p32) -- (12p123p32) node [midway,fill=white] {$\tcs{2}$};
\draw[->] (12p2p32) -- (12p2p321) node [midway,fill=white] {$\tcs{3}$};

\draw[->] (12p123p32) -- (3p123p32) node [midway,fill=white] {$\tcs{1\\2}$};
\draw[->] (12p123p32) -- (12p123p1) node [midway,fill=white] {$\tcs{3\\2}$};

\draw[->] (121p2p32) -- (121p3p32) node [midway,fill=white] {$\tcs{2}$};
\draw[->] (12p2p321) -- (321p12p1) node [midway,fill=white] {$\tcs{2}$};
\draw[->] (121p2p32) -- (121p2p321) node [midway,fill=white] {$\tcs{3}$};
\draw[->] (12p2p321) -- (121p2p321) node [midway,fill=white] {$\tcs{1}$};

\draw[->] (3p123p32) -- (3p123p1) node [midway,fill=white] {$\tcs{3\\2}$};
\draw[->] (3p123p32) -- (121p3p32) node [midway,fill=white] {$\tcs{1}$};
\draw[->] (12p123p1) -- (3p123p1) node [midway,fill=white] {$\tcs{1\\2}$};
\draw[->] (12p123p1) -- (321p12p1) node [midway,fill=white] {$\tcs{3}$};

\draw[->] (3p123p1) -- (3p21p1) node [midway,fill=white] {$\tcs{1~3\\2}$};

\draw[->] (121p3p32) -- (121p3p21) node [midway,fill=white] {$\tcs{3\\2}$};
\draw[->] (321p12p1) -- (321p21p1) node [midway,fill=white] {$\tcs{1\\2}$};

\draw[->] (3p21p1) -- (121p3p21) node [midway,fill=white] {$\tcs{1}$};
\draw[->] (3p21p1) -- (321p21p1) node [midway,fill=white] {$\tcs{3}$};

\draw[->] (121p3p21) -- (121p21p321) node [midway,fill=white] {$\tcs{3}$};
\draw[->] (121p2p321) -- (121p21p321) node [midway,fill=white] {$\tcs{2}$};
\draw[->] (321p21p1) -- (121p21p321) node [midway,fill=white] {$\tcs{1}$};

\end{tikzpicture}
\]
\end{figure}

Because of Lemma~\ref{lem:same_bricks}, one can regard having the same stable factors as an augmentation of the condition of having the same bricks. This augmentation is required to determine the equivalence class of the maximal green sequence.

In some ways it is surprising that maximal green sequences may have the same set of bricks whilst having different sets of $\tau$-rigids. Theorem~\ref{thm:air_relproj} shows that the $\tau$-rigids form the relative projectives of the torsion classes, whilst Theorem~\ref{thm:enomoto_relsimp} shows that the bricks form the relative simples of the torsion classes. Hence there is no duality between simples and projectives on the level of maximal green sequences. It is known from \cite[Example~6.17]{enomoto_jhp} that, in general, there is no duality between simples and projectives within torsion classes --- see also \cite[Theorem~5.10]{enomoto_jhp}. But it is still not obvious why the bricks of a maximal green sequence should give less information than the $\tau$-rigids.

As shown in \cite[Corollary~5.15]{enomoto_jhp}, the existence of the duality between simples and projectives --- at least in the sense of having the same number of simples as indecomposable projectives --- in a functorially finite torsion class $\mathcal{T}$ in $\mod \Lambda$, for an Artin algebra $\Lambda$, is equivalent to the Jordan--H\"older property (JHP) for this torsion class considered as an exact category. Here the exact structure is the one induced by the embedding $\mathcal{T} \hookrightarrow \mod \Lambda$. Enomoto shows that functorially finite torsion classes of Nakayama algebras possess the JHP \cite[Corollary~5.19]{enomoto_jhp}; we show in Section~\ref{sect:nak} that for these algebras the set of bricks does determine the equivalence class of the maximal green sequence.

One may wonder whether the set of bricks determines the equivalence class of the maximal green sequence if and only if every functorially finite torsion class in $\mod \Lambda$ satisfies the JHP. \cite[Example~2.9, Example~4.17]{enomoto_bruhat} shows that the ``only if'' part cannot be true. Namely, the example in \cite{enomoto_bruhat} gives a torsion class in the preprojective algebra of type $D_4$ which does not satisfy the JHP. On the other hand, in a sequel to this paper \cite{mgsii}, we will show that the converse of Lemma~\ref{lem:same_bricks} holds for all preprojective algebras of Dynkin type. Hence, there are examples where the converse of Lemma~\ref{lem:same_bricks} holds whilst there exist functorially finite torsion classes for which the JHP fails. We conjecture that the ``if'' part is true, motivated by the case of Nakayama algebras.

\begin{conjecture}
Let $\Lambda$ be an Artin algebra such that every functorially finite torsion class in $\mod \Lambda$ satisfies the JHP. Then two maximal green sequences $\mathcal{G}$ and $\mathcal{G}'$ have the same set of bricks if and only they are equivalent.
\end{conjecture}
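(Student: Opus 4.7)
The ``only if'' direction holds without any assumption on $\Lambda$: it follows from the corollary of Theorem~\ref{thm:mg_equiv} stated immediately above the conjecture. The plan for the ``if'' direction is to combine the swap-based reduction used in the proof of the implication \ref{op:equiv:hnb}$\Rightarrow$\ref{op:equiv:sq} of Theorem~\ref{thm:mg_equiv} with a rigidity argument supplied by the JHP hypothesis.

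The first step is to induct on the number of pairs of bricks ordered differently between $\mathcal{G}$ and $\mathcal{G}'$. Suppose $\mathcal{G} \neq \mathcal{G}'$. By Lemma~\ref{lem:adj_bricks} applied to $\mathcal{G}'$ there exist adjacent bricks $B, B'$ in $\mathcal{G}'$, with $B$ immediately preceding $B'$, whose order is reversed in~$\mathcal{G}$. Backwards $\Hom$-orthogonality of both sequences yields $\Hom_{\Lambda}(B, B') = \Hom_{\Lambda}(B', B) = 0$. By Lemma~\ref{lem:sq_crit_brick}, the proof then reduces to showing $\Ext^{1}_{\Lambda}(B, B') = 0$: granting this, one deforms $\mathcal{G}'$ across a square into an equivalent sequence $\mathcal{G}''$ with strictly fewer inverted pairs relative to $\mathcal{G}$, and invokes the induction hypothesis on $\mathcal{G}''$ and $\mathcal{G}$.

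Suppose for contradiction that $\Ext^{1}_{\Lambda}(B, B') \neq 0$. By Lemma~\ref{lem:extension_of_bricks}, there is a brick $M$ fitting in a non-split short exact sequence $0 \to B' \to M \to B \to 0$. Let $q$ denote the position of $B'$ in $\mathcal{G}$, so $\mathcal{T}_{q-1} \overset{B'}{\supset} \mathcal{T}_{q}$ is the corresponding covering relation; then $B'$, $B$, and $M$ all lie in $\mathcal{T}_{q-1}$, and $B'$ is a relative simple of $\mathcal{T}_{q-1}$. The HN filtration of $M$ with respect to $\mathcal{G}$ restricts to a descending chain of submodules of $M$ in $\mathcal{T}_{q-1}$, whose refinement via the stable factors produces a filtration of $M$ by bricks that are relative simples of the various $\mathcal{T}_{i-1}$ for $i \geq q$. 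The strategy is to use JHP on $\mathcal{T}_{q-1}$ to show that the multiset of these stable factors must agree with the multiset $\{B, B'\}$ provided by the extension sequence. The HN-ordering constraint then forces the HN filtration of $M$ relative to $\mathcal{G}$ to be $0 \to B \to M \to B' \to 0$; composition with the epimorphism $M \twoheadrightarrow B$ and the monomorphism $B \hookrightarrow M$ from the extension sequence yields a nonzero, non-invertible endomorphism of $M$, contradicting the fact that $M$ is a brick.

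The hard part will be to rigorously derive the rigidity of the multiset of stable factors of $M$ from JHP of $\mathcal{T}_{q-1}$: one must exhibit the stable factors of $M$ with respect to $\mathcal{G}$ as a composition series of $M$ by relative simples in $\mathcal{T}_{q-1}$, so that JHP constrains them to coincide with those of the extension SES. Implementing this step calls for a careful analysis of how relative simples of nested torsion classes along $\mathcal{G}$ behave under JHP, and Enomoto's characterisation of JHP in terms of rank equality and the structure of brick labels \cite{enomoto_jhp} should be the key technical tool. The Nakayama case established in Theorem~\ref{thm:nak_brick_equiv} is expected to serve as the prototype for this implementation.
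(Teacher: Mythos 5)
The statement you address is a conjecture in the paper: the authors establish only the Nakayama case (Theorem~\ref{thm:nak_brick_equiv}) and the two-simple case (Theorem~\ref{thm:two_simples}), and explicitly write that the general ``if'' direction is conjectural. There is therefore no proof in the paper to compare against, so I evaluate your proposal on its own.

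Your scaffolding is sound and mirrors the Nakayama proof: use Lemma~\ref{lem:adj_bricks} to locate an adjacent inverted pair $B, B'$ of $\mathcal{G}'$, reduce via Lemma~\ref{lem:sq_crit_brick} to showing $\Ext_{\Lambda}^{1}(B, B') = 0$, and iterate. The endgame is also right: if the $\mathcal{G}$-stable factors of $M$ are exactly $\{B, B'\}$, the HN-ordering constraint forces the $\mathcal{G}$-HN filtration to be $0 \to B \to M \to B' \to 0$, and splicing the monomorphism $B \hookrightarrow M$ with the epimorphism $M \twoheadrightarrow B$ from the assumed extension gives a non-zero, non-invertible endomorphism of the brick~$M$.

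The gap you flag as ``the hard part'' is genuine, and as currently phrased the JHP of $\mathcal{T}_{q-1}$ is not the right tool. Neither filtration of $M$ under consideration is a composition series in the exact category $\mathcal{T}_{q-1}$: the factor $B'$ is relatively simple in $\mathcal{T}_{q-1}$, but $B$ labels a strictly later covering $\mathcal{T}_{p-1}\supset\mathcal{T}_p$ of $\mathcal{G}$ with $p>q$, hence is relatively simple only in the smaller torsion class $\mathcal{T}_{p-1}\subset\mathcal{T}_{q-1}$ and may have proper non-zero submodules in $\mathcal{T}_{q-1}\setminus\mathcal{T}_{p-1}$. The $\mathcal{G}$-stable factors of $M$ likewise are relative simples of a strictly decreasing chain of torsion classes rather than of the fixed $\mathcal{T}_{q-1}$. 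One could refine both filtrations to genuine $\mathcal{T}_{q-1}$-composition series and compare those by JHP, but equality of the multisets of $\mathcal{T}_{q-1}$-composition factors does not determine the coarser multisets of $\mathcal{G}$-stable factors, so no contradiction follows. What the Nakayama proof actually uses at this step is Lemma~\ref{lem:nak_ses} --- that the non-split extension $M$ is forced into $\bricks{\mathcal{G}}$ --- and that lemma rests on uniseriality, not on JHP. Extracting such a brick-membership statement from the blanket JHP hypothesis is precisely the open content of the conjecture, and your sketch does not yet supply it.
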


Note that the JHP does not hold for some functorially finite torsion classes of the non-linearly oriented $A_3$ algebra considered in Example~\ref{ex:brick_counter}. Indeed, in the torsion class \[\additive \left\{\tcs{1\\2},\, \tcs{1\phantom{2}3\\\phantom{1}2\phantom{3}},\, \tcs{3\\2},\, \tcs{1},\, \tcs{3}\right\}\] the brick $\tcss{1\phantom{2}3\\\phantom{1}2\phantom{3}}$ has two different composition series: one with factors \[\tcs{3} \text{ and } \tcs{1\\2},\] and the other with factors \[\tcs{1} \text{ and } \tcs{3\\2}.\] Consequently, there is no bijection between relative projectives and relative simples. There are three relative projectives \[\tcs{1\\2},\, \tcs{1\phantom{2}3\\\phantom{1}2\phantom{3}}, \text{ and } \tcs{3\\2},\] while there are four relative simples \[\tcs{1\\2},\, \tcs{3\\2},\, \tcs{1}, \text{ and } \tcs{3}.\]

\begin{remark}
Another plausible definition of equivalence of maximal green sequences which fails to coincide with those of Theorem~\ref{thm:mg_equiv} is as follows. Recall from Section~\ref{sect:back:silt_notion} that a maximal green sequence can be specified as the sequence of vertices mutated at. One might conjecture that two maximal green sequences are equivalent if and only if the underlying multisets of these sequences coincide. It is clear that these multisets are preserved by deformation across squares. However, these multisets may coincide for inequivalent maximal green sequences. For instance, the maximal green sequence down the left-hand side of Figure~\ref{fig:silting_counter} has sequence \[1, 2, 3, 2,\] while the sequence down the right-hand side has sequence \[3, 2, 1, 2.\] The underlying multisets are the same here, but the maximal green sequences are not equivalent.
\end{remark}

\section{Partial orders on equivalence classes}\label{sect:partial_order}

The equivalence relation on maximal green sequences defined in the previous section reveals more structure on the set of maximal sequences. Indeed, there are natural partial orders on the equivalence classes of maximal green sequences, analogous to the partial orders on silting complexes. There are three such partial orders, which we study in this section. The first uses deformations across oriented polygons; the second uses reverse-inclusion of summands; the third uses refinement of Harder--Narasimhan filtrations. We show that the first order implies the second and the third, and we conjecture that the three actually coincide, analogously to how the two partial orders on silting complexes have the same Hasse diagram.

\subsection{Deformations across oriented polygons}

The first of these partial orders has covering relations given by deformations across oriented polygons.

\begin{definition}\label{def:iepd}
Let $\mathcal{G}$ and $\mathcal{G}'$ be maximal green sequences of $\Lambda$. If $\mathcal{G}$ and $\mathcal{G}'$ only differ in that $\mathcal{G}$ contains the path of length greater than two around an oriented polygon, whilst $\mathcal{G}'$ contains the length-two path, then we say that $\mathcal{G}'$ is an \emph{increasing elementary polygonal deformation} of $\mathcal{G}$. By extension, we also say that $[\mathcal{G}']$ is an increasing elementary polygonal deformation of $[\mathcal{G}]$. Similarly, we say that $\mathcal{G}$ is a decreasing elementary polygonal deformation of $\mathcal{G}'$ and that $[\mathcal{G}]$ is a decreasing elementary polygonal deformation of $[\mathcal{G}']$.
\end{definition}

Note that an increasing elementary polygonal deformation \textit{decreases} the length of the maximal green sequence. One can think of it instead as increasing the speed of the maximal green sequence.

\begin{figure}
    \caption{An increasing elementary polygonal deformation, where $X$ and $A$ are indecomposable}\label{fig:deformation}
    \[
    \begin{tikzpicture}[scale=1.5,xscale=1.3]

\node (start) at (-1,0) {$\dots$};
\node (exx') at (0.25,0) {$E \oplus X \oplus A$};
\node (eyx') at (2,1) {$E \oplus Z \oplus A$};
\node (eyy') at (3.75,0) {$E \oplus Z \oplus C$};
\node (exz') at (0.75,-1) {$E \oplus X \oplus C'$};
\node (ezy') at (3.25,-1) {$E \oplus X' \oplus C$};
\node (dots) at (2,-1) {$\dots$};
\node (end) at (5,0) {$\dots$};

\draw[->] (start) -- (exx');
\draw[->] (exx') -- (eyx');
\draw[->] (eyx') -- (eyy');
\draw[->] (exx') -- (exz');
\draw[->] (exz') -- (dots);
\draw[->] (dots) -- (ezy');
\draw[->] (ezy') -- (eyy');
\draw[->] (eyy') -- (end);

\end{tikzpicture}
    \]
\end{figure}

\subsubsection{Deformations across oriented polygons in terms of bricks}

Just as we interpreted deformations across squares in terms of maximal backwards $\Hom$-orthogonal sequences of bricks, we also wish to do the same for increasing elementary polygonal deformations.

\begin{lemma}\label{lem:polygon_bricks}
Let $\Lambda$ be a finite-dimensional algebra over a field $K$. Suppose that \[
\begin{tikzcd}
&& \mathcal{V} \ar[drr,"M"] && \\
\mathcal{T} \ar[urr,"L"] \ar[dr,"B_1"] &&&& \mathcal{U} \\
& \mathcal{V}'_{1} \ar[r,"B_2"] & \dots \ar[r,"B_{r-1}"] & \mathcal{V}'_{r} \ar[ur,"B_r"] &
\end{tikzcd}
\] is an oriented polygon in $\fftors \Lambda$ with its brick labels. Then $B_1 \cong M$ and $B_r \cong L$.
\end{lemma}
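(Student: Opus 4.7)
The plan is as follows. Applying Theorem~\ref{thm:filt_intervals} to both paths around the polygon gives
\[
[\mathcal{T}, \mathcal{U}] = \Filt(L, M) = \Filt(B_1, B_2, \ldots, B_r).
\]
Note that $L \not\cong M$, since otherwise the backwards $\Hom$-orthogonality $\Hom_{\Lambda}(M, L) = 0$ would force $\End_{\Lambda}(L) = 0$, contradicting $L$ being a brick. Proposition~\ref{prop:simples_in_intervals} applied to the chain $\mathcal{T} \supset \mathcal{V} \supset \mathcal{U}$ then tells us that every relative simple of the wide subcategory $[\mathcal{T}, \mathcal{U}]$ lies in $\{L, M\}$. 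So my strategy reduces to showing that $B_1$ and $B_r$ are themselves relative simples of $[\mathcal{T}, \mathcal{U}]$, and then determining, for each, which of $L, M$ it equals.

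For the first task, consider a short exact sequence $0 \to A \to B_1 \to C \to 0$ with $A, C$ both nonzero in $[\mathcal{T}, \mathcal{U}]$. Using the description $\Filt(B_1) = \mathcal{T} \cap \hor{\mathcal{V}'_1}$ and the fact that $\hor{\mathcal{V}'_1}$ is closed under submodules, together with $A \in \mathcal{T}$, I would deduce $A \in \Filt(B_1)$; but $A \neq 0$ in $\Filt(B_1)$ forces $\dim A \geq \dim B_1$, contradicting $C \neq 0$. The argument for $B_r$ is dual: from $\Filt(B_r) = \mathcal{V}'_r \cap \hor{\mathcal{U}}$ and the closure of the torsion class $\mathcal{V}'_r$ under factor modules, any nonzero quotient $C$ of $B_r$ in $[\mathcal{T}, \mathcal{U}]$ must lie in $\Filt(B_r)$, so $\dim C \geq \dim B_r$, contradicting $A \neq 0$. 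This establishes $B_1, B_r \in \{L, M\}$.

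For the second task, I would appeal to the fact --- already implicit in the proof of Lemma~\ref{lem:square-bricks} --- that distinct coverings of a common torsion class in $\tors \Lambda$ carry distinct brick labels. Since $\mathcal{V}'_1 \neq \mathcal{V}$ are distinct coverings of $\mathcal{T}$, the labels $B_1$ and $L$ differ, forcing $B_1 \cong M$; likewise, $\mathcal{V}'_r \neq \mathcal{V}$ are distinct torsion classes covering $\mathcal{U}$, so $B_r \not\cong M$ and hence $B_r \cong L$.

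I expect the main obstacle to be the first task --- verifying that $B_1$ and $B_r$ are relative simples of $[\mathcal{T}, \mathcal{U}]$ rather than merely bricks lying in it. The success of that step depends on exploiting the precise formulas $\Filt(B_1) = \mathcal{T} \cap \hor{\mathcal{V}'_1}$ and $\Filt(B_r) = \mathcal{V}'_r \cap \hor{\mathcal{U}}$ to route subobjects of $B_1$ and quotients of $B_r$ into the correct $\Filt$ class, where the dimension estimate then delivers the contradiction.
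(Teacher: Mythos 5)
Your proof is correct and takes a genuinely different route from the paper's. The paper's argument works outward from the two-step side: it first observes directly that $L$ and $M$ are precisely the relatively simple objects of $[\mathcal{T},\mathcal{U}]$ (neither can admit a nontrivial filtration by the other without violating either the brick condition or backwards $\Hom$-orthogonality), uses Proposition~\ref{prop:simples_in_intervals} to force $L, M \in \{B_1,\ldots,B_r\}$, and then invokes backwards $\Hom$-orthogonality of the long sequence to pin them to the two ends with $M \cong B_1$ and $L \cong B_r$. You instead work from the long side: you prove the (slightly more general, reusable) fact that the first and last brick labels of any finite saturated chain from $\mathcal{T}$ to $\mathcal{U}$ are relatively simple in $[\mathcal{T},\mathcal{U}]$, via the defining formulas $\Filt(B_1) = \mathcal{T}\cap\hor{\mathcal{V}'_1}$ and $\Filt(B_r) = \mathcal{V}'_r\cap\hor{\mathcal{U}}$ together with a dimension count, and then apply Proposition~\ref{prop:simples_in_intervals} in the opposite direction --- to the short chain --- to place $B_1, B_r \in \{L, M\}$. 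Your concluding step, that distinct lower (resp.\ upper) covers of a fixed torsion class carry distinct brick labels, is indeed the same fact the paper uses implicitly at the end of the proof of Lemma~\ref{lem:square-bricks}, and it is a clean replacement for the paper's terser Hom-orthogonality argument. Both routes are sound; yours isolates a tidy general observation about the extremal brick labels of a saturated chain, while the paper's keeps the argument anchored on the short side and leans on the brick sequence being backwards $\Hom$-orthogonal.
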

\begin{proof}
We have that $[\mathcal{T}, \mathcal{U}] = \Filt(L, M) = \Filt(B_1, B_2, \dots, B_r)$, recalling the notation $[\mathcal{T}, \mathcal{U}]$ from Section~\ref{sect:back:brick_label}. We then have that $L$ and $M$ must be precisely the relatively simple objects of $[\mathcal{T}, \mathcal{U}]$, since neither brick can admit a filtration with the other as factors without violating the brick condition or the backwards $\Hom$-orthogonality condition.

It then follows from Proposition~\ref{prop:simples_in_intervals} that $L$ and $M$ must occur as elements of the set $\{B_1, B_2, \dots, B_r\}$. It is clear that $L$ cannot occur before $M$ amongst these bricks, otherwise $L, M$ cannot be a maximal backwards $\Hom$-orthogonal sequence of bricks in $[\mathcal{T},\mathcal{U}]$. If we do not have $B_1 \cong M$ and $B_r \cong L$, then we also get a contradiction to backwards $\Hom$-orthogonality, since all of $B_1, B_2, \dots, B_r$ have filtrations with $L$ and $M$ as factors.
\end{proof}

One can view an increasing elementary polygonal deformation as swapping maximal green sequences in an abelian category with two simple objects. Such a category has at most two maximal green sequences up to equivalence, and in our case it has precisely two, where one has length two and the other is longer.

\begin{lemma}\label{lem:poly_ab_cat}
In the situation of the polygon from Lemma~\ref{lem:polygon_bricks}, we have the following.
\begin{enumerate}[label=\textup{(}\arabic*\textup{)}]
    \item $\Filt(L, M)$ is an abelian category.\label{op:poly_ab_cat:ab_cat}
    \item The two paths around the polygon give two maximal green sequences of $\Filt(L, M)$.\label{op:poly_ab_cat:mgs}
    \item The two paths around the polygon give two different sets of Harder--Narasimhan filtrations of $\Filt(L, M)$.\label{op:poly_ab_cat:hn}
\end{enumerate}
\end{lemma}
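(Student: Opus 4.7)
The plan begins by extracting the key orthogonality data from the two paths. Backwards $\Hom$-orthogonality of $B_1, \dots, B_r$ along the long path gives $\Hom_\Lambda(B_r, B_1) = \Hom_\Lambda(L, M) = 0$, and backwards $\Hom$-orthogonality along the short path gives $\Hom_\Lambda(M, L) = 0$. Since $\End_\Lambda(L) \neq 0$, we cannot have $L \cong M$, so $\{L, M\}$ is a semibrick of two non-isomorphic bricks.

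For \ref{op:poly_ab_cat:ab_cat}, I would invoke the standard fact that $\Filt(S)$ is a wide subcategory of $\mod \Lambda$ for any semibrick $S$ (generalising the single-brick case of \cite[1.2]{ringel_rksb} already cited in the paper). Wide subcategories are abelian, so $\Filt(L, M)$ is abelian. For \ref{op:poly_ab_cat:mgs}, I would use the standard correspondence sending a torsion class $\mathcal{V} \in [\mathcal{T}, \mathcal{U}]$ to $\mathcal{V} \cap \Filt(L, M)$, which yields an isomorphism of posets between $[\mathcal{T}, \mathcal{U}]$ and $\tors \Filt(L, M)$, taking $\mathcal{T}$ to $\Filt(L, M)$ and $\mathcal{U}$ to $\{0\}$. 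By Lemma~\ref{lem:iepd_unique}, the two paths around the polygon are the only maximal chains in $[\mathcal{T}, \mathcal{U}]$, so under the correspondence they become exactly the two maximal green sequences of $\Filt(L, M)$.

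For \ref{op:poly_ab_cat:hn}, I would compute the Harder--Narasimhan filtration of the module $L \oplus M \in \Filt(L, M)$ with respect to each path. Along the short path the brick order is $L, M$, so $L$ is the top factor and $M$ the bottom factor, giving $L \oplus M \supset M \supset 0$. Along the long path $M = B_1$ precedes $L = B_r$, so $M$ is the top factor and $L$ the bottom factor, giving $L \oplus M \supset L \supset 0$. These filtrations are distinct, hence the two sets of HN filtrations differ.

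The main obstacle I anticipate is justifying \ref{op:poly_ab_cat:ab_cat}: although that $\Filt$ of a semibrick is wide is standard, it is not quite explicitly stated earlier in the paper, so it will require either a reference outside the paper or a short direct argument using the Hom-vanishing to produce kernels and cokernels of maps between filtered objects. A self-contained alternative would be to invoke silting reduction (Remark~\ref{rmk:reduction}) to identify the polygon with $\fftors \Gamma$ for an algebra $\Gamma$ with $|\Gamma| = 2$, which directly presents $\Filt(L, M)$ as $\mod \Gamma$ and furnishes both the abelian structure and the torsion-class correspondence at once.
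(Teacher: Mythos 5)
Your argument is correct and reaches the same conclusions, but it departs from the paper's proof in two places. For part~(2), you route through the poset isomorphism $[\mathcal{T},\mathcal{U}] \cong \tors\Filt(L,M)$ together with Lemma~\ref{lem:iepd_unique}; the paper instead argues directly that the two brick sequences are backwards $\Hom$-orthogonal in $\Filt(L,M)$ by the definition of brick labels. Your route makes the \emph{maximality} of the two sequences (and the fact that these are the \emph{only} two) explicit, at the cost of invoking the wide-interval machinery of \cite{asai_pfeifer}, which the paper mentions but does not formally deploy here; the silting-reduction alternative you sketch is an equally valid self-contained packaging. For part~(3), your witness is $L \oplus M$, whose two HN filtrations $L\oplus M \supset M \supset 0$ and $L\oplus M \supset L \supset 0$ are visibly distinct; the paper instead observes that $B_2$ has a trivial HN filtration along the long path but necessarily a nontrivial one along the short path. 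Both witnesses work, and yours is arguably the more immediate. One small point: your worry that the wideness of $\Filt$ of a semibrick is ``not quite explicitly stated earlier in the paper'' is unnecessary --- the paper cites \cite[1.2]{ringel_rksb} precisely for the semibrick case in this very proof --- but the concern is harmless since you supply alternatives.
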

\begin{proof}
We have $\Hom_{\Lambda}(L, M) = \Hom_{\Lambda}(M, L) = 0$ by backwards $\Hom$-orthogon\-al\-ity of maximal green sequences going through the sides of the polygon. Then \ref{op:poly_ab_cat:ab_cat} follows from \cite[1.2]{ringel_rksb}, which states that $\Filt(-)$ of a set of $\Hom$-orthogonal bricks is an abelian category. For \ref{op:poly_ab_cat:mgs}, it follows from the definition of brick labels that the sequences of bricks given by the two paths around the polygon must be backwards $\Hom$-orthogonal in $\Filt(L, M)$. For \ref{op:poly_ab_cat:hn}, the fact that the two paths around the polygon give Harder--Narasimhan filtrations on $\Filt(L, M)$ then follows from \ref{op:poly_ab_cat:mgs} by \cite{igusa_lsc,treff_hn}. The fact that the two sets of Harder--Narasimhan filtrations must be different then follows from the fact that in the longer path around the polygon, $B_2$ is the only factor in its filtration, whilst this cannot be the case for the shorter path around the polygon.
\end{proof}

The interpretation of increasing elementary polygonal deformations is then as follows.

\begin{lemma}\label{lem:iepd_via_bricks}
A maximal green sequence $\mathcal{G}'$ is an increasing elementary polygonal deformation of a maximal green sequence $\mathcal{G}$ if and only if, as maximal backwards $\Hom$-orthogonal sequences of bricks, we have that $\mathcal{G}'$ is \[B_1, B_2, \dots, B_r\] whilst $\mathcal{G}$ is \[B_1, \dots, B_{i - 1}, B_{i + 1}, B'_1, \dots, B'_s, B_i, B_{i + 2} \dots, B_r\] for $s \geqslant 1$.
\end{lemma}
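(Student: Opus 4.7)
The forward direction follows directly from Lemma~\ref{lem:polygon_bricks}: if $\mathcal{G}$ traverses the long side of an oriented polygon while $\mathcal{G}'$ traverses the length-two side, that lemma identifies the first and last brick labels of the long path with the (second and first) brick labels of the short path. Since the two maximal green sequences coincide outside the polygon, their brick sequences agree away from the divergence, which yields the stated pattern.

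For the converse, suppose the brick sequences have the prescribed form. Since $\mathcal{T}_{i-1}$ in a maximal green sequence is determined by its first $i-1$ brick labels (via $\hor{\mathcal{T}_{i-1}} = \Filt(B_1, \dots, B_{i-1})$ by Theorem~\ref{thm:filt_intervals}), the torsion classes $\mathcal{T}$ and $\mathcal{U}$ at the beginning and end of the divergence coincide in $\mathcal{G}$ and $\mathcal{G}'$. Let $T, U \in \twosilt \Lambda$ correspond to $\mathcal{T}, \mathcal{U}$ via Theorem~\ref{thm:air:silt_tors}. Then $[\mathcal{T}, \mathcal{U}] = \Filt(B_i, B_{i+1})$ by Theorem~\ref{thm:filt_intervals}, the $\mathcal{G}'$-path from $T$ to $U$ has length two, and the $\mathcal{G}$-path has length $s + 2 \geq 3$.

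The key step is to exhibit a polygon in $\twosilt \Lambda$ containing both paths. The length-two path in $\mathcal{G}'$ mutates two distinct summands (by Lemma~\ref{lem:at_most_once}, a summand cannot be mutated and then restored within a single maximal green sequence), so $T$ and $U$ share a common direct summand $E$ with $|E| = |\Lambda|-2$. I would then argue that every intermediate silting complex on both paths has $E$ as a summand: if some indecomposable $A$ of $E$ were mutated out along either path, then since $A$ lies in $U$ it would need to be reinserted before reaching $U$, producing an exchange pair $(A, -)$ strictly before an exchange pair $(-, A)$ in the same maximal green sequence, contradicting Lemma~\ref{lem:at_most_once}. Hence both paths lie in the polygon determined by $E$.

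To conclude, note that in any polygon the Hasse diagram is two-regular with unique maximum and minimum, so every non-extremal vertex has a unique upper and a unique lower Hasse neighbor inside the polygon. Since there are two distinct downward paths from $T$ to $U$ in the polygon, $T$ must be the maximum and $U$ the minimum. The path lengths $2$ and $s + 2 > 2$ then rule out the square, unoriented, and infinite types, so the polygon is oriented. By Lemma~\ref{lem:iepd_unique}, the two paths coincide with the two sides of the polygon, so $\mathcal{G}'$ is an increasing elementary polygonal deformation of $\mathcal{G}$ by definition. I expect the central obstacle to be the step showing that $E$ is preserved throughout both paths; the remaining arguments are structural bookkeeping.
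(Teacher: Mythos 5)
Your proof is correct and follows essentially the same route as the paper: the forward direction via Lemma~\ref{lem:polygon_bricks}, and the backward direction by identifying a common presilting summand $E$ of $T$ and $T'$ with $|E| = |\Lambda| - 2$ and then concluding via Lemma~\ref{lem:iepd_unique} and the fact that $s \geqslant 1$ forces the polygon to be oriented. The paper's version is shorter because it takes the two torsion classes $\Tors{B_i,\dots,B_r}$ and $\Tors{B_{i+2},\dots,B_r}$, observes that the corresponding silting complexes differ by two green mutations, and immediately invokes Lemma~\ref{lem:iepd_unique} to identify the second path around the polygon. Your middle step -- arguing directly with exchange pairs that no summand of $E$ can be mutated out and later restored along either path -- is not wrong, but it essentially re-proves Lemma~\ref{lem:iepd_unique} (which you then cite anyway), so it is redundant and can be deleted. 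The step you flag as the ``central obstacle'' is thus already handled by the existing lemma, and the remaining observation (two distinct downward paths inside a two-regular polygon with unique maximum and minimum force $T$ to be the maximum and $U$ the minimum) is a reasonable piece of bookkeeping the paper leaves implicit.
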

\begin{proof}
The forwards direction is Lemma~\ref{lem:polygon_bricks}. For the backwards direction, let $T$ be the basic two-term silting complex corresponding to the torsion class $\Tors{B_i, B_{i + 1}, \dots, B_r}$ and $T'$ be the basic two-term silting complex corresponding to the torsion class $\Tors{B_{i + 2}, \dots, B_r}$. Then $T'$ is obtained from $T$ by two green mutations since the corresponding torsion classes differ by two minimal inclusions. Hence $T \cong E \oplus X \oplus X'$ and $T' \cong E \oplus Y \oplus Y'$ where $X$, $X'$, $Y$, and $Y'$ are all indecomposable. Then, by Lemma~\ref{lem:iepd_unique}, the only other path from $T$ to $T'$ in $\twosilt \Lambda$ is the other path around the polygon determined by $E$. This must be the path taken in $\mathcal{G}.$ Since $s \geqslant 1$, the polygon determined by $E$ is oriented. Hence $\mathcal{G}$ and $\mathcal{G}'$ only differ in that $\mathcal{G}'$ contains the length two path around the oriented polygon whilst $\mathcal{G}$ contains the longer path, so $\mathcal{G}'$ is an increasing elementary polygonal deformation of $\mathcal{G}$. 
\end{proof}

\subsection{Partial orders}

We can now define the three partial orders on equivalence classes of maximal green sequences.

\begin{definition}\label{def:partial_orders}
\begin{enumerate}
    \item The partial order $\dleq$ on equivalence classes of maximal green sequences is defined via its covering relations, which are that $[\mathcal{G}] \dlessdot [\mathcal{G}']$ if and only if $[\mathcal{G}']$ is an increasing elementary polygonal deformation of $[\mathcal{G}]$. We refer to this as the \emph{deformation partial order}.
    \item The partial order $\sleq$ is defined via $[\mathcal{G}] \sleq [\mathcal{G}']$ if and only if $\summ{\mathcal{G}} \supseteq \summ{\mathcal{G}'}$. This is evidently also equivalent to having $\sumt{\mathcal{G}} \supseteq \sumt{\mathcal{G}'}$. We refer to this as the \emph{summand partial order}.
    \item The partial order $\hleq$ is defined by $[\mathcal{G}] \hleq [\mathcal{G}']$ if and only if, for any module $M$, we have \[\bhnf{\mathcal{G}'}{M} = \bigsqcup_{B \in \bhnf{\mathcal{G}}{M}}\bhnf{\mathcal{G}'}{B}.\] If $[\mathcal{G}] \hleq [\mathcal{G}']$, then we say that the $\mathcal{G}'$-HN filtrations \emph{refine} the $\mathcal{G}$-HN filtrations. Informally, $[\mathcal{G}] \hleq [\mathcal{G}']$ if the $\mathcal{G}'$-stable factors of any $\Lambda$-module $M$ can be obtained by breaking up the $\mathcal{G}$-stable factors of $M$ into \textit{their} $\mathcal{G}'$-stable factors. We refer to this as the \emph{Harder--Narasimhan} or \emph{HN partial order}.\label{op:partial_orders:hn}
\end{enumerate}
\end{definition}

\begin{remark}\label{rmk:silt_orders}
As shown in \cite[Theorem~4.3.1, Theorem~4.4.4]{njw-phd} \cite[Theorem~3.4, Theorem~5.6]{njw-hst}, the deformation order here should be seen as a higher-dimensional incarnation of the order on silting complexes given by green mutation, whilst the summand partial order should be seen as a higher-dimensional incarnation of the order on silting complexes given by inclusion of aisles \cite[Definition~2.10, Theorem~2.11]{ai}. These orders are known to have the same Hasse diagram \cite[Theorem~2.35]{ai}. Analogous orders exist on tilting modules \cite[2.2]{rs-simp} and support $\tau$-tilting pairs \cite[Section~2.4]{air}, which are likewise known to have the same Hasse diagram \cite[Theorem~2.1]{hu-potm} \cite[Theorem~2.33]{air}. The Harder--Narasimhan order is new and does not have an analogue on silting complexes.

The other new feature here is of course that one must introduce the equivalence relation on maximal green sequences in order to see the partial orders. Note that a partial order on equivalence classes is exactly the same thing as a preorder, so one could instead consider preorders on maximal green sequences. We prefer to keep the equivalence relation and the partial orders conceptually separate.
\end{remark}

\begin{remark}
The reason why we must use the stable factors rather than the semistable factors to define the Harder--Nara\-sim\-han order is as follows. The plausible alternative definition using the semistable factors would be that $[\mathcal{G}] \hleq [\mathcal{G}']$ if and only if for all $\Lambda$-modules $M$, we have that \[\hnf{\mathcal{G}'}{M} = \bigsqcup_{F \in \hnf{\mathcal{G}}{M}} \hnf{\mathcal{G}'}{F}.\] Consider the path algebra of the $A_2$ quiver $1 \leftarrow 2$. This has two maximal green sequences, namely $\mathcal{G}$ given by the sequence of bricks \[\tcs{2}, \, \tcs{1\\2}, \, \tcs{1},\] and $\mathcal{G}'$ given by the sequence of bricks \[\tcs{1}, \, \tcs{2}.\] Then we have that $[\mathcal{G}] \hleq [\mathcal{G}']$, but we have
\begin{align*}
    \hnfbig{\mathcal{G}'}{2 \oplus \tcss{1\\2}} &= \left\{1, 2 \oplus 2\right\} \\
    &\neq \left\{1, 2, 2\right\} \\
    &= \hnfbig{\mathcal{G}'}{\tcss{1\\2}} \sqcup \hnf{\mathcal{G}'}{2} \\
    &= \bigsqcup_{F \in \hnfbig{\mathcal{G}}{2 \oplus \tcss{1\\2}}} \hnf{\mathcal{G}'}{F}.
\end{align*}
In order for this to work correctly, we need to break up $2 \oplus 2$ into $2, 2$ by considering stable factors rather than semistable factors. 
\end{remark}

\begin{remark}\label{rmk:hn_filt_cant_stick}
Given two maximal green sequences $\mathcal{G}$ and $\mathcal{G}'$ such that $[\mathcal{G}] \hleq [\mathcal{G}']$, one might wonder whether, up to equivalence, the $\mathcal{G}'$-HN filtration of a $\Lambda$-module $M$ can be obtained from the $\mathcal{G}$-HN filtration of $M$ by breaking up the $\mathcal{G}$-semistable factors according to their $\mathcal{G}'$-HN filtrations, without doing any rearranging of the orders of the factors. To be more precise, suppose that, up to equivalence, the $\mathcal{G}$-HN filtration of $M$ is \[M = M_0 \supset M_1 \supset \dots \supset M_{l - 1} \supset M_l = 0\] with $F_{i} := M_{i - 1}/M_{i}$, and suppose that the $\mathcal{G}'$-HN filtration of each $F_i$ is \[F_i = L_{i0} \supset L_{i1} \supset \dots \supset L_{il_{i}} = 0\] with $H_{ij} := L_{i(j - 1)}/L_{ij}$. One might hope that, again up to equivalence, the $\mathcal{G}'$-HN filtration of $M$ is 
\begin{align*}
M = M_{10} &\supset M_{11} \supset \dots M_{1(l_1 - 1)} \supset M_{20} \supset \dots \\ &\supset M_{i0} \supset \dots \supset M_{i(l_i - 1)} \supset \dots \supset M_{l(l_{l} - 1)} = 0    
\end{align*}
where $M_{i(j - 1)}/M_{ij} = H_{ij}$ for $1 \leqslant i \leqslant l$ and $0 \leqslant j \leqslant l_i - 2$ and $M_{i(l_i - 1)}/M_{(i + 1)0} = H_{il_i}$ for $1 \leqslant i \leqslant l - 1$.

Unfortunately, this is not generally true. In general, one has to reorganise the semistable factors $H_{ij}$ to obtain the $\mathcal{G}'$-HN filtration, even if one replaces $\mathcal{G}$ and $\mathcal{G}'$ by equivalent maximal green sequences. This is shown in the following example. Consider the path algebra of the following algebra of type $\widetilde{A}_{4}$. \[
\begin{tikzcd}
& 2 \ar[dr] \ar[dl] && 4 \ar[dl] \ar[dlll] \\
1 && 3 &
\end{tikzcd}
\] This algebra has a maximal green sequence $\mathcal{G}$ given by the sequence of bricks \[1,\, \tcs{2\\1},\, \tcs{4\\1},\, \tcs{2\phantom{1}4\\\phantom{2}1\phantom{4}},\, \tcs{3},\, \tcs{2\\3},\, \tcs{2},\, \tcs{4\\3},\, \tcs{4}\] and a maximal green sequence $\mathcal{G}'$ given by the sequence of bricks \[4,\, 2,\, 3,\, 1.\] Since the bricks of $\mathcal{G}'$ are precisely the simple modules, it is clear that we have $[\mathcal{G}] \hleq [\mathcal{G}']$. Now consider the module \[M = \tcs{\phantom{1}2\phantom{3}4\\1\phantom{2}3\phantom{4}}.\] The $\mathcal{G}$-HN filtration of $M$ is \[\tcs{\phantom{1}2\phantom{3}4\\1\phantom{2}3\phantom{4}} \supset \tcs{\phantom{3}4\\3\phantom{4}} \supset 0,\] with $\hnf{\mathcal{G}}{M} = \left\{\tcss{\phantom{1}2\\1\phantom{2}},\,\tcss{\phantom{3}4\\3\phantom{4}}\right\}$, whilst the $\mathcal{G}'$-HN filtration of $M$ is \[\tcs{\phantom{1}2\phantom{3}4\\1\phantom{2}3\phantom{4}} \supset \tcs{\phantom{1}2\phantom{3}\\1\phantom{2}3} \supset 1 \oplus 3 \supset 1 \supset 0,\] with $\hnf{\mathcal{G}'}{M} = \{1, 2, 3, 4\}$.

Now, if we were to try to construct the $\mathcal{G}'$-HN filtration of $M$ as above, by sticking together the $\mathcal{G}'$-HN filtration of \[\tcs{2\\1}\] with the $\mathcal{G}'$-HN filtration of \[\tcs{4\\3}\,,\] then the $\mathcal{G}'$-semistable factors would appear in the order $2, 1, 4, 3$, whereas in actuality they appear in the order $4, 2, 3, 1$. Moreover, no amount of deformation across squares for either $\mathcal{G}$ or $\mathcal{G}'$ can change this. Indeed, the $\mathcal{G}$-HN filtration of $M$ is unique in the equivalence class, whilst deformation across squares cannot change the order in which $1$ and $4$ occur in $\mathcal{G}'$, since $\Ext^{1}_{\Lambda}(4, 1) \neq 0$.
\end{remark}

The HN order on maximal green sequences implies inclusion of bricks.

\begin{lemma}\label{lem:hn->bricks}
If $[\mathcal{G}] \hleq [\mathcal{G}']$, then $\bricks{\mathcal{G}} \supseteq \bricks{\mathcal{G}'}$. Furthermore, if $[\mathcal{G}] \hl [\mathcal{G}']$, then $\bricks{\mathcal{G}} \supset\bricks{\mathcal{G}'}$.
\end{lemma}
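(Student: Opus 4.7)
The plan is to unpack the definition of the Harder--Narasimhan order and apply it to suitably chosen modules, then appeal to Theorem~\ref{thm:mg_equiv}\ref{op:equiv:hnb} for the strict case. Throughout, recall that for a non-zero $\Lambda$-module $N$, the multiset $\bhnf{\mathcal{H}}{N}$ is always non-empty, and that $\bhnf{\mathcal{H}}{B} = \{B\}$ whenever $B \in \bricks{\mathcal{H}}$, since in that case $B$ is itself a stable factor for $\mathcal{H}$.

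For the inclusion $\bricks{\mathcal{G}} \supseteq \bricks{\mathcal{G}'}$, I would take $B' \in \bricks{\mathcal{G}'}$ and apply the refinement condition to the module $M = B'$:
\[
\{B'\} = \bhnf{\mathcal{G}'}{B'} = \bigsqcup_{B \in \bhnf{\mathcal{G}}{B'}} \bhnf{\mathcal{G}'}{B}.
\]
Because each summand on the right is a non-empty multiset and the left is a singleton, $\bhnf{\mathcal{G}}{B'}$ must consist of a single brick $B$, and moreover $\bhnf{\mathcal{G}'}{B} = \{B'\}$. In particular, the $\mathcal{G}$-Harder--Narasimhan filtration of $B'$ has a single stable factor, so $B' \cong B \in \bricks{\mathcal{G}}$.

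For the strict statement, suppose for contradiction that $[\mathcal{G}] \hl [\mathcal{G}']$ but $\bricks{\mathcal{G}} = \bricks{\mathcal{G}'}$. Then for any $\Lambda$-module $M$ and any $B \in \bhnf{\mathcal{G}}{M}$, the brick $B$ lies in $\bricks{\mathcal{G}} = \bricks{\mathcal{G}'}$, so $\bhnf{\mathcal{G}'}{B} = \{B\}$. Substituting into the defining equation of $\hleq$ gives
\[
\bhnf{\mathcal{G}'}{M} = \bigsqcup_{B \in \bhnf{\mathcal{G}}{M}} \bhnf{\mathcal{G}'}{B} = \bhnf{\mathcal{G}}{M}
\]
for every $\Lambda$-module $M$. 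By Theorem~\ref{thm:mg_equiv}\ref{op:equiv:hnb}, this forces $\mathcal{G} \sim \mathcal{G}'$, i.e.\ $[\mathcal{G}] = [\mathcal{G}']$, contradicting $[\mathcal{G}] \hl [\mathcal{G}']$. Combined with the already established $\supseteq$, the inclusion must therefore be strict.

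No step seems to present a real obstacle: the first inclusion is a direct singleton-cardinality argument, and the strict inclusion is a clean contradiction via the equivalence of \ref{op:equiv:hnb} and equivalence of maximal green sequences. The only point to be careful about is citing non-emptiness of $\bhnf{\mathcal{H}}{B}$ for non-zero $B$, which is immediate from the existence of the Harder--Narasimhan filtration recalled at the end of Section~\ref{sect:back:bricks}.
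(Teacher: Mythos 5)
Your proof is correct and takes essentially the same approach as the paper. The first inclusion is argued identically (apply the refinement identity to $M = B'$ and use that the right-hand side is a disjoint union of non-empty multisets). For the strict inclusion the paper argues directly: it picks a module $M$ with $\bhnf{\mathcal{G}}{M} \neq \bhnf{\mathcal{G}'}{M}$ and concludes that some $\mathcal{G}$-stable factor $B$ of $M$ satisfies $\bhnf{\mathcal{G}'}{B} \neq \{B\}$, hence $B \in \bricks{\mathcal{G}} \setminus \bricks{\mathcal{G}'}$, without needing to invoke Theorem~\ref{thm:mg_equiv}. You instead run the same observation as a proof by contradiction, showing that $\bricks{\mathcal{G}} = \bricks{\mathcal{G}'}$ forces $\bhnf{\mathcal{G}}{M} = \bhnf{\mathcal{G}'}{M}$ for all $M$ and then appealing to Theorem~\ref{thm:mg_equiv}\ref{op:equiv:hnb} to deduce $[\mathcal{G}] = [\mathcal{G}']$. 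Both routes are valid and use the same underlying identity; the paper's version is marginally more self-contained since it does not need the equivalence theorem, but the difference is purely organizational.
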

\begin{proof}
Let $B' \in \bricks{\mathcal{G}'}$, so that $\bhnf{\mathcal{G}'}{B'} = \{B'\}$. Since $[\mathcal{G}] \hleq [\mathcal{G}']$, we have
\begin{align*}
    \{B'\} &= \bhnf{\mathcal{G}'}{B'} \\
    &= \bigsqcup_{B \in \bhnf{\mathcal{G}}{B'}}\bhnf{\mathcal{G}'}{B}.
\end{align*}
This means that we must have $\#\bhnf{\mathcal{G}}{B'} = 1$, so $\bhnf{\mathcal{G}}{B'} = \{B'\}$. 
This gives that $B' \in \bricks{\mathcal{G}}$, as desired.

For the second statement, suppose that we have $[\mathcal{G}] \hl [\mathcal{G}']$, so that, in particular, there exists a module $M$ such that $\bhnf{\mathcal{G}}{M} \neq \bhnf{\mathcal{G}'}{M}$. However, we still have refinement, so that \[\bhnf{\mathcal{G'}}{M} = \bigsqcup_{B \in \bhnf{\mathcal{G}}{M}} \bhnf{\mathcal{G'}}{B}.\] Therefore, we must have some $B \in \bhnf{\mathcal{G}}{M}$ such that $\bhnf{\mathcal{G}'}{B} \neq \{B\}$, otherwise the refinement property would give us that $\bhnf{\mathcal{G}'}{M} = \bhnf{\mathcal{G}}{M}$. But this implies that we cannot have $B \in \bricks{\mathcal{G}'}$. We conclude that $\bricks{\mathcal{G}} \supset \bricks{\mathcal{G}'}$.
\end{proof}

The converse to the statement of Lemma~\ref{lem:hn->bricks} is not true in general: we might have $[\mathcal{G}]$ and $[\mathcal{G}']$ incomparable in the Harder--Narasimhan partial order, but such that $\bricks{\mathcal{G}} \supset \bricks{\mathcal{G}'}$.
See Remark~\ref{rem:incomplete_strategy} for an example in type~$\widetilde{A}_4$.

\begin{proposition}\label{prop:poset_well_def}
\begin{enumerate}[label=\textup{(}\arabic*\textup{)}]
    \item The relation $\dleq$ is a well-defined partial order.\label{op:poset_well_def:dleq}
    \item The relation $\sleq$ is a well-defined partial order.\label{op:poset_well_def:sleq}
    \item The relation $\hleq$ is a well-defined partial order.\label{op:poset_well_def:hleq}
\end{enumerate}
\end{proposition}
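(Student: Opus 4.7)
The plan is to verify the four axioms (well-definedness on equivalence classes, reflexivity, transitivity, and antisymmetry) for each of the three relations, invoking Theorem~\ref{thm:mg_equiv} at key points to translate between the different characterisations of equivalence.

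For $\sleq$, I would first note that well-definedness on equivalence classes is immediate from Theorem~\ref{thm:mg_equiv}\ref{op:equiv:summ}, since equivalent maximal green sequences have the same summand set. Reflexivity and transitivity follow trivially from reverse inclusion of sets. For antisymmetry, if $\summ{\mathcal{G}} \supseteq \summ{\mathcal{G}'}$ and $\summ{\mathcal{G}'} \supseteq \summ{\mathcal{G}}$, then $\summ{\mathcal{G}} = \summ{\mathcal{G}'}$, so again by Theorem~\ref{thm:mg_equiv}\ref{op:equiv:summ} we have $\mathcal{G} \sim \mathcal{G}'$, that is, $[\mathcal{G}] = [\mathcal{G}']$.

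For $\hleq$, well-definedness on equivalence classes is immediate from Theorem~\ref{thm:mg_equiv}\ref{op:equiv:hnb}. Reflexivity follows because, for every $B \in \bhnf{\mathcal{G}}{M}$, the brick $B$ lies in $\bricks{\mathcal{G}}$ and hence $\bhnf{\mathcal{G}}{B} = \{B\}$, so the disjoint union on the right collapses to $\bhnf{\mathcal{G}}{M}$. Transitivity is a routine chasing of the refinement identity: given $[\mathcal{G}] \hleq [\mathcal{G}']$ and $[\mathcal{G}'] \hleq [\mathcal{G}'']$, substitute the first refinement into the second. The main delicate step is antisymmetry of $\hleq$: if $[\mathcal{G}] \hleq [\mathcal{G}']$ and $[\mathcal{G}'] \hleq [\mathcal{G}]$, then Lemma~\ref{lem:hn->bricks} forces $\bricks{\mathcal{G}} = \bricks{\mathcal{G}'}$ (strict inequalities would give strict inclusions both ways, a contradiction); so for any module $M$ and every $B \in \bhnf{\mathcal{G}}{M}$, the brick $B$ lies in $\bricks{\mathcal{G}'}$ and hence $\bhnf{\mathcal{G}'}{B} = \{B\}$, which upon substitution into the refinement identity gives $\bhnf{\mathcal{G}'}{M} = \bhnf{\mathcal{G}}{M}$ for every $M$; Theorem~\ref{thm:mg_equiv}\ref{op:equiv:hnb} then yields $[\mathcal{G}] = [\mathcal{G}']$.

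For $\dleq$, reflexivity and transitivity are automatic because the relation is defined as the transitive-reflexive closure of the covering relations. Well-definedness on equivalence classes is built into the definition, since an increasing elementary polygonal deformation is defined between equivalence classes. The main point is antisymmetry. For this I would introduce a length function: given an equivalence class $[\mathcal{G}]$, define its length $\ell([\mathcal{G}])$ to be the length of any representative $\mathcal{G}$ (equivalently, the number of exchange pairs). This is well-defined, because equivalent maximal green sequences have the same set of exchange pairs by Theorem~\ref{thm:mg_equiv}\ref{op:equiv:exch}, which is in bijection with the set of mutations by Lemma~\ref{lem:at_most_once}. By Lemma~\ref{lem:iepd_via_bricks} (comparing the two sequences of bricks, one of length $r$ and the other of length $r+s-1$ with $s\geqslant 1$), every increasing elementary polygonal deformation strictly decreases~$\ell$. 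Hence $[\mathcal{G}] \dlessdot [\mathcal{G}']$ implies $\ell([\mathcal{G}]) > \ell([\mathcal{G}'])$, so the existence of both $[\mathcal{G}] \dleq [\mathcal{G}']$ and $[\mathcal{G}'] \dleq [\mathcal{G}]$ with $[\mathcal{G}]\neq[\mathcal{G}']$ would give a strict descent followed by a strict descent returning to the original length, a contradiction. Therefore $[\mathcal{G}] = [\mathcal{G}']$, proving antisymmetry.

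The only step that is not essentially bookkeeping is the antisymmetry of $\hleq$, where one must carefully combine Lemma~\ref{lem:hn->bricks} with the refinement identity to collapse the relation to plain equality of multisets of stable factors; the other two cases reduce directly to Theorem~\ref{thm:mg_equiv}.
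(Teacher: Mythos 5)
Your proposal is correct and follows essentially the same structure as the paper's proof: well-definedness on classes, reflexivity, and transitivity are routine in each case, and anti-symmetry is handled via the monotone length function for $\dleq$, via Theorem~\ref{thm:mg_equiv}\ref{op:equiv:summ} for $\sleq$, and via Lemma~\ref{lem:hn->bricks} plus Theorem~\ref{thm:mg_equiv}\ref{op:equiv:hnb} for $\hleq$. You are slightly more explicit than the paper in a couple of spots (justifying the well-definedness of the length function and spelling out anti-symmetry of $\sleq$ on equivalence classes), but there is no genuine difference of method.
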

\begin{proof}
In each case, what needs to be shown is that the relation respects equivalence classes, and that it is reflexive, anti-symmetric, and transitive.

\ref{op:poset_well_def:dleq} By construction, $\dleq$ respects equivalence classes and is reflexive and transitive. To show that $\dleq$ is anti-symmetric, we must show that there can be no sequence of covering relations which gives a cycle. But this is clear, since an increasing elementary polygonal deformation reduces the length of the maximal green sequence by at least one, and length is invariant under equivalence of maximal green sequences.

\ref{op:poset_well_def:sleq} The relation $\sleq$ is clearly reflexive, anti-symmetric, and transitive. To see that $\sleq$ respects equivalence classes, it suffices to note that the set of summands of a maximal green sequence is invariant under equivalence by Theorem~\ref{thm:mg_equiv}.

\ref{op:poset_well_def:hleq} It follows from Theorem~\ref{thm:mg_equiv} that $\hleq$ respects equivalence classes, since $\bhnf{\mathcal{G}}{M}$ is invariant on the equivalence class of $\mathcal{G}$. 
Reflexivity then follows from the fact that \[\bhnf{\mathcal{G}}{M} = \bigsqcup_{B \in \bhnf{\mathcal{G}}{M}} \bhnf{\mathcal{G}}{B},\] since $\bhnf{\mathcal{G}}{B} = \{B\}$ for $B \in \bricks{\mathcal{G}}$.

We now show that $\hleq$ is transitive. Suppose that there are equivalence classes of maximal green sequences $[\mathcal{G}]$, $[\mathcal{G}']$, and $[\mathcal{G}'']$ such that $[\mathcal{G}] \hleq [\mathcal{G}']$ and $[\mathcal{G}'] \hleq [\mathcal{G}'']$, and again let $M$ be a $\Lambda$-module. Then
\begin{align*}
    \bhnf{\mathcal{G}''}{M} &= \bigsqcup_{B' \in \bhnf{\mathcal{G}'}{M}}\bhnf{\mathcal{G}''}{B'} &&\because [\mathcal{G}'] \hleq [\mathcal{G}''] \\
    &= \bigsqcup_{B' \in \bigsqcup_{B \in \bhnf{\mathcal{G}}{M}}\bhnf{\mathcal{G}'}{B}}\bhnf{\mathcal{G}''}{B'} &&\because [\mathcal{G}] \hleq [\mathcal{G}'] \\
    &= \bigsqcup_{B \in \bhnf{\mathcal{G}}{M}}\bigsqcup_{B' \in \bhnf{\mathcal{G}'}{B}}\bhnf{\mathcal{G}''}{B'} \\
    &= \bigsqcup_{B \in \bhnf{\mathcal{G}}{M}} \bhnf{\mathcal{G}''}{B}. &&\because [\mathcal{G}'] \hleq [\mathcal{G}''] \text{ for } B
\end{align*}
Hence $[\mathcal{G}] \hleq [\mathcal{G}'']$, so $\hleq$ is transitive.

We now show that $\hleq$ is anti-symmetric. Suppose that there are equivalence classes of maximal green sequences $[\mathcal{G}]$ and $[\mathcal{G}']$ such that $[\mathcal{G}] \hleq [\mathcal{G}']$ and $[\mathcal{G}'] \hleq [\mathcal{G}]$, and let $M$ be a $\Lambda$-module. By Lemma~\ref{lem:hn->bricks}, we have $\bricks{\mathcal{G}} \supseteq \bricks{\mathcal{G}'} \supseteq \bricks{\mathcal{G}}$, so $\bricks{\mathcal{G}}  = \bricks{\mathcal{G}'}$. 
Then
\begin{align*}
    \bhnf{\mathcal{G}'}{M} &= \bigsqcup_{B \in \bhnf{\mathcal{G}}{M}}\bhnf{\mathcal{G}'}{B} && \because [\mathcal{G}] \hleq [\mathcal{G}'] \\
    &= \bigsqcup_{B \in \bhnf{\mathcal{G}}{M}}\{B\} && \because \bricks{\mathcal{G}} = \bricks{\mathcal{G}'} \\
    &= \bhnf{\mathcal{G}}{M}.
\end{align*}
Thus $[\mathcal{G}] = [\mathcal{G}']$, by Theorem~\ref{thm:mg_equiv}, so $\hleq$ is anti-symmetric.
\end{proof}

\subsubsection{Comparing the orders}

The three partial orders each have their own advantages. The HN partial order has the most interesting defining property, showing how all of the Harder--Narasimhan filtrations of one maximal green sequence are refined by those of another. However, it is in principle difficult to compute because it requires checking many filtrations (in general, infinitely many). It is likewise difficult to verify whether there is a sequence of increasing elementary polygonal deformations relating two maximal green sequences. In contrast, the partial order $\sleq$ is the easiest to compute, since one only has to compare two finite sets. The advantage of the order $\dleq$ is that its local structure is clear, since we know its covering relations. This also makes it the easiest order to prove things about. Knowing that these orders were the same would give a single partial order on equivalence classes of maximal green sequences with all of these virtues.

\begin{conjecture}\label{conj}
For a finite-dimensional $K$-algebra $\Lambda$ and two maximal green sequences $\mathcal{G}$ and $\mathcal{G}'$ of $\Lambda$, the following are equivalent.
\begin{enumerate}[label=\textup{(}\arabic*\textup{)}]
    \item $[\mathcal{G}] \dleq [\mathcal{G}']$.
    \item $[\mathcal{G}] \sleq [\mathcal{G}']$.
    \item $[\mathcal{G}] \hleq [\mathcal{G}']$.
\end{enumerate}
\end{conjecture}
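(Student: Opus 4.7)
The plan is to close the cycle of implications. Theorem~\ref{thm:first->second} and Theorem~\ref{thm:def->hn} jointly give $(1) \Rightarrow (2)$ and $(1) \Rightarrow (3)$, so what remains is to produce one of the converses $(2) \Rightarrow (1)$ or $(3) \Rightarrow (1)$. I would focus on $(2) \Rightarrow (1)$, since $\summ{\mathcal{G}}$ is the most combinatorially tractable invariant. The approach is induction on $|\summ{\mathcal{G}} \setminus \summ{\mathcal{G}'}|$: the base case $\summ{\mathcal{G}} = \summ{\mathcal{G}'}$ reduces to $\mathcal{G} \sim \mathcal{G}'$ by Theorem~\ref{thm:mg_equiv}, and the inductive step requires producing a single increasing elementary polygonal deformation $[\mathcal{G}] \dlessdot [\mathcal{G}_1]$ with $[\mathcal{G}_1] \sleq [\mathcal{G}']$.

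To find such a deformation, pick $X \in \summ{\mathcal{G}} \setminus \summ{\mathcal{G}'}$; since each projective and each shifted projective is a summand of $\Lambda$ or $\Lambda[1]$ and hence belongs to both $\summ{\mathcal{G}}$ and $\summ{\mathcal{G}'}$, the indecomposable $X$ is neither, so Lemma~\ref{lem:at_most_once} gives unique exchange pairs $(A, X)$ and $(X, C)$ in $\mathcal{G}$. The silting reduction framework of Remark~\ref{rmk:reduction} attaches an algebra $\Gamma$ with $|\Gamma|=2$ to a suitable $(|\Lambda|-2)$-summand pre-silting complex $E$ whose Bongartz completion contains $X$, and the polygon in $\twosilt \Gamma$ controls the local combinatorics of all mutations involving $X$. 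The aim is to choose $E$ so that $\mathcal{G}$ traverses the long path of the corresponding oriented polygon contiguously, after first commuting through adjacent squares via Lemma~\ref{lem:sq_crit}, and then to replace this long path by the short path to obtain $\mathcal{G}_1$. The verification that $\summ{\mathcal{G}_1} \supseteq \summ{\mathcal{G}'}$ then rests on showing that the summands occurring only on the long path, and thus discarded, lie outside $\summ{\mathcal{G}'}$; this should follow provided the polygon is chosen inner-most in an appropriate sense.

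The main obstacle is the commutation step: an arbitrary $X \in \summ{\mathcal{G}} \setminus \summ{\mathcal{G}'}$ need not give a polygon whose long path is traversed contiguously by $\mathcal{G}$, because $\mathcal{G}$ may interleave mutations inside the polygon with mutations outside it, and Lemma~\ref{lem:sq_crit} is not always applicable to bring them into contiguity. The Nakayama case (Theorem~\ref{thm:intro:nakayama}) circumvents this using the Jordan--H\"older property for functorially finite torsion classes, which controls the interleaving; in the general case one must hope that a minimal choice of $X$, for instance one with shortest ``polygon distance'' between its entry and exit exchange pairs in $\mathcal{G}$, forces the intervening exchange pairs to commute with everything inside the polygon. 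An alternative route is $(3) \Rightarrow (1)$, via Lemma~\ref{lem:hn->bricks}: pick $B \in \bricks{\mathcal{G}} \setminus \bricks{\mathcal{G}'}$ whose $\mathcal{G}'$-HN filtration $B = M_0 \supset \cdots \supset M_s = 0$ has smallest length, and argue that its $\mathcal{G}'$-stable factors $B_1', \dots, B_s'$ must already appear consecutively in $\mathcal{G}$, in reverse order, after commutation deformations; Lemma~\ref{lem:polygon_bricks} and Lemma~\ref{lem:iepd_via_bricks} would then translate this brick picture into an oriented polygon deformation removing $B$. Both approaches reduce to the same essential difficulty of managing commutation obstructions, which is likely the crux of any general proof.
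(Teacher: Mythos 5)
This statement is explicitly a \emph{conjecture} (Conjecture~\ref{conj}) in the paper, not a theorem, so there is no general proof in the paper for your attempt to match. The paper proves only the forward implications $(1) \Rightarrow (2)$ and $(1) \Rightarrow (3)$ in general (Theorems~\ref{thm:first->second} and~\ref{thm:def->hn}), and proves the full equivalence only for algebras with two isomorphism classes of simple modules (Theorem~\ref{thm:two_simples}) and for Nakayama algebras (Corollary~\ref{cor:nak_orders}). Your proposal correctly identifies the proved implications and correctly isolates the difficulty in the missing converses; but what you have written is an outline of a strategy together with an honest statement that its central step is unresolved, and that is exactly the state of affairs in the paper.

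The obstruction you name, that a polygon need not be traversed contiguously by $\mathcal{G}$ and that the commutation deformations needed to make it contiguous are not automatically available, is indeed the crux, and the paper already records precisely this in Remark~\ref{rem:incomplete_strategy}. There, over a type $\widetilde{A}_{4}$ path algebra with $[\mathcal{G}] \hleq [\mathcal{G}']$, applying Lemma~\ref{lem:adj_bricks} to find an adjacent pair $B < B'$ in $\mathcal{G}'$ ordered oppositely in $\mathcal{G}$, and deforming down across the corresponding pentagon, yields a $\mathcal{G}''$ with $[\mathcal{G}] \not\hleq [\mathcal{G}'']$ and even $[\mathcal{G}] \not\dleq [\mathcal{G}'']$. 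So the naive inductive step in your $(3)\Rightarrow(1)$ route genuinely fails; your ``minimal choice of $B$'' or ``inner-most polygon'' heuristic is an unproven assertion that would still have to be justified, and the paper gives no reason to believe it resolves the problem. A smaller point: the Nakayama case does not circumvent the obstruction via the JHP for functorially finite torsion classes as you suggest; the JHP appears in the paper as surrounding context and motivation for a separate conjecture, while the actual Nakayama proofs (Lemmas~\ref{lem:nak_unique_filt}, \ref{lem:nak_ses}, \ref{lem:nak_extension_of_bricks_new} and Theorem~\ref{thm:nak_brick_order}) work directly with uniseriality of indecomposables, and the summand order is recovered from the brick order via the explicit bijection $B \mapsto B/\soc B$ of Lemma~\ref{lem:nak_bs_bij}. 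In short, your write-up is a reasonable sketch of the natural strategy, but it contains the same unresolved gap that makes this a conjecture rather than a theorem, so it should not be presented as a proof.
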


Note that the orders on silting complexes analogous to these orders on equivalence classes of maximal green sequences discussed in Remark~\ref{rmk:silt_orders} in general only have the same Hasse diagram, rather than being equal. This is because the order on silting complexes given by green mutations only applies when the sequence of mutations is finite, whereas there may be inclusion of aisles between two silting complexes not related by such a finite sequence of mutations.
In the present case, we believe that the orders should actually be equal to each other, since two maximal green sequences related by the orders should only be related by a finite sequence of covering relations.

\begin{remark}\label{rmk:no_gap}
Conjecture~\ref{conj} can also be seen as a refined version of the No-Gap Conjecture \cite[Conjecture~1.22]{bdp}, cases of which were proven in \cite{g-mc,hi-no-gap}. Said conjecture says that the set of lengths of maximal green sequences of a quiver should have no gaps. In these cases, the only oriented polygons are pentagons, and so increasing elementary polygonal deformations only change the length of the maximal green sequence by one. For an acyclic quiver $Q$, there exists a maximal green sequence $\mathcal{G}'$ of $KQ$ given by only mutating at sources.
Furthermore, $\mathcal{G}'$ is of minimal length, and for any other maximal green sequence $\mathcal{G}$, we have that $\summ{\mathcal{G}} \supseteq \summ{\mathcal{G}'}$, since $\summ{\mathcal{G}'}$ only consists of projectives and shifted projectives. Conjecture~\ref{conj} would then give that $[\mathcal{G}] \dleq [\mathcal{G}']$, which gives a set of maximal green sequences of every length between that of $\mathcal{G}$ and $\mathcal{G}'$, thereby establishing the No-Gap Conjecture for acyclic quivers. However, Conjecture~\ref{conj} applies to a much larger class of algebras, including those of non-simply-laced type and those which are not Jacobian algebras of quivers with potential.
\end{remark}

\begin{example}\label{ex:brick_poset}
We consider the algebra from Example~\ref{ex:brick_counter}. The poset of its equivalence classes of maximal green sequences under reverse-inclusion of summands is shown in Figure~\ref{fig:summ_poset}. It can be verified that equivalence classes related by a covering relation in this order are also related by an increasing elementary polygonal deformation, so that $\sleq$ coincides with $\dleq$ in this instance. By computing the Harder--Narasimhan filtrations for the indecomposables, one can also check that $\hleq$ coincides with the other two orders here.

We give some particular examples for the deformation order and Harder--Narsimhan order. The maximal green sequences $\mathcal{G}$ and $\mathcal{G}'$ with \[\summ{\mathcal{G}} = \left\{\tcs{1},\, \tcs{1\\2},\, \tcs{2},\, \tcs{3\\2},\, \tcs{1\\2}[1],\, \tcs{2}[1],\, \tcs{3\\2}[1] \right\}\] and \[\summ{\mathcal{G}'} = \left\{ \tcs{1\\2},\, \tcs{2},\, \tcs{3\\2},\, \tcs{1\\2}[1],\, \tcs{2}[1],\, \tcs{3\\2}[1] \right\}\] are such that $[\mathcal{G}] \dlessdot [\mathcal{G}']$ due to the increasing elementary polygonal deformation across the oriented polygon \[
\begin{tikzpicture}[xscale=3.2,yscale=1.5]

     \node (s) at (0,0) {$\tcs{1\\2} \oplus \tcs{2} \oplus \tcs{3\\2}[1]$};
     \node (t) at (1.5,1) {$\tcs{1\\2}[1] \oplus \tcs{2} \oplus \tcs{3\\2}[1]$};
     \node (e) at (3,0) {$\tcs{1\\2}[1] \oplus \tcs{2}[1] \oplus \tcs{3\\2}[1]$};
     \node (b1) at (1,-1) {$\tcs{3\\2}[1] \oplus \tcs{1\\2} \oplus \tcs{1}$};
     \node (b2) at (2,-1) {$\tcs{3\\2}[1] \oplus \tcs{2}[1] \oplus \tcs{1}$};

     \draw[->] (s) -- (t);
     \draw[->] (t) -- (e);
     \draw[->] (s) -- (b1);
     \draw[->] (b1) -- (b2);
     \draw[->] (b2) -- (e);
     
\end{tikzpicture}
\] as can be seen from Figure~\ref{fig:silting_counter}.

As sequences of bricks, we have that $\mathcal{G}$ is \[\tcs{3}, \, \tcs{2}, \, \tcs{1\\2}, \, \tcs{1}\] and $\mathcal{G}'$ is \[\tcs{3}, \, \tcs{1}, \, \tcs{2}.\] The fact that $[\mathcal{G}] \hl [\mathcal{G}']$ can be seen from the fact that \[\bhnfbig{\mathcal{G}}{\tcs{1\\2}} = \Bigl\{ \tcs{1\\2} \Bigr\}\] and \[\bhnfbig{\mathcal{G}'}{\tcs{1\\2}} = \{\tcs{1}, \, \tcs{2}\}.\] Hence, we have
\begin{align*}
    \bhnfbig{\mathcal{G}'}{\tcs{1\\2}} &= \{\tcs{1}, \, \tcs{2}\} \\
    &= \bigsqcup_{B \in \Bigl\{\tcss{1\\2}\Bigr\}} \bhnf{\mathcal{G}'}{B} \\
    &= \bigsqcup_{B \in \bhnfbig{\mathcal{G}}{\tcss{1\\2}}} \bhnf{\mathcal{G}'}{B}.
\end{align*}

An important non-example for the Harder--Narasimhan order is given by the maximal green sequences $\mathcal{G}_1$ \[\tcs{2}, \, \tcs{1\\2}, \, \tcs{1}, \, \tcs{3\\2}, \, \tcs{3}\] and $\mathcal{G}_2$ \[\tcs{2}, \, \tcs{3\\2}, \, \tcs{3}, \, \tcs{1\\2}, \, \tcs{1}\] from Example~\ref{ex:brick_counter}. In this case, we have that \[\bhnfbig{\mathcal{G}_1}{\tcs{1\phantom{2}3\\\phantom{1}2\phantom{3}}} = \Bigl\{\tcs{1}, \, \tcs{3\\2}\Bigr\}\] and \[\bhnfbig{\mathcal{G}_2}{\tcs{1\phantom{2}3\\\phantom{1}2\phantom{3}}} = \Bigl\{\tcs{3}, \, \tcs{1\\2}\Bigr\}.\] Hence, one can show that $[\mathcal{G}_1] \not\hleq [\mathcal{G}_2]$, since
\begin{align*}
    \bhnfbig{\mathcal{G}_2}{\tcs{1\phantom{2}3\\\phantom{1}2\phantom{3}}} &= \Bigl\{\tcs{3}, \, \tcs{1\\2}\Bigr\} \\
    &\neq \Bigl\{\tcs{1}, \, \tcs{3\\2}\Bigr\} \\
    &= \bigsqcup_{B \in \bhnfbig{\mathcal{G}_1}{\tcss{1\phantom{2}3\\\phantom{1}2\phantom{3}}}} \bhnf{\mathcal{G}_2}{B}.
\end{align*}The argument that $[\mathcal{G}_2] \not\hleq [\mathcal{G}_1]$ is similar.
\end{example}

\begin{figure}
    \caption{The poset of reverse-inclusion of indecomposable direct summands of two-term silting complexes from Example~\ref{ex:brick_poset}}
    \label{fig:summ_poset}
\[
\begin{tikzpicture}[scale=2,xscale=1.7]

\node(max) at (0,3) {$\left\{ \tcs{1\\2},\, \tcs{2},\, \tcs{3\\2},\, \tcs{1\\2}[1],\, \tcs{2}[1],\, \tcs{3\\2}[1] \right\}$};
\node(r2) at (1,2) {$\left\{\tcs{1},\, \tcs{1\\2},\, \tcs{2},\, \tcs{3\\2},\, \tcs{1\\2}[1],\, \tcs{2}[1],\, \tcs{3\\2}[1] \right\}$};
\node(r1) at (1,1) {$\left\{\tcs{1},\, \tcs{1\\2},\, \tcs{1\phantom{2}3\\2},\, \tcs{2},\, \tcs{3\\2},\, \tcs{1\\2}[1],\, \tcs{2}[1],\, \tcs{3\\2}[1] \right\}$};
\node(l2) at (-1,2) {$\left\{ \tcs{1\\2},\, \tcs{2},\, \tcs{3\\2},\, \tcs{3},\, \tcs{1\\2}[1],\, \tcs{2}[1],\, \tcs{3\\2}[1] \right\}$};
\node(l1) at (-1,1) {$\left\{ \tcs{1\\2},\, \tcs{1\phantom{2}3\\2},\, \tcs{2},\, \tcs{3\\2},\, \tcs{3},\, \tcs{1\\2}[1],\, \tcs{2}[1],\, \tcs{3\\2}[1] \right\}$};
\node(min) at (0,0) {$\left\{\tcs{1},\, \tcs{1\\2},\, \tcs{1\phantom{2}3\\2},\, \tcs{2},\, \tcs{3\\2},\, \tcs{3},\, \tcs{1\\2}[1],\, \tcs{2}[1],\, \tcs{3\\2}[1] \right\}$};

\draw[->] (max) -- (r2);
\draw[->] (max) -- (l2);
\draw[->] (r2) -- (r1);
\draw[->] (l2) -- (l1);
\draw[->] (l1) -- (min);
\draw[->] (r1) -- (min);

\end{tikzpicture}
\]
\end{figure}
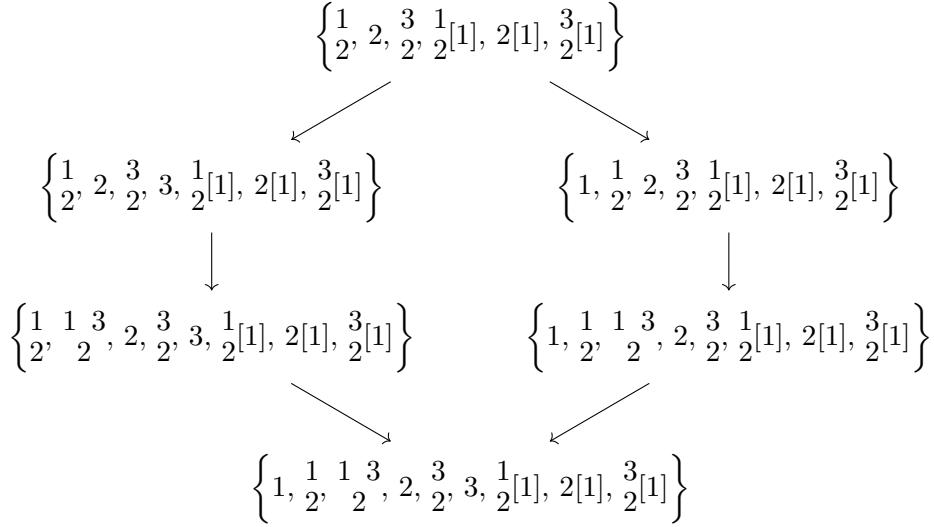

We show that the summand order holds whenever the deformation order holds.

\begin{theorem}\label{thm:first->second}
Let $\Lambda$ be a finite-dimensional algebra over a field $K$ with $\mathcal{G}$ and $\mathcal{G}'$ two maximal green sequences of $\Lambda$. Then $[\mathcal{G}] \dleq [\mathcal{G}']$ implies that $[\mathcal{G}] \sleq [\mathcal{G}']$.
\end{theorem}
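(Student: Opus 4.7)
The plan is to reduce to the case of a single increasing elementary polygonal deformation by induction on the length of the sequence of deformations witnessing $[\mathcal{G}] \dleq [\mathcal{G}']$, using transitivity of $\sleq$. So I may assume that $\mathcal{G}$ traverses the path of length $k>2$ and $\mathcal{G}'$ traverses the length-two path around an oriented polygon in $\twosilt \Lambda$, while agreeing outside of this polygon. By Definition~\ref{def:polygons}, this polygon is determined by a presilting complex $E$ with $|E|=|\Lambda|-2$, and every silting complex occurring on either path of the polygon has $E$ as a direct summand.

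The next step is to describe explicitly the indecomposable summands appearing on the short side. Writing $E \oplus X \oplus X'$ for the maximum of the polygon and $E \oplus Y \oplus Y'$ for its minimum, the unique intermediate vertex $V$ of the length-two path is related by mutation at a single indecomposable to each endpoint. Hence $V$ shares exactly one non-$E$ summand with $E \oplus X \oplus X'$ and one non-$E$ summand with $E \oplus Y \oplus Y'$, forcing $V = E \oplus Z \oplus W$ with $Z \in \{X,X'\}$ and $W \in \{Y,Y'\}$. Consequently the indecomposable summands of silting complexes on the short side are exactly the indecomposable summands of $E$ together with $\{X,X',Y,Y'\}$.

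Finally, I would compare this with the long side. Every silting complex on the long path contains $E$ as a summand, and its endpoints are $E\oplus X\oplus X'$ and $E\oplus Y\oplus Y'$, so all of the indecomposable summands of $E$ together with $X,X',Y,Y'$ again occur on the long path. Thus every indecomposable summand appearing on the short side already appears on the long side. Combined with the fact that $\mathcal{G}$ and $\mathcal{G}'$ agree outside the polygon, this yields $\summ{\mathcal{G}'} \subseteq \summ{\mathcal{G}}$, i.e.\ $[\mathcal{G}] \sleq [\mathcal{G}']$. There is no substantive obstacle here: the argument is structural bookkeeping, relying only on the observation that the short side of an oriented polygon is forced, by one-step mutation and the shared presilting summand $E$, to use only indecomposables already present at its endpoints.
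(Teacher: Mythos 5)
Your proof is correct and follows essentially the same route as the paper's: reduce to a single covering relation $[\mathcal{G}] \dlessdot [\mathcal{G}']$, then observe that every indecomposable summand occurring along the short side of the oriented polygon already occurs along the long side (since the only new vertex on the short side is one mutation away from each endpoint and hence uses only summands of $E \oplus X \oplus X'$ and $E \oplus Y \oplus Y'$). The paper leaves this last step as ``by inspection'' of Figure~\ref{fig:deformation}; you spell out the bookkeeping explicitly, which is a modest improvement in rigor but not a different argument.
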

\begin{proof}
It suffices to show that $[\mathcal{G}] \dlessdot [\mathcal{G}']$ implies $[\mathcal{G}] \sleq [\mathcal{G}']$. Hence, we shall show that if $\mathcal{G}'$ is an increasing elementary polygonal deformation of $\mathcal{G}$, then $\summ{\mathcal{G}} \supseteq \summ{\mathcal{G}'}$. Locally, an increasing elementary polygonal deformation looks as in Figure~\ref{fig:deformation}. The maximal green sequence passing along the top of this polygon is $\mathcal{G}$ and the maximal green sequence passing along the bottom is $\mathcal{G}'$. By inspection, every summand from the bottom path also occurs in the top path. Hence, $\summ{\mathcal{G}} \supseteq \summ{\mathcal{G}'}$, as desired.
\end{proof}

We now show that the deformation order also always implies the Harder--Narasimhan order.

\begin{theorem}\label{thm:def->hn}
Let $\Lambda$ be a finite-dimensional algebra over a field $K$ with $\mathcal{G}$ and $\mathcal{G}'$ two maximal green sequences of $\Lambda$. Then $[\mathcal{G}] \dleq [\mathcal{G}']$ implies that $[\mathcal{G}] \hleq [\mathcal{G}']$.
\end{theorem}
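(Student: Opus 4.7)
Since $\dleq$ is the reflexive-transitive closure of its covering relations and $\hleq$ is transitive by Proposition~\ref{prop:poset_well_def}\ref{op:poset_well_def:hleq}, it suffices to treat the case when $\mathcal{G}'$ is a single increasing elementary polygonal deformation of $\mathcal{G}$. By Lemma~\ref{lem:iepd_via_bricks} together with Lemma~\ref{lem:polygon_bricks}, $\mathcal{G}$ and $\mathcal{G}'$ coincide outside a polygon block where, as maximal backwards $\Hom$-orthogonal sequences of bricks, $\mathcal{G}$ reads $M, B'_1, \dots, B'_s, L$ (the long path) while $\mathcal{G}'$ reads $L, M$ (the short path). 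The backwards $\Hom$-orthogonality of $\mathcal{G}$ and $\mathcal{G}'$ respectively gives $\Hom_\Lambda(L,M) = 0$ and $\Hom_\Lambda(M,L) = 0$.

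Now fix a $\Lambda$-module $N$ and let $\mathcal{T} \supset \mathcal{U}$ be the top and bottom torsion classes of the polygon; these are common to both sequences, and so are the torsion submodules $N^\mathcal{T}$ and $N^\mathcal{U}$. Set $P := N^\mathcal{T}/N^\mathcal{U} \in [\mathcal{T},\mathcal{U}] = \Filt(L,M)$. A direct computation with torsion submodules shows that, for every intermediate torsion class $\mathcal{T}_j$ of either path, $P^{\mathcal{T}_j} = N^{\mathcal{T}_j}/N^\mathcal{U}$, so the inside-polygon portion of each HN filtration of $N$ is induced by the corresponding HN filtration of $P$. Moreover, for every brick $B$ appearing outside the polygon, $B$ is a $\mathcal{G}'$-brick and hence $\bhnf{\mathcal{G}'}{B} = \{B\}$, so the outside-polygon contributions to $\bhnf{\mathcal{G}'}{N}$ and to $\bigsqcup_{B \in \bhnf{\mathcal{G}}{N}} \bhnf{\mathcal{G}'}{B}$ coincide. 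The desired identity therefore reduces to
\[
\bhnf{\mathcal{G}'}{P} = \bigsqcup_{B \in \bhnf{\mathcal{G}}{P}} \bhnf{\mathcal{G}'}{B}.
\]

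The plan is to derive this from Jordan--H\"older inside $\Filt(L,M)$, which by Lemma~\ref{lem:poly_ab_cat}\ref{op:poly_ab_cat:ab_cat} is abelian and in which every object has, by definition, a finite $\{L,M\}$-filtration, so the category has finite length and any simple object is isomorphic to $L$ or $M$. The main obstacle I expect is verifying that $L$ (and, symmetrically, $M$) is itself simple in $\Filt(L,M)$: given a proper non-zero subobject $X \subseteq L$ in $\Filt(L,M)$, the bottom term of any $\{L,M\}$-filtration of $X$ embeds into $L$ and is isomorphic to $L$ or $M$; the first case forces equality because $L$ is a brick, contradicting properness, while the second contradicts $\Hom_\Lambda(M,L) = 0$. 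With this in hand, one refines the $\mathcal{G}$-HN filtration of $P$ successively --- into its $\mathcal{G}$-semistable pieces $F_j \in \Filt(B_j)$ with $B_j \in \{M, B'_1, \dots, B'_s, L\}$, then each $F_j$ into copies of $B_j$, then each $B_j$ via its own $\mathcal{G}'$-HN filtration (which lies in $\Filt(L,M)$) --- to obtain an $\{L,M\}$-filtration of $P$ whose multiset of factors is $\bigsqcup_{B \in \bhnf{\mathcal{G}}{P}} \bhnf{\mathcal{G}'}{B}$. Performing the analogous refinement on the (much shorter) $\mathcal{G}'$-HN filtration of $P$ yields a second $\{L,M\}$-filtration with multiset $\bhnf{\mathcal{G}'}{P}$, and Jordan--H\"older identifies the two multisets, completing the proof.
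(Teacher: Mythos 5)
Your proposal is correct and takes essentially the same approach as the paper: both reduce to a single increasing elementary polygonal deformation, pass to the subquotient $P = N^{\mathcal{T}}/N^{\mathcal{U}}$ lying in the abelian length category $\Filt(L,M) = [\mathcal{T},\mathcal{U}]$, and apply Jordan--H\"older there to match the two multisets of stable factors. The only cosmetic difference is that you re-derive the simplicity of $L$ and $M$ in $\Filt(L,M)$ from scratch, whereas the paper obtains this from the proof of Lemma~\ref{lem:polygon_bricks} and avoids your case by noting that if one of $L,M$ is absent from $\bhnf{\mathcal{G}'}{N}$ the two filtrations already coincide.
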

\begin{proof}
It suffices to show that $[\mathcal{G}] \dlessdot [\mathcal{G}']$ implies $[\mathcal{G}] \hleq [\mathcal{G}']$, so we suppose that $\mathcal{G}'$ is an increasing elementary polygonal deformation of $\mathcal{G}$. Let $B$ and $B'$ be the brick labels for the short path around the polygon, so that $B < B'$ in $\mathcal{G}'$, but $B' < B$ in $\mathcal{G}$. Further, let $M$ be a $\Lambda$-module. If $B \notin \bhnf{\mathcal{G}'}{M}$ or $B' \notin \bhnf{\mathcal{G}'}{M}$, then the $\mathcal{G}'$-HN filtration of $M$ is also a valid $\mathcal{G}$-HN filtration, and so $\bhnf{\mathcal{G}}{M} = \bhnf{\mathcal{G}'}{M}$. 

We thus assume that $B, B' \in \bhnf{\mathcal{G}'}{M}$. Let $N$ be the subquotient of $M$ spanned by the two $\mathcal{G}'$-semistable factors in $\Filt(B)$ and $\Filt(B')$. Then $N \in \Filt(B, B')$. By Lemma~\ref{lem:poly_ab_cat}, the long path around the polygon therefore induces an HN filtration of $N$ in the abelian category $\Filt(B, B')$, which gives us the $\mathcal{G}$-HN filtration of $N$. The $\mathcal{G}$-HN filtration of $M$ is then obtained from the $\mathcal{G}'$-HN filtration of $M$ by replacing the filtration of $N$ with this filtration. We then have that \[\bhnf{\mathcal{G}'}{N} = \bigsqcup_{B'' \in \bhnf{\mathcal{G}}{N}}\bhnf{\mathcal{G}'}{B''},\] since the elements of $\bhnf{\mathcal{G}}{N}$ lie in $\Filt(B, B')$ and $B$ and $B'$ are the relatively simple objects in $\Filt(B, B')$. Indeed, $\Filt(B, B')$ is an abelian category by Lemma~\ref{lem:poly_ab_cat}, so applying $\bhnf{\mathcal{G}'}{-}$ just gives the multiset of composition factors in this category. Using the fact that $N$ is a subquotient of both the $\mathcal{G}$-HN filtration and the $\mathcal{G}'$-HN filtration, we deduce that
\begin{align*}
    \bhnf{\mathcal{G}'}{M} &= (\bhnf{\mathcal{G}'}{M} \setminus \bhnf{\mathcal{G}'}{N}) \sqcup \bhnf{\mathcal{G}'}{N} \\
    &= (\bhnf{\mathcal{G}}{M} \setminus \bhnf{\mathcal{G}}{N}) \sqcup \bhnf{\mathcal{G}'}{N} \\
    &= (\bhnf{\mathcal{G}}{M} \setminus \bhnf{\mathcal{G}}{N}) \sqcup \bigsqcup_{B'' \in \bhnf{\mathcal{G}}{N}}\bhnf{\mathcal{G}'}{B''} \\
    &= \bigsqcup_{B'' \in \bhnf{\mathcal{G}}{M}}\bhnf{\mathcal{G}'}{B''},
\end{align*}
since for $B'' \in \bhnf{\mathcal{G}}{M} \setminus \bhnf{\mathcal{G}}{N}$, we have $\bhnf{\mathcal{G}'}{B''} = \{B''\}$. This gives us that $[\mathcal{G}] \hleq [\mathcal{G}']$, as desired.
\end{proof}

\begin{corollary}\label{cor:def->bricks}
Let $\Lambda$ be a finite-dimensional algebra over a field $K$ with $\mathcal{G}$ and $\mathcal{G}'$ two maximal green sequences of $\Lambda$. Then $[\mathcal{G}] \dleq [\mathcal{G}']$ implies that $\bricks{\mathcal{G}} \supseteq \bricks{\mathcal{G}'}$.
\end{corollary}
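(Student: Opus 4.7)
The plan is to observe that this corollary is an immediate consequence of two results already established in the excerpt. Specifically, Theorem~\ref{thm:def->hn} provides that $[\mathcal{G}] \dleq [\mathcal{G}']$ implies $[\mathcal{G}] \hleq [\mathcal{G}']$, and Lemma~\ref{lem:hn->bricks} provides that $[\mathcal{G}] \hleq [\mathcal{G}']$ implies $\bricks{\mathcal{G}} \supseteq \bricks{\mathcal{G}'}$. Chaining these two implications together delivers the desired conclusion with no further work required.

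Concretely, I would simply write: assume $[\mathcal{G}] \dleq [\mathcal{G}']$; apply Theorem~\ref{thm:def->hn} to obtain $[\mathcal{G}] \hleq [\mathcal{G}']$; then apply Lemma~\ref{lem:hn->bricks} to conclude $\bricks{\mathcal{G}} \supseteq \bricks{\mathcal{G}'}$. There is no real obstacle here, since both auxiliary results have been proved. The only conceptual content of the corollary is that it isolates a concrete and easily stated consequence of the more technical refinement property of Harder--Narasimhan filtrations, namely that deformation-comparable maximal green sequences have nested brick sets. This is worth recording on its own since it connects the deformation partial order $\dleq$ directly to the natural (but not equivalence-class-compatible, by Example~\ref{ex:brick_counter}) preorder given by inclusion of bricks.

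If one wished to avoid going through Lemma~\ref{lem:hn->bricks}, one could alternatively argue more directly: reduce as usual to the case where $\mathcal{G}'$ is a single increasing elementary polygonal deformation of $\mathcal{G}$, with $B$ and $B'$ the two brick labels along the short path, and then invoke Lemma~\ref{lem:polygon_bricks} together with Proposition~\ref{prop:simples_in_intervals} to see that every brick appearing along the long path of the polygon must be either $B$ or $B'$ or lie in $\Filt(B, B')$ as a relative simple, hence must belong to $\bricks{\mathcal{G}}$. However, this reworking is unnecessary given that the chain through the HN order is already available.
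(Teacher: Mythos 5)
Your proposal is correct and is exactly one of the two proofs the paper gives: the paper's own proof of this corollary reads that it ``follows immediately either from Lemma~\ref{lem:iepd_via_bricks}, or from combining Theorem~\ref{thm:def->hn} and Proposition~\ref{lem:hn->bricks},'' and you chose the second route. Your sketched alternative via Lemma~\ref{lem:polygon_bricks} is essentially the first route (the paper packages it through Lemma~\ref{lem:iepd_via_bricks}), so both of your suggestions match the paper.
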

\begin{proof}
This follows immediately either from Lemma~\ref{lem:iepd_via_bricks}, or from combining Theorem~\ref{thm:def->hn} and Proposition~\ref{lem:hn->bricks}.
\end{proof}

\begin{remark} \label{rem:incomplete_strategy}
Given maximal green sequences $\mathcal{G}$ and $\mathcal{G}'$ such that $[\mathcal{G}] \hleq [\mathcal{G}']$, it is natural to attempt to prove the converse of Theorem~\ref{thm:def->hn} by using Lemma~\ref{lem:adj_bricks} to find a decreasing elementary polygonal deformation $\mathcal{G}''$ of $\mathcal{G}'$ such that $[\mathcal{G}] \hleq [\mathcal{G}''] \dlessdot [\mathcal{G}']$. However, simply applying Lemma~\ref{lem:adj_bricks} does not always give a deformation which works. Indeed, suppose that we applied this lemma and found $[\mathcal{G}'']$ which differs from $[\mathcal{G}']$ by a sequence of deformations across squares and a single decreasing deformation across a polygon with the short side $B < B'$ inside $\mathcal{G}'$, such that $B > B'$ in $\mathcal{G}$ and in $\mathcal{G}''$.
The issue is that we do not automatically have $[\mathcal{G}] \hleq [\mathcal{G}'']$, as can be shown by applying the example from Remark~\ref{rmk:hn_filt_cant_stick}. 
Recall here that we have the path algebra $\Lambda$ of \[
\begin{tikzcd}
& 2 \ar[dr] \ar[dl] && 4 \ar[dl] \ar[dlll] \\
1 && 3 &
\end{tikzcd}
\] of type $\widetilde{A}_{4}$ with maximal green sequences $\mathcal{G}$ \[1,\, \tcs{2\\1},\, \tcs{4\\1},\, \tcs{2\phantom{1}4\\\phantom{2}1\phantom{4}},\, \tcs{3},\, \tcs{2\\3},\, \tcs{2},\, \tcs{4\\3},\, \tcs{4}\] and $\mathcal{G}'$ \[4,\, 2,\, 3,\, 1.\] All of the adjacent pairs of bricks in $\mathcal{G}'$ are ordered differently in $\mathcal{G}$, so we could obtain any of them from Lemma~\ref{lem:adj_bricks}. But, in particular, we could obtain the pair $2$ and $3$. Deforming these across a pentagon yields the maximal green sequence~$\mathcal{G}''$ \[4,\, 3,\, \tcs{2\\3},\, 2,\, 1.\] But now we do not have $[\mathcal{G}] \hleq [\mathcal{G}'']$, since \[\bhnfbig{\mathcal{G}}{\tcs{2\phantom{3}4\\\phantom{2}3\phantom{4}}} = \left\{4,\, \tcs{2\\3}\right\},\] whereas \[\bhnfbig{\mathcal{G}''}{\tcs{2\phantom{3}4\\\phantom{2}3\phantom{4}}} = \left\{2,\, \tcs{4\\3}\right\}.\] It is an exercise for the reader to verify that in this case we still have $[\mathcal{G}] \dleq [\mathcal{G}']$, so that this does not provide a counter-example to Conjecture~\ref{conj}. This all contrasts with the later situation in Theorem~\ref{thm:nak_brick_order} for the case of Nakayama algebras, where any deformation obtained from Lemma~\ref{lem:adj_bricks} will work. The difference is in the converse of Lemma~\ref{lem:hn->bricks}: it holds for Nakayama algebras by Corollary~\ref{cor:nak_orders}, but does not hold in general. The failure is illustrated by the present example: we have $\bricks{\mathcal{G}''} \subseteq \bricks{\mathcal{G}}$, but do not have $[\mathcal{G}] \hleq [\mathcal{G}'']$. One can also see that we do not have $[\mathcal{G}] \dleq [\mathcal{G}'']$. In order to deform $[\mathcal{G}'']$ into $[\mathcal{G}]$, one would need to move $1$ past $\tcss{2\\3}$, but when one does this one is forced to insert the brick \[\tcs{\phantom{1}2\phantom{3}\\1\phantom{2}3},\] which is not a brick of $[\mathcal{G}]$.
\end{remark}

We prove Conjecture~\ref{conj} in the simple case where the algebra $\Lambda$ only has two simple modules up to isomorphism, for which we need the following lemma. This was previously obtained independently in \cite[Proposition~4.2]{hofmann_thesis}.

\begin{lemma}\label{lem:simple_bricks}
If $\mathcal{G}$ is a maximal green sequence given as a maximal backwards $\Hom$-orthogonal sequence of bricks $B_1, B_2, \dots, B_r$, then both $B_1$ and $B_r$ are simple $\Lambda$-modules.
\end{lemma}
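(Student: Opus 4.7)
The plan is to exploit the brick labelling correspondence from Section~\ref{sect:back:brick_label}, together with Theorem~\ref{thm:enomoto_relsimp}: since $B_i$ labels the minimal inclusion $\mathcal{T}_{i-1} \supset \mathcal{T}_i$, the brick $B_i$ is in particular a relative simple of $\mathcal{T}_{i-1}$ (this is the inclusion part of Theorem~\ref{thm:enomoto_relsimp}, combined with the fact that $B_i$ lies in $\mathcal{T}_{i-1}$). So it will suffice to identify relative simples of $\mathcal{T}_0 = \modules\Lambda$ and of $\mathcal{T}_{r-1}$ with genuine simple $\Lambda$-modules.

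For $B_1$ this is immediate: in $\modules\Lambda$ itself, every nonzero submodule and every quotient of any object automatically lies in $\modules\Lambda$, so the notion of a relative simple of $\modules\Lambda$ coincides with that of a simple $\Lambda$-module. Hence $B_1$ is simple.

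For $B_r$, the brick labelling of the minimal inclusion $\mathcal{T}_{r-1} \supset \{0\}$ gives $\mathcal{T}_{r-1} \cap \hor{\{0\}} = \mathcal{T}_{r-1} = \Filt(B_r)$. I would then take an arbitrary submodule $N \subseteq B_r$ and set $Q := B_r/N$. Since $\mathcal{T}_{r-1}$ is closed under factor modules, $Q \in \Filt(B_r)$. The key step is that $\Filt(B_r)$ is a wide subcategory of $\modules\Lambda$ (as recalled in Section~\ref{sect:back:brick_label}), hence closed under kernels; applied to the map $B_r \twoheadrightarrow Q$, this forces $N \in \Filt(B_r) = \mathcal{T}_{r-1}$. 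The short exact sequence $0 \to N \to B_r \to Q \to 0$ then has both outer terms in $\mathcal{T}_{r-1}$, so the relative simplicity of $B_r$ in $\mathcal{T}_{r-1}$ forces $N = 0$ or $N = B_r$. Thus $B_r$ has no proper nonzero submodule and is a simple $\Lambda$-module.

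The only subtle point, and what I expect a careful reader to want spelt out, is the use of wide closure to bring a submodule of $B_r$ back inside $\Filt(B_r)$: without it, there is no a priori reason an arbitrary submodule of a brick should lie in the filtration closure, and the reduction to relative simplicity would break down. Once this observation is in place, both halves of the lemma fall out by unwinding the definitions.
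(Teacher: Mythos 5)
Your proof is correct, and for $B_1$ it matches the paper's argument exactly. For $B_r$ you take a slightly different route: you show both $N$ and $Q = B_r/N$ land in $\Filt(B_r)$ (the latter via closure under quotients, the former via closure under kernels from wideness) and then invoke relative simplicity of $B_r$. The paper instead argues only with $Q$: since $\mathcal{T}_{r-1} = \Filt(B_r)$ is a torsion class it is closed under factor modules, so any nonzero quotient $Q$ of $B_r$ lies in $\Filt(B_r)$ and therefore has a filtration by copies of $B_r$, forcing $\dim_K Q \geq \dim_K B_r$ and hence $Q \cong B_r$; so $B_r$ has no proper nonzero quotient and is simple. This is where your assessment of the ``subtle point'' goes slightly astray: the wide-closure step that pulls $N$ back into $\Filt(B_r)$, which you flag as the crux, is not actually needed once you observe that $\Filt(B_r)$ cannot contain any nonzero module of dimension strictly less than $\dim_K B_r$. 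Both routes are valid, but the dimension count on $Q$ alone is more economical and is the one the paper takes; the paper also notes that $B_r$ simple follows formally from torsion--torsion-free duality applied to the $B_1$ case.
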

\begin{proof}
That $B_1$ must be a simple $\Lambda$-module follows from the fact that it must be a relatively simple object in the first torsion class of $\mathcal{G}$, which is $\modules \Lambda$. That $B_r$ must also be a simple $\Lambda$-module follows from the duality between torsion and torsion-free, but can also be seen by the following direct argument. As explained in Section~\ref{sect:back:brick_label}, we must have that the final non-zero torsion class of $\mathcal{G}$ is $\Filt(B)$. But for this to be a torsion class, we must have that $B$ has no proper factor modules, which implies that $B$ is a simple $\Lambda$-module.
\end{proof}

\begin{theorem}\label{thm:two_simples}
Let $\Lambda$ be a finite-dimensional algebra over a field $K$ with two isomorphism classes of simple modules. Let $\mathcal{G}$ and $\mathcal{G}'$ be maximal green sequences of $\Lambda$. Then the following are equivalent.
\begin{enumerate}[label=\textup{(}\arabic*\textup{)}]
\item $[\mathcal{G}] \dleq [\mathcal{G}']$.
\item $[\mathcal{G}] \sleq [\mathcal{G}']$.
\item $[\mathcal{G}] \hleq [\mathcal{G}']$.
\item $\bricks{\mathcal{G}} \supseteq \bricks{\mathcal{G}'}$.
\end{enumerate}
\end{theorem}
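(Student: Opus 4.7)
The key structural observation is that, since $|\Lambda| = 2$, the poset $\twosilt \Lambda$ consists of a single polygon in the sense of Definition~\ref{def:polygons} (taking $E = 0$); by Lemma~\ref{lem:iepd_unique} every finite maximal green sequence then corresponds to one of the at most two finite paths from $\Lambda$ to $\Lambda[1]$ around this polygon. Noting that (1) $\Rightarrow$ (2) is Theorem~\ref{thm:first->second}, (1) $\Rightarrow$ (3) is Theorem~\ref{thm:def->hn}, and (3) $\Rightarrow$ (4) is Lemma~\ref{lem:hn->bricks}, I plan to establish the remaining implications (2) $\Rightarrow$ (1) and (4) $\Rightarrow$ (1) by a case analysis on the polygon's shape.

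Assume $[\mathcal{G}] \neq [\mathcal{G}']$ (the equal case is immediate), so $\mathcal{G}$ and $\mathcal{G}'$ are the two distinct sides. By Lemma~\ref{lem:simple_bricks} each side begins and ends with a simple, and Theorem~\ref{thm:filt_intervals} forces the two starting brick labels to be distinct simples (otherwise the two initial minimal inclusions from $\modules \Lambda$ would coincide); combined with backwards $\Hom$-orthogonality it follows that each side starts with one of $S_1, S_2$ and ends with the other. In the oriented polygon case the short side $\mathcal{G}_s$ has length $2$ with $\bricks{\mathcal{G}_s} = \{S_1, S_2\}$ and $\summ{\mathcal{G}_s} = \{P_1, P_2, P_1[1], P_2[1]\}$, while the long side $\mathcal{G}_l$ contains both of these sets strictly. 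Hence (2) or (4) can hold non-trivially only with $\mathcal{G} = \mathcal{G}_l$ and $\mathcal{G}' = \mathcal{G}_s$, in which case $[\mathcal{G}] \dlessdot [\mathcal{G}']$ holds immediately by the definition of increasing elementary polygonal deformation, giving (1). The degenerate square case collapses to $[\mathcal{G}] = [\mathcal{G}']$ via Lemma~\ref{lem:square-bricks}, and an infinite polygon admits at most one equivalence class of finite maximal green sequences, so the statement reduces to the trivial case.

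The main obstacle is the finite unoriented polygon with both sides of length $> 2$, where I plan to show that (2) and (4) fail between the two distinct sides so both implications hold vacuously. For bricks, a hypothetical common middle brick $B$ would satisfy $\Hom(B, S_1) = \Hom(B, S_2) = 0$ by applying backwards $\Hom$-orthogonality once on each side, using that the two sides start with opposite simples; since $B \neq 0$ must have a simple quotient, this is impossible, so $\bricks{\mathcal{G}} \cap \bricks{\mathcal{G}'} = \{S_1, S_2\}$, and since each side has at least one middle brick, neither inclusion holds. For summands, the analogous observation is that each indecomposable summand in the polygon is shared by exactly two adjacent chambers; a common summand $X \in \summ{\mathcal{G}} \cap \summ{\mathcal{G}'}$ must therefore have its two chambers straddling $\Lambda$ or $\Lambda[1]$, forcing $X \in \{P_1, P_2, P_1[1], P_2[1]\}$. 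Combined with the count $|\summ{\mathcal{G}_i}| = k_i + 2$ for the length-$k_i$ side and the presence of middle summands on both sides, this prevents either inclusion.
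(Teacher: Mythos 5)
Your proof is correct and, while packaged differently, relies on the same core observations as the paper. Both proofs use the $2$-regularity of the Hasse diagram of $\twosilt\Lambda$ (so $\Lambda$ has at most two maximal green sequences), Lemma~\ref{lem:simple_bricks} to pin down the first and last bricks as the two distinct simples, the fact that a non-simple brick $B$ common to both sides would satisfy $\Hom_\Lambda(B,S_1)=\Hom_\Lambda(B,S_2)=0$ (equivalently, has no simple quotient) and hence cannot exist, and the fact that a non-(shifted-)projective indecomposable summand $X$ has both its silting completions interior to one side of the polygon, so $X\notin\summ{\mathcal{G}}\cap\summ{\mathcal{G}'}$. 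The paper compresses this into a single contrapositive: assume $[\mathcal{G}]\not\dleq[\mathcal{G}']$, deduce $\mathcal{G}'$ has length greater than two, then exhibit a brick and a summand of $\mathcal{G}'$ that $\mathcal{G}$ cannot possess. Your organisation by polygon type (square, oriented, unoriented, infinite) is more verbose but arguably more transparent, since in the oriented case (1) then holds by the definition of increasing elementary polygonal deformation, with no further argument. Two small wrinkles worth tightening: the phrase about the two chambers ``straddling $\Lambda$ or $\Lambda[1]$'' is a bit imprecise --- the cleaner statement is that the two completions of $X$ are joined by a Hasse edge, every Hasse edge lies on exactly one of the two paths, and the paths share only $\Lambda$ and $\Lambda[1]$, so a common $X$ must have one completion equal to $\Lambda$ or $\Lambda[1]$. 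Also, the count $|\summ{\mathcal{G}_i}|=k_i+2$ is correct but unnecessary: it already suffices that a side of length greater than two contains a summand outside $\{P_1,P_2,P_1[1],P_2[1]\}$.
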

\begin{proof}
We already know that $[\mathcal{G}] \dleq [\mathcal{G}']$ implies $[\mathcal{G}] \sleq [\mathcal{G}']$, $[\mathcal{G}] \hleq [\mathcal{G}']$, and $\bricks{\mathcal{G}} \supseteq \bricks{\mathcal{G}'}$ by 
Theorem~\ref{thm:first->second},
Theorem~\ref{thm:def->hn}, and Corollary~\ref{cor:def->bricks}. We wish to show the three converse implications. Note first that the Hasse diagram of the poset of two-term silting complexes of $\Lambda$ is 2-regular by \cite[Corollary~3.8(a)]{air}. Hence $\Lambda$ has at most two maximal green sequences. If $\Lambda$ has fewer than two maximal green sequences, then the result is trivial, so suppose that $\Lambda$ has two maximal green sequences $\mathcal{G}$ and $\mathcal{G}'$.

Suppose that $[\mathcal{G}] \not\dleq [\mathcal{G}']$. Since we are assuming that $\mathcal{G}$ and $\mathcal{G}'$ are the two distinct maximal green sequences of $\Lambda$, the only option is that $\mathcal{G}'$ is a maximal green sequence of length greater than two, by Definition~\ref{def:iepd}.

To show that $[\mathcal{G}] \not\sleq [\mathcal{G}']$, we note that there exists $X \in \summ{\mathcal{G}'}$ which is neither a projective nor a shifted projective, since $\mathcal{G}'$ has length greater than two. We have that $X$ completes to precisely two different basic two-term silting complexes $X \oplus Y$ and $X \oplus Y'$ \cite[Corollary~3.8(a)]{air}. Both $X \oplus Y$ and $X \oplus Y'$ therefore occur in $\mathcal{G}'$ and neither is $\Lambda$ or $\Lambda[1]$. Hence $X \notin \summ{\mathcal{G}}$, since neither of the two-term silting complexes it is an indecomposable summand of occurs in $\mathcal{G}$. 
We conclude that $[\mathcal{G}] \not\sleq [\mathcal{G}']$, as desired.

To show that $[\mathcal{G}] \not\hleq [\mathcal{G}']$, we note that there exists $B \in \bricks{\mathcal{G}'}$ which is not a simple. Let $S_1$ and $S_2$ be the simple $\Lambda$-modules. By Lemma~\ref{lem:simple_bricks}, these must be the first and last bricks in $\mathcal{G}$ and $\mathcal{G}'$. Suppose that these are ordered $S_1 < B < S_2$ by $\mathcal{G}'$, in which case the simples must be ordered $S_2 < S_1$ in $\mathcal{G}$. By backwards $\Hom$-orthogonality, the top of $B$ cannot contain $S_1$, so it
must contain $S_2$. Similarly, the socle of $B$ cannot contain $S_2$, so it must contain $S_1$. This implies that $B \notin \bricks{\mathcal{G}}$, since $B$ would have to occur after $S_2$ and before $S_1$ by Lemma~\ref{lem:simple_bricks}, which would contradict backwards $\Hom$-orthogonality. 
Hence, we have $\bricks{\mathcal{G}} \not\supseteq \bricks{\mathcal{G}'}$. By Lemma~\ref{lem:hn->bricks}, we then have $[\mathcal{G}] \not\hleq [\mathcal{G}']$. Note also that bricks therefore determine the equivalence class for algebras with two simples.
\end{proof}

One implication of Theorem~\ref{thm:two_simples} is that unoriented polygons are ``unoriented'' in all of the partial orders. That is, maximal green sequences which differ by a deformation across an unoriented polygon are not related in any of the orders.

\subsubsection{Maxima and minima}

We show that in certain cases the partial orders have unique maxima or unique minima. We first consider cases where the partial orders have unique maxima.

\begin{proposition}\label{prop:unique_max}
Let $\Lambda = KQ/I$ be a path algebra with relations, where $Q$ is acyclic. Then there is a maximal green sequence $\mathcal{G}_{\max}$ whose equivalence class is the unique maximum for $\sleq$ and $\hleq$, and which is maximal in $\dleq$.
\end{proposition}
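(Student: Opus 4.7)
The plan is to construct an explicit maximal green sequence $\mathcal{G}_{\max}$ of minimum possible length $n = |\Lambda|$, whose brick labels are the simple $\Lambda$-modules and whose summand set is $\{P_1, \ldots, P_n, P_1[1], \ldots, P_n[1]\}$. Since $Q$ is acyclic, order its vertices $1, \ldots, n$ so that $\Ext^1_\Lambda(S_i, S_j) = 0$ whenever $i < j$; this is possible because the Ext-quiver of $\Lambda = KQ/I$ is a subquiver of $Q$ and hence also acyclic. Setting \[T_i := \bigoplus_{j > i} P_j \;\oplus\; \bigoplus_{j \leqslant i} P_j[1], \qquad 0 \leqslant i \leqslant n,\] so that $T_0 = \Lambda$ and $T_n = \Lambda[1]$, the chosen vertex ordering will guarantee that consecutive $T_i$ are related by a green mutation whose exchange pair replaces $P_i$ by $P_i[1]$, with brick label $S_i$. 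This produces a maximal green sequence $\mathcal{G}_{\max} := (T_0, T_1, \ldots, T_n)$ of length $n$ with the stated summand set and brick set.

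The argument for $\sleq$ is then immediate: every maximal green sequence $\mathcal{G}$ starts with $\Lambda$ and ends with $\Lambda[1]$, so $\summ{\mathcal{G}}$ contains every indecomposable projective and every shifted indecomposable projective. Hence $\summ{\mathcal{G}} \supseteq \summ{\mathcal{G}_{\max}}$, i.e. $[\mathcal{G}] \sleq [\mathcal{G}_{\max}]$, and Theorem~\ref{thm:mg_equiv} shows that $[\mathcal{G}_{\max}]$ is uniquely characterised by its summand set, so it is the unique maximum.

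For $\hleq$, the key observation is that $\bricks{\mathcal{G}_{\max}} = \{S_1, \ldots, S_n\}$ forces $\bhnf{\mathcal{G}_{\max}}{M}$ to be the multiset of composition factors of $M$ for every $\Lambda$-module $M$. For any maximal green sequence $\mathcal{G}$, the $\mathcal{G}$-HN filtration together with the filtrations of each semistable factor by its stable factors presents $M$ as an iterated extension of the bricks in $\bhnf{\mathcal{G}}{M}$; taking composition factors then yields \[\bhnf{\mathcal{G}_{\max}}{M} = \bigsqcup_{B \in \bhnf{\mathcal{G}}{M}} \bhnf{\mathcal{G}_{\max}}{B},\] so $[\mathcal{G}] \hleq [\mathcal{G}_{\max}]$, making $[\mathcal{G}_{\max}]$ the unique $\hleq$-maximum.

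Finally, for $\dleq$-maximality: Lemma~\ref{lem:at_most_once} implies that each indecomposable summand of $\Lambda$ is mutated at most once in any maximal green sequence, so since all $n$ summands of $\Lambda$ must be swapped out to reach $\Lambda[1]$, every maximal green sequence has length at least $n$. As $\mathcal{G}_{\max}$ attains this minimum, and an increasing elementary polygonal deformation strictly decreases length (replacing a path of length greater than $2$ by a path of length $2$), no such deformation can be performed starting from $[\mathcal{G}_{\max}]$; hence $[\mathcal{G}_{\max}]$ is maximal in $\dleq$. The main obstacle in all four parts is the very first step, namely a clean verification that the explicit complexes $T_i$ are silting and that consecutive pairs are related by green mutations with brick labels the corresponding simples; this should follow from the ordering-compatibility provided by the acyclicity of $Q$ together with the Ext-vanishing it supplies, but it is the only part of the argument that is not essentially formal.
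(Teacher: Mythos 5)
Your overall strategy is the same as the paper's: construct an explicit maximal green sequence $\mathcal{G}_{\max}$ of minimal length $n$ via the exchange pairs $(P_i, P_i[1])$, observe that $\summ{\mathcal{G}_{\max}}$ consists exactly of the (shifted) projectives and $\bricks{\mathcal{G}_{\max}}$ exactly of the simples, and then the three maximality statements become formal. Your explicit description of the $T_i$, and your length-counting argument for $\dleq$-maximality via Lemma~\ref{lem:at_most_once}, are both fine and spell out details the paper leaves implicit.

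However, your ordering condition is reversed. You order the vertices so that $\Ext^1_\Lambda(S_i, S_j) = 0$ whenever $i < j$. The paper instead orders so that $\Hom_\Lambda(P_i, P_j) = 0$ whenever $i < j$, and this is exactly what is needed: to check $\Hom_{\kbproj}(T_i, T_i[1]) = 0$, the only non-trivially vanishing groups are $\Hom_{\kbproj}(P_j[1], P_k[1]) \cong \Hom_\Lambda(P_j, P_k)$ for $j \leqslant i < k$, so you need $\Hom_\Lambda(P_j, P_k) = 0$ for $j < k$. With the paper's right-module conventions one has $\Hom_\Lambda(P_a, P_b) \cong e_b \Lambda e_a$ (paths from $a$ to $b$) while $\Ext^1_\Lambda(S_a, S_b) \neq 0$ precisely when there is an arrow $b \to a$ in $Q$; so the paper's condition is equivalent to $\Ext^1_\Lambda(S_i, S_j) = 0$ for $i > j$, the opposite of yours. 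To see the failure concretely, take $Q = (1 \leftarrow 2)$, where $P_1 = \begin{smallmatrix}1\\2\end{smallmatrix}$ and $P_2 = S_2$. Here $\Ext^1_\Lambda(S_1, S_2) \neq 0$ and $\Ext^1_\Lambda(S_2, S_1) = 0$, so your condition forces the relabelling $1' := 2$, $2' := 1$. Then your $T_1 = P_{1'}[1] \oplus P_{2'} = P_2[1] \oplus P_1$, and $\Hom_{\kbproj}(P_2[1], P_1[1]) \cong \Hom_\Lambda(P_2, P_1) \cong \Hom_\Lambda(S_2, \begin{smallmatrix}1\\2\end{smallmatrix}) \neq 0$ since $\soc P_1 = S_2$; so $T_1$ is not presilting and the construction collapses at the first mutation. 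This is most likely a right-versus-left module convention slip (with left modules the Gabriel quiver convention matches your formula), but as written, with the paper's conventions, the ordering is wrong and needs to be replaced by $\Hom_\Lambda(P_i, P_j) = 0$ for $i < j$. Once that is fixed, the rest of your argument (the verification that the $T_i$ are silting with exchange pairs $(P_i, P_i[1])$, and the arguments for $\sleq$, $\hleq$, and $\dleq$) goes through.
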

\begin{proof}
If $Q$ is an acyclic quiver, then it is clear that the vertices of $Q$ may be labelled $1, 2, \dots, n$ such that $\Hom_{\Lambda}(P_i, P_j) = 0$ for $i < j$, where $P_i$ is the indecomposable projective at vertex $i$. Then there is a maximal green sequence $\mathcal{G}_{\max}$ of $\Lambda$ given by the sequence of exchange pairs $(P_1, P_1[1]), (P_2, P_2[1]), \dots, (P_n, P_n[1])$. Then $\summ{\mathcal{G}_{\max}}$ consists of the indecomposable projectives and indecomposable shifted projectives. Hence, for any other maximal green sequence $\mathcal{G}'$ of $\Lambda$, we have that $[\mathcal{G}'] \sleq [\mathcal{G}_{\max}]$, which proves that $[\mathcal{G}_{\max}]$ is the unique maximum for~$\sleq$.

To show that $[\mathcal{G}_{\max}]$ is the unique maximum for $\hleq$, we first note that $\bricks{\mathcal{G}_{\max}}$ consists of only the simple $\Lambda$-modules $\{S_1, \dots, S_n\}$, since $\bricks{\mathcal{G}_{\max}}$ must contain all simple modules by Theorem~\ref{thm:enomoto_relsimp} and $\# \bricks{\mathcal{G}_{\max}} = \# \summ{\mathcal{G}_{\max}} = n$. By the ordering of $Q_0$ we chose earlier, we in fact have that $S_1, S_2, \dots, S_n$ is a backwards maximal $\Hom$-orthogonal sequence of bricks corresponding to $\mathcal{G}_{\max}$. Now let $\mathcal{G}$ be a maximal green sequence of $\Lambda$ and $M$ be a $\Lambda$-module. Then $\bhnf{\mathcal{G}_{\max}}{M}$ just consists of the composition factors of $M$, so it is clear that \[\bhnf{\mathcal{G}_{\max}}{M} = \bigsqcup_{B \in \bhnf{\mathcal{G}}{M}} \bhnf{\mathcal{G}_{\max}}{B}.\]

Finally, it is clear that $\mathcal{G}_{\max}$ is maximal in $\dleq$, because it does not admit any increasing elementary polygonal deformations.
\end{proof}

It is not obvious if $\mathcal{G}_{\max}$ is the unique maximal green sequence which does not admit any increasing elementary polygonal deformations. \textit{A priori}, there may be other maximal green sequences which do not admit any increasing elementary polygonal deformations --- maximal green sequences where one has got stuck, so to speak. However, due to Conjecture~\ref{conj}, we expect this not to happen.

\bigskip

For the cases where the partial orders have unique minima, we need the following concepts, which can be found in \cite{ringel_ta_iqf}. A \emph{path} in $\modules \Lambda$ is a tuple $(M_0, \dots, M_s)$ of $\Lambda$-modules with $s \geqslant 1$ such that for each $1 \leqslant i \leqslant s$, there exists a map $M_{i - 1} \to M_{i}$ which is neither zero nor an isomorphism. An algebra $\Lambda$ is \emph{representation-directed} if and only if there exists no path $(M_0, \dots, M_s)$ with $M_0 \cong M_s$. It is known that every representation-directed algebra is representation-finite \cite{ringel_ta_iqf}.

\begin{proposition}\label{prop:unique_min}
If $\Lambda$ is representation-directed, then there is a maximal green sequence $\mathcal{G}_{\min}$ whose equivalence class is the unique minimum for all orders.
\end{proposition}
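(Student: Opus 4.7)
The plan is to define $\mathcal{G}_{\min}$ as the maximal green sequence whose maximal backwards $\Hom$-orthogonal sequence of bricks consists of every indecomposable $\Lambda$-module, taken in a topological ordering, and then to verify minimality and uniqueness in each of the three partial orders.

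To construct $\mathcal{G}_{\min}$, I first observe that in a representation-directed algebra every indecomposable $M$ is a brick, since a non-invertible non-zero endomorphism of $M$ would constitute a forbidden path $(M, M)$. Writing $M_1, \dots, M_s$ for the finitely many indecomposables, the DAG of non-invertible non-zero morphisms between them admits a topological sort with $\Hom_\Lambda(M_i, M_j) = 0$ for $i > j$, $i \neq j$. This is a backwards $\Hom$-orthogonal sequence of bricks, maximal since every brick already appears, so by \cite[Theorem~A.3]{dem-kel} it defines a maximal green sequence $\mathcal{G}_{\min}$. The class $[\mathcal{G}_{\min}]$ is independent of the chosen sort: two incomparable indecomposables $M_i, M_j$ have $\Ext^1$ vanishing in both directions, for a non-split extension would be a brick by Lemma~\ref{lem:extension_of_bricks}, linking $M_i$ to $M_j$ and violating incomparability; Lemma~\ref{lem:sq_crit_brick} then converts each transposition of incomparable bricks into a deformation across a square.

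For the summand order, every indecomposable is additionally $\tau$-rigid in the rep-directed case: a non-zero map $M \to \tau M$ composes with the Auslander--Reiten sequence $0 \to \tau M \to E \to M \to 0$ into a forbidden path $M \to \tau M \to E \to M$. Hence the indecomposable two-term silting summands of $\Lambda$ are exactly the $s$ minimal projective presentations of $\tau$-rigid modules together with the $n$ shifted indecomposable projectives, giving $n + s$ in total, where $n = |\Lambda|$. By Lemma~\ref{lem:at_most_once}, $|\summ{\mathcal{G}_{\min}}| = n + \mathrm{length}(\mathcal{G}_{\min}) = n + s$, so $\summ{\mathcal{G}_{\min}}$ exhausts the set and $[\mathcal{G}_{\min}] \sleq [\mathcal{G}]$ for every $\mathcal{G}$. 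Uniqueness follows from Theorem~\ref{thm:mg_equiv}\ref{op:equiv:summ}, since any $[\mathcal{G}']$ with $[\mathcal{G}'] \sleq [\mathcal{G}_{\min}]$ must satisfy $\summ{\mathcal{G}'} = \summ{\mathcal{G}_{\min}}$.

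For the Harder--Narasimhan order, each indecomposable brick $B$ satisfies $\Ext^1(B, B) = 0$: a non-split self-extension would produce an indecomposable $E$ with a forbidden path $B \hookrightarrow E \twoheadrightarrow B$. Hence $\Filt(B) = \add(B)$, so the $\mathcal{G}_{\min}$-HN filtration of any module $M$ cleanly separates its indecomposable summands, and $\bhnf{\mathcal{G}_{\min}}{M}$ is exactly the multiset of indecomposable summands of $M$ counted with multiplicity. The refinement condition for $[\mathcal{G}_{\min}] \hleq [\mathcal{G}]$ then reduces to the distributivity of stable factors over direct sums, and uniqueness follows because every brick of any $\mathcal{G}'$ with $[\mathcal{G}'] \hleq [\mathcal{G}_{\min}]$ is indecomposable and therefore already $\mathcal{G}_{\min}$-stable, so Theorem~\ref{thm:mg_equiv}\ref{op:equiv:hnb} gives $[\mathcal{G}'] = [\mathcal{G}_{\min}]$.

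The main obstacle is the deformation order. The plan is to induct on $s - \mathrm{length}(\mathcal{G})$: the base case $\mathrm{length}(\mathcal{G}) = s$ forces $\bricks{\mathcal{G}}$ to be the full set of indecomposables, reducing to the equivalence established in the first paragraph. For shorter $\mathcal{G}$, I would locate in $\twosilt \Lambda$ an oriented polygon through whose short side $\mathcal{G}$ passes and whose longer side uses a brick of $\mathcal{G}_{\min}$ missing from $\bricks{\mathcal{G}}$; expanding across this polygon produces a strictly longer $\mathcal{G}^*$ with $[\mathcal{G}^*] \dlessdot [\mathcal{G}]$, and the inductive hypothesis yields $[\mathcal{G}_{\min}] \dleq [\mathcal{G}^*] \dleq [\mathcal{G}]$. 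Uniqueness of the $\dleq$-minimum is then automatic by antisymmetry.
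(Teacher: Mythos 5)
Your construction of $\mathcal{G}_{\min}$ and the arguments for the summand and Harder--Narasimhan orders are sound, and they take a route that is somewhat more self-contained than the paper's. You establish that every indecomposable is a brick and is $\tau$-rigid directly from representation-directedness, whereas the paper derives minimality in $\sleq$ from the abstract bijection between bricks and indecomposable $\tau$-rigid modules in \cite[Theorem~4.1]{dij} together with a counting argument; and you prove minimality in $\hleq$ directly via additivity of stable factors over direct sums, whereas the paper instead deduces it from $\dleq$ via Theorem~\ref{thm:def->hn}. One small imprecision: in your argument that $\Ext^1_\Lambda(B,B)=0$, the middle term $E$ of a non-split self-extension need not be indecomposable, so the path $B\hookrightarrow E\twoheadrightarrow B$ is not immediately a forbidden path. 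This is cleanly repaired by appealing to the Auslander--Reiten formula: $\Ext^1_\Lambda(B,B)\cong D\overline{\Hom}_\Lambda(B,\tau B)$, which vanishes because you have already shown $\Hom_\Lambda(B,\tau B)=0$.

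The genuine gap is the deformation order. You write that you ``would locate in $\twosilt\Lambda$ an oriented polygon through whose short side $\mathcal{G}$ passes and whose longer side uses a brick of $\mathcal{G}_{\min}$ missing from $\bricks{\mathcal{G}}$,'' but you give no argument that such a polygon exists; this is precisely the content that needs to be supplied, and it is not obvious. The paper fills this gap concretely. Since $\bricks{\mathcal{G}_{\min}}\supset\bricks{\mathcal{G}}$, Lemma~\ref{lem:adj_bricks} produces adjacent bricks $B<B'$ in $\mathcal{G}$ that are ordered $B'<B$ in $\mathcal{G}_{\min}$; in particular $\Hom_\Lambda(B,B')=\Hom_\Lambda(B',B)=0$, so the swapped sequence is backwards $\Hom$-orthogonal. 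If the swap yields a maximal sequence, it is a square deformation (Lemma~\ref{lem:square-bricks}) which strictly decreases the number of brick pairs out of order relative to $\mathcal{G}_{\min}$; repeating terminates, and it cannot terminate with zero inversions while $\bricks{\mathcal{G}}$ is a proper subset (else the missing bricks of $\mathcal{G}_{\min}$ could be inserted, contradicting maximality of $\mathcal{G}$). So at some point the swapped sequence is not maximal; by Lemma~\ref{lem:sq_crit_brick} this forces $\Ext^1_\Lambda(B,B')\neq 0$, and extending to a maximal sequence gives, by Lemma~\ref{lem:iepd_via_bricks}, the desired decreasing elementary polygonal deformation $\mathcal{G}^*$ with $[\mathcal{G}^*]\dlessdot[\mathcal{G}]$ and $\bricks{\mathcal{G}}\subset\bricks{\mathcal{G}^*}\subseteq\bricks{\mathcal{G}_{\min}}$. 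Inducting on $\#(\bricks{\mathcal{G}_{\min}}\setminus\bricks{\mathcal{G}})$, which is finite by representation-finiteness, gives $[\mathcal{G}_{\min}]\dleq[\mathcal{G}]$. You should incorporate this mechanism to close the argument.

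Finally, note that your separate ``uniqueness'' verifications are redundant: a minimum element of a poset, if it exists, is automatically unique by antisymmetry. What must be checked (and you do check this for $\sleq$ and $\hleq$) is that $[\mathcal{G}_{\min}]$ is well defined, i.e.\ independent of the chosen topological sort; your argument via Lemma~\ref{lem:sq_crit_brick} works, but you could also observe that once every brick appears, maximality of the swapped sequence is automatic, so Lemma~\ref{lem:square-bricks} applies directly and the $\Ext^1$ computation is not needed.
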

\begin{proof}
Since $\Lambda$ is representation-directed, it is representation-finite, and so has finitely many bricks. Moreover, the fact that $\Lambda$ is representation-directed means that these bricks can be ordered $B_1, B_2, \dots, B_r$ in such a way that $\Hom_\Lambda(B_j, B_i) = 0$ for $i < j$. We therefore have a maximal backwards $\Hom$-orthogonal sequence of bricks, which gives us a maximal green sequence $\mathcal{G}_{\min}$. Since $\# \bricks{\mathcal{G}_{\min}} = \# \sumt{\mathcal{G}_{\min}}$, as both are equal to the length of $\mathcal{G}_{\min}$, we have that $\sumt{\mathcal{G}_{\min}}$ must consist of all indecomposable $\tau$-rigid modules of $\Lambda$, because these are in bijection with the bricks of $\Lambda$ by \cite[Theorem~4.1]{dij}. Hence, by Theorem~\ref{thm:mg_equiv}, having $\bricks{\mathcal{G}_{\min}}$ consist of all of the bricks of $\Lambda$ determines $\mathcal{G}_{\min}$ up to equivalence.

Since $\sumt{\mathcal{G}_{\min}}$ consists of all indecomposable $\tau$-rigid modules, $[\mathcal{G}_{\min}]$ is clearly minimal in $\sleq$. To show that $[\mathcal{G}_{\min}]$ is the unique minimum in $\dleq$, let $\mathcal{G}$ be a maximal green sequence of $\Lambda$. If $\bricks{\mathcal{G}_{\min}} = \bricks{\mathcal{G}}$, the above argument shows that $\mathcal{G} \sim \mathcal{G}_{\min}$, so it is sufficient to check that $[\mathcal{G}_{\min}] \dleq [\mathcal{G}]$ for $\mathcal{G}$ such that
$\bricks{\mathcal{G}_{\min}} \supset \bricks{\mathcal{G}}$. For such $\mathcal{G}$, we can use Lemma~\ref{lem:adj_bricks} to find bricks $B$ and $B'$ which are adjacent with $B < B'$ in $\mathcal{G}$ but $B' > B$ in $\mathcal{G}_{\min}$. If swapping $B$ and $B'$ in $\mathcal{G}$ results in a deformation across a square, then we can repeat this process, so we can assume that swapping $B$ and $B'$ results in an increasing elementary polygonal deformation $\mathcal{G}'$ of~$\mathcal{G}$. Since $\bricks{\mathcal{G}_{\min}}$ consists of all bricks, we have that $\bricks{\mathcal{G}_{\min}} \supseteq \bricks{\mathcal{G}'} \supset \bricks{\mathcal{G}}$.  By induction, we conclude that $[\mathcal{G}_{\min}] \dleq [\mathcal{G}]$. (The induction process stops since $\Lambda$ is representation-finite, and so $\#(\bricks{\mathcal{G}_{\min}} \backslash \bricks{\mathcal{G}}) < \infty$.) By Theorem~\ref{thm:def->hn}, we have that $[\mathcal{G}_{\min}] \hleq [\mathcal{G}]$, so that $[\mathcal{G}_{\min}]$ is also the unique minimum in~$\hleq$.
\end{proof}

\begin{remark}
Note that for algebras which have infinite global dimension, the posets of equivalence classes of maximal green sequences do not always have unique maxima and minima. For example, one can compute the posets for the path algebra of the three-cycle with relations given by paths of length two.

Furthermore, for hereditary algebras which are representation-infinite, the posets may not have unique minima. For instance, \cite[Theorem~M3]{ai2020} shows that for path algebras of type $\widetilde{A}_4$, there are maximal green sequences which are of maximal length but have different sets of bricks. More generally, maximal green sequences of maximal length for tame hereditary algebras are studied in \cite{ai2020} and \cite{kn2021}, whilst minimal length maximal green sequences are studied in \cite{gms}.
\end{remark}

\subsection{Exchange pairs}\label{sect:partial_order_exch_pair}

In this subsection, we consider how the partial orders on equivalence classes of maximal green sequences interact with exchange pairs. Note first that one cannot define a partial order using inclusion of exchange pairs.

\begin{proposition}
If maximal green sequences $\mathcal{G}$ and $\mathcal{G}'$ of $\Lambda$ are such that $\exch{\mathcal{G}} \supseteq \exch{\mathcal{G'}}$, then we have $\mathcal{G} \sim \mathcal{G}'$.
\end{proposition}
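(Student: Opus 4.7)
The plan is to use Theorem~\ref{thm:mg_equiv}\ref{op:equiv:exch}: it suffices to prove that $\exch{\mathcal{G}} \supseteq \exch{\mathcal{G}'}$ forces $\exch{\mathcal{G}} = \exch{\mathcal{G}'}$. I would do this by associating to each maximal green sequence $\mathcal{G}$ its \emph{mutation graph} $M_{\mathcal{G}}$: the directed graph whose vertex set is $\summ{\mathcal{G}}$ and which contains a directed edge $X \to Y$ for every $(X,Y) \in \exch{\mathcal{G}}$.

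The first step is a structure theorem for $M_{\mathcal{G}}$. By Lemma~\ref{lem:at_most_once}, every vertex has in-degree and out-degree at most one, and the final assertion of that lemma (that $(X,A)$ must precede $(A,Y)$) rules out directed cycles. Hence $M_{\mathcal{G}}$ is a disjoint union of finite directed paths. I would then identify the endpoints: the in-degree-zero vertices are exactly the projective summands of~$\Lambda$, since projectives lie in $T_0 = \Lambda$ and never arise as a second coordinate; dually, the out-degree-zero vertices are exactly the shifted projectives. Since all $n = |\Lambda|$ projectives and all $n$ shifted projectives lie in $\summ{\mathcal{G}}$, the graph $M_{\mathcal{G}}$ is a disjoint union of exactly $n$ paths, each running from a projective $P$ to some shifted projective $P'[1]$.

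The second step is the comparison. Since $\exch{\mathcal{G}'} \subseteq \exch{\mathcal{G}}$, every vertex and every edge of $M_{\mathcal{G}'}$ is a vertex or edge of $M_{\mathcal{G}}$ (the vertex inclusion $\summ{\mathcal{G}'} \subseteq \summ{\mathcal{G}}$ being automatic: every summand of $\mathcal{G}'$ is either a (shifted) projective or is the first or second coordinate of some pair in $\exch{\mathcal{G}'} \subseteq \exch{\mathcal{G}}$). Fix a projective~$P$ and denote the $M_{\mathcal{G}}$-path from $P$ by $P = W_0 \to W_1 \to \cdots \to W_k$ and the $M_{\mathcal{G}'}$-path from $P$ by $P = V_0 \to V_1 \to \cdots \to V_l$. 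I would argue by induction on $i$ that $V_i = W_i$: given $V_i = W_i$, the out-edge $(V_i, V_{i+1}) \in \exch{\mathcal{G}'} \subseteq \exch{\mathcal{G}}$ must coincide with the unique out-edge of $W_i$ in $M_{\mathcal{G}}$, so $V_{i+1} = W_{i+1}$. When the $M_{\mathcal{G}'}$-path terminates at $V_l$ (a shifted projective), $W_l = V_l$ has out-degree zero in $M_{\mathcal{G}}$ as well, so $k = l$. Ranging over all projectives $P$ yields $M_{\mathcal{G}} = M_{\mathcal{G}'}$; in particular $\exch{\mathcal{G}} = \exch{\mathcal{G}'}$, so $\mathcal{G} \sim \mathcal{G}'$ by Theorem~\ref{thm:mg_equiv}.

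The main obstacle I anticipate is simply establishing the structure theorem for $M_{\mathcal{G}}$ cleanly (especially the absence of cycles, which uses the full strength of Lemma~\ref{lem:at_most_once}); after that, the path-by-path comparison is a short induction that exploits nothing more than the fact that the shifted projectives are exactly the out-degree-zero vertices, so one cannot ``stop early'' along a path of $M_{\mathcal{G}'}$ without also stopping in $M_{\mathcal{G}}$.
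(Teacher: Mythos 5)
Your proof is correct. The argument hinges on the same key input as the paper's --- Lemma~\ref{lem:at_most_once}, which gives uniqueness of exchange pairs with a fixed first (or second) coordinate, plus the observation that every summand of $\mathcal{G}$ that is not a shifted projective must eventually be mutated out --- but you package it differently. The paper runs a short minimal-counterexample argument: take the first exchange pair $(X,Y)$ of $\mathcal{G}$ not in $\exch{\mathcal{G}'}$, use minimality to see $X \in \summ{\mathcal{G}'}$, note $X$ cannot be a shifted projective (so $X$ is mutated out of $\mathcal{G}'$ via some $(X,Y')$ with $Y' \not\cong Y$), and derive a contradiction with Lemma~\ref{lem:at_most_once}. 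You instead introduce the mutation graph $M_{\mathcal{G}}$, prove it is a disjoint union of $n$ directed paths from projectives to shifted projectives, and then do a path-by-path induction to show $M_{\mathcal{G}'}$ must equal $M_{\mathcal{G}}$. Your structure theorem is more work up front, but it makes the mechanism transparent and packages the argument as a clean rigidity statement about these graphs; the paper's version is shorter but the contradiction is a bit less illuminating about \emph{why} the implication holds. One small point to make explicit if you write this up: the absence of cycles in $M_{\mathcal{G}}$ indeed follows from the ``moreover'' clause of Lemma~\ref{lem:at_most_once}, since a cycle would force a cyclic chain of strict precedences among exchange pairs in the linearly ordered sequence~$\mathcal{G}$.
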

\begin{proof}
Suppose that we have $\exch{\mathcal{G}} \supseteq \exch{\mathcal{G}'}$. Suppose for contradiction that there is an exchange pair $(X, Y)$ of $\mathcal{G}$ which is not an exchange pair of $\mathcal{G}'$. We may choose $(X, Y)$ to be the first exchange pair of $\mathcal{G}$ for which this is the case.

We claim that $X \in \summ{\mathcal{G}'}$. This is certainly true if $X$ is projective. If $X$ is not projective, then there exists an exchange pair $(W, X) \in \exch{\mathcal{G}}$. This must occur before $(X, Y)$ in $\mathcal{G}$ by Lemma~\ref{lem:at_most_once}. Hence, by the choice of $(X, Y)$, we have that $(W, X) \in \exch{\mathcal{G}'}$, and so $X \in \summ{\mathcal{G'}}$ in this case too.

However, if $X \in \summ{\mathcal{G}'}$, we must have, for some $Y' \not\cong Y$, that $(X, Y')$ is an exchange pair of $\mathcal{G}'$ and therefore of $\mathcal{G}$, since $X$ cannot be a shifted projective due to the exchange pair $(X, Y)$. But this contradicts Lemma~\ref{lem:at_most_once}. Hence $\exch{\mathcal{G}} = \exch{\mathcal{G}'}$, and so $\mathcal{G} \sim \mathcal{G}'$ by Theorem~\ref{thm:mg_equiv}.
\end{proof}

Moreover, the data of the exchange pairs gives the data of the indecomposable presilting summands, so it would seem impossible to define a partial order using exchange pairs by other means, without its collapsing into the summand partial order. However, the exchange pairs of a maximal green sequence do exhibit interesting behaviour with respect to the deformation order.

\begin{proposition}\label{prop:exch_red}
Let $\Lambda$ be a finite-dimensional algebra over a field~$K$. Let
\[
\begin{tikzpicture}[scale=1.5,xscale=1.7]
    
\node (exa) at (0,0) {$E \oplus X \oplus A$};
\node (eza) at (2,-1) {$E \oplus Z \oplus A$};
\node (ezc) at (4,0) {$E \oplus Z \oplus C$};
\node (exc') at (0.4,1) {$E \oplus X \oplus C'$};
\node (ez'c') at (1.7,1) {$E \oplus Z' \oplus C'$};
\node (dots) at (2.65,1) {$\dots$};
\node (ex'c) at (3.6,1) {$E \oplus X' \oplus C$};

\draw[->] (exa) -- (eza);
\draw[->] (eza) -- (ezc);
\draw[->] (exa) -- (exc');
\draw[->] (exc') -- (ez'c');
\draw[->] (ez'c') -- (dots);
\draw[->] (dots) -- (ex'c);
\draw[->] (ex'c) -- (ezc);

\end{tikzpicture}
\] be an oriented polygon in $\twosilt \Lambda$. There are then commutative diagrams of exchange triangles \[
\begin{tikzcd}
X \ar[r] \ar[d,equal] & Y' \ar[r] \ar[d] & Z' \ar[r] \ar[d] & X[1] \ar[d,equal] \\
X \ar[r] & Y \ar[r] & Z \ar[r] & X[1]
\end{tikzcd}
\] and \[
\begin{tikzcd}
A \ar[r] \ar[d,equal] & B' \ar[r] \ar[d] & C' \ar[r] \ar[d] & A[1] \ar[d,equal] \\
A \ar[r] & B \ar[r] & C \ar[r] & A[1],
\end{tikzcd}
\] where $Y$, $Y'$, $B$, and $B'$ are the two-term complexes which appear in the middle of the relevant exchange triangles.
\end{proposition}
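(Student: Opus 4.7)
The plan is to apply silting reduction to reduce to a two-summand setting, and then to construct the morphisms of triangles via a lift-and-octahedron argument, with TR3 supplying the third vertical arrow.

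\emph{Step 1: Silting reduction.} By Remark~\ref{rmk:reduction}, silting reduction at $E$ identifies the polygon in $\twosilt\Lambda$ with $\twosilt\Gamma$ for an algebra $\Gamma$ with $|\Gamma|=2$. Exchange triangles in $\twoterm$ descend under silting reduction to exchange triangles in $\mathsf{K}^{[-1,0]}(\proj\Gamma)$, up to $\additive E$ summands that are irrelevant to the commutativity statement. Consequently it suffices to construct both diagrams after reducing to the case $E=0$ and $|\Lambda|=2$, so that $X\oplus A$ is the maximum and $Z\oplus C$ the minimum of the polygon $\twosilt\Lambda$.

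\emph{Step 2: Construction of the first diagram.} Write $Y=A^{\oplus a}$, $B'=X^{\oplus b}$ and $Y'=C'^{\oplus m}$. The silting condition on $X\oplus A$ gives $\Hom_{\kbproj}(X,A[1])=0$, so applying $\Hom_{\kbproj}(X,-)$ to the $m$-fold sum of the exchange triangle $A\to B'\to C'\to A[1]$ shows that the exchange map $\iota'\colon X\to Y'$ admits a lift $\phi\colon X\to X^{\oplus bm}$ satisfying $p\circ \phi=\iota'$, where $p\colon X^{\oplus bm}\to C'^{\oplus m}$ is the $m$-fold sum of the middle morphism $B'\to C'$. The left-$\additive A$-approximation property of $\iota\colon X\to Y$ then produces $\psi\colon Y\to A^{\oplus abm}$ with $\psi\iota=\iota^{\oplus bm}\phi$. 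The octahedral axiom, applied to the two decompositions of $\iota^{\oplus bm}\phi=\psi\iota$ and combined with the exchange triangle $A^{\oplus m}\to X^{\oplus bm}\to C'^{\oplus m}\to A^{\oplus m}[1]$, yields the required morphism $\alpha\colon Y'\to Y$ with $\alpha\iota'=\iota$. Axiom TR3 then supplies $\beta\colon Z'\to Z$ making the first diagram a morphism of triangles.

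\emph{Step 3: The second diagram.} The second diagram is constructed by the symmetric argument obtained by exchanging the roles of $X$ and $A$ (and of the long and short paths): we use the silting vanishing $\Hom_{\kbproj}(A,X[1])=0$ and the long exact sequence of $\Hom_{\kbproj}(A,-)$ on the triangle $X\to Y\to Z\to X[1]$ to lift the exchange map $A\to B$ (for $A$ in $Z\oplus A$) through $Z^{\oplus\cdot}$, and then the left-$\additive Z$-approximation property of this exchange map in tandem with the octahedral axiom yields $B'\to B$ and, via TR3, $C'\to C$.

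\emph{Main obstacle.} The key subtlety lies in the octahedral step of Step~2: extracting a morphism with target $Y$ (rather than with the larger object $A^{\oplus abm}$) and verifying that the resulting composition $\alpha\iota'$ equals $\iota$ strictly. This is controlled by the \emph{minimality} of the approximation $\iota$: any endomorphism of $X$ compatible with $\iota$ is an isomorphism, and a suitable normalisation of the lift $\phi$ forces it to be the identity, making the commutativity $\alpha\iota'=\iota$ rigid rather than merely holding up to automorphism.
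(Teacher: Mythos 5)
Your Step~1 identifies the right tool (silting reduction, Remark~\ref{rmk:reduction}), but Step~2 does not actually produce the required morphism $\alpha\colon Y'\to Y$ with $\alpha\iota'=\iota$. The step you are missing is the vanishing $\Hom_{\mathcal{Z}/[E]}(\widetilde{X},\widetilde{A})=0$. Indeed, since $E\oplus Z\oplus A$ is silting, its image $\widetilde{A}\oplus\widetilde{Z}\cong\widetilde{A}\oplus\widetilde{X}\iysh$ is silting in $\mathcal{Z}/[E]$, whence $\Hom_{\mathcal{Z}/[E]}(\widetilde{X}\iysh,\widetilde{A}\iysh)=0$ and so $\Hom_{\mathcal{Z}/[E]}(\widetilde{X},\widetilde{A})=0$. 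Translated back to $\twoterm$: every morphism $X\to A$ factors through $\additive E$, so the minimal left $\additive(E\oplus A)$-approximation $\iota\colon X\to Y$ is already a minimal left $\additive E$-approximation, forcing $Y\in\additive E$. Since $\additive E\subseteq\additive(E\oplus C')$ and $\iota'\colon X\to Y'$ is a left $\additive(E\oplus C')$-approximation, $\iota$ factors through $\iota'$, giving $\alpha$ at once; TR3 then completes the square, and the second diagram is dual. Your argument, by contrast, keeps $Y=A^{\oplus a}$ with $a$ unspecified. The maps $\phi$ and $\psi$ you construct point towards $A^{\oplus abm}$, not towards $Y$, and it is not visible how applying the octahedral axiom to $\psi\iota=\iota^{\oplus bm}\phi$ extracts a morphism $Y'\to Y$. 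Indeed, if $a>0$ were genuinely possible, there would be no reason for $\iota$, with target not in $\additive(E\oplus C')$, to factor through the left $\additive(E\oplus C')$-approximation $\iota'$; the commutativity exists only because $a=0$, and your argument never establishes this.

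Two smaller issues. Your appeal to minimality in the final paragraph is misstated: left-minimality of $\iota\colon X\to Y$ says that any $g\colon Y\to Y$ with $g\iota=\iota$ is an isomorphism; it governs endomorphisms of $Y$, not of $X$. And if you literally pass to $\mathcal{Z}/[E]$ and construct the diagrams there, the lift back to $\twoterm$ gives commutativity only modulo morphisms factoring through $\additive E$, which must then be argued away; the paper avoids this by remaining in $\twoterm$ throughout and using the reduction solely to establish the vanishing.
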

\begin{proof}
By definition of silting mutation, we have that $X \to Y'$ is a minimal left $\additive (E \oplus C')$-approximation and that $f \colon X \to Y$ is a minimal left $\additive (E \oplus A)$-approximation. We claim that $f$ is in fact a minimal left $\additive E$-approximation. We prove this claim using silting reduction \cite{iyama_yang}. Let $\mathcal{Z} = (\hol{E[>0]}) \cap (\hor{E[<0]})$. We have that $\mathcal{Z}/[E]$ is a triangulated category by \cite[Theorem~4.2]{iy-red}, with shift functor denoted by $\iysh$. Let $\widetilde{(-)}\colon \mathcal{Z} \to \mathcal{Z}/[E]$ be the quotient map and $\Gamma = \End_{\mathcal{Z}/[E]}(\widetilde{X} \oplus \widetilde{A})$. Then, as in Remark~\ref{rmk:reduction}, two-term silting complexes in $\twosilt \Gamma$ correspond to the different completions of $E$ to a two-term silting complex.

Then, since $\widetilde{E} \oplus \widetilde{Z} \oplus \widetilde{C}$ is the minimum of $\twosilt \Gamma$, we must have that the images of the exchange pairs on the short path are $(\widetilde{X}, \widetilde{X}\iysh)$  and $(\widetilde{A}, \widetilde{A}\iysh)$. Thus, we have $\widetilde{Z} \cong \widetilde{X}\iysh$ and $\widetilde{C} \cong \widetilde{A}\iysh$. Since $\widetilde{A} \oplus \widetilde{Z} \cong \widetilde{A} \oplus \widetilde{X}\iysh$ is silting in $\mathcal{Z}/[E]$, we must have that $0 = \Hom_{\mathcal{Z}/[E]}(\widetilde{X}\iysh, \widetilde{A}\iysh) \cong \Hom_{\mathcal{Z}/[E]}(\widetilde{X}, \widetilde{A})$. This means that every morphism $X \to A$ must factor through $E$. Letting $Y \cong Y_E \oplus A^{\oplus m}$ where $Y_E \in \additive E$, $m \geq 0$, we then have that there is a factorisation \[
\begin{tikzcd}
    X \ar[r,"g"] \ar[dr,"f",swap] & Y_E \oplus E' \ar[d,"h"] \\
    & Y_E \oplus A^{\oplus m},
\end{tikzcd}
\] where $E' \in \additive E$. Then, since $f$ is a left $\additive (E \oplus A)$-approximation and $Y_E \oplus E' \in \additive E \subseteq \additive (E \oplus A)$, we have that there is a map $l \colon Y_E \oplus A^{\oplus m} \to Y_E \oplus E'$ such that $g = lf$. But then we have that $f = hg = hlf$. Since, $f$ is left minimal, we must have that $hl$ is an isomorphism. This implies that $Y_E \oplus A^{\oplus m}$ is a direct summand of $Y_E \oplus E'$, which in turn implies that $A^{\oplus m} = 0$. Thus, $f \colon X \to Y$ is in fact a minimal left $\additive E$-approximation as claimed.

Therefore, since $X \to Y'$ is a left $\additive (E \oplus C)$-approximation, we must get a factorisation \[
\begin{tikzcd}
X \ar[r] \ar[dr,"f",swap] & Y' \ar[d] \\
& Y.
\end{tikzcd}
\] Using the axioms of triangulated categories, we can extend this to the desired commutative diagram between exchange triangles given in the statement of the proposition.

The proof of the second claim is similar. We have that $\widetilde{X} \oplus \widetilde{A}$ is silting in $\mathcal{Z}/[E]$, so we must have $0 = \Hom_{\mathcal{Z}/[E]}(\widetilde{A}, \widetilde{X}\iysh) = \Hom_{\mathcal{Z}/[E]}(\widetilde{A}, \widetilde{Z})$. Hence, any homomorphism from $A$ to $Z$ must factor through $E$. This means that the minimal left $\additive (E \oplus Z)$-approximation $A \to B'$ of $A$ must in fact be a minimal left $\additive E$-approximation by the same argument as above. Since $A \to B$ is a minimal left $\additive (E \oplus X)$-approximation of $A$, we have a factorisation \[
\begin{tikzcd}
A \ar[r] \ar[dr] & B \ar[d] \\
& B'.
\end{tikzcd}
\] We again use the axioms of triangulated categories to extend this to the commutative diagram shown in the statement of the proposition.
\end{proof}

\begin{corollary}\label{cor:exch_fac}
Let $\Lambda$ be a finite-dimensional algebra over a field $K$ with $\mathcal{G}'$ and $\mathcal{G}$ two maximal green sequences of $\Lambda$. Suppose that $[\mathcal{G}'] \dleq [\mathcal{G}]$ and let $(X, Z)$ be an exchange pair of $\mathcal{G}$ with exchange triangle \[X \to Y \to Z \to X[1].\] Then there is an exchange pair $(X, Z')$ of $\mathcal{G}'$ with exchange triangle \[X \to Y' \to Z' \to X[1]\] and a commutative diagram \[
\begin{tikzcd}
X \ar[r] \ar[d,equal] & Y' \ar[r] \ar[d] & Z' \ar[r] \ar[d] & X[1] \ar[d,equal] \\
X \ar[r] & Y \ar[r] & Z \ar[r] & X[1]
\end{tikzcd}.
\]
\end{corollary}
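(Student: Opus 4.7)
The plan is to induct on the length of a chain of single-polygon deformations realising $[\mathcal{G}'] \dleq [\mathcal{G}]$. Combining the definition of $\dlessdot$ (increasing elementary polygonal deformations generate the covering relations) with Theorem~\ref{thm:mg_equiv} (equivalent maximal green sequences are linked by deformations across squares), I can select a chain
\[\mathcal{G}' = \mathcal{G}_0,\; \mathcal{G}_1,\; \dots,\; \mathcal{G}_N = \mathcal{G}\]
in which each consecutive pair differs by deformation across a single finite polygon $\Pi_i$, which is either a square (within an equivalence class) or an oriented polygon (realising a covering relation in $\dleq$). By induction on $N$, I would show that each exchange pair $(X, Z)$ of $\mathcal{G}$ is matched by an exchange pair $(X, Z')$ of $\mathcal{G}'$ together with a commutative diagram of exchange triangles whose $X$ and $X[1]$ columns are identities.

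For $N = 0$ there is nothing to prove. For the inductive step, I would concentrate on the single polygon $\Pi_N$ separating $\mathcal{G}_{N-1}$ and $\mathcal{G}_N = \mathcal{G}$, whose short length-two path lies in $\mathcal{G}_N$. If the given exchange pair $(X, Z)$ does not lie on this short path, then it is also an exchange pair of $\mathcal{G}_{N-1}$, its ambient $(|\Lambda|-1)$-summand fixed part $E$ is unchanged across $\Pi_N$, and the two exchange triangles are literally identical, so the identity furnishes the comparison diagram. If $(X, Z)$ lies on the short path and $\Pi_N$ is oriented, Proposition~\ref{prop:exch_red} directly produces the matching exchange pair in $\mathcal{G}_{N-1}$ together with the required commutative diagram. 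If $\Pi_N$ is instead a square, the silting-reduction argument underpinning Proposition~\ref{prop:exch_red} can be replayed: modulo the common $(|\Lambda|-2)$-summand $E$ of $\Pi_N$, the square condition forces $\Hom_{\mathcal{Z}/[E]}(\widetilde{X},\widetilde{X'}) = \Hom_{\mathcal{Z}/[E]}(\widetilde{X'},\widetilde{X}) = 0$, so all minimal left approximations used in the exchange triangles around the square already lie in $\additive E$, and the two triangles for the common exchange pair are canonically isomorphic.

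In all three subcases, we obtain an exchange pair of $\mathcal{G}_{N-1}$ together with a comparison diagram back to the original $(X, Z)$-triangle. Applying the inductive hypothesis to this pair in $\mathcal{G}_{N-1}$ produces the desired exchange pair of $\mathcal{G}_0 = \mathcal{G}'$, and vertical composition of the two comparison diagrams---possible because both have identities on the $X$ and $X[1]$ columns---yields the final commutative diagram. The main obstacle is the square case of the inductive step: Proposition~\ref{prop:exch_red} as stated does not cover squares, but its silting-reduction machinery adapts in the manner described above and must be carried out carefully. Everything else reduces to routine bookkeeping of composing commutative diagrams of distinguished triangles sharing a common $X$ column.
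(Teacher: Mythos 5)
Your proof is correct and follows the same overall strategy as the paper: induct along a chain realising $[\mathcal{G}'] \dleq [\mathcal{G}]$, use Proposition~\ref{prop:exch_red} at each oriented polygon, and vertically compose the resulting commutative diagrams (which works because all vertical maps on $X$ and $X[1]$ are identities). The paper's proof inducts directly on the number of covering relations in $\dleq$ and declares the base case $[\mathcal{G}'] = [\mathcal{G}]$ trivial via Theorem~\ref{thm:mg_equiv}; this uses that $\exch{\mathcal{G}'} = \exch{\mathcal{G}}$, and implicitly that the corresponding \emph{exchange triangles} also agree up to the required comparison diagram, which is not completely automatic since the fixed presilting complement $E$ of $X$ can vary within an equivalence class. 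You instead refine the chain into single polygon steps, separating squares from oriented polygons, and supply exactly the missing ingredient: that a square reduces (via the silting reduction of Remark~\ref{rmk:reduction}) to $\twosilt\Gamma$ having exactly four elements, which forces $\Gamma$ to be semisimple, so that $\Hom_{\mathcal{Z}/[E]}(\widetilde{X},\widetilde{X'}) = 0 = \Hom_{\mathcal{Z}/[E]}(\widetilde{X'},\widetilde{X})$, and hence the two relevant minimal left approximations of $X$ are both minimal left $\additive E$-approximations and thus give isomorphic exchange triangles fixing $X$ and $X[1]$. This is the same route with a finer decomposition; what it buys you is a self-contained justification of the within-equivalence-class step that the paper compresses to one line. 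If you adopt this argument, you should state and prove the square lemma you are using as a standalone statement, in particular justifying the implication from $\#\twosilt\Gamma = 4$ to $\Gamma$ semisimple (e.g.\ by checking that a nonzero $\Hom(P_1,P_2)$ or $\Hom(P_2,P_1)$ produces a path of length greater than two from $\Gamma$ to $\Gamma[1]$).
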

\begin{proof}
We show the result by induction on the number of deformations from $[\mathcal{G}']$ to $[\mathcal{G}]$. The base case, where $[\mathcal{G}'] = [\mathcal{G}]$ is trivial, since we have $\exch{\mathcal{G}'} = \exch{\mathcal{G}}$ by Theorem~\ref{thm:mg_equiv}.

Hence, we suppose that $[\mathcal{G}]$ is the result of at least one increasing elementary polygonal deformation of $[\mathcal{G}']$. Hence, there is a maximal green sequence $\mathcal{G}''$ such that $[\mathcal{G}'] \dlessdot [\mathcal{G}''] \dleq [\mathcal{G}]$. Let $(X, Z)$ be an exchange pair of $\mathcal{G}$. Then, by Theorem~\ref{thm:first->second}, we must have exchange pairs $(X, Z'')$ of $\mathcal{G}''$ and $(X, Z')$ of $\mathcal{G}'$. By applying Proposition~\ref{prop:exch_red} to $[\mathcal{G}'] \dlessdot [\mathcal{G}'']$ and the induction hypothesis to $[\mathcal{G}''] \dleq [\mathcal{G}]$, we obtain a commutative diagram \[
\begin{tikzcd}
X \ar[r] \ar[d,equal] & Y' \ar[r] \ar[d] & Z' \ar[r] \ar[d] & X[1] \ar[d,equal] \\
X \ar[r] \ar[d,equal] & Y'' \ar[r] \ar[d] & Z'' \ar[r] \ar[d] & X[1] \ar[d,equal] \\
X \ar[r] & Y \ar[r] & Z \ar[r] & X[1]
\end{tikzcd}.
\] This then gives us the desired diagram  \[
\begin{tikzcd}
X \ar[r] \ar[d,equal] & Y' \ar[r] \ar[d] & Z' \ar[r] \ar[d] & X[1] \ar[d,equal] \\
X \ar[r] & Y \ar[r] & Z \ar[r] & X[1]
\end{tikzcd}
\] completing the proof.
\end{proof}

Note that Proposition~\ref{prop:exch_red} and Corollary~\ref{cor:exch_fac} are both dualisable via fixing the second entry of the exchange pairs, rather than the first. The intuition for Corollary~\ref{cor:exch_fac} is that as we go higher up in the deformation order, the exchange pairs become closer to $(X, X[1])$.

\subsection{An example from the twice-punctured torus}\label{sect:oct}

In this section, we consider the maximal green sequences of the Jacobian algebra $\Lambda$ associated to the triangulation of the twice-punctured torus shown in Figure~\ref{fig:torus_tri}. In this figure, A and B are the two punctures, and the triangulation has six different arcs, labelled 1 to 6, dividing the torus into four triangles. The quiver of this triangulation is given by a clockwise oriented three-cycle within each triangle and is shown in Figure~\ref{fig:oct}. The associated potential is \[W = \lambda\delta\kappa + \iota\alpha\delta + \zeta\eta\gamma + \mu\epsilon\beta + \kappa\mu\zeta\theta + \delta\iota\alpha\eta\gamma\epsilon\beta\lambda.\] We then have that $\Lambda = KQ/\langle\partial W\rangle$, that is, $\Lambda$ is the path algebra of $Q$ modulo the cyclic derivatives of the potential, in the usual way from \cite{dwz}. We do not give background on cluster algebras from triangulated surfaces and the associated quivers with potential and cluster categories. Relevant background can be found in \cite{fst,dwz,lf1,lf2,amiot-cluster,dominguez}.

This quiver $Q$ was used in \cite[Example~1]{ky_clus} to give an example of a quiver whose exchange graph had a fundamental group not generated by squares and pentagons, following earlier work in \cite[Remark~9.19]{fst}. We discuss the implications for posets of equivalence classes of maximal green sequences, since this fact about the exchange graph makes the algebra $\Lambda$ a natural place to look for a counter-example to Conjecture~\ref{conj}. However, as we shall explain, \cite[Example~1]{ky_clus} does not give a counter-example to the conjecture that the summand order is equal to the deformation order.

\begin{figure}
    \caption{A triangulation of the twice-punctured torus, with its associated quiver}\label{fig:torus_tri}
    \[
    \begin{tikzpicture}[scale=2]
        
        \draw (0,0) -- (2,2);
        \draw (0,0) -- (2,-2);
        \draw (0,0) -- (-2,2);
        \draw (0,0) -- (-2,-2);
        
        \draw (2,2) -- (-2,2) -- (-2,-2) -- (2,-2) -- (2,2);
        
        \node (1l) at (-2,0) {};
        \node (1r) at (2,0) {};
        \node (2b) at (0,-2) {};
        \node (2t) at (0,2) {};
        \node (3) at (-1,-1) {};
        \node (4) at (1,-1) {};
        \node (5) at (1,1) {};
        \node (6) at (-1,1) {};
        
        \node(alpha) at ($(1r)!0.5!(4)$) {$\alpha$};
        \node(beta) at ($(1l)!0.5!(6)$) {$\beta$};
        \node(gamma) at ($(2b)!0.5!(3)$) {$\gamma$};
        \node(delta) at ($(2t)!0.5!(5)$) {$\delta$};
        \node(eps) at ($(3)!0.5!(1l)$) {$\epsilon$};
        \node(zeta) at ($(3)!0.5!(4)$) {$\zeta$};
        \node(eta) at ($(4)!0.5!(2b)$) {$\eta$};
        \node(theta) at ($(4)!0.5!(5)$) {$\theta$};
        \node(iota) at ($(5)!0.5!(1r)$) {$\iota$};
        \node(kappa) at ($(5)!0.5!(6)$) {$\kappa$};
        \node(lambda) at ($(6)!0.5!(2t)$) {$\lambda$};
        \node(mu) at ($(6)!0.5!(3)$) {$\mu$};
        
        \draw[->,dotted] (6) -- (lambda) -- (2t);
        \draw[->,dotted] (2t) -- (delta) -- (5);
        \draw[->,dotted] (5) -- (kappa) -- (6);
        \draw[->,dotted] (5) -- (iota) -- (1r);
        \draw[->,dotted] (1r) -- (alpha) -- (4);
        \draw[->,dotted] (4) -- (theta) -- (5);
        \draw[->,dotted] (4) -- (eta) -- (2b);
        \draw[->,dotted] (2b) -- (gamma) -- (3);
        \draw[->,dotted] (3) -- (zeta) -- (4);
        \draw[->,dotted] (3) -- (eps) -- (1l);
        \draw[->,dotted] (1l) -- (beta) -- (6);
        \draw[->,dotted] (6) -- (mu) -- (3);
        
        \node at (1l) [left = 1mm of 1l] {1};
        \node at (1r) [right = 1mm of 1r] {1};
        \node at (2t) [above = 1mm of 2t] {2};
        \node at (2b) [below = 1mm of 2b] {2};
        \node at (3) [left = 1mm of 3] {3};
        \node at (4) [below = 1mm of 4] {4};
        \node at (5) [right = 1mm of 5] {5};
        \node at (6) [above = 1mm of 6] {6};
        
        \node at (2.2,2.2) {$A$};
        \node at (-2.2,2.2) {$A$};
        \node at (2.2,-2.2) {$A$};
        \node at (-2.2,-2.2) {$A$};
        \node at (0,0.4) {$B$};
        
        \node at (0,0) {$\bullet$};
        \node at (2,2) {$\bullet$};
        \node at (-2,2) {$\bullet$};
        \node at (2,-2) {$\bullet$};
        \node at (-2,-2) {$\bullet$};
        
    \end{tikzpicture}
    \]
\end{figure}
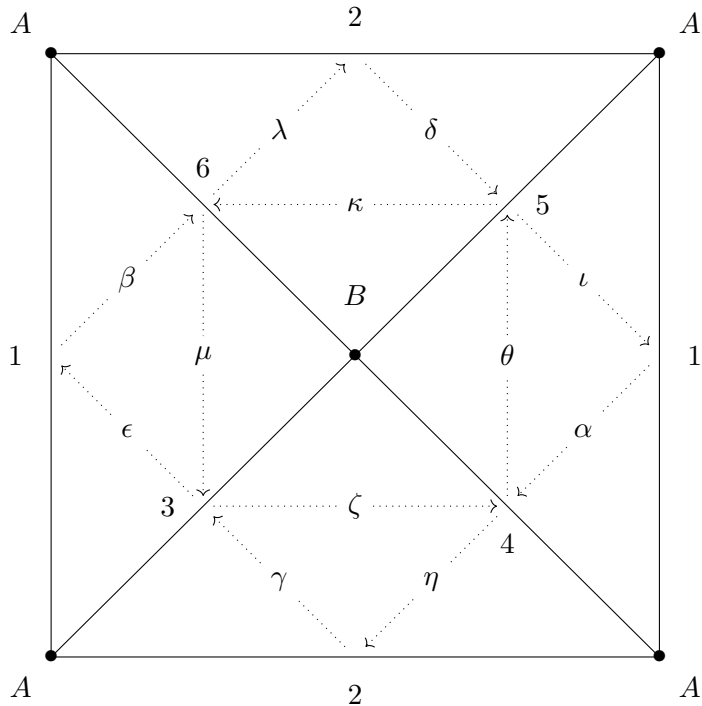

\begin{figure}
    \caption{The quiver $Q$ of the algebra $\Lambda$ considered in Section~\ref{sect:oct}}
    \label{fig:oct}
\[
\begin{tikzpicture}[scale=1.5]
    
    \node(1) at (0,3) {1};
    \node(2) at (0,-3) {2};
    \node(3) at (-3,0) {3};
    \node(5) at (3,0) {5};
    \node(4) at (-1,-1) {4};
    \node(6) at (1,1) {6};
    
    \node(alpha) at ($(1)!0.5!(4)$) {$\alpha$};
    \node(beta) at ($(1)!0.5!(6)$) {$\beta$};
    \node(gamma) at ($(2)!0.5!(3)$) {$\gamma$};
    \node(delta) at ($(2)!0.5!(5)$) {$\delta$};
    \node(eps) at ($(3)!0.5!(1)$) {$\epsilon$};
    \node(zeta) at ($(3)!0.5!(4)$) {$\zeta$};
    \node(eta) at ($(4)!0.5!(2)$) {$\eta$};
    \node(theta) at ($(4)!0.5!(5)$) {$\theta$};
    \node(iota) at ($(5)!0.5!(1)$) {$\iota$};
    \node(kappa) at ($(5)!0.5!(6)$) {$\kappa$};
    \node(lambda) at ($(6)!0.5!(2)$) {$\lambda$};
    \node(mu) at ($(6)!0.5!(3)$) {$\mu$};
    
    \draw[->] (1) -- (beta) -- (6);
    \draw[->] (1) -- (alpha) -- (4);
    \draw[->] (3) -- (eps) -- (1);
    \draw[->] (5) -- (iota) -- (1);
    \draw[->] (2) -- (gamma) -- (3);
    \draw[->] (2) -- (delta) -- (5);
    \draw[->] (4) -- (eta) -- (2);
    \draw[->] (6) -- (lambda) -- (2);
    \draw[->] (3) -- (zeta) -- (4);
    \draw[->] (4) -- (theta) -- (5);
    \draw[->] (5) -- (kappa) -- (6);
    \draw[->] (6) -- (mu) -- (3);
    
\end{tikzpicture}
\]
\end{figure}
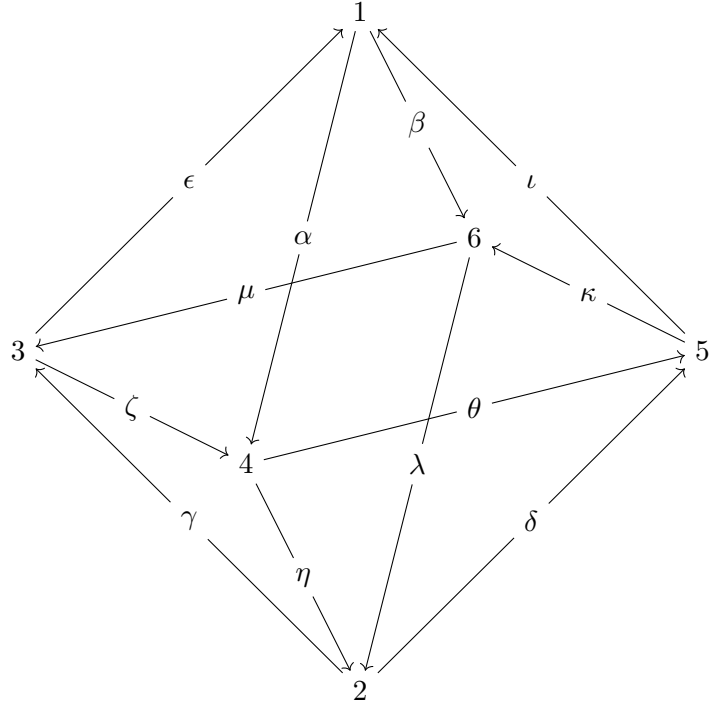

Using the work of \cite[Example~1]{ky_clus}, two maximal green sequences of this algebra are \begin{equation}\label{eq:mgs1}
5, 6, 4, 3, 6, 5, 1, 2, 4, 3, 6, 5, 3, 4, 2, 1    
\end{equation}
and
\begin{equation}\label{eq:mgs2}
1, 2, 3, 4, 6, 5, 4, 3, 2, 1, 6, 5, 4, 3, 5, 6,  
\end{equation}
given as sequences of vertices for mutation, as explained in Section~\ref{sect:back:silt_notion}. It is shown in \cite{ky_clus}, using arguments from \cite{fst}, that these two maximal green sequences cannot be deformed into each other across squares and pentagons. In fact, these are the only types of polygon in this case. Hence the poset $\dleq$ of maximal green sequences for $\Lambda$ has at least two connected components.

The intuitive reason why these two maximal green sequences cannot be deformed into each other across squares and pentagons is as follows. The quiver $Q$ is of infinite cluster type, as can be seen from the fact that it contains subquivers of affine type $\widetilde{D}_{4}$. Moreover, the poset of two-term silting complexes of this algebra is therefore also infinite. One cannot deform one maximal green sequence into the other, since doing so would require traversing regions of the exchange graph which are infinite, and, naturally, this cannot be done.

The proof of that these two maximal green sequences cannot be deformed into each other across squares and larger polygons uses theory from tagged triangulations. Generalising \cite{lf2}, by \cite{dominguez}, tagged triangulations of the twice-punctured surface are in bijection with cluster-tilting objects in the associated generalised cluster category of $\Lambda$ which are connected to the initial cluster-tilting object corresponding to $\Lambda$ via mutation. See \cite[Section~3.4]{amiot-survey} for a summary. It then follows from \cite[Theorem~4.7]{air} that cluster-tilting objects in the generalised cluster category of $\Lambda$ are in bijection with two-term silting complexes over~$\Lambda$. These bijections are moreover induced by a bijection between tagged arcs, indecomposable rigid objects in the generalised cluster category, and indecomposable two-term presilting complexes. In the last two cases, the indecomposable objects and complexes must respectively be summands of cluster-tilting objects and two-term silting complexes connected to $\Lambda$, respectively to the corresponding cluster-tilting object, by mutation.

We briefly outline some of the theory of tagged triangulations. Arcs in a tagged triangulation may be ``notched'' at either end of the arc, or notched at both ends, or simply plain. For a tagget triangulation $\mathcal{T}$ of the twice-punctured torus, there is an associated \emph{signature} $\delta_{\mathcal{T}}\colon \{A, B\} \to \{-, 0, +\}$ on the set of punctures, where \[
\delta_{\mathcal{T}}(X) = 
\left\{
	\begin{array}{ll}
		+, & \quad \text{if all arc-ends incident to $X$ are plain} \\
		-, & \quad \text{if all arc-ends incident to $X$ are notched} \\
		0. & \quad \text{if there are both plain and notched arc-ends}
	\end{array}
\right.
\] Tagged triangulations of the twice-punctured torus fall into eight disjoint sets, known as `strata', according to the signatures at the two punctures. These eight strata are denoted \[\Omega_{++},\, \Omega_{+0},\, \Omega_{0+},\, \Omega_{+-},\, \Omega_{-+},\, \Omega_{0-},\, \Omega_{-0},\, \Omega_{--},\] where triangulations $\mathcal{T} \in \Omega_{xy}$ have $\delta_{\mathcal{T}}(A) = x$ and $\delta_{\mathcal{T}}(B) = y$. Dividing up the associated two-term silting complexes into these strata yields the depiction of the poset $\twosilt \Lambda$ shown in Figure~\ref{fig:strata} --- see \cite[Remark~9.19]{fst}. Here $\Omega_{xy}$ represents the the subposet corresponding to the relevant stratum. 

\begin{figure}
    \caption{The poset of two-term silting objects of $\Lambda$ divided into strata}
    \label{fig:strata}
    \[
    \begin{tikzpicture}[yscale=1.2]
        \node(++) at (0,0) {$\Omega_{++}$};
        \node(+0) at (-2,-1) {$\Omega_{+0}$};
        \node(0+) at (2,-1) {$\Omega_{0+}$};
        \node(+-) at (-2,-2) {$\Omega_{+-}$};
        \node(-+) at (2,-2) {$\Omega_{-+}$};
        \node(0-) at (-2,-3) {$\Omega_{0-}$};
        \node(-0) at (2,-3) {$\Omega_{-0}$};
        \node(--) at (0,-4) {$\Omega_{--}$};
        
        \draw[->] (++) -- (+0);
        \draw[->] (++) -- (0+);
        \draw[->] (+0) -- (+-);
        \draw[->] (0+) -- (-+);
        \draw[->] (+-) -- (0-);
        \draw[->] (-+) -- (-0);
        \draw[->] (0-) -- (--);
        \draw[->] (-0) -- (--);
    \end{tikzpicture}
    \]
\end{figure}
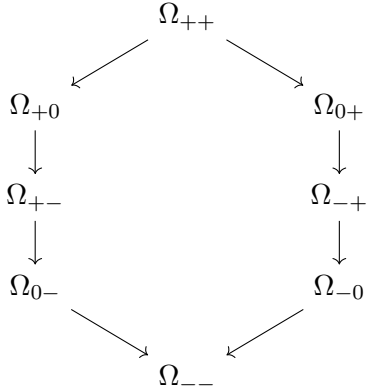

Hence, the two-term silting complex of projectives lies in the stratum $\Omega_{++}$, whilst the two-term silting complex of shifted projectives lies in the stratum $\Omega_{--}$. It is clear from Figure~\ref{fig:strata} that there are only two possible routes for a maximal green sequence of $\Lambda$ through the strata. The maximal green sequence \eqref{eq:mgs1} takes the left-hand route, whilst the maximal green sequence \eqref{eq:mgs2} takes the right-hand route \cite[Example~1]{ky_clus}. Bear in mind that the strata $\Omega_{xy}$ have cardinality larger than $1$: each of these maximal green sequences has length $16$, while each route changes strata $4$ times. In particular, one should not think of Figure~\ref{fig:strata} as depicting an unoriented octagon from Subsection~\ref{ssec:polygons}.

If there were a maximal green sequence taking one route through the strata which contained all the summands of a maximal green sequence taking the other route through the strata, then we would have an immediate counter-example to Conjecture~\ref{conj}, since these maximal green sequences could not be connected by deformations across squares and oriented polygons, as we have explained. However, we now show that two such maximal green sequences cannot exist, meaning that there is no such apparent counter-example.

\begin{proposition}
Let $\mathcal{G}$ be a maximal green sequence of $\Lambda$ passing through the stratum $\Omega_{+-}$ and $\mathcal{G}'$ a maximal green sequence of $\Lambda$ passing through the stratum $\Omega_{-+}$. Then we have $[\mathcal{G}] \not\sleq [\mathcal{G}']$ and $[\mathcal{G}'] \not\sleq [\mathcal{G}]$.
\end{proposition}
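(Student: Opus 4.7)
The plan is to exhibit, for one direction, a single indecomposable summand belonging to $\summ{\mathcal{G}}$ but to no complex on the right-hand route of Figure~\ref{fig:strata}, and then appeal to symmetry for the other direction. Since indecomposable two-term presilting complexes are in bijection with tagged arcs of the twice-punctured torus, I will describe such a summand purely at the level of tagged arcs and their compatibility with signatures.

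The first step is to observe that if $\mathcal{T}$ is a tagged triangulation belonging to $\Omega_{+-}$, then $\mathcal{T}$ must contain at least one arc $a$ with one endpoint at $A$ and the other at $B$. Suppose, for contradiction, that all six arcs of $\mathcal{T}$ were loops at a single puncture. Using Euler characteristic $0$ and $3f = 2e$ one obtains $e = 6,\ f = 4$; inspecting any triangle, the vertex-matching condition along an edge that is a loop at $A$ forces the two remaining edges to also be loops at $A$ (the symmetric statement holds for $B$). Counting corners then forces the pair $(t_A, t_B)$ of triangle-counts to lie in $\{(4,0),(2,2),(0,4)\}$. The extreme cases are excluded because every puncture must be an endpoint of some arc of any triangulation; the case $(2,2)$ is excluded because then the $A$-triangles and $B$-triangles would share no edge or vertex, contradicting connectedness of the twice-punctured torus.

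Next, since $\mathcal{T} \in \Omega_{+-}$, any arc-end of $\mathcal{T}$ at $A$ is plain and any arc-end at $B$ is notched. The arc $a$ from the previous step is therefore tagged plain at $A$ and notched at $B$, and the corresponding indecomposable two-term presilting complex $X_a$ is a summand of the two-term silting complex associated to $\mathcal{T}$, so $X_a \in \summ{\mathcal{G}}$. To show $X_a \notin \summ{\mathcal{G}'}$, I enumerate compatibility with signatures: a plain $A$-end is only compatible with strata having signature $+$ or $0$ at $A$, and a notched $B$-end is only compatible with strata having signature $-$ or $0$ at $B$. The intersection of these conditions is $\{\Omega_{+0},\, \Omega_{+-},\, \Omega_{0-}\}$, all of which lie on the left-hand route of Figure~\ref{fig:strata} and none on the right-hand route through $\Omega_{++},\,\Omega_{0+},\,\Omega_{-+},\,\Omega_{-0},\,\Omega_{--}$ traversed by $\mathcal{G}'$. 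Hence $X_a$ cannot be a summand of any silting complex appearing in $\mathcal{G}'$, so $\summ{\mathcal{G}'} \not\supseteq \summ{\mathcal{G}}$, i.e., $[\mathcal{G}'] \not\sleq [\mathcal{G}]$. The reverse non-inequality $[\mathcal{G}] \not\sleq [\mathcal{G}']$ follows by the obvious symmetric argument applied to a triangulation $\mathcal{T}' \in \Omega_{-+}$ of $\mathcal{G}'$, producing an arc notched at $A$ and plain at $B$ whose corresponding summand lies in $\summ{\mathcal{G}'}\setminus \summ{\mathcal{G}}$.

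The only genuinely combinatorial point is the first step — that every tagged triangulation in $\Omega_{+-}$ (hence every triangulation at all) must have an arc joining $A$ to $B$. Everything else reduces to reading off signature compatibility from Figure~\ref{fig:strata} and using the established bijection between tagged arcs and indecomposable summands of two-term silting complexes over $\Lambda$.
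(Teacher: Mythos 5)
Your proof is correct and follows essentially the same strategy as the paper's: locate, in any triangulation of the stratum $\Omega_{+-}$, a tagged arc connecting $A$ to $B$; observe it must be plain at $A$ and notched at $B$; read off from the signature rules that such an arc can only occur in the strata $\Omega_{+0},\Omega_{+-},\Omega_{0-}$ on the left-hand route of Figure~\ref{fig:strata}; and conclude via the bijection between tagged arcs and indecomposable two-term presilting complexes, then argue symmetrically. The one place you go beyond the paper is that the paper simply asserts that every triangulation of the twice-punctured torus contains a tagged arc joining $A$ and $B$, whereas you supply a combinatorial proof (Euler characteristic, edge/corner counting, connectedness). Your counting argument is sound as stated for ordinary ideal triangulations; to make it fully apply to tagged triangulations you should add one sentence observing that if a tagged triangulation $\mathcal{T}$ had no tagged arc joining $A$ and $B$, then the associated ideal triangulation $T^\circ$ would have no self-folded triangles (any self-folded triangle on a twice-punctured torus forces an arc between the punctures), so $T^\circ$ has the same underlying curves as $\mathcal{T}$ and the ideal-triangulation count applies to $T^\circ$. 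With that small bridge the argument is complete.
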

\begin{proof}
In every triangulation of the twice-punctured torus, there must be a tagged arc connecting $A$ and $B$. In the stratum $\Omega_{+-}$, this arc must be plain at $A$ and notched at $B$. Such an arc, however, cannot exist in any of the strata in the right-hand path through the maximal green sequences in Figure~\ref{fig:strata}, since its presence in the triangulation implies that the signature at $A$ is $+$ or $0$ and the signature at $B$ is $0$ or $-$. Therefore, there exists a tagged arc $\gamma$ which is in the sequence of triangulations corresponding to $\mathcal{G}$ but not in the sequence of triangulations corresponding to $\mathcal{G}'$. This then corresponds to an indecomposable two-term presilting complex $X$ such that $X \in \summ{\mathcal{G}}$ but $X \notin \summ{\mathcal{G}'}$. Hence, we obtain that $[\mathcal{G}'] \not\sleq [\mathcal{G}]$, and the converse can be shown symmetrically.
\end{proof}

Hence, the arcs which are tagged at one puncture but not another are special in some sense, because their presence forces a maximal green sequence to be of one type rather than another. It would be interesting to know whether the corresponding indecomposable complexes have particular homological properties giving algebraic reasons for the two different types of maximal green sequences of $\Lambda$.

\begin{corollary}
The poset of equivalence classes of maximal green sequences of $\Lambda$ with respect to $\sleq$ also has at least two connected components.
\end{corollary}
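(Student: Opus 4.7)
The plan is to partition the set of equivalence classes of maximal green sequences of $\Lambda$ into the subset $L$ of those whose associated chain in $\twosilt \Lambda$ passes through the stratum $\Omega_{+-}$ and the subset $R$ of those passing through $\Omega_{-+}$. I will argue that $L \cup R$ covers all equivalence classes and that no element of $L$ is $\sleq$-comparable to any element of $R$, which forces the Hasse diagram of $\sleq$ to have at least two connected components.

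First I would verify that every equivalence class belongs to $L \cup R$. Any maximal green sequence is a maximal chain from $\Lambda$ to $\Lambda[1]$ in $\twosilt \Lambda$; since $\Lambda$ lies in $\Omega_{++}$ and $\Lambda[1]$ in $\Omega_{--}$, Figure~\ref{fig:strata} shows that there are exactly two routes descending from $\Omega_{++}$ to $\Omega_{--}$ in the poset of strata, and each passes through precisely one of the two incomparable middle strata $\Omega_{+-}$ or $\Omega_{-+}$. Hence every maximal green sequence belongs to $L$ or to $R$. Moreover, since the equivalence relation from Definition~\ref{def:equivalence} preserves the set $\summ{\mathcal{G}}$ and therefore the set of strata visited, membership in $L$ or $R$ is well-defined on equivalence classes. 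Both $L$ and $R$ are nonempty thanks to the explicit sequences \eqref{eq:mgs1} and \eqref{eq:mgs2}, which respectively take the left and right routes.

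The preceding proposition applies to any $\mathcal{G}$ passing through $\Omega_{+-}$ and any $\mathcal{G}'$ passing through $\Omega_{-+}$, yielding $[\mathcal{G}] \not\sleq [\mathcal{G}']$ and $[\mathcal{G}'] \not\sleq [\mathcal{G}]$ for every $[\mathcal{G}] \in L$ and $[\mathcal{G}'] \in R$. Therefore the comparability graph of $\sleq$ has no edges between $L$ and $R$, which immediately gives at least two connected components. There is no real obstacle to this argument: the corollary is a direct packaging of the proposition together with the observation, read off from Figure~\ref{fig:strata}, that every maximal green sequence must take one of exactly two routes through the strata.
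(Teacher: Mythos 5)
Your proof is correct and is essentially the paper's intended argument: the corollary is stated without a separate proof because it follows immediately from the preceding proposition together with the observation, already made explicitly in the text, that Figure~\ref{fig:strata} admits exactly two descending routes from $\Omega_{++}$ to $\Omega_{--}$, one through $\Omega_{+-}$ and one through $\Omega_{-+}$. Your write-up supplies the partition $L \sqcup R$, the non-emptiness witnessed by \eqref{eq:mgs1} and \eqref{eq:mgs2}, and the well-definedness check on equivalence classes that the paper takes for granted; the only minor imprecision is the phrase that $\summ{\mathcal{G}}$ ``therefore'' determines the set of strata visited --- the cleaner justification is that if $\mathcal{G} \sim \mathcal{G}'$ took different middle strata then $\summ{\mathcal{G}} = \summ{\mathcal{G}'}$ would contradict the proposition's incomparability conclusion, which is the argument you are implicitly running anyway.
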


\begin{remark}
Note that it is itself unremarkable for these posets to have several connected components, since unoriented polygons may prevent connectedness. For instance, one can compute that the algebra from \cite[Example~3.30]{bst} has three connected components in its poset of equivalence classes of maximal green sequences. What is interesting about the example considered here is firstly that the disconnectedness does not result from unoriented polygons, secondly that the fundamental group of the poset $\twosilt \Lambda$ is not generated by polygons (in the sense of Definition~\ref{def:polygons}), and thirdly how the disconnectedness of the posets of equivalence classes of maximal green sequences can be understood in terms of the punctured surface.
\end{remark}

\section{Nakayama algebras}\label{sect:nak}

Recall that a finite-dimensional $K$-algebra $\Lambda$ is a \emph{Nakayama algebra} if every finite-dimensional indecomposable $\Lambda$-module is \emph{uniserial}, meaning that it has a unique composition series. In this section, we show that equivalence of maximal green sequences for Nakayama algebras is given by having the same sets of bricks. This makes it possible to define a partial order on equivalence classes of maximal green sequences in terms of reverse-inclusion of bricks. This in turn allows us to prove Conjecture~\ref{conj} in this special case.

\subsection{Equivalence using bricks}

For Nakayama algebras, using the following lemma, we can show that the bricks of a maximal green sequence do determine its equivalence class, for which we need the following series of lemmas. We first show that filtrations of indecomposables by any bricks are unique, not just filtrations by simples.

\begin{lemma}\label{lem:nak_unique_filt}
Given an indecomposable module $M$ over a Nakayama algebra $\Lambda$ such that $M \in \Filt(B_1, B_2, \dots, B_s)$ where $B_1, B_2, \dots, B_s$ are pairwise $\Hom$-orthogonal bricks, we have that the filtration of $M$ in terms of $B_1, B_2, \dots, B_s$ is unique.
\end{lemma}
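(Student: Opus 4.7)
The plan is to proceed by induction on the composition length of the indecomposable module $M$, exploiting the defining feature of Nakayama algebras: the submodules of $M$ form a chain under inclusion. The base case, where $M$ is simple, is trivial, since any filtration by non-zero modules has a single non-zero step and so must consist only of $M$ itself (which equals some $B_j$).

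For the inductive step, suppose we have two filtrations
\[
M = M_0 \supset M_1 \supset \dots \supset M_l = 0, \qquad M = N_0 \supset N_1 \supset \dots \supset N_{l'} = 0,
\]
with factors $M_{i-1}/M_i \cong B_{j_i}$ and $N_{i-1}/N_i \cong B_{k_i}$ drawn from $\{B_1, \dots, B_s\}$. The heart of the argument is to show that $M_1 = N_1$. Since $M$ is uniserial, its submodules are totally ordered, so one of $M_1 \subseteq N_1$ or $N_1 \subseteq M_1$ must hold; assume $M_1 \subseteq N_1$ by symmetry. Then the canonical surjection $B_{j_1} = M/M_1 \twoheadrightarrow M/N_1 = B_{k_1}$ is a non-zero homomorphism between bricks in the $\Hom$-orthogonal system, forcing $j_1 = k_1$. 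But any non-zero endomorphism of the brick $B_{j_1}$ is an isomorphism, so this surjection has zero kernel, giving $N_1/M_1 = 0$ and hence $M_1 = N_1$.

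Having established $M_1 = N_1$, we observe that $M_1$, as a submodule of the uniserial module $M$, is itself uniserial and hence indecomposable (or zero, in which case we are done). Since $M_1$ has strictly smaller composition length than $M$, the induction hypothesis applies and the two filtrations agree from $M_1$ onward. Combined with $B_{j_1} \cong B_{k_1}$, this forces $l = l'$ and $M_i = N_i$ for all $i$.

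The only real subtlety is recognising that the combination of uniseriality (giving the chain condition on submodules) and $\Hom$-orthogonality (forcing non-zero maps between listed bricks to be isomorphisms of identical bricks) rigidly pins down each step of the filtration; once this is noted, the induction is routine and the chief technical obstacle is simply ensuring the case analysis $M_1 \subseteq N_1$ versus $N_1 \subseteq M_1$ is handled symmetrically.
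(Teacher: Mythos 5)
Your proof is correct and follows essentially the same argument as the paper's, merely dualised: you work from the top of the filtration, showing the first submodule $M_1$ is forced by the chain condition on submodules of a uniserial module together with $\Hom$-orthogonality of the bricks, whereas the paper works from the bottom, showing there is a unique $B_i$ that can occur as a submodule of $M$. Both inductions rest on the same two observations (total ordering of submodules of a uniserial module, and that a nonzero map between $\Hom$-orthogonal bricks must be an isomorphism of identical bricks), so this is the same proof up to duality.
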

\begin{proof}
We show that there is a unique $B_i$ such that $B_i$ is a submodule of $M$, whence the claim follows by induction on the length of the filtration. Note that $M/B_i$ must also be indecomposable, otherwise $M$ cannot be uniserial. Suppose that $B_i$ and $B_j$ are both submodules of $M$. Then, since $M$ is uniserial, we either have $B_i \hookrightarrow B_j$ or $B_j \hookrightarrow B_i$. But this contradicts the pairwise $\Hom$-orthogonality unless $i = j$, which is what we wanted to show.
\end{proof}

In the proof of Theorem~\ref{thm:nak_brick_order} we will need to consider not only bricks, but indecomposables which are self-extensions of a single brick. In the following few lemmas, we show how such indecomposables behave somewhat like bricks.

\begin{lemma}\label{lem:nak_hom_orth_self_ext}
Suppose that $L \in \Filt(B)$ and $N \in \Filt(B')$ are indecomposable modules, where $B$ and $B'$ are bricks over a Nakayama algebra $\Lambda$ such that $\Hom_{\Lambda}(B', B) = 0$. Then $\Hom_{\Lambda}(N, L) = 0$.
\end{lemma}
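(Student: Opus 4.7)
The plan is to argue by contradiction: assuming there is a nonzero morphism $f\colon N \to L$, extract a nonzero map $B' \to B$, contradicting the hypothesis. The extraction proceeds in two symmetric stages, each exploiting one of the given filtrations.

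First, using a $B$-filtration $L = L_0 \supset L_1 \supset \dots \supset L_m = 0$ of $L$ with $L_{i-1}/L_i \cong B$ (the identification here, rather than merely membership in $\add B$, is guaranteed by $L$ being uniserial and indecomposable over a Nakayama algebra), let $i$ be the largest index for which $\im f \subseteq L_i$; since $\im f \neq 0 = L_m$ we have $i < m$ and $\im f \not\subseteq L_{i+1}$. Regarding $f$ as a map into $L_i$ and composing with the projection $L_i \twoheadrightarrow L_i/L_{i+1} \cong B$ produces a morphism $g\colon N \to B$ whose image is $\im f/(\im f \cap L_{i+1})$, which is nonzero by the maximality of $i$.

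Second, using a $B'$-filtration $N = N_0 \supset N_1 \supset \dots \supset N_n = 0$ of $N$ with $N_{j-1}/N_j \cong B'$, let $j$ be the largest index with $g|_{N_j} \neq 0$; this is well-defined since $g|_{N_0} = g \neq 0$ while $g|_{N_n} = 0$, and necessarily $j < n$. Then $g|_{N_{j+1}} = 0$, so $g|_{N_j}$ factors through $N_j/N_{j+1} \cong B'$, producing a nonzero map $\bar{g}\colon B' \to B$. This contradicts the hypothesis $\Hom_\Lambda(B', B) = 0$, forcing $f = 0$.

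I anticipate no serious obstacle: the argument is essentially two applications of the same observation, namely that if a nonzero morphism starts on a module with a filtration by copies of some object $X$, one can locate the deepest layer on which the morphism is still nonzero and factor through the copy of $X$ appearing at that layer. The Nakayama hypothesis enters only to ensure uniseriality of $L$ and $N$, which upgrades the filtrations to having subquotients isomorphic to $B$ and $B'$ themselves rather than arbitrary finite direct sums thereof; beyond this point the proof is essentially formal.
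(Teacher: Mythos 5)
Your proof is correct and follows essentially the same strategy as the paper's: assume $f \neq 0$, use uniseriality of $L$ to extract a nonzero map $g\colon N \to B$ by projecting onto a single layer of the $B$-filtration, then use uniseriality of $N$ to show $g$ restricts and factors through a single $B'$-layer, yielding a nonzero map $B' \to B$. The paper phrases the two localisation steps via a maximal submodule $X \subset \im f$ with $X \in \Filt(B)$ and a minimal $Y \supset \ker(pf)$ with $Y \in \Filt(B')$, whereas you fix the canonical filtrations up front and select the relevant indices; these are the same choices in different notation, since by Lemma~\ref{lem:nak_unique_filt} the $\Filt(B)$-submodules of $L$ are precisely the $L_i$ and likewise for $N$.
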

\begin{proof}
Assume that there exists a non-zero map $f\colon N \to L$. 
Choose a maximal submodule $X$ of $L$ such that $X \subset \im f$ and $X \in \Filt(B)$. If we consider the quotient $p \colon L \to L/X$, then $L/X \in \Filt(B)$. Moreover, $L/X$ must be indecomposable and so uniserial, otherwise $L$ cannot be uniserial. Since $\im (pf)$ and $B$ are both submodules of $L/X$, we have $\im (pf) \subseteq B$. Indeed, since $L/X$ is uniserial, the only alternative would be that $B \subset \im (pf)$, but in this case $X$ would not be maximal with respect to its defining properties. Since $\im f \supset X$, $\im (pf) \cong \im f / X$ is non-zero.

Now choose a minimal submodule $i \colon Y \hookrightarrow N$ such that $Y \in \Filt(B')$ but $\ker (pf) \subset Y$. Then $\coim (pfi) \cong Y/\ker (pfi) \not \cong 0$, since $\ker (pf) \subset Y$ as submodules of $N$. Moreover, $\coim(pfi)$ and $B'$ are both factor modules of $Y$, and since $Y$ must be indecomposable and hence uniserial, we must either have that $\coim (pfi) \twoheadrightarrow B'$ as a proper factor module or $B' \twoheadrightarrow \coim (pfi)$. However, the former possibility cannot hold, since then we could replace $Y$ by its submodule $Y'$ such that $Y/Y' \cong B'$, and we would still have that $\ker (pf) \subset Y'$. Thus, we have that $B' \twoheadrightarrow \coim (pfi)$.

The composition $B' \twoheadrightarrow \coim (pfi) \cong \im (pfi) \hookrightarrow \im (pf) \hookrightarrow B$ is then non-zero. This contradicts the assumption that $\Hom_{\Lambda} (B', B) = 0$. Thus, such non-zero $f$ cannot exist and $\Hom_{\Lambda}(N, L) = 0$.
\end{proof}

\begin{corollary}\label{cor:nak_hom_orth_self_ext}
If $B$ and $B'$ are bricks over a Nakayama algebra $\Lambda$ with $\Hom_{\Lambda}(B', B) = 0$, then $\Filt(B) \cap \Filt(B') = \{0\}$.
\end{corollary}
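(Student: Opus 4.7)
The plan is to reduce to the indecomposable case and then invoke Lemma~\ref{lem:nak_hom_orth_self_ext} with the same object playing both roles, producing a contradiction via the identity morphism. So I would argue by contradiction: suppose there exists a non-zero module $M \in \Filt(B) \cap \Filt(B')$. The goal is to produce an indecomposable summand that lies in both filtrations.

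To do this, I would use the fact, recorded in the paper just after Section~\ref{sect:back:brick_label}, that $\Filt(B)$ and $\Filt(B')$ are wide subcategories of $\mod \Lambda$, and so in particular are closed under direct summands. Hence any indecomposable direct summand $L$ of $M$ belongs to $\Filt(B) \cap \Filt(B')$, and $L$ is non-zero provided $M$ is. Since $\Lambda$ is Nakayama, $L$ is uniserial, and so $L$ fits the hypotheses of Lemma~\ref{lem:nak_hom_orth_self_ext} as an indecomposable element of $\Filt(B)$, and simultaneously as an indecomposable element of $\Filt(B')$.

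Now I would apply Lemma~\ref{lem:nak_hom_orth_self_ext} taking both the $L$ there (the object in $\Filt(B)$) and the $N$ there (the object in $\Filt(B')$) to be our module $L$. The hypothesis $\Hom_\Lambda(B', B) = 0$ is exactly what is assumed, so the lemma yields $\Hom_\Lambda(L, L) = 0$. But $L \neq 0$, so $\mathrm{id}_L$ is a non-zero endomorphism of $L$, a contradiction. Therefore no non-zero $M$ can exist, giving $\Filt(B) \cap \Filt(B') = \{0\}$.

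I do not anticipate a serious obstacle here, since the real work has already been absorbed into Lemma~\ref{lem:nak_hom_orth_self_ext}. The only subtle point is the reduction to indecomposables, which relies on wideness of $\Filt(B)$ and $\Filt(B')$; if one preferred to avoid invoking wideness, one could instead pick a non-zero indecomposable submodule or quotient of $M$ in $\Filt(B)$ and one in $\Filt(B')$ and run a slightly more delicate argument, but wideness makes this immediate.
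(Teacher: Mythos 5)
Your proof is correct and follows essentially the same route as the paper's one-line argument, which also applies Lemma~\ref{lem:nak_hom_orth_self_ext} with $N = L$ to produce $\Hom_\Lambda(L,L)=0$. You are somewhat more careful than the paper, which leaves implicit the reduction to indecomposables; your explicit use of wideness (hence closure under direct summands) of $\Filt(B)$ and $\Filt(B')$ to extract an indecomposable summand is exactly the right way to fill that small gap.
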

\begin{proof}
This follows from letting $L \in \Filt(B) \cap \Filt(B')$ and applying Lemma~\ref{lem:nak_hom_orth_self_ext} with $N = L$.
\end{proof}

A similar argument to Lemma~\ref{lem:nak_hom_orth_self_ext} also shows the following lemma.

\begin{lemma}\label{lem:nak_self_ext}
Suppose that $M \in \Filt(B)$ is an indecomposable module over a Nakayama algebra $\Lambda$, where $B$ is a brick. Then every endomorphism $f \colon M \to M$ is either
\begin{enumerate}[label=\textup{(}\arabic*\textup{)}]
    \item zero,
    \item an isomorphism, or
    \item has $\im f \in \Filt(B)$ being a proper submodule of $M$, equivalently, has $\ker f \in \Filt(B)$ being a proper submodule of $M$. 
\end{enumerate}
\end{lemma}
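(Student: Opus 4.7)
The plan is to mimic the argument of Lemma~\ref{lem:nak_hom_orth_self_ext}, with source and target both equal to the same indecomposable $M \in \Filt(B)$, now exploiting that $M$ is uniserial. First I will record the structural setup: since $M$ is indecomposable over a Nakayama algebra, its submodules form a chain, and combined with $M \in \Filt(B)$ this yields a distinguished chain $M = M_0 \supsetneq M_1 \supsetneq \dots \supsetneq M_s = 0$ with $M_{j-1}/M_j \cong B$. In particular the $\Filt(B)$-submodules of $M$ are precisely those of length a multiple of $|B|$, namely the $M_j$. Given $f\colon M \to M$ which is neither zero nor an isomorphism, $\im f$ is a proper non-zero submodule, and since $|\im f| + |\ker f| = s\,|B|$, the conditions $\im f \in \Filt(B)$ and $\ker f \in \Filt(B)$ are equivalent. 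So it suffices to prove $\im f \in \Filt(B)$.

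I will argue this by contradiction and assume $\im f \notin \Filt(B)$. Choose $X$ to be the maximal submodule of $M$ with $X \subseteq \im f$ and $X \in \Filt(B)$ (possibly $X = 0$); by assumption $X \subsetneq \im f$. Let $p\colon M \to M/X$, write $X = M_j$, and let $\overline{B}$ be the image of $M_{j-1}$ in $M/X$, so $\overline{B} \cong B$ is the unique smallest non-zero $\Filt(B)$-submodule of the uniserial quotient $M/X$. If $\overline{B} \subseteq \im(pf)$, then $M_{j-1} \subseteq \im f$ with $M_{j-1} \in \Filt(B)$ strictly containing $X$, contradicting maximality. Since $M/X$ is uniserial, this forces $\im(pf) \subsetneq \overline{B}$, while the strict inclusion $X \subsetneq \im f$ guarantees $\im(pf) \neq 0$.

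Dually, I will choose $Y$ to be the minimal submodule of $M$ with $\ker(pf) \subseteq Y$ and $Y \in \Filt(B)$; such $Y$ exists because $\ker(pf) \subsetneq M$ (as $\im(pf) \neq 0$), so $Y = M_i$ for some $i$. Minimality together with uniseriality gives $\ker(pf) \supsetneq M_{i+1}$, and the case $\ker(pf) = M_i$ is ruled out because then $\im(pf) \cong M/M_i$ would be a non-zero $\Filt(B)$-submodule of $\overline{B} \cong B$ strictly smaller than $\overline{B}$, and $B$ admits no such submodule. Thus $M_i \supsetneq \ker(pf) \supsetneq M_{i+1}$, so $M_i/\ker(pf)$ is a non-zero proper quotient of $M_i/M_{i+1} \cong B$, and the composition
\[ B \twoheadrightarrow M_i/\ker(pf) \hookrightarrow M/\ker(pf) \cong \im(pf) \hookrightarrow \overline{B} \cong B \]
is a non-zero, non-invertible endomorphism of $B$, contradicting the brick property. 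The main subtlety is the bookkeeping around the two optimal choices of $X$ and $Y$ and the strictness of the various inclusions; once these are set up, the contradiction parallels that of Lemma~\ref{lem:nak_hom_orth_self_ext} essentially verbatim.
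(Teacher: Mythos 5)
Your proof is correct and follows the paper's intended route: the authors do not write out a proof of this lemma but merely indicate that one should adapt the argument of Lemma~\ref{lem:nak_hom_orth_self_ext}, and your proof is precisely that adaptation, with the additional (and necessary) care at the end to show that the resulting endomorphism of $B$ is not just non-zero but also non-invertible, which is what the brick hypothesis on $B$ rules out here, in place of the $\Hom(B',B)=0$ hypothesis used in Lemma~\ref{lem:nak_hom_orth_self_ext}.
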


The following lemma can be seen as a stronger version of Lemma~\ref{lem:extension_of_bricks} which holds for Nakayama algebras.

\begin{lemma}\label{lem:nak_extension_of_bricks_new}
Suppose that $L \in \Filt(B)$ and $N \in \Filt(B')$ are indecomposable modules, where $B$ and $B'$ are bricks over a Nakayama algebra $\Lambda$ such that $\Hom_{\Lambda}(B', B) = 0$. Then every indecomposable $M$ which is a non-split extension \[0 \to L \xrightarrow{f} M \xrightarrow{g} N \to 0\] of $L$ and $N$ is a brick.
\end{lemma}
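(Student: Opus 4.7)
Since $M$ is indecomposable over a Nakayama algebra, $M$ is uniserial, and a uniserial module is a brick if and only if its top appears exactly once as a composition factor---otherwise there is a proper submodule with the same top, yielding a non-zero endomorphism that is not an isomorphism. The top of $M$ equals that of $N$, which in turn equals $\text{top}(B')$ since $N$ is uniserial in $\Filt(B')$. The composition series of $M$ is the concatenation of that of $N$ (on top) with that of $L$ (at the bottom), so the task is to count occurrences of $\text{top}(B')$ in these two parts.

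The first step is to show $L \cong B$ and $N \cong B'$. In the acyclic case this is immediate, as uniserial modules over linear Nakayama algebras have distinct simple composition factors, ruling out any proper uniserial self-extension. In the cyclic case of cycle length $c$, a uniserial self-extension in $\Filt(B')$ would require $\ell_{B'}$ to be a multiple of $c$, and since $B'$ is a brick this forces $\ell_{B'} = c$, making the composition factors of $B'$ comprise all simples of $\Lambda$. Compatibility of the uniserial structure of $M$ at the junction between $N$ and $L$ forces $\text{top}(B)$ to equal the cyclic successor of $\soc(B')$; combining this with the Nakayama translation of $\Hom_{\Lambda}(B', B) = 0$ (namely, that $\text{top}(B')$ cannot occupy the bottom $\dim B'$ composition factors of $B$) then yields a contradiction. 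The analogous argument shows $L \cong B$.

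With $L \cong B$ and $N \cong B'$, the junction condition combined with the $\Hom$-vanishing hypothesis implies that $\text{top}(B')$ does not appear as a composition factor of $B$, hence not of $L$ either. Therefore $\text{top}(B')$ occurs exactly once in the composition series of $M$---at the very top---and so $M$ is a brick. The most delicate part of the argument is the combination of the junction compatibility with the homological hypothesis in the cyclic Nakayama case; the acyclic case falls out of the same reasoning essentially for free.
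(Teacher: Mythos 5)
Your argument is correct, and it takes a genuinely different route from the paper. The paper works directly with an arbitrary endomorphism $h\colon M\to M$, splits into the cases $L\subseteq\ker h$ and $\ker h\subset L$, and argues via the third isomorphism theorem together with the preparatory Lemmas~\ref{lem:nak_hom_orth_self_ext} and~\ref{lem:nak_self_ext}; it never asserts $L\cong B$ or $N\cong B'$. You instead exploit the explicit combinatorics of composition series of uniserial modules over a (connected) Nakayama algebra: the brick criterion ``top appears once'', the junction condition $\operatorname{top}(L)=\sigma(\soc N)$ forced by uniseriality of $M$, and the translation of $\Hom_\Lambda(B',B)=0$ into ``$\operatorname{top}(B')$ does not sit in the bottom $\ell_{B'}$ factors of $B$''. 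The delicate step you flag --- that the junction condition together with $\Hom_\Lambda(B',B)=0$ forces $\operatorname{top}(B')$ to be absent from $B$ altogether --- does check out: given the junction, $\operatorname{top}(B')$ can only occur in $B$ at the unique position $c+1-\ell_{B'}$, which is always within the bottom $\ell_{B'}$ slots, so $\Hom_\Lambda(B',B)=0$ excludes it entirely and hence $\ell_B+\ell_{B'}\leqslant c$. Your approach buys a strictly stronger conclusion (under the hypotheses, one must have $L\cong B$ and $N\cong B'$, so $M$ has exactly one $B$ and one $B'$ Jordan--H\"older layer), at the price of relying on the concrete description of uniserial modules via cyclically ordered simples --- you should really spell out the position count at the junction rather than leave it as ``yields a contradiction'', since that is where the whole argument lives. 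The paper's proof, while longer, is phrased in homological/module-theoretic terms and reuses its earlier lemmas, which fits better with the surrounding development; but as a self-contained verification yours is shorter and, I would say, more transparent about why the Nakayama hypothesis is needed.
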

\begin{proof}
We suppose that $h \colon M \to M$ is an endomorphism of $M$ and seek to show that this must either be zero or an isomorphism. Since $\Lambda$ is a Nakayama algebra, we are in one of the following two cases:
\begin{enumerate}
    \item $L \subseteq \ker h$;\label{case:im_l_ker}
    \item $\ker h \subset L$.\label{case:ker_l}
\end{enumerate}
Case~\eqref{case:im_l_ker} is similar in flavour to Lemma~\ref{lem:extension_of_bricks}, but we will refrain from copying out the commutative diagrams once more. We have that $hf = 0$, so that there is an induced map $a \colon N \to M$ such that $h = ag$.
Since $N \in \Filt(B')$, by Lemma~\ref{lem:nak_self_ext}, its endomorphim $ga$ is either an isomorphism, zero, or there is a proper submodule $X = \im (ga) \in \Filt(B')$ of $N$ which is also a proper factor module of $N$. 
If $ga$ is an isomorphism, then $a(ga)^{-1}$ gives a splitting of $g$, which contradicts the assumption that the sequence is non-split. If $ga = 0$, then there is an induced map $b \colon N \to L$ such that $fb = a$. But, since $\Hom_{\Lambda}(N, L) = 0$ by Lemma~\ref{lem:nak_hom_orth_self_ext}, we have that $b = 0$, and so $h = ag = fbg = 0$.

We are left with the last option, where there is a proper submodule $X \in \Filt(B')$ which is also a proper factor module. First, note that $\im a = \im h$, since $h = ag$ and $g$ is epic. Then, we have that $X = \im (ga) = g(\im a) = g(\im h) \cong \im h/L$. Secondly, note that there exists a submodule $m \colon M' \hookrightarrow M$ such that $M/M' \cong X$, as $X$ is a factor module of $N$ and so also a factor module of $M$. Then $N' := M'/L \in \Filt(B')$ is a proper  non-zero submodule $l \colon N' \hookrightarrow N$ and, moreover, $N' \cong \ker (ga)$ as $X = \im (ga) \cong \coim (ga)$. Since $N$ is uniserial, we actually have that $N'$ is indecomposable and $N' = \ker (ga)$. We denote the map $M' \twoheadrightarrow N'$ by $g'$. We have a map $a' \colon N' \to M$ defined by $a' = al$. Now we have that $ga' = gal = 0$,
which gives an induced map $b' \colon N' \to L$ such that $fb' = a'$.
\[
\begin{tikzcd}
    & M' \ar[d,hookrightarrow,"m"] \ar[r,"g'"] & N' \ar[ddll,dotted,"b'",swap] \ar[ddl,dotted,"a'",swap] \ar[d,hookrightarrow,"l"] \\
    & M \ar[d,"h"] \ar[r,"g",swap] & N \ar[dl,"a"] \\
    L \ar[r,"f",hookrightarrow] & M\ar[r,"g",twoheadrightarrow] & N
\end{tikzcd}
\]
Then $b' = 0$, by Lemma~\ref{lem:nak_hom_orth_self_ext}, which gives that $hm = agm = alg' = a'g' = fb'g' = 0$. This then implies that $M' \subseteq \ker h$, and so that $\im h \cong M/\ker h \cong (M/M')/(\ker h/M') \cong X/(\ker h/M')$ by the third isomorphism theorem. But $\im h$ cannot be a quotient of $X$, since $X \cong \im h/L$ and $L \neq 0$. This is a contradiction, and so the last option cannot hold.

In case~\eqref{case:ker_l}, we have that $L/\ker h$ is a non-zero factor module of $L$. If $\ker h = 0$, then $h$ is an isomorphism, so we can ignore this case and assume that $L/\ker h$ is a proper factor module of $L$. We have an induced monomorphism $L/\ker h \hookrightarrow M/\ker h \cong \im h \subset M$. Thus, $L/\ker h$ is a submodule of~$M$. Since $M$ is uniserial, we have $L/\ker h \subset L$ or $L \subseteq L/\ker h$. By length considerations, only the former is possible. Thus, we have an endomorphism $L \twoheadrightarrow L/\ker h \hookrightarrow L$ which is certainly not an isomorphism. Since we assume that $\ker h \neq 0$, we must have that $\ker h \in \Filt(B)$ by Lemma~\ref{lem:nak_self_ext}.

We show that having $\ker h \in \Filt(B)$ leads to a contradiction. By the third isomorphism theorem, we have \[N \cong \frac{M}{L} \cong \frac{M/\ker h}{L/\ker h} \cong \frac{\im h}{L/\ker h} \hookrightarrow \frac{M}{L/\ker h},\] so that $N$ is a submodule of $M/(L/\ker h)$. We also have that $L/(L/\ker h)$ is a submodule of $M/(L/\ker h)$. Since $M/(L/\ker h)$ is uniserial as a factor module of a uniserial module $M$, we must either have that $N \subseteq L/(L/\ker h)$ or $L/(L/\ker h) \subset N$ as submodules of this module. Having $N \subseteq L/(L/\ker h)$ contradicts Lemma~\ref{lem:nak_hom_orth_self_ext}, since $L/(L/\ker h) \in \Filt(B)$. Hence, we must have $L/(L/\ker h) \subset N$. Having $L/(L/\ker h) \in \Filt(B')$ contradicts Corollary~\ref{cor:nak_hom_orth_self_ext}, so $L/(L/\ker h) \notin \Filt(B')$. But then we apply the third isomorphism theorem again to obtain \[\frac{N}{L/(L/ \ker h)} \hookrightarrow \frac{M/(L/\ker h)}{L/(L/\ker h)} \cong \frac{M}{L} \cong N.\] We therefore have a composition of maps \[N \twoheadrightarrow \frac{N}{L/(L/\ker h)} \hookrightarrow N\] which is neither zero nor an isomorphism, with $N/(L/(L/\ker h)) \notin \Filt(B')$, which contradicts Lemma~\ref{lem:nak_self_ext}.

We conclude that $M$ is a brick.
\end{proof}

We can now show the key lemma which establishes that bricks determine the equivalence class of a maximal green sequence over a Nakayama algebra.

\begin{lemma}\label{lem:nak_ses}
Let $\mathcal{G}$ be a maximal green sequence of a Nakayama algebra~$\Lambda$. Suppose that $B_{i}$ and $B_{k}$ are bricks of $\mathcal{G}$ such that $i < k$ in the sequence of bricks and that there is a non-split short exact sequence \[0 \to B_{i} \xrightarrow{f} B \xrightarrow{g} B_{k} \to 0\] 
such that $B$ is a brick. Then $B \cong B_{j}$ is a brick of $\mathcal{G}$ with $i < j < k$.
\end{lemma}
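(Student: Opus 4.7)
My plan is to use the maximality of $\mathcal{G}$ as a backwards $\Hom$-orthogonal sequence of bricks: if $B$ is not already one of the $B_j$'s, I will exhibit a position where $B$ can be inserted, contradicting maximality.

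First I would establish the orthogonality properties $\Hom_\Lambda(B, B_l) = 0$ for all $l \leq i$ and $\Hom_\Lambda(B_l, B) = 0$ for all $l \geq k$. Each is a routine composition argument using the given short exact sequence: a hypothetical non-zero map between $B$ and $B_l$, once composed with $B_i \hookrightarrow B$ or $B \twoheadrightarrow B_k$, either yields a non-zero morphism between two bricks of $\mathcal{G}$ that violates backwards $\Hom$-orthogonality, or factors through $B_i$ or $B_k$ and then does so. The boundary cases $l = i$ and $l = k$ additionally rule out $B \cong B_i$ or $B \cong B_k$, using that $B$ is a brick (hence indecomposable) while $B_i$ and $B_k$ are both non-zero. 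Consequently, setting $p := \max\{l : \Hom_\Lambda(B_l, B) \neq 0\}$ and $q := \min\{l : \Hom_\Lambda(B, B_l) \neq 0\}$, we obtain $i \leq p < k$ and $i < q \leq k$.

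I would next show $p \geq q$. Indeed, if $p < q$, then inserting $B$ into the backwards $\Hom$-orthogonal sequence between $B_p$ and $B_{p+1}$ produces a backwards $\Hom$-orthogonal sequence properly extending $\mathcal{G}$, because $B \cong B_m$ for some $m$ already appearing in $\mathcal{G}$ would force $q \leq m \leq p$, which is absurd; this contradicts the maximality of $\mathcal{G}$. With $p \geq q$ in hand, I would pick non-zero $\varphi \colon B_p \to B$ and $\psi \colon B \to B_q$ and examine $\psi\varphi \colon B_p \to B_q$. If $\psi\varphi \neq 0$, then backwards $\Hom$-orthogonality forces $p = q$, so $\psi\varphi$ is a non-zero, hence invertible, endomorphism of the brick $B_p$; this splits $B_p$ off the brick $B$, yielding $B \cong B_p$ with $i < p < k$, as desired.

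The main obstacle is the remaining case $\psi\varphi = 0$, where $\im \varphi \subseteq \ker \psi \subset B$; here the Nakayama hypothesis enters crucially via the uniseriality of $B$, which forces $\ker \psi$ and $B_i$ to be totally comparable as submodules of $B$. If $B_i \subseteq \ker \psi$, then $\psi$ factors through $B/B_i \cong B_k$ to give a non-zero morphism $B_k \to B_q$; backwards $\Hom$-orthogonality forces $q = k$, contradicting $p \geq q$ together with $p < k$. If instead $\ker \psi \subset B_i$, then $\varphi$ factors through $B_i \hookrightarrow B$ as a non-zero $B_p \to B_i$ whose image lies in the proper submodule $\ker \psi$ of $B_i$; for $p > i$ this violates backwards $\Hom$-orthogonality, while for $p = i$ it yields a non-zero, non-invertible endomorphism of the brick $B_i$, contradicting the brick property. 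So $\psi\varphi = 0$ is impossible, and the argument is complete.
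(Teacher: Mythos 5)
Your proof is correct, and it follows the same basic strategy as the paper's: use maximality of the backwards $\Hom$-orthogonal sequence together with uniseriality of $B$ to show that $B$ can be slotted into the sequence, hence must already be there. Your preliminary orthogonality computations ($\Hom_\Lambda(B, B_l) = 0$ for $l \leq i$, $\Hom_\Lambda(B_l, B) = 0$ for $l \geq k$, ruling out $B \cong B_i$ and $B \cong B_k$) are routine and sound, and the Nakayama hypothesis enters in exactly the same place — comparing submodules of the uniserial $B$. The main presentational difference is bookkeeping. The paper argues abstractly with generic obstructing bricks $L$ (mapping into $B$) and $N$ (receiving a map from $B$), shows both must lie strictly between $B_i$ and $B_k$, and, assuming some $N$ precedes some $L$, compares $\operatorname{im} l$, $\operatorname{ker} n$, and $B_i$ inside $B$ to produce a forbidden non-zero map $L \to N$. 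You instead name the extremal obstruction positions $p$ and $q$, prove $p \geq q$ directly from maximality, and then compare $\operatorname{ker}\psi$ with $B_i$. One genuine advantage of your version: you explicitly treat the degenerate case $p = q$, where the obstruction lies at a single position and the composition $\psi\varphi$ is a \emph{non-zero endomorphism} of a brick — not a violation of backwards $\Hom$-orthogonality, but an isomorphism, which splits $B_p$ off of the indecomposable $B$ and identifies $B \cong B_p$ with $i < p < k$. The paper's final sentence, ``the composition $L \to B \to N$ is non-zero, which contradicts the backwards $\Hom$-orthogonality,'' silently assumes $L$ and $N$ sit at distinct positions; that this assumption is harmless requires exactly the observation you make, namely that if $L$ and $N$ coincide then $B$ is (isomorphic to) that brick. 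So your treatment is a clean and slightly more careful rendering of the same argument.
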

\begin{proof}
We use the fact that the brick sequence of $\mathcal{G}$ must be maximal with respect to the property of being backwards $\Hom$-orthogonal. Hence, if we can show that there is a space between $B_i$ and $B_k$ where one can insert $B$ without disrupting the backwards $\Hom$-orthogonality, then it follows that $B$ must actually occur in the brick sequence of $\mathcal{G}$ as $B \cong B_{j}$ for $i < j < k$.

Let us insert $B$ in the brick sequence of $\mathcal{G}$ at some point between $B_{i}$ and $B_{k}$. If $B$ cannot be inserted at such a point, then there must exist either a brick $L$ in $\mathcal{G}$ which occurs after $B$ with a non-zero homomorphism $l\colon L \to B$, or there exists a brick $N$ before $B$ with a non-zero homomorphism $n\colon B \to N$. In the first case, if $L$ occurs after~$B_k$, then the composition $L \xrightarrow{l} B \xrightarrow{g} B_k$ must be zero. This gives a non-zero map $L \to B_i$, which is a contradiction. One can argue in a similar way that $N$ cannot occur before $B_i$.

Hence, we have that all such bricks $N$ and $L$ occur between $B_i$ and~$B_k$. If all such bricks $L$ occur before all such bricks $N$, then we can place $B$ between these two sets of bricks, thereby preserving backwards $\Hom$-orthogonality. Thus, we may assume that $N$ occurs before $L$.

We now use the fact that $\Lambda$ is a Nakayama algebra and so that all the modules in question must be uniserial. We have that $\im l \supset B_i$, otherwise there is a non-zero map $L \twoheadrightarrow \im l \hookrightarrow B_i$. Likewise, $\ker n \subset \ker g = B_i$, otherwise there is a non-zero map $B_k \to N$. Thus, we have that $\im l \supset \ker n$.
This means that the composition $L \xrightarrow{l} B \xrightarrow{n} N$ is non-zero, which contradicts the backwards $\Hom$-orthogonality. Hence, all of the bricks $L$ occur before all the bricks $N$ between $B_i$ and $B_k$, and so there is a point between $B_i$ and $B_k$ where $B$ can be placed without violating backwards $\Hom$-orthogonality, as desired. 
\end{proof}

\begin{remark}
Note that Lemma~\ref{lem:nak_ses} does not hold for the non-linearly oriented type~$A$ algebra from Example~\ref{ex:brick_counter}, which is not a Nakayama algebra. Indeed, considering the maximal green sequence \[\tcs{2},\, \tcs{1\\2},\, \tcs{1},\, \tcs{3\\2},\, \tcs{3},\] we have a short exact sequence \[0 \to \tcs{1\\2} \to \tcs{1\phantom{2}3\\\phantom{1}2\phantom{3}} \to \tcs{3} \to 0,\] but the non-uniserial brick $\tcss{1\phantom{2}3\\\phantom{1}2\phantom{3}}$ does not occur in the maximal green sequence.
\end{remark}

We can then make the following argument showing that maximal green sequences of Nakayama algebras with the same bricks are connected by deformations across squares.

\begin{theorem}\label{thm:nak_brick_equiv}
Let $\mathcal{G}_1$ and $\mathcal{G}_2$ be maximal green sequences of a Nakayama algebra $\Lambda$ such that $\bricks{\mathcal{G}_1} = \bricks{\mathcal{G}_2}$. Then we have that $\mathcal{G}_1 \sim \mathcal{G}_2$.
\end{theorem}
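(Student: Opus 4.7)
The plan is to show that $\mathcal{G}_1$ can be transformed into $\mathcal{G}_2$ by a finite sequence of deformations across squares, which by Theorem~\ref{thm:mg_equiv} will give $\mathcal{G}_1 \sim \mathcal{G}_2$. Since $\bricks{\mathcal{G}_1} = \bricks{\mathcal{G}_2}$, the two maximal backwards $\Hom$-orthogonal sequences of bricks have the same underlying finite set and the same length. I will induct on the number $N$ of pairs of bricks that are ordered differently in the two sequences. The base case $N = 0$ forces $\mathcal{G}_1 = \mathcal{G}_2$ via the bijection between maximal green sequences and maximal backwards $\Hom$-orthogonal sequences of bricks.

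For the inductive step, apply Lemma~\ref{lem:adj_bricks} to produce bricks $B, B'$ that are adjacent in $\mathcal{G}_1$ with $B < B'$ in $\mathcal{G}_1$ but $B' < B$ in $\mathcal{G}_2$. Backwards $\Hom$-orthogonality in $\mathcal{G}_1$ and $\mathcal{G}_2$ respectively yields $\Hom_\Lambda(B', B) = 0$ and $\Hom_\Lambda(B, B') = 0$. By Lemma~\ref{lem:sq_crit_brick}, the decisive point is to verify that $\Ext^1_\Lambda(B, B') = 0$; once this is established, swapping $B$ and $B'$ produces a new maximal green sequence $\mathcal{G}_1'$ that differs from $\mathcal{G}_1$ by a single deformation across a square, satisfies $\bricks{\mathcal{G}_1'} = \bricks{\mathcal{G}_2}$, and has $N - 1$ inversions with $\mathcal{G}_2$. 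The induction hypothesis then gives $\mathcal{G}_1' \sim \mathcal{G}_2$, hence $\mathcal{G}_1 \sim \mathcal{G}_2$.

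The Ext-vanishing is the main obstacle and the only place where the Nakayama hypothesis enters essentially. I will argue by contradiction: if $\Ext^1_\Lambda(B, B') \neq 0$, choose a non-split extension $0 \to B' \to M \to B \to 0$; Lemma~\ref{lem:extension_of_bricks} applies to the pair of $\Hom$-orthogonal bricks $B, B'$ and shows that $M$ is a brick. In $\mathcal{G}_2$, the brick $B'$ precedes $B$, so Lemma~\ref{lem:nak_ses} forces $M$ to occur as some $B_j$ of $\mathcal{G}_2$ strictly between $B'$ and $B$. Since $\bricks{\mathcal{G}_1} = \bricks{\mathcal{G}_2}$, we have $M \in \bricks{\mathcal{G}_1}$ as well, and the non-zero maps $B' \hookrightarrow M$ and $M \twoheadrightarrow B$ force $B'$ to precede $M$ and $M$ to precede $B$ in $\mathcal{G}_1$ by backwards $\Hom$-orthogonality; this contradicts the adjacency of $B$ and $B'$ in $\mathcal{G}_1$. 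The Nakayama hypothesis is used precisely in invoking Lemma~\ref{lem:nak_ses}, and the failure of this step in the general case is witnessed by the non-Nakayama algebra of Example~\ref{ex:brick_counter}.
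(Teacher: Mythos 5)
Your proof is correct and uses essentially the same key idea as the paper: when an adjacent inversion pair $B, B'$ cannot be swapped, the obstruction $\Ext^1_\Lambda(B,B')\neq 0$ together with Lemma~\ref{lem:extension_of_bricks} and Lemma~\ref{lem:nak_ses} (where the Nakayama hypothesis enters) produces a brick forced to lie in a contradictory position. The only difference is cosmetic: you induct on the number of inversions and locate a swappable adjacent pair via Lemma~\ref{lem:adj_bricks}, whereas the paper inducts on the length of the maximal green sequence past the first point of divergence and moves a specific brick into position; both frameworks terminate and rely on the identical core contradiction.
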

\begin{proof}
Suppose that we have maximal green sequences $\mathcal{G}_1$ and $\mathcal{G}_2$ such that $\bricks{\mathcal{G}_1} = \bricks{\mathcal{G}_2}$. We will prove that $\mathcal{G}_1$ and $\mathcal{G}_2$ can be deformed into each other across squares by induction on the number of bricks after the first brick where they differ. In the base case, the maximal green sequences coincide, and so there is no point at which they diverge.

Now we suppose that $\mathcal{G}_1 \neq \mathcal{G}_2$. We consider the first bricks where $\mathcal{G}_1$ and $\mathcal{G}_2$ differ. Let this brick be $B_1$ for $\mathcal{G}_1$ and $B_2$ for $\mathcal{G}_2$. Since $\bricks{\mathcal{G}_1} = \bricks{\mathcal{G}_2}$, we must therefore have $B_1 \in \bricks{\mathcal{G}_2}$ and $B_2 \in \bricks{\mathcal{G}_1}$ as well.

We claim that, by deforming across squares, we can make $B_2$ the brick immediately before $B_1$ in $\mathcal{G}_1$. Suppose that, on the contrary, there is a brick $B \in \bricks{\mathcal{G}_1}$ such that $B_2$ cannot be moved back past $B$. By Lemma~\ref{lem:sq_crit_brick}, this must either be because $\Hom_{\Lambda}(B, B_2) \neq 0$ or because $\Ext_{\Lambda}^{1}(B, B_2) \neq 0$. We must have $B \in \bricks{\mathcal{G}_2}$. Furthermore, $B$ must occur in $\mathcal{G}_2$ after $B_2$, since it coincides with $B_1$ or occurs in $\mathcal{G}_1$ after $B_1$, and the respective segments of $\mathcal{G}_1$ and $\mathcal{G}_2$ before $B_1$ and $B_2$ coincide. We then cannot have $\Hom_{\Lambda}(B, B_2) \neq 0$, since $B$ occurs after $B_2$ in $\mathcal{G}_2$. Suppose, then, that $\Ext_{\Lambda}^{1}(B, B_2) \neq 0$. Since $B$ and $B_2$ appear in different orders in $\mathcal{G}_1$ and $\mathcal{G}_2$, we have $\Hom_{\Lambda}(B_2, B) = \Hom_{\Lambda}(B, B_2) = 0$. Then, by Lemma~\ref{lem:extension_of_bricks}, there is a brick $B'$ which is a non-split extension of $B$ and $B_2$, \[0 \to B_2 \to B' \to B \to 0.\] Because $B_2$ occurs before $B$ in $\mathcal{G}_2$, we must have that $B' \in \bricks{\mathcal{G}_2}$ by Lemma~\ref{lem:nak_ses}. Then we also have that $B' \in \bricks{\mathcal{G}_1}$. Due to backwards $\Hom$-orthogonality, we must have that $B'$ occurs after $B_2$ and before $B$ in $\mathcal{G}_1$. However, this is a contradiction, since $B_2$ occurs after $B$ in $\mathcal{G}_1$.

Thus, we can move $B_2$ back along $\mathcal{G}_1$ by deforming across squares to obtain a maximal green sequence $\mathcal{G}'_1$ where $B_2$ occurs before $B_1$. The maximal green sequences $\mathcal{G}'_1$ and $\mathcal{G}_2$ then have fewer bricks which occur after they differ, so by the induction hypothesis, we have that $\mathcal{G}'_1 \sim \mathcal{G}_2$. This then establishes that $\mathcal{G}_1 \sim \mathcal{G}_2$, as desired, since $\mathcal{G}_1 \sim \mathcal{G}'_1$. 
\end{proof}

\subsection{Partial order using bricks}

For Nakayama algebras we may also consider the partial order on maximal green sequences given by reverse inclusion of bricks, since maximal green sequences with the same sets of bricks are equivalent by Theorem~\ref{thm:nak_brick_equiv}. For general algebras this will not be a well-defined partial order; the relation will not in general be anti-symmetric, since non-equivalent maximal green sequences may have the same set of bricks.

\begin{definition}
Let $[\mathcal{G}]$ and $[\mathcal{G}']$ be equivalence classes of maximal green sequences over a Nakayama algebra $\Lambda$. We write $[\mathcal{G}] \bleq [\mathcal{G}']$ if $\bricks{\mathcal{G}} \supseteq \bricks{\mathcal{G}'}$. We refer to this as the \emph{brick order}.
\end{definition}

\begin{lemma}
The partial order $\bleq$ is well-defined for Nakayama algebras.
\end{lemma}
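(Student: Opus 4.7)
The plan is to verify the four standard requirements for $\bleq$ to be a well-defined partial order on equivalence classes: that the relation respects the equivalence relation $\sim$, and that it is reflexive, transitive, and anti-symmetric. Three of these are essentially formal, but anti-symmetry is precisely where the Nakayama hypothesis enters and where we will cite the heavy lifting done in Theorem~\ref{thm:nak_brick_equiv}.

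First I would note that $\bleq$ is well-defined on equivalence classes: if $\mathcal{G} \sim \mathcal{G}'$ then by Theorem~\ref{thm:mg_equiv} (condition \ref{op:equiv:hnb}) together with Lemma~\ref{lem:same_bricks} we have $\bricks{\mathcal{G}} = \bricks{\mathcal{G}'}$, so the containment condition depends only on the equivalence class. Reflexivity $\bricks{\mathcal{G}} \supseteq \bricks{\mathcal{G}}$ and transitivity of containment of sets are both immediate.

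The substantive point is anti-symmetry. Suppose $[\mathcal{G}] \bleq [\mathcal{G}']$ and $[\mathcal{G}'] \bleq [\mathcal{G}]$, that is, $\bricks{\mathcal{G}} \supseteq \bricks{\mathcal{G}'}$ and $\bricks{\mathcal{G}'} \supseteq \bricks{\mathcal{G}}$. Then $\bricks{\mathcal{G}} = \bricks{\mathcal{G}'}$, and now Theorem~\ref{thm:nak_brick_equiv} applies, since $\Lambda$ is assumed Nakayama, to conclude that $\mathcal{G} \sim \mathcal{G}'$, i.e.\ $[\mathcal{G}] = [\mathcal{G}']$. This is the step that would fail outside the Nakayama setting, as Example~\ref{ex:brick_counter} shows: there two non-equivalent maximal green sequences can have the same set of bricks, so inclusion of bricks need not be anti-symmetric in general. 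In the Nakayama case, however, Theorem~\ref{thm:nak_brick_equiv} rules this out, and the proof is complete.
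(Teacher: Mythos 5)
Your proof is correct and follows the same route as the paper: well-definedness on equivalence classes via Lemma~\ref{lem:same_bricks}, reflexivity and transitivity by inspection, and anti-symmetry via Theorem~\ref{thm:nak_brick_equiv} as the sole place the Nakayama hypothesis enters.
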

\begin{proof}
We first note $\bleq$ is well-defined on equivalence classes, since $\bricks{\mathcal{G}} = \bricks{\widehat{\mathcal{G}}}$ for all $\widehat{\mathcal{G}} \in [\mathcal{G}]$ by Lemma~\ref{lem:same_bricks}. The relation $\bleq$ is clearly transitive and reflexive. It is anti-symmetric since $\bricks{\mathcal{G}} = \bricks{\mathcal{G}'}$ implies $[\mathcal{G}] = [\mathcal{G}']$ by Theorem~\ref{thm:nak_brick_equiv}.
\end{proof}

We can show that the brick order coincides with the deformation order for Nakayama algebras, which is the key to showing that in fact all of the orders coincide in this case.

\begin{theorem}\label{thm:nak_brick_order}
Let $\Lambda$ be a finite-dimensional Nakayama algebra over a field $K$, with $\mathcal{G}$ and $\mathcal{G}'$ two maximal green sequences of $\Lambda$. Then $[\mathcal{G}] \bleq [\mathcal{G}']$ if and only if $[\mathcal{G}] \dleq [\mathcal{G}']$.
\end{theorem}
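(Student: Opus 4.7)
The forward implication $[\mathcal{G}] \dleq [\mathcal{G}'] \Rightarrow [\mathcal{G}] \bleq [\mathcal{G}']$ is immediate from Corollary~\ref{cor:def->bricks}, so the content of the theorem is the converse.

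For the converse, the plan is to argue by induction on the difference $|\bricks{\mathcal{G}}| - |\bricks{\mathcal{G}'}|$, which equals the difference in the lengths of the two maximal green sequences. The base case, when this difference vanishes, reduces to Theorem~\ref{thm:nak_brick_equiv}: equal brick sets force $[\mathcal{G}] = [\mathcal{G}']$. For the inductive step, I aim to produce a maximal green sequence $\mathcal{G}''$ with $[\mathcal{G}''] \dlessdot [\mathcal{G}']$ and $\bricks{\mathcal{G}''} \subseteq \bricks{\mathcal{G}}$; the induction hypothesis then yields $[\mathcal{G}] \dleq [\mathcal{G}'']$, and transitivity with $[\mathcal{G}''] \dlessdot [\mathcal{G}']$ closes the argument.

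To construct such a $\mathcal{G}''$, I would apply Lemma~\ref{lem:adj_bricks} to locate bricks $B, B'$ adjacent in $\mathcal{G}'$ with $B < B'$ in $\mathcal{G}'$ but $B' < B$ in $\mathcal{G}$, and then examine the polygon $\mathcal{P}$ in $\twosilt \Lambda$ containing the two corresponding consecutive mutations of $\mathcal{G}'$. If $\mathcal{P}$ is a square, Lemma~\ref{lem:sq_crit_brick} permits a deformation across it, producing another representative of $[\mathcal{G}']$ with strictly fewer inverted adjacent brick pairs against $\mathcal{G}$; a secondary induction on this inversion count allows one to assume that $\mathcal{P}$ is an oriented polygon, so that $\mathcal{G}''$ can be defined as the result of sending $\mathcal{G}'$ along the long side of $\mathcal{P}$.

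The principal obstacle is then verifying $\bricks{\mathcal{G}''} \subseteq \bricks{\mathcal{G}}$: the only new bricks are the intermediate labels $B_2, \dots, B_{r-1}$ on the long side of $\mathcal{P}$, which by Lemma~\ref{lem:poly_ab_cat} all lie in the abelian category $\Filt(B, B')$ whose simples are $B$ and $B'$. Since $\Lambda$ is a Nakayama algebra, each such $B_j$ is a uniserial brick, and I would establish $B_j \in \bricks{\mathcal{G}}$ by a further induction on the composition length of $B_j$ in $\Filt(B, B')$. The shortest nontrivial case is a non-split extension $0 \to B' \to B_j \to B \to 0$, and Lemma~\ref{lem:nak_ses} directly places $B_j$ between $B'$ and $B$ in $\mathcal{G}$, exploiting the ordering $B' < B$ furnished by Lemma~\ref{lem:adj_bricks}. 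For longer $B_j$, uniseriality permits splitting off a strictly shorter sub-brick whose presence in $\bricks{\mathcal{G}}$ is already guaranteed by the inner inductive hypothesis, and a further application of Lemma~\ref{lem:nak_ses} extends the conclusion to $B_j$; the $\Hom$-orthogonality hypotheses needed at each step are secured by Lemma~\ref{lem:nak_hom_orth_self_ext}.
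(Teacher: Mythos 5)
Your overall strategy matches the paper's proof almost exactly: use Corollary~\ref{cor:def->bricks} for the easy direction; for the converse, invoke Lemma~\ref{lem:adj_bricks} to find an adjacent inverted pair $B < B'$ in $\mathcal{G}'$, deform away squares (secondary induction on inversions) to arrive at an oriented polygon, pass $\mathcal{G}'$ along its long side to get $\mathcal{G}''$, and show that the new intermediate brick labels all lie in $\bricks{\mathcal{G}}$ by an inner induction on composition length in $\Filt(B,B')$, via Lemma~\ref{lem:nak_ses}.

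There is, however, a genuine gap in the inner induction. You write that ``uniseriality permits splitting off a strictly shorter sub-brick,'' citing Lemma~\ref{lem:nak_hom_orth_self_ext} for the necessary Hom-orthogonality. But uniseriality alone does not make a proper submodule of a brick itself a brick, and Lemma~\ref{lem:nak_hom_orth_self_ext} only gives Hom-vanishing between objects of $\Filt(B)$ and $\Filt(B')$ — it does not establish the submodule is a brick. The paper's proof needs two further ingredients here. First, it uses Lemma~\ref{lem:brick_no_ext} to conclude $\Ext^1_\Lambda(B',B)=0$ (from $B$ immediately preceding $B'$ in $\mathcal{G}'$), which forces the composition series of each intermediate brick $B''$ within $\Filt(B,B')$ to have all $B'$-factors below all $B$-factors; without this ordering the right subobject may not exist. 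Second, having arranged that the subobject $L$ (with $B''/L\cong B$) sits in a non-split extension of an object of $\Filt(B')$ by an object of $\Filt(B)$, the paper applies Lemma~\ref{lem:nak_extension_of_bricks_new} to conclude $L$ is a brick; that lemma is precisely what bridges ``non-split extension of self-extensions of orthogonal bricks'' to ``brick'' over a Nakayama algebra, and the paper proves it with a non-trivial case analysis. Your sketch omits both steps, and the sub-brick claim does not follow from what you cite. Once these are supplied, the rest of your argument closes correctly.
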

\begin{proof}
We know that $[\mathcal{G}] \dleq [\mathcal{G}']$ implies
$[\mathcal{G}] \bleq [\mathcal{G}']$ by Corollary~\ref{cor:def->bricks}. 
We now show that $[\mathcal{G}] \bleq [\mathcal{G}']$ implies $[\mathcal{G}] \dleq [\mathcal{G}']$. Let $\mathcal{G}$ and $\mathcal{G}'$ be maximal green sequences such that $[\mathcal{G}] \bl [\mathcal{G}']$. Hence, every brick of $\mathcal{G}'$ is also a brick of $\mathcal{G}$.

By Lemma~\ref{lem:adj_bricks}, there must be an adjacent pair of bricks $B$ and $B'$ of $\mathcal{G}'$ such that $B$ occurs before $B'$ in $\mathcal{G}'$, but $B$ occurs after $B'$ in $\mathcal{G}$. Let $\mathcal{S}$ be the sequence of bricks given by starting with $\mathcal{G}'$ and swapping $B$ and $B'$ so that $B'$ now occurs before $B$. This sequence $\mathcal{S}$ must in fact be backwards $\Hom$-orthogonal, since $B'$ occurs before $B$ in $\mathcal{G}$. If $\mathcal{S}$ is maximal backwards $\Hom$-orthogonal, then we have deformed across a square and we apply Lemma~\ref{lem:adj_bricks} again to find a new pair of bricks. Hence, we can assume that $\mathcal{S}$ is not maximal backwards $\Hom$-orthogonal.

Therefore, we can add bricks to $\mathcal{S}$ to obtain a maximal backwards $\Hom$-orthogonal sequence $\mathcal{G}''$. By Lemma~\ref{lem:iepd_unique}, $\mathcal{G}''$ is unique. Since $\mathcal{G}'$ was maximal backwards $\Hom$-orthogonal, the extra bricks in $\mathcal{G}''$ can only occur between $B$ and~$B'$. We have that all the extra bricks lie in $\Filt(B, B')$ by Theorem~\ref{thm:filt_intervals}.

We claim that, by applying Lemma~\ref{lem:nak_ses} iteratively, we obtain that all the extra bricks of $\mathcal{G}''$ must also be bricks of $\mathcal{G}$ lying in between $B'$ and $B$, and so there can only be finitely many of them. Let $B''$ be an extra brick of $\mathcal{G}''$, which thus has a unique filtration with factors $B$ and $B'$ by Lemma~\ref{lem:nak_unique_filt}. Note then that we must have $\Ext_{\Lambda}^{1}(B', B) = 0$ by Lemma~\ref{lem:brick_no_ext}, since $B$ occurs immediately before $B'$ in $\mathcal{G}'$ and $\Hom(B, B') = 0$ by the the backwards $\Hom$-orthogonality of $\mathcal{G}$. Hence any subquotient of $B''$ with factors $B$ and $B'$ where $B$ occurred below $B'$ would have to be isomorphic to $B \oplus B'$, with the result that the order of the factors could be switched so that $B'$ occurs below $B$. This would contradict uniqueness, so in fact we must have that in the filtration of $B''$ by $B$ and $B'$, all of the $B'$ factors occur below all of the $B$ factors.

We now prove the claim that $B'' \in \bricks{\mathcal{G}}$ and lies in between $B'$ and $B$ by induction on the length of the filtration of $B''$ by $B$ and $B'$. In the base case, where this filtration is of length 1, we either have $B'' \cong B$ or $B'' \cong B'$, and in either case the claim is immediate. We also use the case where $B''$ has only one $B$ factor and only one $B'$ factor as a base case. Since $B'$ occurs in $\mathcal{G}$ before $B$, we then have that $B''$ occurs in $\mathcal{G}$ between $B'$ and $B$ by Lemma~\ref{lem:nak_ses}. For the induction step there are then two cases to consider, namely, either
\begin{itemize}
    \item $B''$ has at least two $B$ factors, or
    \item $B''$ has at least two $B'$ factors.
\end{itemize}
The two cases are similar, so we only consider the first one. First note that $B''$ must also have $B'$ factors in its filtration, otherwise it is clearly not a brick. We claim that if $L$ is the unique submodule of $B''$ such that $B''/L \cong B$, then we have that $L$ is a brick. Indeed, we have that there is a non-split short exact sequence $0 \to X \to L \to Y \to 0$ such that $X \in \Filt(B')$ and $Y \in \Filt(B)$. Also, $L$ is indecomposable, since otherwise $B''$ would not be uniserial. Hence, $L$ is a brick by Lemma~\ref{lem:nak_extension_of_bricks_new}.

Hence, by the induction hypothesis, we have that $L$ is a brick of $\mathcal{G}$ lying between $B'$ and $B$. Then Lemma~\ref{lem:nak_ses} establishes that $B''$ is a brick of $\mathcal{G}$ lying between $L$ and $B$, and therefore also between $B'$ and $B$. Therefore, by induction, we have that all of the extra bricks of $\mathcal{G}''$ are bricks of $\mathcal{G}$ and lie between $B'$ and~$B$.

We thus obtain that $\mathcal{G}'$ is an increasing elementary polygonal deformation of~$\mathcal{G}''$, and that the bricks of $\mathcal{G}''$ are contained in the bricks of $\mathcal{G}$, and so $[\mathcal{G}] \bleq [\mathcal{G}''] \dlessdot [\mathcal{G}']$. Applying this argument inductively gives that $[\mathcal{G}] \dl [\mathcal{G}']$, as desired.
\end{proof}

\begin{corollary}\label{cor:nak_hn}
Let $\Lambda$ be a finite-dimensional Nakayama algebra over a field $K$, with $\mathcal{G}$ and $\mathcal{G}'$ two maximal green sequence of $\Lambda$. Then $[\mathcal{G}] \hleq [\mathcal{G}']$ if and only if $[\mathcal{G}] \dleq [\mathcal{G}']$.
\end{corollary}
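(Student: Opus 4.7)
The statement is a direct corollary of the results that have already been established in the paper, so the plan is simply to chain the relevant implications together, rather than to embark on a new argument.

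First I would invoke Theorem~\ref{thm:def->hn}, which holds for an arbitrary finite-dimensional algebra and gives one direction for free: $[\mathcal{G}] \dleq [\mathcal{G}']$ implies $[\mathcal{G}] \hleq [\mathcal{G}']$ with no use of the Nakayama hypothesis. So the only thing to do is the converse implication, and this is where the Nakayama hypothesis enters.

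For the converse, suppose $[\mathcal{G}] \hleq [\mathcal{G}']$. By Lemma~\ref{lem:hn->bricks}, refinement of Harder--Narasimhan filtrations entails reverse inclusion of bricks, so $\bricks{\mathcal{G}} \supseteq \bricks{\mathcal{G}'}$, i.e., $[\mathcal{G}] \bleq [\mathcal{G}']$ in the brick order defined for Nakayama algebras. Now I would apply Theorem~\ref{thm:nak_brick_order}, which was just proved and which identifies the brick order with the deformation order over a Nakayama algebra: $[\mathcal{G}] \bleq [\mathcal{G}']$ gives $[\mathcal{G}] \dleq [\mathcal{G}']$. Composing these two steps closes the loop.

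Since the work has been done in assembling the pieces (Theorem~\ref{thm:def->hn}, Lemma~\ref{lem:hn->bricks}, and especially Theorem~\ref{thm:nak_brick_order}, whose proof used the Nakayama structural lemmas \ref{lem:nak_unique_filt}--\ref{lem:nak_ses}), there is no new obstacle in this corollary. The only subtlety worth flagging explicitly in the write-up is that the Nakayama hypothesis is needed only for the implication $\hleq \Rightarrow \dleq$, via the brick order, whereas the reverse implication is fully general. One could therefore also state the corollary as a short ``$\Rightarrow$ by Theorem~\ref{thm:def->hn}; $\Leftarrow$ by Lemma~\ref{lem:hn->bricks} followed by Theorem~\ref{thm:nak_brick_order}'' one-liner, which is essentially what the formal proof will look like.
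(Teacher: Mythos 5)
Your proposal is correct and follows exactly the same chain of implications as the paper's own proof: Theorem~\ref{thm:def->hn} for the forward direction, and Lemma~\ref{lem:hn->bricks} followed by Theorem~\ref{thm:nak_brick_order} for the converse. Nothing to change.
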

\begin{proof}
We already know from Theorem~\ref{thm:def->hn} that if $[\mathcal{G}] \dleq [\mathcal{G}']$ then $[\mathcal{G}] \hleq [\mathcal{G}']$. For the other direction, we have from Lemma~\ref{lem:hn->bricks} that $[\mathcal{G}] \hleq [\mathcal{G}']$ implies $[\mathcal{G}] \bleq [\mathcal{G}']$. Then, by Theorem~\ref{thm:nak_brick_order}, we have that this implies $[\mathcal{G}] \dleq [\mathcal{G}']$.
\end{proof}

\subsection{Bricks versus summands}

We now show how one can compute the set of $\tau$-rigid summands of a maximal green sequence for a Nakayama algebra from the set of bricks, which will allow us to show that the brick order coincides with the summand order for Nakayama algebras.

We begin by letting $\Lambda$ be a Nakayama algebra, and introducing the set \[ \mathcal{B}(\Lambda) := \{B \in \brick \Lambda \st B \notin \simp \Lambda\}\] of bricks over $\Lambda$ which are not simple. We further introduce \[\mathcal{R}(\Lambda) := \{M \in \ind \Lambda \st M \text{ is $\tau$-rigid but } M \notin \proj \Lambda\}\] of indecomposable $\tau$-rigid modules which are not projective. Note that $\tau$-tilting theory for Nakayama algebras was studied in \cite{adachi_nak}.

\begin{lemma}\label{lem:nak_bs_bij}
If $\Lambda$ is a Nakayama algebra, then there is a bijection
\begin{align*}
    \phi \colon \mathcal{B}(\Lambda) &\to \mathcal{R}(\Lambda) \\
    B &\mapsto B/\soc B
\end{align*}
\end{lemma}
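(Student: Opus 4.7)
The plan is to verify that $\phi$ is well-defined, injective, and surjective, exploiting the uniserial structure of indecomposable modules over a Nakayama algebra and the fact that any such module is determined up to isomorphism by the pair $(\top M, \ell(M))$ of its top and its length.

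For well-definedness, let $B \in \mathcal{B}(\Lambda)$. Since $B$ is uniserial and non-simple, $B/\soc B$ is a non-zero uniserial module of length $\ell(B) - 1 \geq 1$, hence indecomposable. Moreover, $B/\soc B$ cannot be projective: otherwise the short exact sequence $0 \to \soc B \to B \to B/\soc B \to 0$ would split, yielding a non-trivial decomposition of $B$ and contradicting its indecomposability. Showing that $B/\soc B$ is $\tau$-rigid is the main substantive point, and I would use the combinatorial characterisation of indecomposable $\tau$-rigid modules over Nakayama algebras from \cite{adachi_nak}.

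For injectivity, if $\phi(B) \cong \phi(B')$ then $B$ and $B'$ share the same top (namely $\top(B/\soc B)$) and the same length ($\ell(B/\soc B) + 1$), hence $B \cong B'$. For surjectivity, given $M \in \mathcal{R}(\Lambda)$ with top $S$ and length $l$, non-projectivity forces $l < \ell(P(S))$. Hence there exists a uniserial module $B$ with $\top B = S$ and $\ell(B) = l + 1$; for this $B$ one has $B/\soc B \cong M$, so it remains to verify that $B$ is a brick.

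The main obstacle is establishing the two matched conditions: that removing the socle of a non-simple brick produces a $\tau$-rigid module, and conversely that extending a non-projective indecomposable $\tau$-rigid module by a suitable simple socle produces a brick. I expect the cleanest route is to compare the combinatorial descriptions of both classes of indecomposables in terms of the Kupisch series of $\Lambda$ via \cite{adachi_nak}, and to observe that the defining inequalities on length-parameters for bricks and for $\tau$-rigid indecomposables with a fixed top differ by exactly one, matching the effect of $B \mapsto B/\soc B$ on length. An alternative approach would be to use the explicit Nakayama formula for $\tau$ on uniserial modules and verify directly that $\Hom_\Lambda(B/\soc B, \tau(B/\soc B)) = 0$ is equivalent to $\End_\Lambda(B) = K$, but this seems more calculation-heavy than the combinatorial comparison.
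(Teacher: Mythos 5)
Your framework (well-defined, injective, surjective, with uniserial modules determined by top and length) is the right one, and you correctly locate where the substantive content lies. However, the proposal has genuine gaps at precisely the two points you flag as the ``main obstacle'': neither the $\tau$-rigidity of $B/\soc B$ nor the brick property of the candidate preimage $B$ is actually established; you only point at \cite{adachi_nak} and a ``combinatorial comparison'' without carrying it out.

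The paper's route to $\tau$-rigidity is in fact the ``alternative approach'' you dismiss as calculation-heavy, and it is quite short: for uniserial modules one has $\tau(B/\soc B) \cong \rad B$, so $\Hom_\Lambda(B/\soc B, \tau(B/\soc B)) = \Hom_\Lambda(B/\soc B, \rad B)$, and any non-zero map $B/\soc B \to \rad B$ composes with the projection $B \twoheadrightarrow B/\soc B$ and the inclusion $\rad B \hookrightarrow B$ to produce an endomorphism of $B$ that is non-zero but not an isomorphism, contradicting that $B$ is a brick. This is not a computation in the Kupisch series; it uses only the standard identification of $\tau$ for Nakayama algebras.

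For surjectivity, the paper takes a genuinely different route that sidesteps your obstacle entirely: it simply counts. By \cite[Theorem 4.1]{dij} there is a bijection between indecomposable $\tau$-rigid modules and bricks generating functorially finite torsion classes, and for a representation-finite algebra such as $\Lambda$ all bricks qualify; combined with the bijection between simples and indecomposable projectives (projective cover), one gets $\#\mathcal{B}(\Lambda) = \#\mathcal{R}(\Lambda)$, so the injective map $\phi$ between finite sets of equal cardinality is a bijection. Your constructive surjectivity --- building $B$ of length $\ell(M) + 1$ with top $\top M$ and then proving $B$ is a brick --- would also work if completed (and would yield a slightly more explicit inverse), but proving the brick property is an extra piece of work that the counting argument renders unnecessary. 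As written, your proposal leaves both halves of the ``matched conditions'' as placeholders.
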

\begin{proof}
Note first that, since $B$ is not a simple, we have that $B/\soc B$ is non-zero. Moreover, $B/\soc B$ is not a projective due to the non-split short exact sequence \[0 \to \soc B \to B \to B/\soc B \to 0,\] whilst $B/\soc B$ must be indecomposable since $B$ is a uniserial module.

We now show that $B/\soc B$ is $\tau$-rigid. It follows from \cite[Theorem~VI.2.1]{ars_book} that $\tau(B/\soc B) = \rad B$ (see also \cite[Theorem~V.4.1]{ass}).
Thus $\Hom_{\Lambda}(B/\soc B, \tau(B/\soc B)) = \Hom_{\Lambda}(B/\soc B, \rad B) = 0$, since $B$ is a brick. Hence $B/\soc B$ is $\tau$-rigid, as desired.

We now show that $\phi$ is injective. Suppose that we have bricks $B, B' \in \mathcal{B}(\Lambda)$ such that $B/\soc B \cong B'/\soc B'$. We use the fact from \cite{nak_ii}, \cite[Theorem~VI.2.1]{ars_book}, \cite[Theorem~V.3.5]{ass} that there exist indecomposable projectives $P, P' \in \proj \Lambda$ such that $B \cong P/ \rad^t P$ and $B' \cong P' \rad^{t'} P'$. Then $B/\soc B \cong P/\rad^{t - 1}P$ and $B'/\soc B' \cong P'/\rad^{t' - 1}P'$. Hence $B/\soc B \cong B'/\soc B'$ implies that $P \cong P'$ since indecomposable projectives are determined by their top, and so we also have $t = t'$ and $B \cong B'$, as desired.

Surjectivity of the map $\phi$ then follows from the fact that $\# \mathcal{B}(\Lambda) = \# \mathcal{R}(\Lambda)$ because there is a bijection between indecomposable $\tau$-rigid modules and bricks generating functorially finite torsion classes from \cite[Theorem~4.1]{dij}, as well as a well-known bijection between simple modules and indecomposable projectives (sending a simple module to its projective cover). Note that, since Nakayama algebras are representation-finite \cite{nak_ii}, \cite[Theorem~VI.2.1]{ars_book}, \cite[Theorem~V.3.5]{ass}, we have that all bricks generate functorially finite torsion classes and that both the sets $\mathcal{B}(\Lambda)$ and $\mathcal{R}(\Lambda)$ are finite.
\end{proof}

Note that this bijection $\phi$ is specific to Nakayama algebras; it does not coincide with the bijection between indecomposable $\tau$-rigid modules and bricks generating functorially finite torsion classes for general algebras given in \cite[Theorem~4.1]{dij}. We can then use the map $\phi$ to relate the relative simples of a torsion class to the relative projectives of the torsion class.

\begin{lemma}\label{lem:nak_bs_tors}
If $B$ is a relatively simple object in a torsion class $\mathcal{T}$ for a Nakayama algebra $\Lambda$, then $\phi(B)$ is relatively projective in $\mathcal{T}$. 
\end{lemma}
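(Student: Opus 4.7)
The plan is to show the contrapositive-style statement: any non-zero homomorphism from $\mathcal{T}$ into $\rad B$ would contradict the relative simplicity of $B$, and then to convert this vanishing of $\Hom$ into vanishing of $\Ext^1$ using Auslander--Reiten duality together with the identification $\tau(B/\soc B) = \rad B$ noted in Lemma~\ref{lem:nak_bs_bij}.

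First, observe that $B/\soc B$ lies in $\mathcal{T}$, since $\mathcal{T}$ is closed under factor modules. To establish that $B/\soc B$ is relatively projective, we must prove $\Ext^1_{\Lambda}(B/\soc B, M) = 0$ for every $M \in \mathcal{T}$. By the Auslander--Reiten formula for the Artin algebra $\Lambda$, this would follow from $\Hom_{\Lambda}(M, \tau(B/\soc B)) = 0$, and we recall from Lemma~\ref{lem:nak_bs_bij} that $\tau(B/\soc B) \cong \rad B$. So it suffices to prove $\Hom_{\Lambda}(M, \rad B) = 0$ for every $M \in \mathcal{T}$.

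Suppose, for contradiction, that there is a non-zero map $f \colon M \to \rad B$ for some $M \in \mathcal{T}$. Then $\im f$ is a non-zero submodule of $\rad B$, hence a non-zero proper submodule of $B$ (recall that $\rad B \subsetneq B$ since $B \neq 0$). On the other hand, $\im f \cong M/\ker f$ is a factor module of $M \in \mathcal{T}$, so $\im f \in \mathcal{T}$ by closure of torsion classes under factor modules. Thus, the short exact sequence
\[
0 \to \im f \to B \to B/\im f \to 0
\]
has both outer terms non-zero and in $\mathcal{T}$ (the quotient $B/\im f$ is in $\mathcal{T}$ by closure under factors), contradicting the assumption that $B$ is relatively simple in $\mathcal{T}$.

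The only subtle point is the invocation of Auslander--Reiten duality, but since $\Lambda$ is a finite-dimensional algebra over $K$ this is standard and the identification of $\tau(B/\soc B)$ is already recorded in the paper. Everything else is a direct combination of the uniserial structure of bricks over Nakayama algebras with the defining properties of torsion classes and relative simples.
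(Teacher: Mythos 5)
Your proof is correct and takes essentially the same route as the paper's: reduce $\Ext^1_\Lambda(B/\soc B, -)$ to $\Hom_\Lambda(-, \rad B)$ via the Auslander--Reiten formula and the identification $\tau(B/\soc B) \cong \rad B$, then note that a non-zero map into $\rad B$ would produce a proper non-zero submodule of $B$ lying in $\mathcal{T}$. You spell out a couple of details the paper leaves implicit, namely that $\phi(B) \in \mathcal{T}$ and the explicit short exact sequence witnessing non-simplicity, but the argument is the same.
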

\begin{proof}
Let $B$ be a relative simple in $\mathcal{T}$. To show that $\phi(B)$ is a relative projective in $\mathcal{T}$, it suffices to show that $\Hom_{\Lambda}(X, \tau \phi(B)) = 0$ for any $X \in \mathcal{T}$, by the Auslander--Reiten formula. As in Lemma~\ref{lem:nak_bs_bij}, we have that $\Hom_{\Lambda}(X, \tau \phi(B)) = \Hom_{\Lambda}(X, \rad B)$. If there were then a non-zero map $f \colon X \to \rad B$, then $\im f \in \mathcal{T}$ would be a proper submodule of $B$, which would contradict the fact that $B$ is relatively simple in $\mathcal{T}$. 
\end{proof}

Note that it is not true in general that for an arbitrary torsion class $\mathcal{T}$, $\phi$ restricts to a bijection between non-simple relatively simple objects and non-projective indecomposable relatively projective objects in $\mathcal{T}$. Nonetheless, Lemma~\ref{lem:nak_bs_tors} allows us to compute the $\tau$-rigid summands of a maximal green sequence of a Nakayama algebra from the bricks.

\begin{proposition}\label{prop:nak_bs_green}
Let $\mathcal{G}$ be a maximal green sequence of a Nakayama algebra~$\Lambda$. Then \[\phi(\bricks{\mathcal{G}}\setminus\simp\Lambda) = \sumt{\mathcal{G}}\setminus\proj\Lambda.\]
\end{proposition}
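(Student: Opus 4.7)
The plan is to prove the equality by combining the inclusion $\phi(\bricks{\mathcal{G}} \setminus \simp \Lambda) \subseteq \sumt{\mathcal{G}} \setminus \proj \Lambda$ with a matching cardinality count, then invoking the injectivity of $\phi$ from Lemma~\ref{lem:nak_bs_bij}.

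For the inclusion, I would take a non-simple brick $B \in \bricks{\mathcal{G}}$. By Theorem~\ref{thm:enomoto_relsimp}, $B$ is a relative simple in some torsion class $\mathcal{T}_{i-1}$ of $\mathcal{G}$. Lemma~\ref{lem:nak_bs_tors} then gives that $\phi(B) = B/\soc B$ is relatively projective in $\mathcal{T}_{i-1}$; it is indecomposable and nonzero because $B$ is uniserial and not simple. By Theorem~\ref{thm:air_relproj}, the indecomposable relative projectives of $\mathcal{T}_{i-1}$ are exactly the indecomposable summands of the associated support $\tau$-tilting module, so $\phi(B) \in \sumt{\mathcal{G}}$, while Lemma~\ref{lem:nak_bs_bij} ensures $\phi(B) \notin \proj \Lambda$.

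For the cardinality count, let $r$ denote the length of $\mathcal{G}$ and set $n = |\Lambda|$. Since every simple $\Lambda$-module is a relative simple of $\mathcal{T}_0 = \modules \Lambda$, Theorem~\ref{thm:enomoto_relsimp} yields $\simp \Lambda \subseteq \bricks{\mathcal{G}}$, so $|\bricks{\mathcal{G}} \setminus \simp \Lambda| = r - n$. To match this on the $\tau$-tilting side, I would enumerate the $r$ exchange pairs $(X_i, Y_i)_{i=1}^{r}$ of $\mathcal{G}$: by Lemma~\ref{lem:at_most_once} the $X_i$ are pairwise non-isomorphic and so are the $Y_i$, all $n$ indecomposable projectives must occur among the $X_i$ (each being a summand of $T_0 = \Lambda$ that must eventually leave the sequence), and dually all $n$ shifted projectives must occur among the $Y_i$. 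Every $X_i$ which is not projective must have first entered the sequence as some $Y_j$ with $j < i$, and conversely, so the complexes in $\summ{\mathcal{G}}$ which are neither projective nor shifted projective number exactly $r - n$. Applying the $H^0$-bijection from $\twosilt \Lambda$ to $\sttilt \Lambda$ identifies these complexes bijectively with the non-projective indecomposable summands of support $\tau$-tilting modules along $\mathcal{G}$, giving $|\sumt{\mathcal{G}} \setminus \proj \Lambda| = r - n$.

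Combining the inclusion with injectivity of $\phi$ from Lemma~\ref{lem:nak_bs_bij} and the matching cardinalities forces the inclusion to be an equality. None of the ingredients is difficult in isolation; the only step requiring a little care is the bookkeeping in the cardinality argument, while the inclusion itself is just an unwinding of Lemma~\ref{lem:nak_bs_tors} together with Theorems~\ref{thm:enomoto_relsimp} and~\ref{thm:air_relproj}.
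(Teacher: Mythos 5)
Your proof is correct and takes essentially the same approach as the paper: prove the inclusion $\phi(\bricks{\mathcal{G}}\setminus\simp\Lambda) \subseteq \sumt{\mathcal{G}}\setminus\proj\Lambda$ via Theorem~\ref{thm:enomoto_relsimp} and Lemma~\ref{lem:nak_bs_tors}, then upgrade it to equality using injectivity of $\phi$ and a cardinality count. The paper states the count more tersely (both $\#\bricks{\mathcal{G}}$ and $\#\sumt{\mathcal{G}}$ equal the length of $\mathcal{G}$); your explicit enumeration via exchange pairs and Lemma~\ref{lem:at_most_once} is simply a more detailed route to the same number.
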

\begin{proof}
Let $B \in \bricks{\mathcal{G}}\setminus\simp\Lambda$. Then by Theorem~\ref{thm:enomoto_relsimp} $B$ is a relative simple in some torsion class $\mathcal{T}$ in $\mathcal{G}$ containing $B$. 
By Lemma~\ref{lem:nak_bs_tors}, we the obtain that $\phi(B)$ is a relative projective in $\mathcal{T}$. Recalling that $\phi(B) \notin \proj\Lambda$, we conclude that $\phi(B) \in \sumt{\mathcal{G}}\setminus\proj\Lambda$. Thus, $\phi(\bricks{\mathcal{G}}\setminus\simp\Lambda) \subseteq \sumt{\mathcal{G}}\setminus\proj\Lambda$. We then obtain the result from the fact that $\# \bricks{\mathcal{G}} = \# \sumt{\mathcal{G}}$, since both are equal to the length of the maximal green sequence.
\end{proof}

We conclude that the orders on maximal green sequences defined by bricks and summands coincide for Nakayama algebras.

\begin{theorem}\label{thm:nak_bs}
Given a Nakayama algebra $\Lambda$ and two maximal green sequences $\mathcal{G}$ and $\mathcal{G}'$ of $\Lambda$, we have that \[[\mathcal{G}] \bleq [\mathcal{G}'] \text{ if and only if } [\mathcal{G}] \sleq [\mathcal{G}'].\]
\end{theorem}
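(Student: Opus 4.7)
The plan is to deduce Theorem~\ref{thm:nak_bs} almost immediately from Proposition~\ref{prop:nak_bs_green} together with the bijectivity of $\phi$ from Lemma~\ref{lem:nak_bs_bij}, after reducing both orders to their ``non-trivial'' parts. The point is that every maximal green sequence contains the same distinguished simples among its bricks and the same distinguished projectives among its $\tau$-rigid summands, so the orders $\bleq$ and $\sleq$ only see the non-simple bricks and non-projective $\tau$-rigids respectively, which are matched by $\phi$.

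First I would record the two baseline inclusions $\simp\Lambda \subseteq \bricks{\mathcal{G}}$ and $\proj\Lambda \subseteq \sumt{\mathcal{G}}$ for any maximal green sequence $\mathcal{G}$. The first follows from Theorem~\ref{thm:enomoto_relsimp}, since the simple $\Lambda$-modules are precisely the relatively simple objects of $\mathcal{T}_{0} = \mod\Lambda$, hence appear among the brick labels of $\mathcal{G}$. The second follows from the fact that $\mathcal{G}$ begins at the support $\tau$-tilting module $\Lambda$, so every indecomposable projective occurs as a summand of a support $\tau$-tilting module in $\mathcal{G}$. Consequently,
\[
\bricks{\mathcal{G}} \supseteq \bricks{\mathcal{G}'} \iff \bricks{\mathcal{G}}\setminus\simp\Lambda \supseteq \bricks{\mathcal{G}'}\setminus\simp\Lambda,
\]
and analogously
\[
\sumt{\mathcal{G}} \supseteq \sumt{\mathcal{G}'} \iff \sumt{\mathcal{G}}\setminus\proj\Lambda \supseteq \sumt{\mathcal{G}'}\setminus\proj\Lambda.
\]

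Next I would invoke Proposition~\ref{prop:nak_bs_green}, which gives $\phi(\bricks{\mathcal{G}}\setminus\simp\Lambda) = \sumt{\mathcal{G}}\setminus\proj\Lambda$ and similarly for $\mathcal{G}'$. Since $\phi \colon \mathcal{B}(\Lambda) \to \mathcal{R}(\Lambda)$ is a bijection by Lemma~\ref{lem:nak_bs_bij}, it preserves and reflects inclusion of subsets. Therefore
\[
\bricks{\mathcal{G}}\setminus\simp\Lambda \supseteq \bricks{\mathcal{G}'}\setminus\simp\Lambda \iff \sumt{\mathcal{G}}\setminus\proj\Lambda \supseteq \sumt{\mathcal{G}'}\setminus\proj\Lambda.
\]
Chaining the three equivalences gives $[\mathcal{G}] \bleq [\mathcal{G}']$ iff $[\mathcal{G}] \sleq [\mathcal{G}']$, as required. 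Recall that the equivalence of the two formulations $\summ{\mathcal{G}} \supseteq \summ{\mathcal{G}'}$ and $\sumt{\mathcal{G}} \supseteq \sumt{\mathcal{G}'}$ of $\sleq$ was already noted in Definition~\ref{def:partial_orders}.

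There is essentially no obstacle here: all the real work has been done in Lemma~\ref{lem:nak_bs_bij}, Lemma~\ref{lem:nak_bs_tors}, and Proposition~\ref{prop:nak_bs_green}. The only subtlety is confirming that simples and projectives are always present, but this is immediate from Theorem~\ref{thm:enomoto_relsimp} and the fact that every maximal green sequence starts at $\Lambda$. The proof is thus a short bookkeeping argument using the established bijection $\phi$.
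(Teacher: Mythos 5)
Your proposal is correct and follows essentially the same chain of equivalences as the paper's own proof: strip away the ambient simples and projectives, transport along the bijection $\phi$ via Proposition~\ref{prop:nak_bs_green}, and put them back. The only difference is that you spell out why $\simp\Lambda \subseteq \bricks{\mathcal{G}}$ and $\proj\Lambda \subseteq \sumt{\mathcal{G}}$, which the paper leaves implicit.
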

\begin{proof}
Since the simple modules and indecomposable projectives are always contained in $\bricks{\mathcal{G}}$ and $\sumt{\mathcal{G}}$ respectively, we can make the following chain of deductions.
\begin{align*}
    [\mathcal{G}] \bleq [\mathcal{G}'] &\iff \bricks{\mathcal{G}} \supseteq \bricks{\mathcal{G}'} \\
    &\iff \bricks{\mathcal{G}}\setminus\simp\Lambda \supseteq \bricks{\mathcal{G}'}\setminus\simp\Lambda \\
    &\iff \phi(\bricks{\mathcal{G}}\setminus\simp\Lambda) \supseteq \phi(\bricks{\mathcal{G}'}\setminus\simp\Lambda) \\
    &\iff \sumt{\mathcal{G}}\setminus\proj\Lambda \supseteq \sumt{\mathcal{G}'}\setminus\proj\Lambda \\
    &\iff \sumt{\mathcal{G}} \supseteq \sumt{\mathcal{G}'} \\
    &\iff [\mathcal{G}] \sleq [\mathcal{G}'].
\end{align*}
\end{proof}

\begin{corollary}\label{cor:nak_orders}
For a Nakayama algebra $\Lambda$ and two maximal green sequences $\mathcal{G}$ and $\mathcal{G}'$ of $\Lambda$, the following are equivalent.
\begin{enumerate}[label=\textup{(}\arabic*\textup{)}]
    \item $[\mathcal{G}] \dleq [\mathcal{G}']$.\label{op:nak_thm:def}
    \item $[\mathcal{G}] \sleq [\mathcal{G}']$.\label{op:nak_thm:sum}
    \item $[\mathcal{G}] \hleq [\mathcal{G}']$.\label{op:nak_thm:hn}
    \item $[\mathcal{G}] \bleq [\mathcal{G}']$.\label{op:nak_thm:brick}
\end{enumerate}
\end{corollary}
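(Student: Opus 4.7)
The plan is to assemble this corollary from the three key ingredients already established for Nakayama algebras: Theorem~\ref{thm:nak_brick_order} (deformation $\Leftrightarrow$ brick), Corollary~\ref{cor:nak_hn} (deformation $\Leftrightarrow$ HN), and Theorem~\ref{thm:nak_bs} (brick $\Leftrightarrow$ summand). Since the four conditions are thus already pairwise linked through $\dleq$ and $\bleq$, the proof is essentially citational.

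Concretely, I would argue as follows. First, the equivalence \ref{op:nak_thm:def} $\Leftrightarrow$ \ref{op:nak_thm:brick} is precisely Theorem~\ref{thm:nak_brick_order}. Second, the equivalence \ref{op:nak_thm:def} $\Leftrightarrow$ \ref{op:nak_thm:hn} is Corollary~\ref{cor:nak_hn}. Third, the equivalence \ref{op:nak_thm:brick} $\Leftrightarrow$ \ref{op:nak_thm:sum} is Theorem~\ref{thm:nak_bs}. Chaining these three biconditionals yields the simultaneous equivalence of all four conditions. For clarity one can display this as a cycle \ref{op:nak_thm:def} $\Leftrightarrow$ \ref{op:nak_thm:brick} $\Leftrightarrow$ \ref{op:nak_thm:sum}, combined with \ref{op:nak_thm:def} $\Leftrightarrow$ \ref{op:nak_thm:hn}.

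There is essentially no obstacle at this stage because all the difficult work has already been done upstream: the Nakayama-specific input sat in Lemma~\ref{lem:nak_ses} (and the surrounding machinery on uniserial self-extensions) used in Theorem~\ref{thm:nak_brick_order}, and in the bijection $\phi$ of Lemma~\ref{lem:nak_bs_bij} feeding into Theorem~\ref{thm:nak_bs}. The role of the corollary is merely to record the end result. The only minor thing worth flagging in the write-up is that the general-algebra implications \ref{op:nak_thm:def} $\Rightarrow$ \ref{op:nak_thm:sum}, \ref{op:nak_thm:hn}, \ref{op:nak_thm:brick} come from Theorem~\ref{thm:first->second}, Theorem~\ref{thm:def->hn}, and Corollary~\ref{cor:def->bricks} respectively, so the Nakayama hypothesis is only really needed for the converse directions.
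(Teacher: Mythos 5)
Your proposal is correct and matches the paper's own proof exactly: both cite Theorem~\ref{thm:nak_brick_order} for \ref{op:nak_thm:def}~$\Leftrightarrow$~\ref{op:nak_thm:brick}, Corollary~\ref{cor:nak_hn} for \ref{op:nak_thm:def}~$\Leftrightarrow$~\ref{op:nak_thm:hn}, and Theorem~\ref{thm:nak_bs} for \ref{op:nak_thm:sum}~$\Leftrightarrow$~\ref{op:nak_thm:brick}, then chain. The extra remark about where the Nakayama hypothesis is actually needed is accurate but beyond what the paper records here.
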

\begin{proof}
Equivalence of \ref{op:nak_thm:def} and \ref{op:nak_thm:brick} is Theorem~\ref{thm:nak_brick_order}. Equivalence of \ref{op:nak_thm:def} and \ref{op:nak_thm:hn} is Corollary~\ref{cor:nak_hn}. Finally, equivalence of \ref{op:nak_thm:sum} and \ref{op:nak_thm:brick} is Theorem~\ref{thm:nak_bs}. 
\end{proof}

\printbibliography

\end{document}